\newtheorem{theorem}{Theorem}[section]  %   number by chapter
\newtheorem{lemma}[theorem]{Lemma}        %   same counter as thm
\newtheorem{corollary}[theorem]{Corollary}
\newtheorem{proposition}[theorem]{Proposition}
\newtheorem{example}[theorem]{Example}
\theoremstyle{definition}
\newtheorem{remark}[theorem]{Remark}           %   Unnumbered.
\newtheorem{definition}{Definition}       %   Numbered independently from thm
\newtheorem{conjecture}[theorem]{Conjecture}
\newtheorem*{theorem*}{Theorem}
\numberwithin{equation}{section}
\numberwithin{definition}{section}
\numberwithin{theorem}{section}
 \def\S{{\cal S}}
\theoremstyle{definition}
\definecolor{red}{rgb}{0.9,0,0}
\definecolor{blue}{rgb}{0,0,0.9}
\title{Distinction and quadratic base change for regular supercuspidal representations}
\author{Chuijia Wang}
\date{}
\newcommand{\Addresses}{{
\bigskip
\footnotesize
\noindent
\textsc{Faculty of Mathematics, Technion, Haifa, Israel}\\
\nopagebreak
\textsc{Email address}:\textrm{wangchuijia@u.nus.edu}
}}
\begin{document}

\maketitle

\begin{abstract}
In this article, we study Prasad's conjecture for regular supercuspidal representations based on the machinery developed by Hakim and Murnaghan to study distinguished representations, and the fundamental work of Kaletha on parameterization of regular supercuspidal representations. For regular supercuspidal representations, we give some new interpretations of the numerical quantities appearing in Prasad's formula, and reduce the proof to the case of tori. The proof of Prasad's conjecture then reduces to a comparison of various quadratic characters appearing naturally in the above process. We also have some new observations on these characters and study the relation between them in detail. For some particular examples, we show the coincidence of these characters, which gives a new purely local proof of Prasad's conjecture for regular supercuspidal representations of these groups. We also prove Prasad's conjecture for regular supercuspidal representations of $G(E)$ when $E/F$ is unramified, and $G$ is a general quasi-split reductive group.
\end{abstract}
%{\pagestyle{empty}\mbox{}\newpage\pagestyle{empty}}
%\setcounter{tocdepth}{1}
\tableofcontents

\section{Introduction}
Let $G$ be a connected reductive algebraic group over a non-archimedean local field $F$ with residual characteristic $p\neq2$. In the study of representation theory of $G(F)$, one important problem is to capture the distinguished representations with respect to a pair $(H,\chi)$, where $H$ is a closed subgroup of $G$ and $\chi$ is a one dimensional character of $H(F)$. More precisely, an irreducible smooth representation $\pi$ of $G(F)$ is said to be $(H,\chi)$-distinguished if $$\mathrm{Hom}_{H(F)}(\pi,\chi)\neq 0.$$
The distinction property of an irreducible representation could be described in two common ways. One is in terms of the symmetry of the representation itself, such as the self dual or conjugate self dual property. The other one is in terms of the functorial property of the Langlands parameter associated to the representation.

Sakellaridis and Venkatesh \cite{SV17} have set up a general framework to understand the relationship between distinguished representations and Langlands functoriality when the corresponding homogeneous space $G/H$ is a spherical variety. Moreover, Dipendra Prasad \cite{Pra15} has a more precise conjecture describing the relationship between the distinction property of an irreducible representation with respect to a specific quadratic character and the base change property of its Langlands parameter when $G/H$ is a Galois symmetric space.

The main focus of this article is to understand Prasad's conjecture on the Galois distinction problem. More precisely, let $E/F$ be a quadratic extension and $\sigma$ be the non-trivial element in $\mathrm{Gal}(E/F)$. We are interested in the distinction problem for the pair $(G,\omega_{G(F),E})$, where $G$ is regarded as a closed subgroup of $\mathrm{Res}_{E/F}G_{E}$ and $\omega_{G(F),E}$ is the specific quadratic character of $G(F)$ associated to the quadratic extension $E/F$ defined in \cite{Pra15}. Let $\Pi(G(E))$ denote the set of equivalence classes of irreducible representations of $G(E)$ and $\Phi(G(E))$ denote the set of $\widehat{G}$ conjugacy classes of Langlands parameters of $G(E)$. The local Langlands correspondence states that there exists a finite-to-one surjective map from $\Pi(G(E))$ to $\Phi(G(E))$, which is denoted by $\mathrm{LLC}$. Prasad \cite{Pra15} predicts that the Langlands parameters of $(G(F),\omega_{G(F),E})$-distinguished representations are exactly those which are base change from Langlands parameters of $G^{\mathrm{op}}(F)$, where $G^{\mathrm{op}}$ is a quasi-split $F$-form of $G$ such that $G^{\mathrm{op}}(E)\cong G(E)$. One can refer to \Cref{section5} for a more detailed description of these notions. In other words, we have the following:
\begin{equation}\label{conjecturalparameter}
\begin{aligned}
\bigcup_{\alpha\in H^1(\mathrm{Gal}(E/F),G(E))}\Pi_{(G_{\alpha}(F),\omega_{G_{\alpha}(F),E})}(G(E)) &\xhookrightarrow{\iota}\Pi(G(E)) \xrightarrow{\mathrm{LLC}}\Phi(G(E))\xleftarrow{\mathrm{BC}}\Phi(G^{\mathrm{op}}(F)).\\
\mathrm{Im}(\mathrm{LLC} \circ\iota)&=\mathrm{Im}(\mathrm{BC}).
\end{aligned}
\end{equation}
Moreover, he also conjectures that the following identity holds for an irreducible discrete series representation $\pi$ of $G(E)$ in a generic $L$-packet:
\begin{equation}\label{conjecturalidentity}
\begin{aligned}
\sum_{\alpha\in H^1(\mathrm{Gal}(E/F),G(E))}\mathrm{dim}\mathrm{Hom}_{G_{\alpha}(F)}(\pi,\omega_{G_{\alpha}(F),E})=\sum\limits_{\tilde{\phi}}m(\lambda_{\pi},\tilde{\phi}),
\end{aligned}
\end{equation}
where the sum of RHS runs over all the Langlands parameters $\tilde{\phi}:W_F\rightarrow \prescript{L}{}G^{\mathrm{op}}$ such that $\tilde{\phi}|_{W_{E}}=\phi_\pi$, $\lambda_{\pi} \in \mathrm{Irr}(\pi_{0}(S_{\phi_{\pi}}))$ denotes the irreducible representation of the component group $S_{\phi_{\pi}}:=C_{\widehat{G}}(\mathrm{Im}\phi_{\pi})$ associated to $\pi$, and $m(\lambda_{\pi},\tilde{\phi})$ is the multiplicity of the trivial representation in the restriction of $\lambda_{\pi}|_{\pi_{0}(S_{\tilde{\phi}})}$. Notice that \ref{conjecturalparameter} follows from \ref{conjecturalidentity}, when both sides of \ref{conjecturalidentity} are non-zero.

This conjecture unifies a number of conjectures which aim to capture the distinction property of representations for Galois symmetric pairs and considerable progress has been made in attacking Prasad's conjecture for specific Galois pairs. For the pair $(GL_n(E),GL_n(F))$, this conjecture generalizes a conjecture of Jacquet, Rallis and Flicker \cite{Fli91}, which states that the $(GL_n(F),(\omega_{E/F}\circ \mathrm{det})^{n-1})$-distinguished representations of $GL_n(E)$ are precisely those arising as base change from $U_{n,E/F}(F)$, and the conjecture is proved by Kable \cite{Kab04} for discrete series representations. For the pair $(GL_{n}(E),U_{n,E/F}(F))$, this conjecture generalizes a conjecture of Jacquet \cite{Jac05}\cite{Jac10} and is proved by Feigon, Lapid and Offen \cite{FLO12}. Anandavardhanan and Prasad prove the conjecture for $(SL_n(E),SL_n(F))$ in \cite{AP03}\cite{AP16}, where they give the first example of a non-supercuspidal Gelfand pair. More precisely, they find that the dimension of the space of invariant linear forms is strictly larger than one for some supercuspidal reprensentations. Hengfei Lu \cite{Lu19}\cite{Lu20} proves this conjecture for certain classical groups of small rank and their similitude groups using the machinery of local theta correspondence. All of these work depends on concrete analysis of the terms appearing on both sides of Prasad's identity, and developing a general method which could work for all Galois symmetric pairs would be interesting. Recently, Raphael Plessis \cite{Rap18} develops a relative trace formula approach to attack Prasad's conjecture for general groups. His method turns out to be powerful, and has already been used to prove Prasad's conjecture for Steinberg representations, which recovers early results proved by Broussous and Court\'{e}s \cite{BC14} \cite{Cou15} using the geometry of Bruhat-Tits building, and by Matringe \cite{Mat17} through careful analysis of double cosets appearing in Mackey theory.

In this article, we try to give some evidence on the possibility of proving this conjecture for regular supercuspidal representations without constraints on the Galois symmetric pair using a different purely local method. Supercuspidal representations are fundamental objects in the study of representation theory of $p$-adic groups. They serve as building blocks of all irreducible smooth representations in the sense that every irreducible representation can be realized as a subrepresentation of a parabolic induction from a supercuspidal representation of a Levi subgroup. One of the reasons why we work on regular supercuspidal representations is that they are parametrized by simple data and their Langlands parameters have good functorial properties. These representations are special cases of tamely ramified supercuspidal representations satisfying certain regular conditions. Explicit constructions of tame supercuspidal representations date back to the work of Howe \cite{How77}. Following his pioneering work, tremendous progress has been made in the explicit construction of tame supercuspidal representations for general reductive groups. Jiu-Kang Yu \cite{Yu01} first develops a general construction of tamely ramified supercuspidal representations using generic cuspidal $G$-data $(\overrightarrow{G},x,\overrightarrow{r},\rho,\overrightarrow{\phi})$, which generalizes the earlier work of Adler \cite{Adl98}. Ju-Lee Kim \cite{Kim07} later shows that the representations constructed by Yu exhaust all the irreducible supercuspidal representations when the charateristic of the residue is large enough. Recently, the exhaustion theorem is improved by Jessica Fintzen \cite{Fin18} under a weaker constraint on $p$. Based on Yu's work, Tasho Kaletha \cite{Kal19} classifies regular supercuspidal representations using simpler data $(S,\mu)$ and established the local Langlands correspondence for regular supercuspidal representations. As pointed out by Kaletha, one crucial property of a regular supercuspidal representation is that its Langlands parameter factors through the $L$-group of the elliptic torus $S$ which defines the representation:
$$\prescript{L}{}{S}\stackrel{\prescript{L}{}j_\chi}{\longrightarrow} \prescript{L}{}{G}.$$
This property enables us to describe the base change of $G$ in terms of the base change of $S$ because of the following commutative diagram of functorial maps:
$$\xymatrix{
  \prescript{L}{}{S} \ar[d]_{\prescript{L}{}{j}_\chi} \ar[r]^{i~~~~~~~}
                & \prescript{L}{}{(\mathrm{Res}_{E/F}S_E)}  \ar[d]_{\prescript{L}{}{j_{\chi_{\mathrm{BC}}}}}  \\
  \prescript{L}{}{G} \ar[r]^{i~~~~~~~}
                &   \prescript{L}{}{(\mathrm{Res}_{E/F}G_E)}. }$$

To state and prove Prasad's conjecture for regular supercuspidal representations, we need the Langlands-Vogan bijection for regular supercuspidal representations of $G(E)$. For these representations, the full Langlands-Vogan bijection, that is, the map $\Pi(G,E)\rightarrow \Phi(G,E)$ and the bijection between the set of characters of the component group and the set of rational equivalence classes of rational embeddings of tori, are constructed in a series of papers \cite{Kal19} \cite{FKS21}. In fact, they also deal with more general supercuspidal representations, which are known as non-singular representations. Their work enables us to use the parametrization results of regular supercuspidal representations unconditionally in the study of distinction problem.

The main ingredients to prove the conjecture are Hakim-Murnaghan's description of the Hom space by Mackey theory \cite{HM08} and Kaletha's parametrization of regular supercuspidal representations using tame regular elliptic pairs \cite{Kal19}. We will use these ingredients in three steps. The first step is to describe the base change map in terms of regular supercuspidal $L$-packet data defined by Kaletha, and check the compatibility between our formulation and the original base change map defined by restriction of the Langlands parameter to the Weil group of the quadratic extension field. Then the right hand side of the conjectural formula \ref{conjecturalidentity} can be expressed as an identity involving a set of elliptic maximal tori over $F$ and a set of characters of the elliptic tori satisfying certain conditions. The second step is to provide a reinterpretation of Hakim-Murnaghan's formula on the dimension of the Hom space using the same data that appears on the parameter side of the formula. The final step is to give a comparison between both sides, and reduce the proof to an identity comparing various characters of the elliptic maximal torus $S(F)$, which naturally appear in the distinction problem, the base change problem, and the construction of local Langlands correspondence. A priori, it seems that there is no obvious relation between these characters, whose constructions are of different nature. However, we can prove this conjectural identity for several examples, which gives a new purely local proof of Prasad's conjecture for regular supercuspidal representations of these groups. We also prove Prasad's conjecture for regular supercuspidal representations of $G(E)$ when $E/F$ is unramified, and $G$ is a general quasi-split reductive group.

\begin{theorem*}[\Cref{unramifiedprasad}]
Prasad's conjecture \Cref{conjecture1} holds for regular supercuspidal representations when $E/F$ is unramified.
\end{theorem*}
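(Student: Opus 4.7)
The plan is to execute the three-step program outlined in the introduction in the concrete setting where $E/F$ is unramified and $G$ is quasi-split over $F$, leveraging the unramified hypothesis to make the final character comparison explicit.

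First, I would realize $\pi$ via Kaletha's parametrization as a regular supercuspidal of $G(E)$ attached to a tame regular elliptic pair $(S_E,\mu)$, where $S_E$ is an elliptic maximal torus of $G$ over $E$ and $\mu:S_E(E)\to\C^\times$ is a suitably regular character. Because the Langlands parameter factors as $\phi_\pi=\prescript{L}{}{j_\chi}\circ\phi_\mu$ and the commutative diagram of $L$-embeddings displayed in the introduction intertwines base change of $G$ with base change of $S$, the right hand side of \ref{conjecturalidentity} can be rewritten as a sum over $F$-rational forms $S$ of $S_E$ (admitting an admissible embedding into $G^{\mathrm{op}}$) of explicit torus-level multiplicities attached to $\mu$; this is the content of the first step indicated in the introduction.

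Second, I would apply Hakim-Murnaghan's Mackey-theoretic description of $\mathrm{Hom}_{G_\alpha(F)}(\pi,\omega_{G_\alpha(F),E})$. Summing over $\alpha\in H^1(\mathrm{Gal}(E/F),G(E))$ and over the relevant double cosets, one expresses the total multiplicity as a sum indexed by $G(E)$-conjugacy classes of $F$-rational involutions of the Yu datum for $\pi$, each of which singles out an $F$-rational form $S$ of $S_E$ together with an invariance condition on $\mu$. After this bundling, the indexing sets on the two sides of \ref{conjecturalidentity} match, and the identity reduces, on each $S$ appearing in the sum, to an identity of two quadratic characters of $S(F)$.

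Third, the character of $S(F)$ whose triviality must be verified factors into three pieces of distinct origin: the Hakim-Murnaghan sign $\varepsilon_\theta$, the discrepancy between Kaletha's chi-data for $G$ and for $G^{\mathrm{op}}$, and the restriction of Prasad's quadratic character $\omega_{G(F),E}$ along $S(F)\hookrightarrow G(F)$. In the general problem each of these pieces can be wildly ramified and a priori unrelated to the others, which is precisely the obstruction treated by the rest of the paper. When $E/F$ is unramified, however, the situation simplifies dramatically: $\omega_{E/F}$ is an unramified quadratic character of $F^\times$; for symmetric unramified roots of $S$ one may choose chi-data that are themselves unramified; and the Hakim-Murnaghan sign reduces to a sign of the Galois action on the root system. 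I expect the main work to be a careful root-system bookkeeping showing that these three unramified quadratic characters of $S(F)$ cancel, combined with an evaluation at uniformizers via local class field theory on $S$. The hardest step will be matching the chi-data discrepancy uniformly across all rational forms $S$ that contribute to the sum; once this is in place, the remaining verification is essentially a Galois-cohomological identity whose unramified nature makes it tractable.
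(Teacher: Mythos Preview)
Your three-step outline matches the paper's strategy, and steps one and two are essentially \Cref{RHS} and \Cref{LHS}. The gap is in step three. First, the character identity involves four characters, not three: you have omitted Kaletha's rectifying character $\epsilon_{\mathrm{Kal}}$, which enters when one passes from the regular supercuspidal datum to the tame elliptic pair. The identity to be checked is $\epsilon_{\mathrm{HM}}\cdot\omega_{G(F),E}|_{S(F)}\cdot\epsilon_{\mathrm{Kal}}|_{S(F)}=\zeta|_{S(F)}$ (\Cref{productofcharacters}). Second, the unramified hypothesis on $E/F$ does \emph{not} make these characters unramified in the sense you suggest: roots of $S$ can still be symmetric ramified over $F$ (meaning $F_\alpha/F_{\pm\alpha}$ is ramified), and it is exactly these orbits that carry the nontrivial content. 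In the paper's computation, $\epsilon_{\mathrm{Kal}}|_{S(F)}$ and $\epsilon_{\mathrm{HM}}$ both come out trivial, while $\omega_{G(F),E}|_{S(F)}$ and $\zeta|_{S(F)}$ are each equal to the same nontrivial product $\prod\omega_{E_\alpha/F_\alpha}(\iota_\alpha\alpha(t))$ over symmetric-ramified orbits; it is this matching, not a cancellation of unramified characters, that closes the argument.

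The place your plan would genuinely stall is the triviality of $\epsilon_{\mathrm{HM}}$. Its root-component expression is $\prod_{\alpha\in(\Phi_{r/2})_{\mathrm{sym,ram}}/\Gamma_F}\mathrm{sgn}_{k_{F_\alpha}^\times}(\alpha(t))$, and the paper explicitly says one ``can not detect more information of this character from this expression''---so the root-system bookkeeping you anticipate does not suffice. Instead the paper invokes Zhang's result \cite[Proposition~4.1]{Zha20b}, whose argument is global rather than root-by-root: it builds an $F$-symplectic structure on the relevant Heisenberg quotient preserved by the $S(F)$-action, using the existence of a depth-zero good element of trace zero, an input available precisely when $E/F$ is unramified. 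Without this external ingredient, your step three would not close.
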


\begin{theorem*}[\Cref{prasadexample}]
Prasad's conjecture \Cref{conjecture1} holds for regular supercuspidal representations of $SL_2$,$GL_2$,$GL_n$ ($n$ odd) and $U_{n,E/F}$ ($n$ odd) for arbitrary quadratic extension $E/F$.
\end{theorem*}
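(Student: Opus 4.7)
The plan is to apply the three-step reduction set up in the main body of the paper: for a regular supercuspidal representation $\pi$ of $G(E)$ attached to a Kaletha tame regular elliptic pair $(S,\mu)$, one (i) rewrites the base change side of \ref{conjecturalidentity} in terms of elliptic $F$-tori $S_F\subset G^{\mathrm{op}}$ together with characters restricting to $\mu$, (ii) uses the Hakim-Murnaghan double coset formula to rewrite the distinction side as a sum over $\sigma$-stable $G(F)$-conjugacy classes of embeddings $S\hookrightarrow \mathrm{Res}_{E/F}G_E$ weighted by certain quadratic characters of $S(F)$, and (iii) matches the two sums term by term, reducing the whole statement to an identity on $S(F)$ between a Hakim-Murnaghan character $\epsilon_{\mathrm{HM}}$ twisted by $\omega_{G(F),E}|_{S(F)}$, and a base-change obstruction character $\epsilon_{\mathrm{BC}}$ measuring whether $(S,\mu)$ descends to $G^{\mathrm{op}}(F)$. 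This reduction is available from the earlier sections, so the proof of \Cref{prasadexample} is reduced to verifying the identity
\[
\omega_{G(F),E}\big|_{S(F)}\cdot \epsilon_{\mathrm{HM}}(S,\mu) \;=\; \epsilon_{\mathrm{BC}}(S,\mu)
\]
for every elliptic maximal $F$-torus $S$ appearing in each of the four families.

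The first concrete step is to enumerate those tori. For $GL_2$ and $SL_2$ they are parametrized by quadratic extensions $K/E$, and the verification splits into a handful of subcases depending on the ramification of $K/E$ and $E/F$, each of which can be attacked directly by Hilbert-symbol manipulations on $K^\times$. For $GL_n$ with $n$ odd, an elliptic maximal torus corresponds to a degree $n$ field extension $L/E$; the odd-degree hypothesis forces $H^1(\mathrm{Gal}(E/F),G(E))=1$, so only the split inner form $G^{\mathrm{op}}=GL_n$ contributes, and there is essentially one $\sigma$-equivariant embedding per $L$. For $U_{n,E/F}$ with $n$ odd, the $\sigma$-stable elliptic maximal tori of $\mathrm{Res}_{E/F}U_{n,E}$ correspond to $\sigma$-stable degree $n$ extensions of $E$, which are of the form $L\otimes_F E$ for a degree $n$ extension $L/F$; the odd rank again collapses the Galois cohomology to a single class.

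The second step is to compute both sides of the displayed identity as tame characters of $L^\times$. For $GL_n$ ($n$ odd), both $\epsilon_{\mathrm{HM}}\cdot\omega_{G(F),E}|_{S(F)}$ and $\epsilon_{\mathrm{BC}}$ can be shown to equal $\omega_{E/F}\circ \mathrm{Nm}_{L/F}$: the Kaletha toral transfer factors $\epsilon^f,\epsilon^{f,\mathrm{ram}}$ contribute one piece, and the Heisenberg-Weil signs in Hakim-Murnaghan's formula contribute a matching piece because the symplectic modules attached to the odd-rank setting have trivial Weil constants. For $U_{n,E/F}$ ($n$ odd), one transfers this computation through the identification $U_n(F)(E)\cong GL_n(E)$, absorbing the unitary involution into the character $\mu$. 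For $GL_2$ and $SL_2$ the identity becomes a direct verification using the product formula for Hilbert symbols on the quadratic extension $K$.

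The main obstacle will be bookkeeping of ramification and of the signs coming from Yu's construction. When both $E/F$ and the field defining $S$ are ramified, the characters $\omega_{E/F}$, the Kaletha transfer factors, and the Heisenberg-Weil signs extracted by Hakim-Murnaghan each contribute tame symbols on $L^\times$ that must be matched exactly, and the identity only emerges after tracking these through the relevant filtrations. The hypothesis that $n$ is odd for $GL_n$ and $U_n$ is precisely what eliminates the Weil constants attached to even-dimensional symplectic $\sigma$-modules, which would otherwise obstruct the comparison. Once these signs are controlled, the identity reduces to standard reciprocity among Hilbert and norm residue symbols for the tower $L/E/F$, and the theorem follows.
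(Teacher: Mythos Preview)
Your overall outline follows the paper's reduction strategy, but the displayed identity you intend to verify is not the one the paper actually reduces to, and several of your concrete computations are off in ways that would make the verification fail.

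First, the character identity established in \Cref{section7} involves \emph{four} characters, not three: \Cref{productofcharacters} reads
\[
\epsilon_{\mathrm{HM}}\cdot \omega_{G(F),E}\big|_{S(F)}\cdot \epsilon_{\mathrm{Kal}}\big|_{S(F)} \;=\; \zeta\big|_{S(F)}.
\]
You have suppressed Kaletha's rectifying character $\epsilon_{\mathrm{Kal}}$ entirely and replaced the $\zeta$-data character by an unspecified ``$\epsilon_{\mathrm{BC}}$''. In the $GL_2$ case this matters: in the ``odd'' ramification configuration of \Cref{section8}, $\epsilon_{\mathrm{Kal}}|_{E_1^\times}$ is genuinely nontrivial (equal to $\mathrm{sgn}_{k_{K/E}^1}\circ\alpha$), and only its product with $\epsilon_{\mathrm{HM}}$ gives $\omega_{K/E_1}$, while $\zeta$ is trivial there. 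Dropping $\epsilon_{\mathrm{Kal}}$ breaks the match.

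Second, for $G=GL_n$ you assert $G^{\mathrm{op}}=GL_n$; this is false. The Chevalley involution gives $G^{\mathrm{op}}=U_{n,E/F}$ (see \Cref{Gop}), so the base-change side involves unitary tori $S^{\mathrm{op}}=\mathrm{Res}_{E_1/F}U_{1,K/E_1}$, not split tori. Relatedly, your claim that both sides equal $\omega_{E/F}\circ\mathrm{Nm}_{L/F}$ for $GL_n$ with $n$ odd is wrong: for $n$ odd, $\omega_{GL_n(F),E}$ is trivial (\Cref{GL_n}), all roots are asymmetric over both $E$ and $F$, and the paper shows directly that each of $\omega_{G(F),E}|_{S(F)}$, $\epsilon_{\mathrm{HM}}$, $\epsilon_{\mathrm{Kal}}|_{S(F)}$, and $\zeta|_{S(F)}$ is trivial (the last two via \Cref{oddtrivial} and the asymmetry of $\alpha^{\mathrm{op}}$). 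The odd-$n$ case is thus handled not by ``Weil constants of symplectic modules'' but by the structural fact that every $\Gamma_F$-orbit of roots is asymmetric.

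The paper's route is a root-by-root computation: it factorizes each of the four characters over $\Gamma_F\times\{\pm1\}$-orbits of $\Phi(G,S)$ (\Cref{rootcharacter}, \Cref{Hakimcharacter}, the tables in \Cref{table}), and then for each family checks the identity orbit-by-orbit using the symmetry/ramification type of $\alpha$ and $\alpha^{\mathrm{op}}$. For $GL_2$ this requires Blasco's depth-parity lemma to rule out spurious nontrivial contributions; for $SL_2$ one uses that $\alpha=2\epsilon$ so $\alpha(t)$ is always a square. Your proposed Hilbert-symbol and Weil-constant bookkeeping is not obviously equivalent to this and, as written, does not account for the missing $\epsilon_{\mathrm{Kal}}$ or the correct $G^{\mathrm{op}}$.
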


This article is organized as follows. In \Cref{section3} and \Cref{section4}, we briefly summarize Kaletha's work on the construction of $L$-parameters and $L$-packets for regular supercuspidal representations, and study the base change map of Langlands parameters (\Cref{naivebasechange} \Cref{basechangesame}) in terms of Kaletha's supercuspidal $L$-packet data. After that, we give a brief introduction to Prasad's conjecture in \Cref{section5}, where we also obtain a new factorization formula (\Cref{rootcharacter}) for the restriction of Prasad's quadratic character to an elliptic maximal torus. Then we review Hakim and Murnaghan's work on the computation of the dimension of the space of invariant linear forms and give a new interpretation of their formula (\Cref{HMformularein}) in \Cref{section6}. After all these preparation, the reduction to the case of tori (\Cref{LHS} \Cref{RHS}) and the comparison of the two sides (\Cref{embeddingoverF} \Cref{comparisonoflr}) are established in \Cref{section7} except for \Cref{productofcharacters}. In \Cref{section8}, we present a detailed study of these quadratic characters and prove \Cref{characterunramified} and \Cref{characterexample}, which finishes the proof of \Cref{unramifiedprasad} and \Cref{prasadexample}.

\textbf{Acknowledgement.}
This article consists of my Ph.D thesis at National University of Singapore. I want to express my sincere thanks to my supervisor Wee Teck Gan for his patient guidance, constant encouragement and valuable discussions over the years. His insight and suggestions play an important role in the whole project. I would like to thank Dipendra Prasad for proposing his conjecture, as well as continuous communications and encouragement during the period of preparing this article. I am also grateful to Tasho Kaletha, Jeffrey Hakim, Fiona Murnaghan, Jiu-Kang Yu, Laure Blasco, Jessica Fintzen, Wen-Wei Li, Chong Zhang and David Schwein for kindly answering my questions. Thanks are also due to Tasho Kaletha, Jeffrey Hakim, Hung Yean Loke, Hengfei Lu, Jiandi Zou for providing useful comments and feedbacks on an early draft. This work was partially supported by a Singapore government MOE Tier 1 grant R-146-000-320-114 and Israel Science Foundation grant 737/20.

\section{Notations and conventions}
\subsection{Notation.}
Let $F$ be a non-achimedean local field with residual characteristic $\neq 2$. Fix a valuation $v_F:F\rightarrow \mathbb{Z}\cup \{\infty\}$ on $F$, let $O_F$, $\varpi_F$ and $k_F$ denote the ring of integers, a uniformizer and the residual field of $F$. Let $q_F$ denote the cardinality of $k_F$. Let $\overline{F}$ be an algebraic closure of $F$ and $F^s$ be the separable closure of $F$ in $\overline{F}$.

For any finite Galois extension $L/F$, let $\mathrm{Gal}(L/F)$ be the finite Galois group. Let $\Gamma_F=\mathrm{Gal}(F^s/F)\cong \lim\limits_{\stackrel{\longleftarrow}{L\supset F, [L:F]<\infty}}\mathrm{Gal}(L/F)$
be the absolute Galois group with the profinite topology. Let $W_{L/F}$ be the relative Weil group of $L/K$, that is, an extension
$$1\rightarrow L^{\times}\rightarrow W_{L/F}\rightarrow \mathrm{Gal}(L/F)\rightarrow 1,$$
which corresponds to the fundamental class in $H^{2}(\mathrm{Gal}(L/F),L^{\times})$. Let $W_F\cong\lim\limits_{\stackrel{\longleftarrow}{L\supset F, [L:F]<\infty}}W_{L/F}$ be the Weil group with locally profinite topology. Let $I_F$ be the inertia group, and $P_F$ be the wild inertia group. The natural map from $W_F$ to $\Gamma_F$ fits into the commutative diagram:
$$\xymatrix{
  1  \ar[r] & I_F \ar@{=}[d] \ar[r] & W_F \ar[d] \ar[r] & \mathbb{Z} \ar@{^{(}->}[d] \ar[r] & 1 \\
  1 \ar[r] & I_F \ar[r] & \Gamma_F \ar[r] & \widehat{\mathbb{Z}} \ar[r] & 1 .}$$
Let $WD_F:=W_F\times SL_2(\mathbb{C})$ denote the Weil-Deligne group. Let $F^{\mathrm{ur}}$ be the maximal unramified extension of $F$ and $\mathrm{Fr}\in \mathrm{Gal}(F^{\mathrm{ur}}/F)$ be the lift of the Frobenius automorphism of $\overline{k_F}$, which is a topological generator of $\widehat{\mathbb{Z}}$. For any reasonable object $X$(abelian group, non-abelian group, algebraic variety) equipped with a $\Gamma_F$ or $W_F$ action which is compatible with the structure of $X$, let $H^{i}(F,X):=H^{i}(\Gamma_F,X)$ denote the $i$-th Galois cohomology of $X$ and $H^{i}(W_F,X)$ denote the $i$-th Weil cohomology of $X$. If $X$ is the complex points of an algebraic group over $\mathbb{C}$, we use $H^{i}(F,X)$ and $H^{i}(W_F,X)$ to stand for continuous cohomology, by which we mean the cochains are continuous with respect to the natural complex analytic topology of $X$.

Whenever there is a group homomorphism $f:G_1\rightarrow G_2$, we use the notation $G_2/G_1$ instead of $G_2/\mathrm{Im}(f)$ for simplicity, which is a group only when $\mathrm{Im}f$ is normal in $G_2$. For any one dimensional character $\chi:G_2\rightarrow \mathbb{C}^{\times}$ such that $\chi|_{\mathrm{Im}(f)}=\mathbbm{1}$, we sometimes use the notation $\chi\in \mathrm{Hom}(G_2/G_1,\mathbb{C}^{\times})$ for simplicity, when there is no confusion.

For a reductive group $G$ defined over $F$, let $G_{\mathrm{der}}$ denote the derived subgroup of $G$, which is semisimple, and $G_{\mathrm{sc}}$ and $G_{\mathrm{ad}}$ denote the simply connected cover and adjoint quotient of $G_{\mathrm{der}}$. For any maximal torus $S\subset G$ defined over $F$ with splitting field $L$, let $\Phi(G,S):=\Phi(G_{F^{s}},S_{F^{s}})\subset X^{*}_{F^{s}}(S)$ denote the set of absolute roots associated to the adjoint action of $S_{F^{s}}$. When there is no confusion about the maximal torus, we simply write $\Phi$ for convenience. For any root $\alpha\in \Phi$, let $F_{\alpha}$ and $F_{\pm\alpha}$ be the subfield of $F^s$ corresponding to the subgroup $\mathrm{Stab}_{\Gamma_F}\alpha$ and $\mathrm{Stab}_{\Gamma_F}\{\pm\alpha\}$ of $\Gamma_F$. $F_{\alpha}$ is an extension of $F_{\pm\alpha}$ of degree at most two. If $F_{\alpha}=F_{\pm\alpha}$, then $\alpha$ is called asymmetric, otherwise it is called symmetric. Let $\Phi_{\mathrm{asy}}$ and $\Phi_{\mathrm{sym}}$ denote the set of asymmetric and symmetric roots in $\Phi$. Let $(X^{*}(S),\Phi(G,S), X_{*}(S),\Phi^{\vee}(G,S))$ be the absolute root datum of $G$. Let $\widehat{G}$ be the Langlands dual group of $G$, which is a complex algebraic group determined by the root datum $(X_{*}(S),\Phi^{\vee}(G,S),X^{*}(S),\Phi(G,S))$. For $\alpha\in \Phi(G,S)$, $\widehat{\alpha}$ denotes the corresponding element in $\Phi^{\vee}(\widehat{G},\widehat{S})$. Also, for $\alpha^{\vee}\in \Phi^{\vee}(G,S)$, $\widehat{\alpha^{\vee}}$ denotes the corresponding element in $\Phi(\widehat{G},\widehat{S})$. We also use the usual notation $\pi_1(G)=\pi_1(G_{\overline{F}}):=X_{*}(S)/X_{*}(S_{\mathrm{sc}})$ for the algebraic fundamental group of $G$ and $\pi_{0}(G):=G/G^{\circ}$ for the component group of $G$, both of which are finite when $G$ is semisimple.

From now on till the end, we will always assume that $G$ splits over a tamely ramified extension $L/F$, and $S$ is a maximal torus of $G$ defined over $F$ which splits over $L$. Let $\mathcal{B}(G,L)$ denote the enlarged Bruhat-Tits building of $G$ over $L$. Set $\widetilde{\mathbb{R}}:=\mathbb{R}\bigsqcup\{r+|r\in \mathbb{R}\}\bigsqcup \{\infty\}$ with a natural partial order: for any $r,s\in \mathbb{R}$, $r<s+$ if $r\leq s$; $r+<s+$, $r+<s$ if $r<s$; and $r<\infty$, $r+<\infty$ for any $r\in \mathbb{R}$. For any $r\in \mathbb{R}_{\geq 0}$ and $x\in \mathcal{A}(G,S,L)\subset \mathcal{B}(G,L)$, let $G(L)_{x,r}$ denote the Moy-Prasad subgroup generated by $T(L)_r$ and $U_{\alpha}(L)_{x,r}$ for any $\alpha\in \Phi(G,S)$ defined in \cite{MP94}, and $G(L)_{x,r+}$ be $\bigcap\limits_{s>r}G(L)_{x,s}$, with corresponding filtration lattices of Lie algebra $\mathfrak{g}(F)_{x,r}$ and $\mathfrak{g}(F)_{x,r+}$. Let $G(F)_{x,r}$ denote $G(L)_{x,r}\cap G(F)$, and $G(F)_{x,r+}$ denote $\bigcap\limits_{s>r}G(F)_{x,s}$. For $r<s$, we also use the notation $G(F)_{x,r,s}$ for the Moy-Prasad quotient $G(F)_{x,r}/G(F)_{x,s}$. For a sequence of twisted Levi subgroups $\overrightarrow{G}=(G^0,\cdots,G^d=G)$, a sequence $\overrightarrow{r}=(r_0,\cdots,r_d)$ in $\widetilde{\mathbb{R}}$ and $x\in \mathcal{A}(G,S,F)$, we use the following notation for simplicity:
$$\overrightarrow{G}(F)_{x,\overrightarrow{r}}:=G^{0}(F)_{x,r_0}\cdots G^{d}(F)_{x,r_d}.$$

For an irreducible admissible representation $(\pi,V)$ of $G(F)$, let $\mathrm{depth}(\pi)$ be the depth of $\pi$, which is defined by
$$\mathrm{depth}(\pi):=\mathrm{inf}\{r\in \mathbb{R}_{\geq 0}| V^{G_{x,r+}}\neq 0 \mathrm{~for~some~}x\in \mathcal{B}(G,F)\}.$$

We will also adopt the notations from \cite{Kal19}. For any $\alpha\in \Phi(G,S)$ and $x\in \mathcal{B}(G,F)$, let $\mathrm{ord}_{x}(\alpha)$ denote the jumps of $\mathfrak{g}_{\alpha}$, that is,
$$\mathrm{ord}_{x}(\alpha):=\{r\in \mathbb{R}|\mathfrak{g}_{\alpha}(F_{\alpha})_{x,r+}\neq\mathfrak{g}_{\alpha}(F_{\alpha})_{x,r}\}.$$
Let $\Phi_{t_{<r}^{g}}$ and $\Phi_{r/2}$ denote the following subsets of roots
$$\Phi_{t_{<r}^{g}}=\{\alpha\in \Phi(G,S)-\Phi(G^{d-1},S)|\alpha(t_{<r}^{g})\neq 1\},$$
$$\Phi_{r/2}=\{\alpha\in \Phi_{t_{<r}^{g}}|r\in 2\mathrm{ord}_{x}(\alpha)\}.$$

For any quadratic extension $E/F$, we use the notation $E_{E/F}^{1}$ for the set of norm $1$ elements inside $E^{\times}$. When there is no confusion about the subfield $F$, we simply write $E^{1}$.

When $G$ is quasi-split over $F$ with a fixed Borel subgroup $B_0$ such that $S_0$ is a maximal torus of $B_0$ and $N_0$ is the unipotent radical of $B_0$, we use the notation $\mathcal{P}=(B_0,S_0,\{X_\alpha\}_{\alpha\in \Delta})$ for the pinning of $G$ determined by the pair $(B_0,S_0)$. By a Whittaker datum $\mathfrak{w}$, we mean a $G(F)$ conjugacy class of pairs $(B_0,\psi_{N_0})$, where $\psi_{N_0}$ is a generic character of $N_0(F)$. Let $\psi:F\rightarrow\mathbb{C}^{\times}$ be a non-trivial additive character. Notice that each choice of $\{X_{\alpha}\}$ gives a homomorphism $N_0\rightarrow \prod\limits_{\alpha\in \Delta}\mathbb{G}_a$, and composing with the addition map $\Sigma:\prod\limits_{\alpha\in \Delta}\mathbb{G}_a\rightarrow \mathbb{G}_a$ yields a homomorphism $\Delta_{\Sigma}:N_0\rightarrow\mathbb{G}_a$. Every generic character $\psi_{N_0}$ arises as a composition of $\psi$ and $\Delta_{\Sigma}:N_0(F)\rightarrow F$ for some choice of pinning $\mathcal{P}=(B_0,S_0,\{X_\alpha\}_{\alpha\in \Delta})$, and we also call $\mathfrak{w}=(B_0,\psi_{N_0})$ the Whittaker datum determined by the pinning $\mathcal{P}$. A representation $\pi$ of $G(F)$ is said to be generic if it is $(N_0,\psi_{N_0})$ distinguished for some Whittaker datum $(B_0,\psi_{N_0})$.
%where
%\begin{equation*}
%\begin{aligned}
%T(L)_b=\{t\in T(F)|\}
%T(L)_0=\{\}
%T(L)_r=\{t\in T(F)_0|v_F(\chi(t)-1)\geq 1\}
%U{\alpha}(L)_{x,r}=\{u\in U_{\alpha}|\}
%\end{aligned}
%\end{equation*}

%For any $\widetilde{\mathbb{R}}$ valued concave function $f$ on $\Phi(G,S,L)\cup \{0\}$ and $x\in \mathcal{A}(G,S,L)\subset \mathcal{B}(G,L)$ , let %$G(L)_{x,f}$ denote the Moy-Prasad subgroups generated by
%$$\langle,T(L)_{f(0)},U_{\alpha}(L)_{x,f(\alpha)}\rangle,\forall \alpha\in \Phi$$

%Let $\Phi^{\mathrm{aff}}(G,S):=\{\psi=\alpha+n|\alpha\in\Phi(G,S),n\in\mathbb{Z}\}$ denote the set of absolute affine roots, and $\dot{\psi}$ denote the %gradient of $\psi$, which is $\alpha$ if $\dot{\psi}=\alpha+n$.

\subsection{Assumption.}
Since our work is based on the fundamental work of Kaletha \cite{Kal19}, we also need to propose certain assumptions on the ground field as in \cite{Kal19}. We list these assumptions for convenience of the readers. We assume the residual characteristic $p\neq 2$, $p$ is not a bad prime for $G$, $p \nmid|\pi_0(Z(G))|$ and $p\nmid |\pi_1(G_{\mathrm{der}})|$.

\section{Local Langlands correspondence for regular supercuspidal representations}\label{section3}
\subsection{Langlands-Vogan bijection.}
We first recall the conjectural local Langlands correspondence and Vogan's refinement. Let $G$ be a quasi-split reductive group defined over $F$, and $\prescript{L}{}{G}=\widehat{G}\rtimes W_F$ be the Weil form of the $L$-group of $G$.
\begin{definition}
A Langlands parameter $\phi$ of $G(F)$ is a homomorphism $\phi:WD_F\rightarrow \prescript{L}{}{G}=\widehat{G}\rtimes W_F$ such that:
\begin{enumerate}[(1)]
\item $\phi|_{SL_2(\mathbb{C})}:SL_2(\mathbb{C})\rightarrow \widehat{G}$ is a morphism of algebraic groups over $\mathbb{C}$.
\item $\phi$ is continuous on $I_F$ and $\phi(\mathrm{Fr})$ is semisimple in $\widehat{G}$.
\item $\mathrm{pr}_2\circ \phi:WD_F\rightarrow \prescript{L}{}{G}\rightarrow W_F$ is the natural projection.
\end{enumerate}
The homomorphism $\phi$ satisfying the above conditions is usually called an admissible homomorphism.
\end{definition}
By the third condition, for any $w\in WD_F$, we can write $\phi(w)=(\varphi(w),w)\in\prescript{L}{}{G} $, such that $\varphi(w)$ lies in $\widehat{G}$. Notice that $\phi$ is a homomorphism if and only if
$$\varphi(v)v(\varphi(w))=\varphi(vw),~~\forall v,w\in WD_F,$$
which means that one can regard $\varphi$ as an element in $Z^{1}(WD_F,\widehat{G})$.
Let $\Pi(G)$ denote the set of equivalence classes of irreducible admissible representations of $G(F)$ and $\Phi(G)$ denote the set of $\widehat{G}$ conjugacy classes of Langlands parameters of $G(F)$. The local Langlands correspondence for quasi-split groups predicts that there exists a surjective, finite to one map:
$$\mathrm{LLC}:\Pi(G(F))\twoheadrightarrow \Phi(G(F)),$$
such that there exists a bijection between $\Pi_{\phi}(G(F))$ and $\mathrm{Irr}(\pi_0(S_{\phi}/Z(\widehat{G})^{\Gamma_F}))$ for any $\phi\in \Phi(G)$, where $\Pi_{\phi}(G(F))$ is the fiber of $\phi$ and $S_{\phi}$ is $C_{\widehat{G}}(\mathrm{Im}\phi)$. The set $\Pi_{\phi}(G(F))$ is called the $L$-packet associated to $\phi$. This bijection is not canonical and depends on certain normalization of Whittaker datum, under which the trivial representation of the component group corresponds to a generic representation. It is expected that LLC has nice properties, such as preserving local constants on both sides, being compatible with character twists on both sides, etc. We will list some of these properties in \Cref{Galoiscohomology}.

Later, Vogan \cite{Vog93} notices that one should consider the representation theory of pure inner twist of $G$ simultaneously in the local Langlands correspondence and he predicts that there should exist a bijection between $\bigsqcup\limits_{\alpha\in H^1(F,G)}\Pi_{\phi}(G_{\alpha}(F))$ and $\mathrm{Irr}(\pi_{0}(S_{\phi}))$ for any $\phi\in \Phi(G)$, such that the following diagram commutes:
$$\xymatrix{
  \bigsqcup\limits_{\alpha\in H^1(F,G)}\Pi_{\phi}(G_{\alpha}(F)) \ar[d] \ar@{<->}[r]^{\mathrm{1-1}}
  & \mathrm{Irr}(\pi_{0}(S_{\phi})) \ar[d]^{\mathrm{Taking~central~character}}\\
  H^1(F,G)  \ar@{<->}[r]^{\mathrm{Kottiwz}}
  & \pi_{0}(Z(\widehat{G})^{\Gamma_F})^{*},}$$
where the bottom arrow is given by the Kottwitz isomorphism. Fixing a Whittaker datum of the quasi-split form $G$, one can associate a pair $(\phi_{\pi},\lambda_{\pi})$ to any irreducible representation $\pi$ of $G_{\alpha}(F)$, where $\phi\in \Phi(G)$ is the usual Langlands parameter and $\lambda_{\pi}$ is an irreducible representation of $\pi_{0}(S_{\phi})$. The pair $(\phi_{\pi},\lambda_{\pi})$ is usually called the Langlands-Vogan parameter (enhanced Langlands parameter) of $\pi$.

It is expected that one can read the information of an irreducible representation from its Langlands parameter. We have the following definitions of various Langlands parameters.

\begin{definition}
Let $\phi:WD_F\rightarrow \prescript{L}{}{G}$ be a Langlands parameter of $G(F)$. It is called
\begin{enumerate}[(1)]
\item elliptic (discrete), if $\mathrm{Im}(\phi)$ is not contained in any proper Levi subgroup $\prescript{L}{}{L}$ of $\prescript{L}{}{G}$. (or equivalently $C_{\widehat{G}}(\mathrm{Im}\phi)$ is finite modulo $Z(\widehat{G})^{\Gamma_F}$.)
\item bounded, if the closure of $\varphi(W_F)$ in $\widehat{G}$ is compact.
\item unramified, if $\varphi(I_F)$ is trivial.
\item spherically unramified, if $\varphi(I_F)\times SL_2(\mathbb{C})$ is trivial.
\item tamely ramified, if $\varphi(P_F)$ is trivial.
\item torally wild, if $\varphi(P_F)$ is contained in a torus of $\widehat{G}$.
\end{enumerate}
\end{definition}
These Langlands parameters are supposed to parameterize essentially discrete series, tempered, unipotent (unipotent reduction in the sense of Lusztig), unramified (spherical), depth zero and torally wild (essentially tame in the sense of Bushnell and Henniart for $GL_n$) representations of $G(F)$.

The conjectural refined local Langlands correspondence has been verified for a large number of cases: for all representations of groups of certain type \cite{HT01}\cite{Hen00}\cite{Sch13}\cite{Art13}\cite{Mok15}\cite{GT11} etc and for particular representations of general reductive groups \cite{DR09}\cite{RY14}\cite{Kal15}\cite{Kal19}\cite{Sol 18} etc. One can refer to \cite{Kal16a} for a more detailed description of the status of the conjecture.

\begin{remark}
As conjectured by Gross and Reeder \cite[Conjecture 7.1 (4)]{GR10}, for a discrete Langlands parameter, the condition $\phi|_{SL_2(\mathbb{C})}=1$ should be equivalent to the condition that all the representations in $\Pi_{\phi}$ are supercuspidal representations. For these representations, the Langlands parameters are simply homomorphisms from $W_F$ to  $^{L}G$.
\end{remark}

\begin{definition}[{\cite[Definition 5.2.3]{Kal19}}]
A regular supercuspidal parameter is a discrete Langlands parameter $\phi: W_F\rightarrow \prescript{L}{}{G}$ such that:
\begin{enumerate}[(1)]
\item $\varphi(P_F)$ is contained in a torus of $\widehat{G}$.
\item $C:=C_{\widehat{G}}(\phi(I_F))^{\circ}$ is a torus.
\item Let $\widehat{M}:=C_{\widehat{G}}(\phi(P_F))^{\circ}$, $\widehat{T}:=C_{\widehat{M}}(C)$, if $n\in N_{\widehat{M}}(\widehat{T})$ projects to a nontrival element in $W(\widehat{M},\widehat{T})^{\Gamma_F}$, then $n\notin C_{\widehat{G}}(\phi(I_F))$.
\end{enumerate}
\end{definition}

\begin{remark}\label{bcregular}
Let $E/F$ be a quadratic extension, and $\phi:W_E\rightarrow \prescript{L}{}{G}$ be a regular supercuspidal Langlands parameter of $G(E)$. If $\tilde{\phi}$ is a Langlands parameter of $G(F)$ such that $\tilde{\phi}|_{W_E}$ is $\widehat{G}$ conjugate to $\phi$, then $\tilde{\phi}$ is also a regular supercuspidal parameter of $G(F)$. This is simply due to the fact that $p\neq 2$ implies $E/F$ is a tamely ramified extension, so that one has $P_E=P_F$ and $I_E\subset I_F$.
\end{remark}

\subsection{$L$-embedding and $\chi$-data.}\label{Lembedding}

A crucial property of regular supercuspidal parameters is that they factor though the $L$-group of certain elliptic maximal torus of $G$, which could be regarded as a special example of functoriality discussed in \Cref{Galoiscohomology}. Let $S$ be an arbitrary maximal tori of a quasi-split reductive group $G$ defined over a local field $F$ of characteristic zero. Langlands and Shelstad \cite{LS87} first realized that one could always extend a $\Gamma_F$-stable $\widehat{G}$ conjugacy class of embedding of dual groups $\widehat{j}:\widehat{S}\rightarrow \widehat{G}$ to an embedding of $L$-groups (Weil form) $\prescript{L}{}{j}:\prescript{L}{}{S}=\widehat{S}\rtimes W_F\rightarrow \prescript{L}{}{G}=\widehat{G}\rtimes W_F$ with the help of some auxiliary data. We give a brief review about their constructions of $L$-embeddings using $\chi$-data. A more detailed description could be found in \cite{LS87} and \cite{Tam16}.

Let $G$ be a quasi-split reductive group over $F$, and $S\subset G$ be a maximal torus defined over $F$, and we assume $S_{F^{s}}$ is contained in a Borel subgroup $B_{F^{s}}$ since $G_{F^{s}}$ is split. We may choose a $W_F$-invariant pinning $(\widehat{S}_0,\widehat{B}_0,\{\widehat{X_{\widehat{\alpha}}}\})$ of $\widehat{G}$ such that $\widehat{j}(\widehat{S})\cong \widehat{S}_0$ and $\widehat{j}$ maps the simple roots determined by $\widehat{B}$ to those determined by $\widehat{B}_0$. Notice that the map $\widehat{j}$ is not necessarily $W_F$-equivariant, we use subscripts $w_{\widehat{G}}$ and $w_{\widehat{S}}$ to denote the action on $\widehat{G}$ and $\widehat{S}$ for any $w\in W_F$.

\begin{definition}[{\cite[2.6]{LS87} \cite[6.2]{Tam16}}]
An admissible $L$-embedding $\prescript{L}{}{j}:\prescript{L}{}{S}\rightarrow \prescript{L}{}{G}$ is a group homomorphism satisfying:
$$\prescript{L}{}{j}(t\rtimes w)=\widehat{j}(t)\varphi_{\prescript{L}{}{j}}(w)\rtimes w,$$
for some $\varphi_{\prescript{L}{}{j}}:W_F\rightarrow \widehat{G}$.
\end{definition}

For $\prescript{L}{}{j}$ to be a homomorphism, one must have:
\begin{equation}\label{admissible embedding}
\begin{aligned}
\varphi_{\prescript{L}{}{j}}(v)v_{\widehat{G}}(\widehat{j}(t))\varphi_{\prescript{L}{}{j}}(v)^{-1}&=\widehat{j}(v_{\widehat{S}}(t))\\
\varphi_{\prescript{L}{}{j}}(vw)&=\varphi_{\prescript{L}{}{j}}(v)v_{\widehat{G}}(\varphi_{\prescript{L}{}{j}}(w)),
\end{aligned}
\end{equation}
for any $v,w\in W_F$. From the first equation, one can see that $\mathrm{Im}(\varphi_{\prescript{L}{}{j}})\subset N_{\widehat{G}}(j(\widehat{S}))$. From the second equation, one can find that $\phi=(\varphi_{\prescript{L}{}{j}},id):W_F\rightarrow \widehat{G}\rtimes W_F$ is a Langlands parameter of $G$.
By choosing a base point of an embedding $$\prescript{L}{}{j_0}:\prescript{L}{}{S}\rightarrow \prescript{L}{}{G},$$
one can obtain the following fact.

\begin{proposition}[{\cite{LS87}, \cite[Proposition 6.1]{Tam16}}]

The set of $\widehat{j}(\widehat{S})$ equivalent classes of L-embeddings
$\{\prescript{L}{}{j}:\prescript{L}{}{S}\rightarrow \prescript{L}{}{G}\}/\widehat{j}(\widehat{S})$ is a torsor under $H^1(W_F,\widehat{S})$. The explicit bijection with respect to a base point $\prescript{L}{}{j_0}$ is given by:
\begin{equation}\label{embeddingcharacter}
\begin{aligned}
\{\prescript{L}{}{j}:\prescript{L}{}{S}\rightarrow \prescript{L}{}{G}\}/\widehat{j}(\widehat{S})&\leftrightarrow H^1(W_F,\widehat{S})\\
\prescript{L}{}{j}&\mapsto w\mapsto \varphi_{\prescript{L}{}{j}}(w)\cdot \varphi_{\prescript{L}{}{j_0}}(w)^{-1}.
\end{aligned}
\end{equation}
Conversely, for any given $\prescript{L}{}{j}$ and $\theta\in Z^1(W_F,\widehat{S})$, one can associate an equivalence class of embeddings:
\begin{equation}\label{twistembedding}
\begin{aligned}
\prescript{L}{}{j}^{\theta}:\prescript{L}{}{S}&\rightarrow \prescript{L}{}{G}\\
t\rtimes w&\mapsto \widehat{j}(t\theta(w))\varphi_{\prescript{L}{}{j}}(w)\rtimes w.
\end{aligned}
\end{equation}
\end{proposition}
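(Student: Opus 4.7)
My strategy is to reduce everything to a bookkeeping exercise with the cocycle condition on $W_F$. Fix the base point $\prescript{L}{}{j_0}$ and for any other admissible $L$-embedding $\prescript{L}{}{j}$ define $\theta(w) := \varphi_{\prescript{L}{}{j}}(w)\varphi_{\prescript{L}{}{j_0}}(w)^{-1}$. The plan is to show (i) that $\theta$ actually takes values in $\widehat{T} := \widehat{j}(\widehat{S})$, (ii) that $\theta$ is a $1$-cocycle for the natural $W_F$-action on $\widehat{S}$, and (iii) that the induced map on equivalence classes is a well-defined bijection onto $H^1(W_F,\widehat{S})$. The inverse map is already predicted by the formula in \eqref{twistembedding}, which I would verify separately to satisfy admissibility.

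For step (i), since the pinning $(\widehat{S}_0,\widehat{B}_0,\{\widehat{X}_{\widehat{\alpha}}\})$ is preserved by the Galois action on $\widehat{G}$, we have $w_{\widehat{G}}(\widehat{T}) = \widehat{T}$ for every $w\in W_F$. Applying the first identity of \eqref{admissible embedding} to both $\prescript{L}{}{j}$ and $\prescript{L}{}{j_0}$ and comparing, the element $\varphi_{\prescript{L}{}{j_0}}(v)^{-1}\varphi_{\prescript{L}{}{j}}(v)$ centralizes $v_{\widehat{G}}(\widehat{T})=\widehat{T}$; since the centralizer of a maximal torus in a connected reductive group is the torus itself, this element lies in $\widehat{T}$. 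Conjugating by the normalizer element $\varphi_{\prescript{L}{}{j_0}}(v)$ (which lies in $N_{\widehat{G}}(\widehat{T})$) keeps us inside $\widehat{T}$, so $\theta(v)\in\widehat{T}$ as required.

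For step (ii), the second identity of \eqref{admissible embedding} gives
\[
\theta(vw) = \varphi_{\prescript{L}{}{j}}(v)\, v_{\widehat{G}}(\varphi_{\prescript{L}{}{j}}(w))\, v_{\widehat{G}}(\varphi_{\prescript{L}{}{j_0}}(w))^{-1}\,\varphi_{\prescript{L}{}{j_0}}(v)^{-1} = \theta(v)\cdot\bigl(\varphi_{\prescript{L}{}{j_0}}(v)\,v_{\widehat{G}}(\theta(w))\,\varphi_{\prescript{L}{}{j_0}}(v)^{-1}\bigr).
\]
By the first identity applied to $\prescript{L}{}{j_0}$, the inner conjugation by $\varphi_{\prescript{L}{}{j_0}}(v)$ composed with $v_{\widehat{G}}$ is precisely the transport of the intrinsic Galois action $v_{\widehat{S}}$ on $\widehat{S}$ to $\widehat{T}$ via $\widehat{j}$. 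Hence $\theta(vw) = \theta(v)\cdot v_{\widehat{S}}(\theta(w))$, which is the cocycle relation in $Z^1(W_F,\widehat{S})$.

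For step (iii), I would check that two admissible embeddings $\prescript{L}{}{j}$, $\prescript{L}{}{j'}$ are conjugate by some $s\in\widehat{T}$ if and only if the corresponding cocycles differ by the coboundary $w\mapsto s\cdot w_{\widehat{S}}(s)^{-1}$; a direct calculation using $\prescript{L}{}{j'}(x) = s\prescript{L}{}{j}(x)s^{-1}$ gives $\varphi_{\prescript{L}{}{j'}}(w) = s\,\varphi_{\prescript{L}{}{j}}(w)\,w_{\widehat{G}}(s)^{-1}$, from which the coboundary formula follows after again invoking the transport identity for $\prescript{L}{}{j_0}$. Surjectivity is immediate from \eqref{twistembedding}, provided one verifies that the twisted map indeed satisfies both identities in \eqref{admissible embedding}; this is a short calculation using the cocycle condition on $\theta$ and the fact that $\widehat{j}$ is a group homomorphism. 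The main technical nuisance in the whole argument is keeping the two different Galois actions straight---$w_{\widehat{G}}$ on $\widehat{G}$ (coming from the fixed pinning) versus the intrinsic $w_{\widehat{S}}$ on $\widehat{S}$ (coming from the $F$-structure of $S$)---and understanding how the auxiliary embedding $\prescript{L}{}{j_0}$ intertwines them. Once this bookkeeping is organized via the identity $\widehat{j}\circ v_{\widehat{S}} = \mathrm{Ad}(\varphi_{\prescript{L}{}{j_0}}(v))\circ v_{\widehat{G}}\circ \widehat{j}$, everything else follows by direct computation.
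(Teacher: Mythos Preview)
Your proof is correct and complete; the argument you give---checking that $\theta$ lands in $\widehat{T}$ via the centralizer property, verifying the cocycle relation using the intertwining identity $\widehat{j}\circ v_{\widehat{S}} = \mathrm{Ad}(\varphi_{\prescript{L}{}{j_0}}(v))\circ v_{\widehat{G}}\circ\widehat{j}$, and then checking injectivity/surjectivity directly---is exactly the standard one. The paper itself does not supply a proof of this proposition: it is stated with citations to \cite{LS87} and \cite[Proposition 6.1]{Tam16} and treated as a known input, so there is no independent argument in the paper to compare against.
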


\begin{example}[{\cite[Proposition 6.2]{Tam16}}]

For $GL_n$ with elliptic maximal torus $S=\mathrm{Res}_{L/F}\mathbb{G}_{m,L}$ such that $L/F$ is a degree $n$ extension, one can choose the canonical pinning of $\widehat{G}$ such that $\widehat{S}_0\subset \widehat{G}$ is the diagonal torus. One can choose a $\widehat{j}$ such that $\widehat{j}(\widehat{S})\cong \widehat{S}_0$. There is a canonical base point $\phi_0$ in $Z^1(W_F,N_{\widehat{G}}(\widehat{S}_0))$ such that:
\begin{equation}
\begin{aligned}
\phi_0:W_F&\rightarrow N_{\widehat{G}}(\widehat{S}_0) \\
w&\mapsto N(w),
\end{aligned}
\end{equation}
with
$$N(w)\widehat{j}(t) N(w)^{-1}=\widehat{j}(w_{\widehat{S}}(t)),$$
for any $t\in \widehat{S}$, and $N(w)$ is given by a permutation matrix in $GL_n(\mathbb{C})$ with entries being $0$ and $1$ determined the the above relation. If we fix a character $\theta:L^{\times}\rightarrow \mathbb{C}^{\times}$ and $\theta'\in H^{1}(W_F,\widehat{S})$ which corresponds to an equivalence class of $L$-embedding in \Cref{embeddingcharacter} with respect to the above base point $\phi_0$, the composition gives the Langlands parameter of $GL_n$:
\begin{equation}
\begin{aligned}
H^1(W_F,\widehat{S})\times \{\prescript{L}{}{j}:\prescript{L}{}{S}\rightarrow \prescript{L}{}{G}\}_{\phi_0}/\widehat{S}_0&\rightarrow H^1(W_F,\widehat{G})\\
(\theta,\theta')&\mapsto \mathrm{Ind}_{W_L}^{W_F}(\theta\cdot \theta').
\end{aligned}
\end{equation}
\end{example}

The remaining difficulty is to construct a base point of an $L$-embedding in general, which is achieved by Langlands and Shelstad \cite{LS87} using $\chi$-data.

\begin{definition}[\cite{LS87}]
A set of $a$-data for $S\subset G$ is a collection of elements $\{a_{\alpha}\in F_{\alpha}^{\times}\}_{\alpha\in \Phi(G,S)}$ satisfying:
$$a_{-\alpha}=-a_{\alpha},~~a_{\gamma(\alpha)}=\gamma(a_{\alpha}),$$
for any $\gamma\in \Gamma_F.$
\end{definition}

\begin{definition}[\cite{LS87}]
A set of $\chi$-data for $S\subset G$ is a collection of elements $\{\chi_{\alpha}: F_{\alpha}^{\times}\rightarrow \mathbb{C}^{\times}\}_{\alpha\in \Phi(G,S)}$ satisfying:
$$\chi_{-\alpha}=\chi_{\alpha}^{-1},~~\chi_{\gamma(\alpha)}=\chi_{\alpha}\circ \gamma^{-1},$$
for any $\gamma\in \Gamma_F,$ and
$\chi_{\alpha}|_{F_{\pm\alpha}^{\times}}$ is the quadratic character attached to $F_{\alpha}/F_{\pm \alpha}$ for any symmetric root $\alpha\in \Phi_{\mathrm{sym}}(G,S)$.
\end{definition}

Let $W(\widehat{G},\widehat{S}_0)$ denote the Weyl group $N_{\widehat{G}}(\widehat{S}_0)/\widehat{S}_0$, one can define a section of
$$W(\widehat{G},\widehat{S}_0)\rightarrow N_{\widehat{G}}(\widehat{S}_0)$$
in the following way.

Let $\widehat{\alpha^{\vee}}$ be a simple root of $\widehat{\mathfrak{g}}$ with root vector $X_{\widehat{\alpha}}\subset \widehat{\mathfrak{g}}$. Let $\{H_{\widehat{\alpha}},X_{\widehat{\alpha}},X_{-\widehat{\alpha}}\}$ be the corresponding $\mathfrak{sl}_2$ triple. Let $s_{\alpha}\in W(\widehat{G},\widehat{S}_0)$ denote the simple reflection associated to $\alpha$, we set
$$n(s_{\alpha}):=\exp (X_{\widehat{\alpha}})\exp (-X_{-\widehat{\alpha}})\exp (X_{\widehat{\alpha}}),$$
and set $n(s)=\prod n(s_\alpha)$ for $s=\prod s_{\alpha}$ with $n(1)=1$. This section gives a map:
\begin{equation*}
\begin{aligned}
n:W_F&\rightarrow N_{\widehat{G}}(\widehat{S}_0)\rtimes W_F,\\
w&\mapsto n(w)=n(s_{w})\rtimes w,
\end{aligned}
\end{equation*}
where $n(s_{w})$ is the section of $s_{w}$ in $N_{\widehat{G}}(\widehat{S}_0)$ and the action of $s_w\in W(\widehat{G},\widehat{S}_0)$ is given by the difference of the $W_F$ action on $\widehat{S}_0$ and $W_F$ action on $\widehat{S}$, that is
$$s_{w}\cdot w_{\widehat{G}}(\widehat{j}(t))=\widehat{j}(w_{\widehat{S}}(t)).$$

Notice that the map $n$ is not necessarily a homomorphism of groups. Hence to get a base point, Langlands and Shelsted \cite{LS87} construct a 2 cocyle
$$t(v,w):=n(v)n(w)n(vw)^{-1}\in Z^2(W_F,\widehat{S}_0).$$

One needs to find a splitting $r_{\chi}^{-1}:W_F\rightarrow \widehat{S}$ of $t(v,w)$, that is, the map $r_\chi$ should satisfy:
\begin{equation}\label{**}
\widehat{j}(r^{-1}(v)v_{\widehat{S}}(r^{-1}(w))r^{-1}(vw)^{-1})=t(v,w).
\end{equation}
Such splittings are constructed by a family of $\chi$-data. We recall the explicit construction of the cochain $r_{\chi}:W_F\rightarrow \widehat{S}$. (In fact, the explicit realization of $r_{\chi}$ involves certain choices of coset representatives). Fixing a set of representatives $g_i$ of the left coset $W_{F_{\pm\alpha}}\backslash W_F\cong \Gamma_{F_{\pm\alpha}}\backslash \Gamma_F$, we can define a family of functions $u_{g_{i}}:W_F\rightarrow W_{F_{\pm\alpha}}$ by the following equations:
$$g_{i}w=u_{g_{i}}(w)g_{i'},$$
for suitable $g_{i'}$.

One can also define $v_{0}:W_{F_{\pm\alpha}}\rightarrow W_{F_{\alpha}}$ similarly by fixing the coset representatives $\{1,s\}$ of $W_{F_{\alpha}}\backslash W_{F_{\pm\alpha}}$. More explicitly, for $w\in W_{F_{\alpha}}$, $v_{0}(w)=1$, and for $w\in W_{F_{\pm\alpha}}-W_{F_{\alpha}}$, $v_{0}(w)=ws^{-1}$.

We can define a cochain $r_{\chi}:W_F\rightarrow \widehat{S}$ by:
\begin{equation}\label{basepoint}
r_{\chi}(w):=\prod_{\alpha\in \Phi/\Gamma\times\{\pm 1\}\atop g_{i}\in \Gamma_{F_{\pm\alpha}}\backslash \Gamma_F}\chi_{\alpha}(v_0(u_{g_{i}}(w)))^{\widehat{g_{i}^{-1}\alpha}}.
\end{equation}

Langlands and Shelstad prove that such $r_{\chi}^{-1}$ is a splitting of $t(v,w)$. A priori, the explicit construction of $r_{\chi}$ depends on a choice of certain coset representatives, which has a more elegant interpretation via the notion of gauge introduced in \cite{LS87}.

\begin{definition}[\cite{LS87}]
A gauge is a function $p:\Phi(G,S)\rightarrow \{\pm 1\}$ such that $p(-\alpha)=-p(\alpha)$ for any $\alpha\in \Phi(G,S)$.
\end{definition}

A set of coset representatives $g_i$ of $\Gamma_{F_{\pm\alpha}}\backslash\Gamma_F$ canonically determines a gauge by $p(g_{i}^{-1}\alpha)=+1$. For two different gauges $p$ and $q$, one can associate a cochain $s_{p/q}\in C^1(W_F,\widehat{S})$ as in \cite[Lemma 2.4]{LS87}. The map $r_{\chi,p}$ and $r_{\chi,q}$ are related by $r_{\chi,q}=s_{q/p}r_{\chi,p}$.

Given a cochain $r_{\chi,p}$ associated to a set of $\chi$-data $\chi$ and a fixed gauge $p$, we can extend the embedding $\widehat{j}$ to $\prescript{L}{}{j}_{\chi}$ by defining:
\begin{equation}\label{extensionofl}
1\rtimes w\mapsto \widehat{j}(r_{\chi}(w))n(s_w)\rtimes w.
\end{equation}

\begin{theorem}
The above map in \Cref{extensionofl} is a group homomorphism.
\end{theorem}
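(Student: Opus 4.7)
Denote the candidate map by $\prescript{L}{}{j}_\chi(w) := \widehat{j}(r_\chi(w))\, n(s_w) \rtimes w$. My plan is to verify the homomorphism identity $\prescript{L}{}{j}_\chi(v)\prescript{L}{}{j}_\chi(w) = \prescript{L}{}{j}_\chi(vw)$ by a direct computation in $\widehat{G}\rtimes W_F$ in which three cancellations happen in sequence; all the genuine content has been built into the definitions of $r_\chi$ and of the cocycle $t(v,w)$.

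Three inputs are required. First, since $n(s_v)\in N_{\widehat{G}}(\widehat{S}_0)$ is a lift of the Weyl element $s_v$, which by definition satisfies $s_v\cdot v_{\widehat{G}}(\widehat{j}(t)) = \widehat{j}(v_{\widehat{S}}(t))$, conjugation gives the identity
\[
n(s_v)\, v_{\widehat{G}}(\widehat{j}(t))\, n(s_v)^{-1} = \widehat{j}(v_{\widehat{S}}(t)), \qquad t\in\widehat{S}.
\]
Second, unwinding the semidirect-product multiplication, the definition $t(v,w) = n(v) n(w) n(vw)^{-1}$ is the same as
\[
n(s_v)\, v_{\widehat{G}}(n(s_w)) = t(v,w)\, n(s_{vw}).
\]
Third, and most importantly, the theorem of Langlands--Shelstad already cited in the excerpt tells us that $r_\chi$ defined by \eqref{basepoint} satisfies the splitting relation \eqref{**}; taking inverses inside the abelian group $\widehat{S}$, this rearranges to
\[
\widehat{j}\bigl(r_\chi(v)\, v_{\widehat{S}}(r_\chi(w))\bigr) = \widehat{j}(r_\chi(vw))\cdot t(v,w)^{-1}.
\]

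Plugging these in, the computation collapses in five lines:
\begin{align*}
\prescript{L}{}{j}_\chi(v)\prescript{L}{}{j}_\chi(w)
&= \widehat{j}(r_\chi(v))\, n(s_v)\, v_{\widehat{G}}(\widehat{j}(r_\chi(w)))\, v_{\widehat{G}}(n(s_w)) \rtimes vw \\
&= \widehat{j}\bigl(r_\chi(v)\, v_{\widehat{S}}(r_\chi(w))\bigr)\, n(s_v)\, v_{\widehat{G}}(n(s_w)) \rtimes vw \\
&= \widehat{j}\bigl(r_\chi(v)\, v_{\widehat{S}}(r_\chi(w))\bigr)\, t(v,w)\, n(s_{vw}) \rtimes vw \\
&= \widehat{j}(r_\chi(vw))\, t(v,w)^{-1}\, t(v,w)\, n(s_{vw}) \rtimes vw \\
&= \widehat{j}(r_\chi(vw))\, n(s_{vw}) \rtimes vw \;=\; \prescript{L}{}{j}_\chi(vw),
\end{align*}
applying the three identities in the order stated, together with the fact that $\widehat{S}_0$ is abelian so that elements of $\widehat{j}(\widehat{S})$ commute among themselves.

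The main obstacle, which I would quote from \cite{LS87} rather than reprove, is the splitting identity \eqref{**} for the particular cochain \eqref{basepoint}. A direct verification would proceed orbit by orbit: both $r_\chi(w)$ and the cocycle $t(v,w)$ decompose as products indexed by $\Gamma_F$-orbits in $\Phi/\{\pm 1\}$, once one picks a compatible choice of reflection lifts $n(s_\alpha)$. On an asymmetric orbit the $\chi$-datum $\chi_\alpha$ is unconstrained and the splitting is a formal manipulation of the functions $u_{g_i}$. On a symmetric orbit the cocycle contribution is nontrivial precisely along $F_\alpha/F_{\pm\alpha}$, and the defining condition that $\chi_\alpha|_{F_{\pm\alpha}^\times}$ equals the quadratic character of $F_\alpha/F_{\pm\alpha}$ is exactly the ingredient needed to absorb that obstruction. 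Gauge-independence, \cite[Lemma 2.4]{LS87}, then allows one to reduce to a convenient set of coset representatives throughout the computation.
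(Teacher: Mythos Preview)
Your proof is correct and follows essentially the same approach as the paper's: both expand the product in $\widehat{G}\rtimes W_F$, use the conjugation identity to move $n(s_v)$ past $\widehat{j}(r_\chi(w))$, rewrite $n(s_v)\,v_{\widehat G}(n(s_w))$ via the cocycle $t(v,w)$, and then invoke the splitting relation~\eqref{**} from \cite{LS87}. Your version is slightly more explicit in laying out the five-line computation and in sketching the orbit-by-orbit verification of~\eqref{**}, but the logical skeleton is identical.
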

\begin{proof}
This is essentially contained in \cite{LS87}. We write down the explicit computation for completeness. To check that the map defines a homomorphism, one needs to check the following identity:
$$\widehat{j}(r(v))n(s_v)v_{\widehat{G}}(\widehat{j}(r(w))n(s_w))=\widehat{j}(r(vw))n(s_{vw}).$$
Notice that by \Cref{admissible embedding}, we have
$$n(s_v)v_{\widehat{G}}\widehat{j}(r(w))=\widehat{j}(v_{\widehat{T}}(r(w)))n(s_v).$$
We can also compute $t(v,w)$ explicitly by:
$$(t(v,w)\rtimes 1)(n(s_{vw})\rtimes vw)=(n(s_v)\rtimes v)(n(s_w)\rtimes w)=(n(s_v)v_{\widehat{G}}(n(s_w))\rtimes vw).$$
Hence we only need to prove
$$\widehat{j}(r(v))\widehat{j}(v_{\widehat{T}}(r(w)))n(s_v)v_{\widehat{G}}(n(s_w))=\widehat{j}(r(vw))n(s_{vw}),$$
which is exactly given by \Cref{**}.
\end{proof}

In fact, the choice of $\chi$-data is not unique. The difference of $\chi$-data provides a character of $S(F)$ in the following way.
\begin{definition}[{\cite[Definition 4.6.4]{Kal19}}]
A set of $\zeta$-data for $S\subset G$ is a collection of elements $\{\zeta_{\alpha}: F_{\alpha}^{\times}\rightarrow \mathbb{C}^{\times}\}_{\alpha\in \Phi(G,S)}$ satisfying:
$$\zeta_{-\alpha}=\zeta_{\alpha}^{-1},~~\zeta_{\gamma(\alpha)}=\zeta_{\alpha}\circ \gamma^{-1},$$
for any $\gamma\in \Gamma_F,$ and
$\zeta_{\alpha}|_{F_{\pm\alpha}^{\times}}=\mathbbm{1}$ for any symmetric root $\alpha\in \Phi(G,S)_{\mathrm{sym}}$ .
\end{definition}

Let $\{\chi_{\alpha}\}_\alpha$ and $\{\chi_{\alpha}'\}_{\alpha}$ be two sets of $\chi$-data, it is clear from definition that $\{\frac{\chi_{\alpha}}{\chi_{\alpha}'}\}_\alpha$ is a set of $\zeta$-data. Following \cite{LS87} and \cite{Kal19}, one can construct a character

\begin{equation}\label{zetacharacter}
\begin{aligned}
\zeta_{S(F),\chi,\chi'}:S(F)&\rightarrow \mathbb{C}^{\times}\\
t&\mapsto \prod_{\alpha\in \Phi_{\mathrm{sym}}/\Gamma_F}\zeta_{\alpha}(\iota_{\alpha} \alpha(t))\prod_{\alpha\in \Phi_{\mathrm{asy}}/\Gamma_F\times\{\pm1\}}\zeta_{\alpha}(\alpha(t)),
\end{aligned}
\end{equation}
where $\iota_{\alpha}$ denotes the isomorphism $\iota_{\alpha}: F_{\alpha,F_{\alpha}/F_{\pm\alpha}}^{1}\rightarrow F_{\alpha}^{\times}/F_{\pm\alpha}^{\times}$. By the notation in \Cref{twistembedding}, the $L$-embeddings associated to $\{\chi_\alpha\}$ and $\{\chi'_\alpha\}$ are related by:
$$\prescript{L}{}{j}_{\chi}=\prescript{L}{}{j}_{\chi'}^{\zeta_{S(F),\chi,\chi'}}.$$
Among the set of $\chi$-data, the following ones are crucially used in computing the characters of supercuspidal representations.
\begin{definition}
A set of $\chi$-data: $\{\chi_{\alpha}\}_{\alpha}$ is called minimally ramified if
\begin{enumerate}[1]
\item $\chi_{\alpha}=\mathbbm{1}$, if $\alpha$ is asymmetric,
\item $\chi_{\alpha}$ is unramified, if $\alpha$ is symmetric unramified,
\item $\chi_{\alpha}$ is tamely ramified, if $\alpha$ is symmetric ramified.
\end{enumerate}
\end{definition}

Kaletha also introduced the following notion of mod-$a$-data, which can be better used in conjunction with tamely ramified $\chi$-data.

\begin{definition}[{\cite[Definition 4.6.8]{Kal19}}]
A set of mod-$a$-data for $S\subset G$ is a collection of elements $\{(r_{\alpha},\overline{a}_{\alpha})| r_{\alpha}\in \mathbb{R},\overline{a}_{\alpha}\in [F_{\alpha}]_{r_{\alpha}}/[F_{\alpha}]_{r_{\alpha}+}\}_{\alpha\in \Phi(G,S)}$ satisfying:
$$r_{\gamma(\alpha)}=r_{\alpha}=r_{-\alpha},~~\overline{a}_{-\alpha}=-\overline{a}_{\alpha},~~\overline{a}_{\gamma(\alpha)}=\gamma(\overline{a}_{\alpha}),$$
for any $\gamma\in \Gamma_F.$
\end{definition}

For regular supercuspidal representations associated to a tame elliptic pair, Kaletha \cite{Kal19} describes a canonical choice of minimally ramified $\chi$-data associated to the mod-$a$-data determined by the character $\mu:S(F)\rightarrow \mathbb{C}^{\times}$.

\begin{enumerate}
\item $\chi_{\alpha}=\mathbbm{1}$, if $\alpha$ is asymmetric.
\item $\chi_{\alpha}$ is the quadratic character associated to the unique unramified quadratic extension of $F_{\alpha}$ by local class field theory, if $\alpha$ is symmetric unramified.
\item $\chi_{\mu,\alpha}$ is the tamely ramified character determined by the following equation, if $\alpha$ is symmetric ramified.
For any $X\in [F_{\alpha}]_{r}/[F_{\alpha}]_{r+}$ and a Howe factorization $\mu=\mu_{-1}\cdot\prod\limits_{i=0}^{d} \phi_{i}$ in \Cref{howefactorization},
\begin{equation*}
\begin{aligned}
&\chi_{\mu,\alpha}(2a_{\alpha})=\lambda_{F_{\alpha}/F_{\pm\alpha}},\\
&\phi_{d-1}(\mathrm{Nm}_{S(F_{\alpha})/S(F)}\alpha^{\vee}(1+X))=\psi_{F_{\alpha}}(\overline{a_{\alpha}}\cdot X)=\psi_{F}(\mathrm{tr}_{F_{\alpha}/F}(\overline{a_{\alpha}}\cdot X)),
\end{aligned}
\end{equation*}
where $\lambda_{F_{\alpha}/F_{\pm\alpha}}$ is the $\lambda$-constant defined by Langlands. More precisely, it has the following expression in terms of Deligne's local root numbers:
$$\lambda_{F_{\alpha}/F_{\pm\alpha}}=\frac{\epsilon(\mathrm{Ind}_{\Gamma_{F_{\alpha}}}^{\Gamma_{F_{\pm\alpha}}}\mathbbm{1}_{\Gamma_{F_{\alpha}}},\frac{1}{2},\psi_{F_{\pm\alpha}})}
{\epsilon(\mathbbm{1}_{\Gamma_{F_{\alpha}}},\frac{1}{2},\psi_{F_{\alpha}})},$$
where $\psi_{F_{\pm\alpha}}$ denotes the additive character $\psi_F\circ \mathrm{tr}_{F_{\pm\alpha}/F}:F_{\pm\alpha}\rightarrow \mathbb{C}^{\times}$ and $\psi_{F_{\alpha}}$ similarly.
\end{enumerate}

Notice that the construction of \Cref{extensionofl} depends heavily on the choice of the $\chi$-data, which is not unique. For the case of $GL_n$, Tam \cite{Tam16} also constructed certain $\chi$-data in order to understand the rectifiers defined by Bushnell and Henniart \cite{BH10}. The comparison of Tam's $\chi$-data and Kaletha's canonical minimally ramified $\chi$-data is carefully studied for regular supercuspidal representations of $GL_n$ in \cite{OK21}, which also leads to a comparison of the local Langlands correspondences for these representations established by \cite{HT01}\cite{Hen00} and \cite{Kal19}. Throughout this article, we work with Kaletha's canonical minimally ramified $\chi$-data.

Recently, Kaletha \cite{Kal19a} gives a new interpretation of the work of Langlands and Shelstad by introducing certain double cover of the elliptic torus, where the base point of the $L$-embedding associated to the double cover corresponds to a genuine character of the double cover.

\subsection{Kaletha's parametrization of regular supercuspidal representations.}

Based on the earlier work of Langlands and Shelstad \cite{LS87}, Kaletha \cite{Kal19} uses another type of data to describe these regular supercuspidal Langlands parameters, which he calls regular supercuspidal $L$-packet data. He identifies the set of equivalence classes of regular supercuspidal parameters with the set of equivalence classes of regular supercuspidal $L$-packet data, which forms a category. We give a brief review of his construction.

\begin{definition}[{\cite[Definition 3.7.5]{Kal19}}]\label{tameellipticpair}

Let $S\subset G$ be a maximal torus and $\mu:S(F)\rightarrow \mathbb{C}^{\times}$ be a character. The pair $(S,\mu)$ is called tame elliptic regular if
\begin{enumerate}
\item $S$ is elliptic and splits over a tamely ramified extension $L$.
\item The action of $I_F$ on the root system
\begin{equation*}
\begin{aligned}
\Phi_{0^{+}}:=\{\alpha\in \Phi(S,G)|\mu(\mathrm{Nm}_{S(L)/S(F)}(\alpha^{\vee}(L_{0^{+}}^{\times})))=1\}
\end{aligned}
\end{equation*}
preserves a positive set of roots.
\item the character $\mu|_{S(F)_{0}}$ has a trivial stabilizer for the action of $N_{G^0}(S)(F)/S(F)$, where $G^{0}$ is a reductive group with maximal torus $S$ and root system $R_{0^{+}}$.
\end{enumerate}
\end{definition}

\begin{definition}\cite{Kal19}(Category of regular supercuspidal $L$-packet data.)\label{equivalenceclass}

An object in the category consists of tuples
$(S,\widehat{j},\chi,\mu)$, where
\begin{enumerate}[(1)]
\item $S$ is an elliptic torus defined over $F$ such that $\mathrm{dim}S=\mathrm{rank}_{\overline{F}}G$ and $S$ splits over a tamely ramified extension of $F$.
\item $\widehat{j}:\widehat{S}\rightarrow \widehat{G}$ is an embedding of complex reductive groups whose $\widehat{G}$ conjugacy class is $\Gamma_F$-stable.
\item $\chi$ is a minimally ramified $\chi$-data for $\Phi(G,S)$.
\item $\mu:S(F)\rightarrow \mathbb{C}^{\times}$ is a character, such that $(S,\mu)$ is a tame elliptic regular pair in the sense of \Cref{tameellipticpair}.
\end{enumerate}

A morphism between $(S_1,\widehat{j_1},\chi_1,\mu_1)$ and $(S_2,\widehat{j_2},\chi_2,\mu_2)$ is a triple $(\iota,g,\zeta)$, where
\begin{enumerate}[(1)]
\item $\iota:S_1\rightarrow S_2$ is an isomorphism of $F$-tori inducing an isomorphism of complex torus $\widehat{\iota}:\widehat{S_2}\rightarrow \widehat{S_1}$.
\item $g\in \widehat{G}$, such that $\widehat{j_1}\circ\widehat{\iota}=\mathrm{Ad}(g)\circ\widehat{j_2}$.
\item $\zeta=\{\zeta_{\alpha_2}\}_{\alpha_{2}\in \Phi(G,S_2)}$ is a set of zeta data, that is, a family of characters $\zeta_{\alpha_{2}}:F_{\alpha_2}^{\times}\rightarrow \mathbb{C}^{\times}$, such that $\chi_{1,\alpha_2\circ\iota}=\chi_{2,\alpha_2}\cdot\zeta_{\alpha_2}$, and $\zeta_{S_2}^{-1}\cdot(\mu_2\circ\iota)=\mu_1$.
\end{enumerate}
\end{definition}

\begin{remark}
By Kaletha \cite[5.1]{Kal19}, a $\Gamma_F$-stable $G(\overline{F})$ conjugacy class $J$ of embeddings $j:S_{\overline{F}}\rightarrow G_{\overline{F}}$ defined over $\overline{F}$ determines a $\Gamma_F$-stable $\widehat{G}$ conjugacy class $\widehat{J}$ of embeddings $\widehat{j}:\widehat{S}\rightarrow \widehat{G}$ defined over $\mathbb{C}$, and vice versa. Here, the structure of $S$ to be a maximal torus of $G$ is given by $J$, which is determined by $\widehat{J}$, and is not necessarily defined over $F$. Elements in $J$ are usually called admissible embeddings, and the subset of $\Gamma_F$ fixed points in $J$ corresponds to the subset of admissible embeddings defined over $F$, which is non empty due to \cite[Corollary 2.2]{Kot82}. As remarked by Kaletha, if we choose a $\Gamma_F$ invariant pinning $(S_0,B_0,\{X_{\alpha}\})$ of $G$, one can always choose $j\in J$ such that $j:S\rightarrow S_0$ is an isomorphism over $\overline{F}$, and pull back $\Phi(G,S_0)$ via $j$ to get a $\Gamma_F$ invariant subset $\Phi(G,S)\subset X^{*}_{F^{s}}(S)$. Here the notion of tame regular elliptic pair makes sense if one uses the embedding $j$ defined over $\overline{F}$ determined by $\widehat{j}$. The notion of symmetric roots or asymmetric roots also makes sense with the help of $j$.
\end{remark}

\begin{theorem}[{\cite[Proposition 5.25]{Kal19}}]

There is a natural 1-1 correspondence between the $\widehat{G}$-conjugacy classes of regular supercuspidal parameters and the isomorphism classes of regular supercuspidal $L$-packet data.
\end{theorem}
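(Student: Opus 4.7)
The plan is to construct the bijection in both directions and then verify that the equivalence relations on the two sides (namely $\widehat G$-conjugacy of parameters, and the triples $(\iota,g,\zeta)$ of \Cref{equivalenceclass}) match up. Starting from a regular supercuspidal $L$-packet datum $(S,\widehat j,\chi,\mu)$, the construction is essentially forced by the discussion in \Cref{Lembedding}: the $\chi$-data produces an $L$-embedding $\prescript{L}{}{j}_\chi:\prescript{L}{}S\to\prescript{L}{}G$ via the cochain $r_\chi$ of \Cref{basepoint} and the formula \Cref{extensionofl}, while the character $\mu:S(F)\to\mathbb{C}^\times$ corresponds by the local Langlands correspondence for tori to a unique Langlands parameter $\phi_\mu:W_F\to\prescript{L}{}S$. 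The composition $\phi:=\prescript{L}{}{j}_\chi\circ\phi_\mu:W_F\to\prescript{L}{}G$ is our candidate parameter. One then verifies the three conditions defining a regular supercuspidal parameter: condition (1) on $\varphi(P_F)$ lying in a torus of $\widehat G$ follows because $\prescript{L}{}{j}_\chi$ factors the image through $\widehat j(\widehat S)$; conditions (2) and (3) on $C_{\widehat G}(\phi(I_F))^\circ$ being a torus and on the Weyl group obstruction come from the tame elliptic regularity of $(S,\mu)$ (\Cref{tameellipticpair}), which ensures that only roots in $\Phi_{0^+}$ remain fixed by $\phi(I_F)$ and that no nontrivial $W(\widehat M,\widehat T)^{\Gamma_F}$ element can be realized inside the inertial centralizer.

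For the inverse direction, I would start from a regular supercuspidal parameter $\phi$ and read off the datum intrinsically from the centralizer filtration inside $\widehat G$. Set $\widehat M:=C_{\widehat G}(\phi(P_F))^\circ$, let $C:=C_{\widehat G}(\phi(I_F))^\circ$, which is a torus by hypothesis, and take $\widehat T:=C_{\widehat M}(C)$; condition (3) of the definition guarantees that $\widehat T$ is a maximal torus of $\widehat G$ and that the restriction of $\phi$ to $W_F$ normalizes $\widehat T$ via the quotient $W_F\to\Gamma_F$. The $\Gamma_F$-action on $X^*(\widehat T)$ obtained this way determines an $F$-torus $S$ together with a $\Gamma_F$-stable $\widehat G$-conjugacy class of embeddings $\widehat j:\widehat S\to\widehat G$, which by \cite{Kot82} admits an $F$-rational admissible embedding $j:S\to G$; this yields the first two pieces of the datum. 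To extract $(\chi,\mu)$, one chooses any minimally ramified $\chi$-data for the root system $\Phi(G,S)$: this produces a base point $\prescript{L}{}{j}_\chi$ for the torsor of $L$-embeddings with underlying $\widehat j$, and \Cref{embeddingcharacter} tells us that $\phi$ factors as $\prescript{L}{}{j}_\chi\circ\phi_\mu$ for a unique parameter $\phi_\mu:W_F\to\prescript{L}{}S$, and hence via LLC for tori a unique character $\mu:S(F)\to\mathbb{C}^\times$.

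The main technical obstacle, and the reason the morphisms in \Cref{equivalenceclass} involve $\zeta$-data, is the non-canonicity at two points of the reverse construction. First, the inner twist chosen when identifying $\widehat T$ with $\widehat j(\widehat S)$ is only determined up to $\widehat G$-conjugacy, which accounts for the data $g$ in a morphism. Second, the choice of minimally ramified $\chi$-data is not unique: replacing $\chi$ by $\chi'$ shifts the base-point cocycle by $r_{\chi/\chi'}$, which via \Cref{zetacharacter} and the relation $\prescript{L}{}{j}_{\chi}=\prescript{L}{}{j}_{\chi'}^{\zeta_{S(F),\chi,\chi'}}$ twists $\mu$ by precisely the character $\zeta_{S(F),\chi,\chi'}$. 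Thus the pair $(\chi,\mu)$ is recovered only modulo the equivalence relation $(\chi,\mu)\sim(\chi\cdot\zeta,\zeta_{S(F)}\cdot\mu)$, which is exactly the notion of isomorphism for $L$-packet data. A parallel analysis for an isomorphism $\iota:S_1\to S_2$ of $F$-tori translates admissible-embedding conjugation on one side into precisely the condition $\widehat j_1\circ\widehat\iota=\mathrm{Ad}(g)\circ\widehat j_2$ on the other.

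Finally, I would verify that the two constructions are mutually inverse modulo equivalence. In one direction this is immediate: starting from $(S,\widehat j,\chi,\mu)$, applying the centralizer procedure to $\prescript{L}{}{j}_\chi\circ\phi_\mu$ returns the same $\widehat S$-in-$\widehat G$ and recovers $(\chi,\mu)$ up to the $\zeta$-data ambiguity described above. In the other direction, one must check that the parameter reconstructed from the extracted datum coincides with $\phi$ on the nose, which reduces to the statement that an $L$-embedding with prescribed $\widehat j$ is determined by the cocycle class $\varphi_{\prescript{L}{}{j}}\cdot\varphi_{\prescript{L}{}{j_0}}^{-1}\in H^1(W_F,\widehat S)$ of \Cref{embeddingcharacter}. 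Regularity of the parameter, together with the freedom to absorb the resulting $H^1$-class into the character $\mu$, then closes the loop and establishes the claimed bijection of isomorphism classes.
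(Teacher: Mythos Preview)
The paper does not give its own proof of this theorem: it is stated as a citation of \cite[Proposition~5.25]{Kal19} and is used as a black box throughout. So there is no ``paper's proof'' to compare against; your proposal is essentially a sketch of Kaletha's original argument in \cite{Kal19}, and at that level of detail it is broadly correct.

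One small point worth tightening if you ever write this out in full: your justification of condition~(1) (``$\prescript{L}{}{j}_\chi$ factors the image through $\widehat j(\widehat S)$'') is not literally true, since $\prescript{L}{}{j}_\chi$ lands in $N_{\widehat G}(\widehat j(\widehat S))\rtimes W_F$, not in the torus itself. The correct reason $\varphi(P_F)$ lies in a torus is that $S$ splits over a tame extension and the $\chi$-data are tamely ramified, so both the cochain $r_\chi$ and the Weyl-element section $n(s_w)$ are trivial on wild inertia; only then does the restriction to $P_F$ genuinely land in $\widehat j(\widehat S)$. Similarly, the verification that the extracted pair $(S,\mu)$ is tame elliptic regular in the sense of \Cref{tameellipticpair}, and that the resulting parameter is discrete, requires more than what you have written; in Kaletha's paper this is where most of the work in Proposition~5.2.5 (your 5.25) actually lies.
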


For a given $L$-packet datum $(S,\widehat{j},\chi,\mu)$ (equivalently a given regular supercuspidal Langlands parameter $\phi$), Kaletha also introduces another category to parameterize supercuspidal representations in the $L$-packet $\Pi_{\phi}$, which he calls the category of regular supercuspidal data. The object in the category is formulated in terms of Kaletha's rigid inner twist of a quasi-split reductive group. Since Prasad's original conjecture is formulated in terms of pure inner twist, we only use the pure inner twist version of Kaletha's fomulation. For relationship between rigid inner form and pure inner form, one can refer to Kaletha's paper \cite{Kal16}.

\begin{definition}\cite{Kal19}(Category of regular supercuspidal data.)

An object in the category consists of tuples
$(S,\widehat{j},\chi,\mu,(G_{\alpha},\xi_{\alpha}),j)$, where
\begin{enumerate}[(1)]
\item $(S,\widehat{j},\chi,\mu)$ is a regular supercuspidal $L$-packet datum.
\item $(G_{\alpha},\xi_{\alpha})$ is a pure inner twist of $G$, that is $\xi_{\alpha}:G\rightarrow G_{\alpha}$ is an isomorphism over $F^{s}$ and $\alpha=\{\gamma\mapsto\xi_{\alpha}^{-1}\gamma(\xi_{\alpha})\in \mathrm{Inn}(G)\}\in \mathrm{Im}(H^1(F,G)\rightarrow H^1(F,\mathrm{Inn}(G)).$
\item $j:S\rightarrow G_{\alpha}$ is an admissible embedding defined over $F$.
\end{enumerate}

A morphism between $(S_1,\widehat{j_1},\chi_1,\theta_1,(G_{\alpha_1},\xi_{\alpha_2}),j_1)$ and $(S_2,\widehat{j_2},\chi_2,\theta_2,(G_{\alpha_2},\xi_{\alpha_2}),j_2)$ is a tuple $(\iota,g,\zeta,f)$, where
\begin{enumerate}[(1)]
\item $(\iota,g,\zeta)$ is an isomorphism of regular supercuspidal $L$-packet datum.
\item $f:(G_{\alpha_1},\xi_{\alpha_2})\rightarrow (G_{\alpha_2},\xi_{\alpha_2})$ is an isomorphism of pure inner twist of $G$, such that $j_2\circ\iota=f\circ j_1$.
\end{enumerate}
\end{definition}

Notice that, one has a natural forgetful functor from the category of regular supercuspidal data to the category of regular supercuspidal $L$-packet data. In this way, the $L$-packet $\Pi_{(S,\widehat{j},\chi,\theta)}$ as a set is in bijection with a torsor under $H^1(F,S)$. In fact, one also wants to understand the construction of each member of a given regular supercuspidal $L$-packet data. Kaletha \cite{Kal19} constructs a generic cuspidal $G$-datum from a tame elliptic regular pair, which produces a regular supercuspidal representation by Yu's construction \cite{Yu01}. We will describe this explicit construction later in \Cref{Yuconstruction}.

\begin{remark}
From a regular supercuspidal datum $(S,\widehat{j},\chi,\mu,(G_{\alpha},\xi_{\alpha}),j)$, one can construct a tame elliptic pair $(j(S),\theta\circ j^{-1}\cdot \epsilon_{\mathrm{Kal}})$, where $\epsilon_{\mathrm{Kal}}:j(S)(F)\rightarrow \mathbb{C}^{\times}$ is known as certain rectifying character arising in the earlier work of Debacker-Spice \cite{DS18} and Kaletha \cite{Kal15}. Then, one can run the process described in \Cref{Yuconstruction} to get a regular supercuspidal representation $\pi_{(j(S),\theta\circ j^{-1}\cdot \epsilon_{\mathrm{Kal}})}$ of $G_{\alpha}(F)$ from this tame elliptic pair.
\end{remark}

More precisely, the character $\epsilon_{\mathrm{Kal}}:j(S)(F)\rightarrow \mathbb{C}^{\times}$ has the following explicit expression.
\begin{definition}[\cite{Kal19}]
$$\epsilon_{\mathrm{Kal}}(t):=\prod_{\alpha\in (\Phi_{\frac{r}{2}})_{\mathrm{asy}}/\Gamma_{F}\times\{\pm 1\}}\mathrm{sgn}_{k_{F_{\alpha}}^{\times}}\alpha(t) \prod\limits_{\substack{\alpha\in \Phi_{\mathrm{sym,ram}}/\Gamma_{F},\\ \alpha(t)\neq 1, \\ \mathrm{ord}(\alpha(t)-1)=0}}f_{(G,S)}(\alpha) \prod_{\alpha\in (\Phi_{\frac{r}{2}})_{\mathrm{sym,un}}/\Gamma_{F}}\mathrm{sgn}_{k_{F_{\alpha}}^{1}}\alpha(t),$$
where $\mathrm{sgn}_{k_{F_{\alpha}}^{\times}}$ and $\mathrm{sgn}_{k_{F_{\alpha}}^{1}}$ denote the unique non-trivial quadratic character of the cyclic group $k_{F_{\alpha}}^{\times}$ and $k_{F_{\alpha}}^{1}$, and $f_{(G,S)}$ denotes the toral invariants defined in \cite{Kal15}.
\end{definition}

More detailed descriptions of these characters will be given in \Cref{toralinvariant}.

%\begin{remark}
%Let $\epsilon_{f,\mathrm{ram}}$ denote the character
%$$\epsilon_{f,\mathrm{ram}}(t)=\prod\limits_{\substack{\alpha\in \Phi_{\mathrm{sym,ram}}/\Gamma_{F},\\ \alpha(t)\neq 1, \\ %\mathrm{ord}(\alpha(t)-1)=0}}f_{(G,S)}(\alpha).$$
%By \cite[Fact 4.7.5]{Kal19}, $\epsilon_{f,\mathrm{ram}}$ is $N_{G}(S)(F)$ invariant.
%\end{remark}

\subsection{Comparison of the $L$-packet.}\label{comparisonl}

Now we have two ways to parameterize the set $\bigsqcup\limits_{\alpha\in H^1(F,G)}\Pi_{\phi}(G_{\alpha}(F))$ for a regular supercuspidal Langlands parameter $\phi$. One is conjecturally given by $\mathrm{Irr}(\pi_0(S_\phi))$, while the other one is given by a torsor under $H^1(F,S)$. Notice that the first parametrization depends on certain normalization of Whittaker datum, while the second parametrization depends on a base point of the set of equivalence classes of rational embeddings. According to the strong tempered $L$-packet conjecture, there exists a unique $\mathfrak{w}$ generic representation in a given tempered $L$-packet for a fixed Whittaker datum $\mathfrak{w}$. Hence, one may expect that a fixed Whittaker datum canonically and uniquely determines a base point inside the set of equivalence classes of $F$-embeddings of abstract tori.

Notice that for a toral regular supercuspidal representation $\pi$ with regular supercuspidal $L$-packet data $(S,\widehat{j},\chi,\mu)$, the following lemma due to Kaletha implies that there exists a canonical choice of rational embedding as a base point in a given $L$-packet for any chosen Whittaker datum.
\begin{lemma}[{\cite[Lemma 6.2.2]{Kal19}}]\label{toral}
Fix a Whittaker datum $\mathfrak{w}$ for $G$ associated to a fixed pinning $(S_0,B_0,\{X_{\alpha}\})$ defined over $F$ and an additive character $\psi:F\rightarrow \mathbb{C}^{\times}$. Let $(S,\widehat{j},\chi,\mu)$ be a toral $L$-packet datum of generic depth $r$. There exists a unique (up to $G(F)$ conjugacy) admissible rational embedding $j_{\mathfrak{w}}$, such that the representation corresponding to $(S,\widehat{j},\chi,\mu,(G,\mathrm{id}),j_{\mathfrak{w}})$ is $\mathfrak{w}$ generic.
\end{lemma}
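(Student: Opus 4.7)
Because the datum $(S,\widehat{j},\chi,\mu)$ is toral, Yu's twisted Levi sequence degenerates to $\overrightarrow{G}=(S,G)$ with a single non-trivial step, and for any admissible rational embedding $j:S\hookrightarrow G$ the associated representation takes the explicit form
\[
\pi_{j}=\mathrm{c\text{-}Ind}_{K_{j}}^{G(F)}\kappa_{j},\qquad K_{j}=j(S)(F)\cdot G(F)_{x_{j},r/2},
\]
where $x_{j}$ is the unique vertex in $\mathcal{A}(j(S),F)$ (well-defined because $S$ is elliptic) and $\kappa_{j}$ is Yu's extended character built from $\mu\circ j^{-1}\cdot\epsilon_{\mathrm{Kal}}$ together with a Heisenberg--Weil factor supported on the Moy--Prasad quotient $G(F)_{x_{j},r/2,r}$. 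The goal is to decide, as $j$ varies over its $G(F)$-conjugacy classes (a torsor under $\ker(H^{1}(F,S)\to H^{1}(F,G))$), exactly when $\pi_{j}$ admits a non-zero $\psi_{N_{0}}$-equivariant functional.

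The first step is to convert the question into a Mackey calculation. By Frobenius reciprocity for compact induction,
\[
\mathrm{Hom}_{N_{0}(F)}(\pi_{j},\psi_{N_{0}})=\bigoplus_{g\in N_{0}(F)\backslash G(F)/K_{j}}\mathrm{Hom}_{N_{0}(F)\cap gK_{j}g^{-1}}\bigl(\kappa_{j}^{g},\psi_{N_{0}}\bigr).
\]
Using the Bruhat--Tits building, I would follow the DeBacker--Reeder style analysis recalled in Section 6 of \cite{Kal19}: the character $\psi_{N_{0}}$, being normalized by the pinning $\mathcal{P}=(B_{0},S_{0},\{X_{\alpha}\})$, is trivial on $N_{0}(F)\cap G(F)_{x',0+}$ for every non-generic vertex $x'$, so only those double cosets whose representative $g$ carries $x_{j}$ into the $G(F)$-orbit of a vertex adapted to $\mathcal{P}$ can contribute a non-zero Mackey summand. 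This immediately cuts the sum down to at most one orbit of contributing $g$'s.

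Next comes the identification of the correct embedding. Two stably-conjugate admissible embeddings $j$ and $j'=j\circ\iota$ linked by a cocycle $\alpha\in H^{1}(F,S)$ yield inducing data $\kappa_{j'}$ and $\kappa_{j}$ which, at the level of restriction to a common maximal compact, differ by a zeta-twist $\zeta_{S(F),\alpha}:S(F)\to\mathbb{C}^{\times}$ of the sort constructed in \Cref{zetacharacter}. The Mackey summand for $j'$ is therefore obtained from that of $j$ by multiplying by $\zeta_{S(F),\alpha}$, so the non-vanishing of the Whittaker integral is a linear condition on $\alpha$. A direct evaluation of the Heisenberg--Weil piece at the generic vertex, paired with the choice of additive character $\psi$ used to define $\psi_{N_{0}}$, pins down a unique class $\alpha=\alpha_{\mathfrak{w}}$, giving the existence of $j_{\mathfrak{w}}$.

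Uniqueness up to $G(F)$-conjugacy then follows from two facts: first, the Mackey decomposition above shows that any $\mathfrak{w}$-generic $\pi_{j}$ has Whittaker dimension exactly one (only a single double coset contributes and it contributes a one-dimensional space); second, by Kaletha's construction of the $L$-packet $\Pi_{(S,\widehat{j},\chi,\mu)}$, distinct $G(F)$-conjugacy classes of embeddings give non-isomorphic representations. I expect the main technical obstacle to be the explicit Heisenberg--Weil computation in step three, namely checking that the contribution of the single generic double coset to the Mackey sum is genuinely non-zero and transforms under $\alpha$ by the asserted character $\zeta_{S(F),\alpha}$; this is where one must combine the Moy--Prasad structure at depth $r/2$ with the pinning-normalization of $\psi_{N_{0}}$, and where the regularity hypothesis on $\mu$ is essential to rule out cancellations.
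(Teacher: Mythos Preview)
Your Mackey-theoretic route is genuinely different from the argument the paper invokes (Kaletha's, from \cite[Lemma 6.2.2]{Kal19}, with the relevant background collected in the appendix). That argument runs through harmonic analysis rather than double cosets: by the Murnaghan--Kirillov formula one has $\Theta_{\pi_j}(\exp Y)=\deg(\pi_j)\,\widehat{\mu_{j(X)}}(Y)$ near the identity, where $X\in\mathfrak{s}^{*}_{-r}$ is the generic semisimple element attached to $\mu$; the coefficient of the regular nilpotent orbit $\mathcal{O}_{\psi_{N_0}}$ in the local character expansion computes $\dim\mathrm{Hom}_{N_0(F)}(\pi_j,\psi_{N_0})$; and by the Shalika-germ / Kostant-section criterion (\Cref{DR}), this is non-zero precisely when the $G(F)$-orbit of $j(X)$ meets $e_{\psi}+C_{\mathfrak{g}}(f_{\psi})$. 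Kostant's lemma (\Cref{Kostant}) then furnishes exactly one $F$-rational intersection point per stable regular orbit, hence exactly one $G(F)$-conjugacy class of $j$. No Mackey decomposition and no explicit Heisenberg--Weil evaluation is carried out; the characterization of $j_{\mathfrak{w}}$ is the geometric one used verbatim in the proof of \Cref{embeddingoverF}.

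Your plan has a concrete gap at the step ``identification of the correct embedding''. You assert that two embeddings $j$, $j'$ in the same stable class, linked by $\alpha\in H^{1}(F,S)$, produce inducing data that ``differ by a zeta-twist $\zeta_{S(F),\alpha}$'' on a common maximal compact. This is not how the members of the packet are related: the vertices $x_j$ and $x_{j'}$ are in general distinct points of the building, the open-compact subgroups $K_j$ and $K_{j'}$ are not $G(F)$-conjugate, and there is no common subgroup on which the two $\kappa$'s restrict to a character multiple of one another. The $\zeta$-characters of \Cref{zetacharacter} encode a change of $\chi$-data for a \emph{fixed} embedding, not a change of rational embedding within the stable class; conflating the two is the error. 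Once that step fails, your ``linear condition on $\alpha$'' and the subsequent pinning-down of $\alpha_{\mathfrak{w}}$ have no foundation. A direct Mackey computation of Whittaker functionals for positive-depth supercuspidals is possible in principle, but it requires a serious building-theoretic analysis of the infinite coset space $N_0(F)\backslash G(F)/K_j$ in the style of \cite{DR10}, not the zeta-twist shortcut you propose.
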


In a recent preprint of \cite[Page 25]{FKS21}, \Cref{toral} is generalized to arbitrary regular supercuspidal representations. After fixing the base point, then these two parameterizations coincide as a consequence of Kottwitz isomorphism for tori and the following lemma due to Kaletha.
\begin{lemma}[{\cite[Lemma 5.3.4]{Kal19}}]
The embedding $\hat{j}:\widehat{S}\rightarrow \widehat{G}$ induces an isomorphism $\widehat{S}^{\Gamma_F}\cong S_{\phi}$\label{special isomorphism}.
\end{lemma}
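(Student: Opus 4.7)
The plan is to verify the isomorphism by establishing the containment $\widehat{j}(\widehat{S}^{\Gamma_F})\subseteq S_\phi$ first, and then the reverse containment $S_\phi\subseteq\widehat{j}(\widehat{S})$, after which the two inclusions combine to give the claim.

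For the first containment, I would unpack the explicit description of $\prescript{L}{}{j}_\chi$ from \Cref{extensionofl}. Since $\phi$ factors through $\prescript{L}{}{j}_\chi$, we may write $\phi(w)=\varphi(w)\rtimes w$ with $\varphi(w)=\widehat{j}(\tilde\phi'(w))\cdot n(s_w)$ for some $\tilde\phi'(w)\in\widehat{S}$ and Weyl-element representative $n(s_w)\in N_{\widehat{G}}(\widehat{j}(\widehat{S}))$. For $t\in\widehat{S}^{\Gamma_F}$, the commutation of $\widehat{j}(t)$ with $\phi(w)$ in $\prescript{L}{}{G}$ reduces, after cancelling the abelian factor $\widehat{j}(\tilde\phi'(w))$, to $n(s_w)^{-1}\widehat{j}(t)n(s_w)=w_{\widehat{G}}(\widehat{j}(t))$. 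This is exactly the defining relation $n(s_w)w_{\widehat{G}}(\widehat{j}(t))n(s_w)^{-1}=\widehat{j}(w_{\widehat{S}}(t))$ of $s_w$, evaluated at a $\Gamma_F$-invariant element where $w_{\widehat{S}}(t)=t$. Injectivity of the resulting map $\widehat{S}^{\Gamma_F}\to S_\phi$ follows from injectivity of $\widehat{j}$, and the same calculation shows that the $\phi$-twisted $W_F$-action on $\widehat{j}(\widehat{S})$ is transported from the canonical $\Gamma_F$-action on $\widehat{S}$; in particular $\widehat{j}(\widehat{S})\cap S_\phi=\widehat{j}(\widehat{S}^{\Gamma_F})$.

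The reverse containment $S_\phi\subseteq\widehat{j}(\widehat{S})$ is where the regularity of $\phi$ is essential. By condition (1) of the definition of regular supercuspidal parameter, $\widehat{M}:=C_{\widehat{G}}(\varphi(P_F))^\circ$ is a Levi subgroup containing the maximal torus $\widehat{j}(\widehat{S})$; by condition (2), $C:=C_{\widehat{G}}(\phi(I_F))^\circ$ is a torus. Any $s\in S_\phi$ centralizes $\phi(I_F)$, hence normalizes $C$ and thereby normalizes $\widehat{T}:=C_{\widehat{M}}(C)$; since $\widehat{j}(\widehat{S})$ is a maximal torus of $\widehat{M}$ commuting with $C$, we have $\widehat{T}=\widehat{j}(\widehat{S})$, placing $s\in N_{\widehat{G}}(\widehat{j}(\widehat{S}))$. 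Because $s$ commutes with all of $\varphi(W_F)$, the projection of $s$ to $W(\widehat{M},\widehat{T})$ is $\Gamma_F$-invariant with respect to the natural action determined by the pinning, and regularity condition (3) forces this projection to be trivial. Hence $s\in C_{\widehat{G}}(\widehat{j}(\widehat{S}))=\widehat{j}(\widehat{S})$, the last equality holding because $\widehat{j}(\widehat{S})$ is a maximal torus.

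The main obstacle is the second paragraph, specifically the two structural identifications $\widehat{T}=\widehat{j}(\widehat{S})$ and the $\Gamma_F$-invariance of the Weyl-group class of an arbitrary $s\in S_\phi$; once these are in place, condition (3) does exactly the work needed to rule out nontrivial Weyl contributions to $S_\phi$. Combining the two inclusions yields $S_\phi=\widehat{j}(\widehat{S})\cap S_\phi=\widehat{j}(\widehat{S}^{\Gamma_F})$, which via the first step is isomorphic to $\widehat{S}^{\Gamma_F}$.
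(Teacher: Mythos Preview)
The paper does not supply its own proof of this lemma; it is quoted verbatim from \cite[Lemma~5.3.4]{Kal19} and used as a black box. There is therefore no argument in the present paper to compare your proposal against.

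Evaluating your sketch on its own merits: the two-inclusion strategy is correct and the first inclusion is fine. In the reverse inclusion there is a genuine gap. Condition~(3) of the definition of a regular supercuspidal parameter applies only to elements $n\in N_{\widehat{M}}(\widehat{T})$, but from $s\in S_\phi$ you obtain only $s\in C_{\widehat{G}}(\varphi(P_F))\cap N_{\widehat{G}}(\widehat{T})$. The centralizer $C_{\widehat{G}}(\varphi(P_F))$ need not coincide with its identity component $\widehat{M}$ when $\widehat{G}$ is not simply connected, so you cannot yet project $s$ into $W(\widehat{M},\widehat{T})$ and invoke~(3). Kaletha's actual proof handles this with additional input, and the identification $\widehat{T}=\widehat{j}(\widehat{S})$ is likewise established earlier in his construction (the claim that $\widehat{j}(\widehat{S})$ commutes with $C$ is not immediate from what you wrote, since $\varphi(I_F)$ need not lie in $\widehat{j}(\widehat{S})$). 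Your plan is sound, but both of these structural facts should be imported from \cite{Kal19} rather than asserted inline.
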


Under this isomorphism, $\mathrm{Irr}(\pi_0(S_\phi))\cong\pi_{0}(\widehat{S}^{\Gamma_F})^{*}$ is isomorphic to $H^1(F,S)$ by the Kottwitz isormophism for tori. Fix a base point of $F$-embedding $j_0:S\rightarrow G$, then $j_{\pi}$ could be regarded as an element in $H^1(F,S)$ by $\mathrm{inv}(j_{\pi},j_0)$, which we still denote by $j_\pi$ for simplicity.

\subsection{Cohomological aspects of local Langlands correspondence.}\label{Galoiscohomology}

In the study of functorial properties of Langlands parameters, it is often convenient for us to ignore the Frobenius semisimplicity condition and regard $\Phi(G)$ as a subset of
$$\{\mathrm{Admissible~homomorphisms}:WD_F\rightarrow \prescript{L}{}{G}\}/\widehat{G}\mathrm{-conjugate}.$$
Notice that there is a natural bijection:
\begin{equation*}
\begin{aligned}
\{\mathrm{Admissible~homomorphisms}:WD_F\rightarrow \prescript{L}{}{G}\}/\widehat{G}\mathrm{-conjugate}&\leftrightarrow H^1(WD_F,\widehat{G}),\\
\phi&\mapsto \varphi,
\end{aligned}
\end{equation*}
which provides a parametrization of the (enlarged) space of equivalence classes of Langlands parameters by $H^1(WD_{F},\widehat{G})$ as a pointed set. In this way, many functorial maps between Langlands parameters could be described in terms of natural maps between non-abelian Galois cohomology.

\subsubsection{Functoriality.}
The principle of Langlands functoriality provides evidence that there may exist a deep relation between representations of different reductive groups. In a simple word, the Langlands functoriality predicts the existence of certain transfer of representations whenever there exists a homomorphism between their $L$-groups. More precisely, we assume that $H$ is also a reductive group defined over $F$, and there exists an algebraic homomorphism $\prescript{L}{}{f}$:
$$\prescript{L}{}{f}:\prescript{L}{}{H}\rightarrow \prescript{L}{}{G}.$$
For any Langlands parameter $\phi_H$ of $H(F)$, we can get a Langlands parameter $\phi_{G,\prescript{L}{}{f}}$ of $G(F)$ by composing $\phi_H$ with $\prescript{L}{}{f}$:

$$\xymatrix{
                & WD_F \ar[dl]_{\phi_H}\ar[dr]^{\prescript{L}{}{f}\circ \phi_H}             \\
 \prescript{L}{}{H}  \ar[rr]^{\prescript{L}{}{f}} & &     \prescript{L}{}{G},}$$
which implies that $\prescript{L}{}{f}$ defines a map from the set of Langlands parameters of $H(F)$ to those of $G(F)$.
Notice that $\prescript{L}{}{f}$ descents to a map from $\Phi(H)$ to $\Phi(G)$, hence defines a transfer from $\Pi_{\phi_H}(H(F))$ to $\Pi_{\prescript{L}{}{f}\circ \phi_{H}}(G(F))$. In fact, if we ignore the Frobenious semisimplicity condition, $\prescript{L}{}{f}$ also defines a map:
$$\prescript{L}{}{f}:H^{1}(WD_F,\widehat{H})\rightarrow H^1(WD_F,\widehat{G}).$$
If there exists a $W_F$-equivariant map
$$\widehat{f}:\widehat{H}\rightarrow \widehat{G},$$
between complex dual groups, then one can easily extend this homomorphism to a homomorphism of $L$-groups simply by:
\begin{equation*}
\begin{aligned}
\prescript{L}{}{f}:\prescript{L}{}{H}&\rightarrow \prescript{L}{}{G},\\
(h,w)&\mapsto (\widehat{f}(h),w).
\end{aligned}
\end{equation*}

In this case, $\prescript{L}{}{f}$ should coincide with $\widehat{f}_{*}:H^{1}(WD_F,\widehat{H})\rightarrow H^1(WD_F,\widehat{G})$ induced by the $WD_F$ equivariant map $\widehat{f}$, which provides a Galois cohomological interpretation of the funtoriality.  However, the map $\widehat{f}$ is not $W_F$-equivariant in general and the $L$-homomorphism $\prescript{L}{}{f}$ is usually not an extension of a $W_F$-equivariant $\widehat{f}$. Nonetheless, $\prescript{L}{}{f}:H^{1}(WD_F,\widehat{H})\rightarrow H^1(WD_F,\widehat{G})$ may still enjoy some nice cohomological properties as $\widehat{f}_{*}$, which will be seen soon.

\subsubsection{Twisted by characters and taking central characters.}

Notice that the set of continuous characters of $G(F)$ inherits a natural structure of abelian group with group structure given by pointwise multiplication. In this case, we also have a parametrization of this set in terms of abelian Galois cohomology.

\begin{proposition}[\cite{Lan97}, \cite{LM15}]\label{characters}
There is a natural map:
$$L:H^1(W_F,Z(\widehat{G}))\rightarrow \mathrm{Hom}_{\mathrm{cts}}(G(F),\mathbb{C}^{\times}),$$
which is a bijection when $G$ is quasi-split.
\end{proposition}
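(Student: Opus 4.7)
The plan is to reduce to the torus case. The starting point is local Langlands for tori: for any $F$-torus $T$, there is a canonical isomorphism
$$H^1(W_F, \widehat{T}) \xrightarrow{\sim} \mathrm{Hom}_{\mathrm{cts}}(T(F), \mathbb{C}^\times),$$
which I would treat as known and whose proof ultimately reduces via Shapiro's lemma to local class field theory in the form $H^1(W_F, \mathbb{C}^\times) \cong \mathrm{Hom}_{\mathrm{cts}}(F^\times, \mathbb{C}^\times)$.

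Next I would identify $Z(\widehat{G})$ with the dual of the abelianization. If $S$ is a maximal torus of $G$ with coroot lattice $\mathbb{Z}\Phi^\vee \subset X_*(S)$, then $Z(\widehat{G}) \subset \widehat{S}$ is cut out by the characters $\alpha^\vee \in \Phi^\vee$, so
$$X^*(Z(\widehat{G})) = X_*(S)/\mathbb{Z}\Phi^\vee = X_*(G^{\mathrm{ab}}),$$
giving a canonical $\Gamma_F$-equivariant identification $Z(\widehat{G}) \cong \widehat{G^{\mathrm{ab}}}$, where $G^{\mathrm{ab}} = G/G_{\mathrm{der}}$. Composing this with pullback along the natural projection $\pi : G(F) \to G^{\mathrm{ab}}(F)$ yields the desired map
$$L : H^1(W_F, Z(\widehat{G})) \cong \mathrm{Hom}_{\mathrm{cts}}(G^{\mathrm{ab}}(F), \mathbb{C}^\times) \xrightarrow{\pi^*} \mathrm{Hom}_{\mathrm{cts}}(G(F), \mathbb{C}^\times).$$

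For the bijection when $G$ is quasi-split, the main obstacle is showing $\pi^*$ is an isomorphism. Surjectivity says every continuous character of $G(F)$ is trivial on $G_{\mathrm{der}}(F)$; this follows because $G_{\mathrm{sc}}(F)$ is topologically perfect by Platonov--Rapinchuk, and the finite quotient $G_{\mathrm{der}}(F)/\mathrm{Im}(G_{\mathrm{sc}}(F)) \hookrightarrow H^1(F, Z(G_{\mathrm{sc}}))$ carries no characters extending to $G(F)$. Injectivity demands that $G^{\mathrm{ab}}(F)/\pi(G(F)) \hookrightarrow H^1(F, G_{\mathrm{der}})$ support no nontrivial continuous character; here the quasi-split hypothesis enters through Kneser's theorem $H^1(F, G_{\mathrm{sc}}) = 1$, which combined with the long exact sequence attached to $1 \to Z(G_{\mathrm{sc}}) \to G_{\mathrm{sc}} \to G_{\mathrm{der}} \to 1$ pins down the cokernel. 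The cleanest bookkeeping for both directions, and the one I would adopt, is Labesse's abelianized Galois cohomology $H^0_{\mathrm{ab}}(F, G) = \mathbb{H}^0(F, [G_{\mathrm{sc}} \to G])$: this group sits naturally between $G(F)$ and $G^{\mathrm{ab}}(F)$, shares the same continuous characters as $G(F)$ in the quasi-split case, and is Pontryagin dual to $H^1(W_F, Z(\widehat{G}))$ via the torus correspondence applied termwise. Packaging $L$ through this intermediate object, rather than $G^{\mathrm{ab}}(F)$ directly, delivers the bijection with essentially no further work.
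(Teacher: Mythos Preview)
Your identification $Z(\widehat{G}) \cong \widehat{G^{\mathrm{ab}}}$ is incorrect in general. You correctly compute $X^*(Z(\widehat{G})) = X_*(S)/\mathbb{Z}\Phi^\vee = \pi_1(G)$, but $X_*(G^{\mathrm{ab}}) = X_*(S)/X_*(S_{\mathrm{der}})$, and $X_*(S_{\mathrm{der}}) = \mathbb{Z}\Phi^\vee$ only when $G_{\mathrm{der}}$ is simply connected. For $G = PGL_2$ one has $G^{\mathrm{ab}} = 1$ while $Z(\widehat{G}) = Z(SL_2(\mathbb{C})) = \mu_2$; the nontrivial class in $H^1(W_F,\mu_2)$ corresponds to a genuine quadratic character of $PGL_2(F)$, which your map through $G^{\mathrm{ab}}(F)=1$ cannot see. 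So as written, your construction of $L$ is not defined on all of $H^1(W_F, Z(\widehat{G}))$.

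The fix is exactly what you gesture at in the last paragraph, and it is how the paper proceeds. The paper does not prove the proposition directly --- it is cited from \cite{Lan97} and \cite{LM15} --- but immediately afterward it describes $L$ via hypercohomology of the crossed module $[G_{\mathrm{sc}} \to G]$, following \cite{Kal15}: there is a canonical isomorphism $H^1(W_F, Z(\widehat{G})) \cong \mathrm{Hom}_{\mathrm{cts}}(H^1(F, G_{\mathrm{sc}} \to G), \mathbb{C}^\times)$, and $L$ is pullback along the natural map $G(F)/G_{\mathrm{sc}}(F) \to H^1(F, G_{\mathrm{sc}} \to G)$. In the paper's degree convention this $H^1(F, G_{\mathrm{sc}} \to G)$ is your $H^0_{\mathrm{ab}}(F,G)$. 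So your closing approach is the right one and coincides with the paper's description, but it must \emph{replace} the $G^{\mathrm{ab}}$ construction of $L$, not merely repackage it for the bijectivity step.
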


In fact, there is a convenient description of the above map using hypercohomology of crossed modules. This has been described in detail in \cite{Bor98}, \cite{Kal15} and \cite{Lab08}. Let $(d:G_0\rightarrow G_1)$ be a crossed module. More precisely, $d$ is a group homomorphism and $G_0$ is endowed with an action of $G_1$ such that
\begin{equation*}
\begin{aligned}
d(g\cdot h)&=g dh g^{-1},\\
d(h)\cdot h'&=hh'h^{-1}.
\end{aligned}
\end{equation*}
for any $h,h'\in G_0, g\in G_1$. Notice that the second equation implies that $\mathrm{ker}d$ is abelian. Let $\Gamma$ be a group acting on the crossed module such that the action is compatible with the crossed module structure, that is
$$\gamma(g\cdot h)=\gamma(g)\cdot \gamma(h),$$
then we have the following cohomology theory for crossed modules:
\begin{equation*}
\begin{aligned}
H^{0}(\Gamma,G_0\rightarrow G_1)&:=(\mathrm{ker}d)^{\Gamma}\\
H^{1}(\Gamma,G_0\rightarrow G_1)&:=Z^1(\Gamma,G_0\rightarrow G_1)/B^1(\Gamma,G_0\rightarrow G_1)\\
H^{2}(\Gamma,G_0\rightarrow G_1)&:=Z^2(\Gamma,G_0\rightarrow G_1)/B^2(\Gamma,G_0\rightarrow G_1).
\end{aligned}
\end{equation*}
where the hypercocycles $Z^{i}:=Z^{i}(\Gamma,G_0\rightarrow G_1)$ and hypercoboundaries $B^{i}:=B^{i}(\Gamma,G_0\rightarrow G_1)$ are defined by:
\begin{equation*}
\begin{aligned}
Z^{1}&:=\{(a,b)|a\in Z^{1}(\Gamma,G_0),b\in G_1,d(a(\gamma))=b^{-1}\gamma(b)\},\\
B^{1}&:=\{(\partial c,dc)\in Z^{1}| c\in G_0,\partial c(\gamma):=c^{-1}\gamma(c) \},\\
Z^{2}&:=\{(a,b)|a\in Z^{2}(\Gamma,G_0),b\in C^{1}(\Gamma,G_1),da=\partial b\},\\
B^{2}&:=\{(\partial c,dc)|c\in C^1(\Gamma,G_0)\}.
\end{aligned}
\end{equation*}
Remark that we also adopt the notation in \cite{Kal15} such that the degree in our definition is shifted by $1$ from the usual degree in the cohomology theory of crossed modules. The reason is explained in \cite{Kal15} for the compatibility between the cohomology of crossed module and hypercohomology of a complex concentrated in degree $0$ and $1$, when we take the crossed module to be a complex of tori.

Using the above notation, we have the following isomorphisms.

\begin{proposition}[{\cite[Proposition 5.19]{Kal15}}]
There are canonical isomorphisms:
$$H^{1}(W_F,Z(\widehat{G}))\cong \mathrm{Hom}_{\mathrm{cts}}(H^1(F,G_{\mathrm{sc}}\rightarrow G),\mathbb{C}^{\times}),$$
and
$$H^2(W_F,\widehat{G}_{\mathrm{sc}}\rightarrow \widehat{G})\cong\mathrm{Hom}(Z(G)(F),\mathbb{C}^{\times}).$$
\end{proposition}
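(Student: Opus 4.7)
The plan is to deduce both isomorphisms from local Tate--Nakayama duality applied to complexes of tori that are quasi-isomorphic to the crossed modules in question. First I fix a maximal $F$-torus $T\subset G$ and set $T_{\mathrm{sc}}:=G_{\mathrm{sc}}\times_{G}T$, a maximal torus of $G_{\mathrm{sc}}$. The natural inclusion of crossed modules $[T_{\mathrm{sc}}\to T]\hookrightarrow [G_{\mathrm{sc}}\to G]$ is a quasi-isomorphism: both sides have kernel $Z(G_{\mathrm{sc}})$ and cokernel $G/G_{\mathrm{der}}=G^{\mathrm{ab}}$, and Borovoi's abelianization theorem upgrades this to isomorphisms on hypercohomology $H^{i}(F,T_{\mathrm{sc}}\to T)\cong H^{i}(F,G_{\mathrm{sc}}\to G)$ in all degrees. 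By a parallel argument on the Langlands dual side, the complex $[\widehat{G}_{\mathrm{sc}}\to \widehat{G}]$ is likewise quasi-isomorphic to a complex of complex tori coming from a maximal torus of $\widehat{G}$, reducing the entire statement to complexes of tori on both sides.

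Once this reduction is in place, I invoke the local Tate--Nakayama duality for complexes of tori (Kottwitz--Shelstad, Borovoi): for any complex of $F$-tori $[T_1\to T_2]$, there is a canonical perfect pairing
$$H^{i}(F,T_1\to T_2)\times H^{2-i}(W_F,\widehat{T_2}\to \widehat{T_1})\to \Q/\Z,$$
using the degree-shift convention recalled in the excerpt. For the first isomorphism, applying this in degree $i=1$ to $[T_{\mathrm{sc}}\to T]$ yields
$$H^{1}(F,G_{\mathrm{sc}}\to G)\cong \mathrm{Hom}_{\mathrm{cts}}(H^{1}(W_F,\widehat{T}\to\widehat{T_{\mathrm{sc}}}),\C^{\times}).$$
The short exact sequence $1\to Z(\widehat{G})\to \widehat{T}\to\widehat{T_{\mathrm{sc}}}\to 1$ (arising from dualizing the isogeny $T_{\mathrm{sc}}\to T_{\mathrm{der}}$) shows that the complex $[\widehat{T}\to\widehat{T_{\mathrm{sc}}}]$ is quasi-isomorphic to $Z(\widehat{G})$ placed in degree zero, so $H^{1}(W_F,\widehat{T}\to\widehat{T_{\mathrm{sc}}})\cong H^{1}(W_F,Z(\widehat{G}))$, finishing the first isomorphism. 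For the second isomorphism, I apply the same pairing in degree $i=0$ to the complex $[G\to G_{\mathrm{ad}}]$: by the definitions in the excerpt, the left-hand hypercohomology is $H^{0}(F,G\to G_{\mathrm{ad}})=Z(G)^{\Gamma_F}=Z(G)(F)$, and the dual complex is precisely $[\widehat{G}_{\mathrm{sc}}\to\widehat{G}]$ since $\widehat{G_{\mathrm{ad}}}=\widehat{G}_{\mathrm{sc}}$.

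The hardest step is the careful bookkeeping of the degree-shift convention (differing by one from the standard hypercohomology of complexes of tori, as flagged in the excerpt) together with the boundary identifications: one must verify that the continuous-cochain Tate pairing descends compatibly to both the connected-torus part $Z(\widehat{G})^{\circ}=\widehat{G^{\mathrm{ab}}}$ and the finite component group $\pi_0(Z(\widehat{G}))=\widehat{\pi_1(G_{\mathrm{der}})}$, rather than on only one of the two pieces. Checking independence of the choice of $T$, and verifying that all of the above comparison maps are truly functorial in $G$ and do not depend on the auxiliary torus, is the remaining technical content; once that is done, both isomorphisms drop out as formal consequences of local Tate duality for tori.
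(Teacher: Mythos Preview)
The paper does not give its own proof of this proposition; it is stated purely as a citation of \cite[Proposition 5.19]{Kal15} and is used as a black box. So there is no in-paper argument to compare against.

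That said, your proposed route is essentially the one Kaletha himself uses: replace the crossed modules $[G_{\mathrm{sc}}\to G]$ and $[\widehat{G}_{\mathrm{sc}}\to\widehat{G}]$ by the quasi-isomorphic complexes of tori $[T_{\mathrm{sc}}\to T]$ and $[\widehat{T}_{\mathrm{sc}}\to\widehat{T}]$ via Borovoi abelianization, and then invoke the extension of Langlands' duality for tori to two-term complexes. Your identification $[\widehat{T}\to\widehat{T_{\mathrm{sc}}}]\simeq Z(\widehat{G})[0]$ for the first isomorphism, and your switch to $[T\to T_{\mathrm{ad}}]$ (whose dual is $[\widehat{T}_{\mathrm{sc}}\to\widehat{T}]$) for the second, are exactly the right moves. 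One small point of care: the pairing you wrote as landing in $\mathbb{Q}/\mathbb{Z}$ is really the Langlands-type duality with values in $\mathbb{C}^{\times}$ (combining local Langlands for tori with Tate--Nakayama), not the classical Tate pairing alone; this matters for the second isomorphism, since $Z(G)(F)$ can have characters of infinite order when $Z(G)$ contains a split torus. Your ``hardest step'' paragraph correctly flags the bookkeeping issues, but be sure you are invoking the $\mathbb{C}^{\times}$-valued duality rather than the $\mathbb{Q}/\mathbb{Z}$-valued one.
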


The map $L$ and the map (on the parameter side) of taking the central character of an irreducible representation correspond to the following natural maps between Galois cohomology.

\begin{proposition}[{\cite[Proposition 5.20]{Kal15}}]
The map $L$ is given by:
$$H^{1}(W_F,Z(\widehat{G}))\cong \mathrm{Hom}_{\mathrm{cts}}(H^1(F,G_{\mathrm{sc}}\rightarrow G),\mathbb{C}^{\times})\rightarrow \mathrm{Hom}_{\mathrm{cts}}(G(F)/G_{\mathrm{sc}(F)},\mathbb{C}^{\times}).$$
The map of taking the central character is given by:
$$H^{1}(W_F,\widehat{G})\cong H^{2}(W_F,1\rightarrow \widehat{G})\rightarrow H^2(W_F,\widehat{G}_{\mathrm{sc}}\rightarrow \widehat{G})\cong\mathrm{Hom}(Z(G)(F),\mathbb{C}^{\times}). $$
\end{proposition}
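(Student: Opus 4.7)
My strategy is to reduce both statements to the case where $G$ is a torus, where they follow from classical Langlands duality for tori, and then propagate to general $G$ by naturality of the hypercohomology of crossed modules. The two endpoint isomorphisms in each factorization are the content of the already cited Proposition 5.19, so the substance of the claim is to identify the middle arrow with its representation-theoretic counterpart.

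For part (i), any continuous character $\chi:G(F)\to\mathbb{C}^\times$ is trivial on commutators and hence on the image of $G_{\mathrm{sc}}(F)$ in $G(F)$, so it factors through $G(F)/\mathrm{Im}(G_{\mathrm{sc}}(F)\to G(F))$. By the definitions in the excerpt (with the shifted-degree convention), there is a canonical inclusion $G(F)/\mathrm{Im}(G_{\mathrm{sc}}(F))\hookrightarrow H^1(F,G_{\mathrm{sc}}\to G)$ given by constant $1$-hypercocycles with trivial first coordinate; dualizing produces the restriction arrow in the claimed factorization. To verify that the whole composite equals the map $L$ of Proposition \ref{characters}, pick a maximal torus $T\subset G$ defined over $F$ and invoke the induced morphism of crossed modules $(1\to T)\to(G_{\mathrm{sc}}\to G)$ together with its dual $Z(\widehat{G})\hookrightarrow\widehat{T}$. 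Both the proposed factorization and $L$ are functorial under this morphism, so the identity reduces to the torus case; on a torus the crossed module degenerates and the assertion becomes the Langlands correspondence for tori \cite{Lan97}. Since characters of $G(F)/G_{\mathrm{sc}}(F)$ are separated by the family of restrictions to maximal $F$-tori, this transports back to $G$.

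For part (ii), the map $H^1(W_F,\widehat{G})\cong H^2(W_F,1\to\widehat{G})\to H^2(W_F,\widehat{G}_{\mathrm{sc}}\to\widehat{G})$ is induced by the evident morphism of crossed modules $(1\to\widehat{G})\to(\widehat{G}_{\mathrm{sc}}\to\widehat{G})$. Again one restricts to a maximal torus $T\subset G$: after replacing the parameter $\phi$ by a $\widehat{G}$-conjugate factoring through $\widehat{T}$, its class pulls back to $H^1(W_F,\widehat{T})$ and then to $H^2(W_F,\widehat{T}_{\mathrm{sc}}\to\widehat{T})$, which by Langlands duality for $T$ is $\mathrm{Hom}_{\mathrm{cts}}(T(F),\mathbb{C}^\times)$. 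Naturality of the long exact sequence of hypercohomology identifies the further restriction to $Z(G)(F)\subset T(F)$ with the image of $\phi$ in $\mathrm{Hom}_{\mathrm{cts}}(Z(G)(F),\mathbb{C}^\times)$, and the compatibility of the local Langlands correspondence with Levi subgroups identifies this last character with the central character of any member of the $L$-packet $\Pi_\phi$.

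The main obstacle is setting up the naturality of the Borovoi-Kottwitz duality for morphisms of crossed modules and matching the degree-shift convention used in the excerpt (where $H^0$ of a crossed module is $(\ker d)^\Gamma$) with the conventions of \cite{Lan97}, so that the signs in the connecting homomorphisms line up correctly. Once these conventions are aligned, all the functorial diagrams commute essentially tautologically and both claims of the proposition reduce to the torus case.
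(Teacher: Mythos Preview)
The paper does not give its own proof of this proposition: it is stated as a citation of \cite[Proposition 5.20]{Kal15} and immediately followed by the $GL_n$ example, with no argument in between. So there is nothing in the paper to compare your proposal against directly.

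That said, your proposal for part (ii) has a genuine gap. You write ``after replacing the parameter $\phi$ by a $\widehat{G}$-conjugate factoring through $\widehat{T}$'', but an arbitrary Langlands parameter $\phi:WD_F\to{}^L G$ need not factor through ${}^L T$ for any maximal torus $T$ of $G$. For instance, any parameter with nontrivial $SL_2(\mathbb{C})$-component, or any parameter whose image contains a regular unipotent, cannot be conjugated into a torus on the dual side. The reduction-to-tori step therefore fails for general $\phi$, and with it the rest of the argument for part (ii). The actual proof in \cite{Kal15} works instead by unwinding the hypercohomology and the Tate--Nakayama-type duality for crossed modules directly, without assuming the parameter is toral. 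Your appeal to ``compatibility of the local Langlands correspondence with Levi subgroups'' at the end is also circular in this context, since the statement you are proving is precisely part of what makes that compatibility precise on the level of central characters.

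For part (i), your strategy is more defensible: the restriction-to-tori argument can be made to work because every class in $G(F)/\mathrm{Im}(G_{\mathrm{sc}}(F))$ is represented by a semisimple element (unipotents lift along the isogeny $G_{\mathrm{sc}}\to G_{\mathrm{der}}$), and every semisimple element of $G(F)$ lies in a maximal $F$-torus. But you should make this point explicit rather than asserting the separation property without justification.
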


\begin{example}
For $G=GL_n$, the map of taking the central character has a simple description. Notice that we have a short exact sequence of $W_F$-modules:
$$1\rightarrow \widehat{G}_{\mathrm{sc}}=SL_{n}(\mathbb{C})\rightarrow \widehat{G}=GL_n(\mathbb{C})\stackrel{\mathrm{det}}{\longrightarrow}\mathbb{C}^{\times}\rightarrow 1,$$
which means that we have a quasi isomorphism of crossed modules:
$$[\widehat{G}_{\mathrm{sc}}\rightarrow \widehat{G}]\cong [1\rightarrow \mathbb{C}^{\times}],$$ and the following commutative diagram:
$$\xymatrix@R=0.5cm{
                &         [\widehat{G}_{\mathrm{sc}}\rightarrow \widehat{G}] \ar[dd]^{\cong}     \\
  [1\rightarrow \widehat{G}] \ar[ur] \ar[dr]_{\mathrm{det}}                 \\
                &         [1\rightarrow \mathbb{C}^{\times}].}$$
Hence the map of taking the central character
$$H^{2}(W_F,1\rightarrow \widehat{G})\rightarrow H^2(W_F,\widehat{G}_{\mathrm{sc}}\rightarrow\widehat{G})$$
coincides with the map of taking determinant on the parameter side
$$H^{1}(W_F,GL_{n}(\mathbb{C}))\stackrel{\mathrm{det}}{\longrightarrow}  H^1(W_F,\mathbb{C}^{\times}).$$

\end{example}
Notice that the set of continuous characters of $G(F)$ acts naturally on the set of equivalence classes of irreducible representations of $G(F)$ by character twists, that is, one has a natural action
\begin{equation*}
\begin{aligned}
\mathrm{Hom}_{\mathrm{cts}}(G(F),\mathbb{C}^{\times})\times \Pi(G(F))&\rightarrow \Pi(G(F)),\\
(\chi,[\pi])&\mapsto [\chi\cdot\pi],
\end{aligned}
\end{equation*}
which corresponds to the following action on the parameter side:
\begin{equation*}
\begin{aligned}
H^1(W_F,Z(\widehat{G}))\times H^{1}(W_F,\widehat{G})&\rightarrow H^1(W_F,\widehat{G}),\\
(\varphi_{\chi},\varphi_{\pi})&\mapsto \varphi_{\chi\cdot \pi},
\end{aligned}
\end{equation*}
such that $\varphi_{\chi\cdot \pi}$ is given by $\varphi_{\chi\cdot \pi}(w)=\varphi_{\chi}(w)\cdot \varphi_{\pi}(w)$.

Fixing an elliptic maximal torus $S\subset G$, Kaletha's construction of regular supercuspidal representations in \Cref{Yuconstruction} and his construction of local Langlands for regular supercuspidal representations could be regarded as a map
\begin{equation}
\begin{aligned}
K_{\chi}:=\prescript{L}{}{j}_{\chi}:H^1(W_F,\widehat{S})^{\mathrm{reg}}&\rightarrow H^1(W_F,\widehat{G}),\\
\phi_{\mu}&\mapsto \phi_{\pi_{(S,\mu)}}.
\end{aligned}
\end{equation}

For any character $\eta:G(F)\rightarrow \mathbb{C}^{\times}$ with $(S,\mu\cdot \eta|_{S(F)})$ being regular elliptic, the following well known isomorphism between supercuspidal representations yields the compatibility of character twists and the construction of regular supercuspidal representations
$$\pi_{(S,\mu)}\otimes \eta \cong \pi_{(S,\mu\cdot \eta|_{S(F)})},$$
which corresponds to the commutativity of the diagram on the parameter side:
\begin{equation*}
\begin{array}{ccccc}
  H^{1}(W_F,Z(\widehat{G}))  &\times &H^{1}(W_F,\widehat{S}) &\longrightarrow & H^{1}(W_F,\widehat{S})  \\
  \bigg\| & &\bigg\downarrow K_{\chi} & &\bigg\downarrow K_{\chi}\\
  H^{1}(W_F,Z(\widehat{G}))&\times  &H^{1}(W_F,\widehat{G}) &\longrightarrow & H^{1}(W_F,\widehat{G}).
\end{array}
\end{equation*}

\section{Base change of regular supercuspidal $L$-packet data}\label{section4}
Base change is one of the examples of Langlands functoriality. From the point of view of trace formula and harmonic analysis, the theory of base change could be understood as a special example of the theory of twisted endoscopy, which has been detailed studied in \cite{KS99}. We will not go deep into the theory of twisted endoscopy, instead we only focus on the local counterpart of quadratic base change.
\subsection{Base change of Langlands parameters.}\label{bc}
Let $E/F$ be a quadratic extension. For a Langlands parameter $\phi$ of $G(F)$, a base change $\mathrm{BC}(\phi)$ of $\phi:WD_F\rightarrow \widehat{G}\rtimes W_F$ is nothing but the composite of $\phi$ with the functorial map:
\begin{equation*}
\begin{aligned}
i:\prescript{L}{}{G}&\rightarrow \prescript{L}{}{(\mathrm{Res}_{E/F}G_E)}=(\widehat{G}\times\widehat{G})\rtimes W_F,\\
(g,w)&\mapsto (g,g,w)
\end{aligned}
\end{equation*}
where the $W_F$ action on $\widehat{\mathrm{Res}_{E/F}(G_E)}=\widehat{G}\times\widehat{G}$ is given by:
\begin{equation*}
\begin{aligned}
w(g_1,g_2)&:=(wg_1,wg_2) ~~~\mathrm{for}~~ w\in W_E,\\
s(g_1,g_2)&:=(sg_2,sg_1) ~~~\mathrm{for}~~ s\in W_F- W_E.
\end{aligned}
\end{equation*}

In other words, BC induces natural maps:
$$\Phi(G(F))\rightarrow \Phi(\mathrm{Res}_{E/F}(G_E)(F))\cong \Phi(G(E)),$$
$$H^1(WD_F,\widehat{G})\rightarrow H^1(WD_F,\widehat{\mathrm{Res}_{E/F}G_E}).$$
There is an alternative way to describe the base change map. Since $G$ is defined over $F$, $\widehat{G}$ is naturally a $W_F$-group. Consider the following $W_F$-group:
$$\mathrm{Ind}_{W_E}^{W_F}\widehat{G}:=\{f:W_F\rightarrow \widehat{G}|f(ww')=wf(w')\mathrm{~for~any~}w\in W_E,w'\in W_F\},$$
with the group law induced from the one on $\widehat{G}$. To be more precise, for any $f,g\in \mathrm{Ind}_{W_E}^{W_F}\widehat{G}$, one defines $(f\cdot g)(w):=f(w)g(w)$.
There is a natural $W_F$ action on $\mathrm{Ind}_{W_E}^{W_F}\widehat{G}$ given by right multiplication, which preserves the group law:
$$(u\cdot f)(w)=f(wu)\mathrm{~for~any~} u,w\in W_F.$$
Choosing any element $t\in W_F - W_E$, we could construct a $W_F$-equivariant isomorphism:
\begin{equation*}
\begin{aligned}
\mathrm{Ind}_{W_E}^{W_F}\widehat{G}&\rightarrow\widehat{\mathrm{Res}_{E/F}G_E} \\
f&\mapsto (f(1),t^{-1}f(t)).
\end{aligned}
\end{equation*}
For any $t'\in W_F-W_E$, we have:
$${t'}^{-1}f(t')={t'}^{-1}f(t't^{-1}t)={t'}^{-1}t't^{-1}f(t)=t^{-1}f(t),$$
which implies that the isomorphism does not depend on the choice of $t\in W_F-W_E$, hence is canonical. Moreover, we have the following commutative diagram:
$$\xymatrix{
                & \widehat{G} \ar[dl]_{i'}\ar[dr]^{i}             \\
 \mathrm{Ind}_{W_E}^{W_F}\widehat{G}  \ar@{<->}[rr]^{\cong} & &     \widehat{\mathrm{Res}_{E/F}G_E} ,}$$
where the $W_F$-equivariant map $i'$ is defined by:
\begin{equation}\label{induced}
\begin{aligned}
i':\widehat{G}&\rightarrow \mathrm{Ind}_{W_E}^{W_F}\widehat{G},\\
g&\mapsto f_g:u\mapsto u\cdot g,~\forall u\in W_F .
\end{aligned}
\end{equation}
By Shapiro's lemma for non-abelian Galois cohomology, we have:
$$H^1(WD_{F},\widehat{\mathrm{Res}_{E/F}G_E})\cong H^1(WD_{F},\mathrm{Ind}_{W_E}^{W_F}\widehat{G})\cong H^1(WD_{E},\widehat{G}).$$
Hence $(i^{'})^{*}:H^1(WD_F,\widehat{G})\rightarrow H^1(WD_E,\widehat{G})$ coincides with the restriction map, which means that $\mathrm{BC}(\phi)$ could be identified with the restriction $\phi|_{WD_E}$.

Many aspects in the theory of base change have Galois cohomological interpretations. It turns out that the following remark in Prasad's paper \cite{Pra15}, which describes the fiber of base change map, is quite useful in our work.

Picking up a base point $\phi_\pi:W_{E}\rightarrow \widehat{G}\rtimes W_{E}$ associated to an irreducible representation $\pi$ of $G(E)$, we can twist the action of the Weil group on $\widehat{G}$ by $\phi_\pi$, which means we can define a new $W_E$ action on $\widehat{G}$ by:
$$w^{\phi_\pi}\cdot g=\varphi_\pi(w)w(g)\varphi_\pi(w)^{-1},$$
such that $\phi_\pi$ corresponds to the distinguished point in $H^1(W_{E},\widehat{G})$. One advantage of doing this is that we have an identification:
$$\widehat{G}^{{W_{E,\phi_{\pi}}}}=\{g\in \widehat{G}|\varphi_{\pi}(w)w(g)=g\varphi_{\pi}(w),\forall w\in W_E\} = S_{\phi_{\pi}}.$$
 If we assume there exists an extension $\tilde{\phi}\in H^1(W_F,\widehat{G})$, such that $\tilde{\phi}|_{W_E}=\phi_{\pi}$, then under this twisted action, the set of possible extensions of $\phi_\pi$ to a parameter of $\widehat{G}$ can be described by the inflation-restriction sequence:
\begin{equation}\label{inflationrestriction}
1\rightarrow H^1(\mathrm{Gal}(E/F),\widehat{G}^{W_{E,\phi_{\pi}}})\rightarrow H^1_{\tilde{\phi}}(W_F,\widehat{G})\rightarrow H^1_{\phi_\pi}(W_E,\widehat{G})^{\mathrm{Gal}(E/F)},
\end{equation}
where $H^1_{\phi_\pi}(W_E,\widehat{G})$ denotes the corresponding non-abelian Galois cohomology associated to the twisted $W_E$ action.
The obstruction of extending the parameter of $G(E)$ to a parameter of $G(F)$ could also be read from this inflation restriction sequence. For example, the inflation restriction map implies the image of $H^1_{\tilde{\phi}}(W_F,\widehat{G})\rightarrow H^1_{\phi_\pi}(W_E,\widehat{G})$ lies in $H^1_{\phi_\pi}(W_E,\widehat{G})^{\mathrm{Gal}(E/F)}$. Hence a necessary condition for $\phi_\pi$ to be extendable to parameters of $\widehat{G}$ is that it is $\mathrm{Gal}(E/F)$-invariant.(The Galois action is given by conjugation of an element in $W_F- W_E$). In fact, this is not the only obstruction. For general Langlands parameters, the inflation restriction sequence terminates due to the lack of $H^2$ for non abelian $W_F$-groups. However, for regular supercuspidal representations, $H^2(\mathrm{Gal}(E/F),\widehat{G}^{W_E})$ is still meaningful due to the special isomorphism in \Cref{special isomorphism}.

\subsection{Base change for regular supercuspidal $L$-packet data.}
Now we use tuples $(S,\widehat{j},\chi,\mu)$ as Langlands parameters for regular supercuspidal representations of $G(F)$. We would like to describe the quadratic base change in terms of regular supercuspidal $L$-packet data. For simplicity, we first focus on those regular supercuspidal $L$-packet data whose base change to $G(E)$ remain regular supercuspidal.

\begin{definition}[naive]

A quadratic base change of a supercuspidal $L$-packet datum of $G(F)$ associated to
$$(S,\widehat{j},\chi,\mu)$$
to $G(E)$ is given by a supercuspidal $L$-packet datum of $G(E)$, which is equivalent to:
$$(S\times_F E,\widehat{j},\chi_{\mathrm{BC}},\mu\circ \mathrm{Nm}_{S(E)/S(F)}),$$
where $\chi_{\mathrm{BC}}$ is a $\chi$-data of $G(E)$, such that $(\chi_{\mathrm{BC}})_{\alpha}=\chi_{\alpha}\circ \mathrm{Nm_{E_{\alpha}/F_{\alpha}}}:E_{\alpha}^{\times}\rightarrow F_{\alpha}^{\times}\rightarrow \mathbb{C}^{\times}$.
\end{definition}

\begin{lemma}\label{naivebasechange}
The above definition of base change map only depends on the equivalence class of the supercuspidal $L$-packet datum. Let $\mathcal{RSC}_{F,\mathrm{BC}}$ denote the subcategory of the category of regular supercuspidal $L$-packet data of $G$ over $F$ consisting of the objects whose base change (in the usual sense) to $G(E)$ remain regular supercuspidal, then $\mathrm{BC}$ can be regarded as a functor from $\mathcal{RSC}_{F,\mathrm{BC}}$ to the category of regular supercuspidal $L$-packet data of $G_E$ over $E$.
\end{lemma}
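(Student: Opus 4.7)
The plan is to verify the lemma in two logical steps: first, that the naive base change recipe sends objects to objects of the target category; second, that it lifts naturally to morphisms in a way that respects identities and composition. Well-definedness on equivalence classes will then follow as a formal consequence of functoriality.

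\textbf{Step 1 (Objects).} For $(S,\widehat{j},\chi,\mu)\in\mathcal{RSC}_{F,\mathrm{BC}}$ I would verify the four axioms of \Cref{equivalenceclass} for the tuple $(S\times_{F}E,\widehat{j},\chi_{\mathrm{BC}},\mu\circ\mathrm{Nm}_{S(E)/S(F)})$ one at a time. Ellipticity of $S\times_{F}E$ in $G_E$ and tameness of its splitting field over $E$ are built into the very definition of $\mathcal{RSC}_{F,\mathrm{BC}}$ together with the tameness of $E/F$, which is automatic since $p\neq 2$. The $\Gamma_{E}$-stability of the conjugacy class of $\widehat{j}$ is immediate from its $\Gamma_{F}$-stability since $\Gamma_{E}\subset\Gamma_{F}$. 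The tame elliptic regular property of $(S\times_{F}E,\mu\circ\mathrm{Nm})$ reduces to that of $(S,\mu)$ after noting that the Moy--Prasad filtration is compatible with the tame extension $E/F$ and that the relevant depth quotients of $S(E)$ surject via the norm onto those of $S(F)$. The real content is checking that $\chi_{\mathrm{BC}}$ is a (minimally ramified) $\chi$-data for $\Phi(G_{E},S\times_{F}E)$; this requires a case-by-case analysis of how the $\Gamma_{F}$-symmetry type of each root $\alpha$ passes to its $\Gamma_{E}$-symmetry type, together with the observation that $\mathrm{Nm}_{E_{\alpha}/F_{\alpha}}$ preserves the unramified/tamely ramified nature of a character.

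\textbf{Step 2 (Morphisms).} For a morphism $(\iota,g,\zeta)\colon(S_{1},\widehat{j}_{1},\chi_{1},\mu_{1})\to(S_{2},\widehat{j}_{2},\chi_{2},\mu_{2})$ I define $\mathrm{BC}(\iota,g,\zeta):=(\iota\times_{F}E,\,g,\,\zeta_{\mathrm{BC}})$ with $(\zeta_{\mathrm{BC}})_{\alpha}:=\zeta_{\alpha}\circ\mathrm{Nm}_{E_{\alpha}/F_{\alpha}}$. The $\zeta$-data axioms for $\zeta_{\mathrm{BC}}$ follow by the same root-by-root analysis used for $\chi_{\mathrm{BC}}$. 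The identity $\chi_{1,\mathrm{BC},\alpha\circ\iota_{E}}=\chi_{2,\mathrm{BC},\alpha}\cdot\zeta_{\mathrm{BC},\alpha}$ is obtained by composing its $F$-analogue with $\mathrm{Nm}_{E_{\alpha}/F_{\alpha}}$. The character identity $\zeta_{\mathrm{BC},S_{2}\times_{F}E}^{-1}\cdot((\mu_{2}\circ\mathrm{Nm})\circ\iota_{E})=\mu_{1}\circ\mathrm{Nm}$ is then a consequence of the key compatibility
\[
\zeta_{\mathrm{BC},\,S\times_{F}E}\;=\;\zeta_{S}\circ\mathrm{Nm}_{S(E)/S(F)},
\]
which I would establish by a direct computation from \eqref{zetacharacter}, matching $\Gamma_{E}$-orbits of roots against $\Gamma_{F}$-orbits and collecting terms. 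Preservation of identities and composition is immediate from the construction, so the assignment really is a functor.

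\textbf{Main obstacle.} The delicate technical point throughout is the root-level bookkeeping when the symmetry type changes on passing from $\Gamma_{F}$ to $\Gamma_{E}$: asymmetric-over-$F$ roots stay asymmetric over $E$, but $\Gamma_{F}$-symmetric roots may either remain symmetric (possibly switching between unramified and ramified subtype, according to how the quadratic extension $E/F$ sits inside $F_{\alpha}/F_{\pm\alpha}$) or become $\Gamma_{E}$-asymmetric when the nontrivial element of $\mathrm{Gal}(F_{\alpha}/F_{\pm\alpha})$ restricts to the nontrivial element of $\mathrm{Gal}(E/F)$. Verifying that $\chi_{\alpha}\circ\mathrm{Nm}_{E_{\alpha}/F_{\alpha}}$ and $\zeta_{\alpha}\circ\mathrm{Nm}_{E_{\alpha}/F_{\alpha}}$ satisfy the correct minimal-ramification conditions and the right restriction-on-norm-one-elements properties in each case, and that the orbit-sum defining $\zeta_{\mathrm{BC},\,S\times_{F}E}$ collapses cleanly to $\zeta_{S}\circ\mathrm{Nm}$, is where the bulk of the work lies.
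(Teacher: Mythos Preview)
Your approach is essentially the paper's: the heart of both is the compatibility
\[
\zeta_{\mathrm{BC},\,S\times_{F}E}\;=\;\zeta_{S}\circ\mathrm{Nm}_{S(E)/S(F)},
\]
which the paper proves by exactly the root-by-root orbit analysis you describe (splitting according to whether the $\Gamma_F$-orbit of $\alpha$ stays a single $\Gamma_E$-orbit or breaks in two, and within each case whether the symmetry type changes). Your framing in terms of functoriality is slightly more explicit than the paper's, which only checks well-definedness on equivalence classes, but the technical content is the same.

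There is, however, a genuine error in your Step~1. You claim that $\chi_{\mathrm{BC}}$ is a \emph{minimally ramified} $\chi$-datum, relying on ``$\mathrm{Nm}_{E_{\alpha}/F_{\alpha}}$ preserves the unramified/tamely ramified nature of a character.'' This fails precisely because the symmetry type of $\alpha$ can change in passing from $F$ to $E$. For instance, when $\alpha$ is symmetric ramified over $F$ but becomes symmetric \emph{unramified} over $E$ (this occurs when $E_{\alpha}/F_{\pm\alpha}$ is biquadratic with $E_{\pm\alpha}/F_{\pm\alpha}$ ramified; see the tables in \S\ref{table}), minimal ramification over $E$ demands that $\chi_{\mathrm{BC},\alpha}$ be unramified, yet $\chi_{\alpha}\circ\mathrm{Nm}_{E_{\alpha}/F_{\alpha}}$ inherits the tame ramification of $\chi_{\alpha}$. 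The paper flags exactly this point in the remark immediately following the proof: ``One bad news is that $\chi_{\mathrm{BC}}$ may fail to be minimally ramified.'' The fix is not to claim minimal ramification but to observe that the output tuple is equivalent (via a suitable $\zeta$-datum twist) to one with minimally ramified $\chi$-data; the well-definedness statement is then about equivalence classes, which is all the lemma asserts. Your compatibility identity is precisely what makes this adjustment coherent.
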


\begin{proof}

It is enough to treat the case when the $F$-isomorphism of $S\rightarrow S$ is the identity map. General cases could be reduced to this case by composing the isomorphism over $F$. Recall $(S,\widehat{j},\chi,\theta)$ and $(S,\widehat{j},\chi',\theta')$ are equivalent, if and only if:
$$\theta=\theta'\cdot {\zeta_{S}}^{-1}_{\chi',\chi},$$
where $\zeta_S$ is a character $S(F)\rightarrow \mathbb{C}^{\times}$ constructed from the zeta datum $\{\zeta_{\alpha}\}$ (product of characters depending on whether the root is symmetric or asymmetric) determined by the ratio of $\chi$ and $\chi'$.
Composing both sides with $\mathrm{Nm}_{S(E)/S(F)}$, we need to prove the following identity in order to check the definition of base change is well-defined:
\begin{equation}\label{compatibility}
\begin{aligned}
{\zeta_{S}}_{\chi_{\mathrm{BC}},{\chi'}_{\mathrm{BC}}}={\zeta_{S}}_{\chi,\chi'}\circ \mathrm{Nm}_{S(E)/S(F)}.
\end{aligned}
\end{equation}
We only need to check this identity for a fixed $\Gamma_{F}\times \{\pm 1\}$ orbit of $\alpha$, which will imply the whole identity by taking the product of components of these orbits.

\begin{enumerate}
\item If the $\Gamma_{F}$ orbit of $\alpha$ remains a single $\Gamma_E$ orbit, then we have the following diagram
\begin{equation}
\begin{aligned}
\xymatrix{
  E_{\alpha} \ar@{-}[d]_{/2} \ar@{-}[r]^{/2} & E_{\pm\alpha} \ar@{-}[d]_{/2} \ar@{-}[r] & E \ar@{-}[d] \\
  F_{\alpha} \ar@{-}[r]^{/2} & F_{\pm\alpha} \ar@{-}[r] & F,   }
\end{aligned}
\end{equation}
or
\begin{equation}
\begin{aligned}
\xymatrix{
  E_{\alpha} \ar@{-}[d]_{/2} \ar@{-}[r]_{=} & E_{\pm\alpha} \ar@{-}[d] \ar@{-}[r] & E \ar@{-}[d] \\
  F_{\alpha} \ar@{-}[r]^{=} & F_{\pm\alpha} \ar@{-}[r] & F. }
\end{aligned}
\end{equation}
Notice that we have a correspondence of orbits $(1\leftrightarrow 1)$
$$\Gamma_F\cdot\alpha/\Gamma_F\leftrightarrow \Gamma_F\cdot\alpha/\Gamma_E$$ when $\alpha$ is symmetric, and $$\pm\Gamma_F\cdot\alpha/(\Gamma_F\times \{\pm 1\})\leftrightarrow \pm\Gamma_F\cdot\alpha/(\Gamma_E\times\{\pm1\})$$ when $\alpha$ is asymmetric.

Hence the identity which we want to prove in this case ($\alpha$ component of \Cref{compatibility}) is given by
$$\frac{\chi_{\alpha}}{\chi'_{\alpha}}(\iota_{F_{\alpha}}\alpha(\mathrm{Nm}_{S(E)/S(F)}t))=\frac{\chi_{\mathrm{BC},\alpha} }{\chi'_{\mathrm{BC},\alpha}}(\iota_{E_{\alpha}}\alpha(t)).$$

In this case, we have $[E_{\alpha}:F_{\alpha}]=2$. We also have the following commutative diagram:
$$\xymatrix{
  S(E) \ar[d]_{\mathrm{Nm}_{S(E)/S(F)}} \ar[r]^{\alpha}
                & E_{\alpha}^{\times} \ar[d]^{\mathrm{BC}_{\alpha}=\mathrm{Nm}_{E_{\alpha}/F_{\alpha}}} \\
  S(F)  \ar[r]_{\alpha}
                & F_{\alpha}^{\times}.}$$

In this case, the symmetric property of $\alpha$ is the same over $E$ and $F$. Based on the commutativity of the above diagram and the fact $\iota_{F_{\alpha}}\mathrm{Nm}_{E_\alpha/F_\alpha}=\mathrm{Nm}_{E_\alpha/F_\alpha}\iota_{E_\alpha}$, one has the following identity:
$$\iota_{F_{\alpha}}\alpha(\mathrm{Nm}_{S(E)/S(F)}t))=\mathrm{Nm}_{E_{\alpha}/F_{\alpha}}(\iota_{E_{\alpha}}\alpha(t)).$$
Composing both sides with $\frac{\chi_\alpha}{\chi'_{\alpha}}$, we have
\begin{equation}
\begin{aligned}
\frac{\chi_{\alpha}}{\chi'_{\alpha}}(\iota_{F_{\alpha}}\alpha(\mathrm{Nm}_{S(E)/S(F)}t))&=\frac{\chi_{\alpha}\circ \mathrm{Nm}_{E_{\alpha}/F_{\alpha}}}{\chi'_{\alpha}\circ \mathrm{Nm}_{E_{\alpha}/F_{\alpha}}}(\iota_{E_{\alpha}}\alpha(t))\\
&=\frac{\chi_{\mathrm{BC},\alpha}}{\chi'_{\mathrm{BC},\alpha}}(\iota_{E_{\alpha}}\alpha(t)),
\end{aligned}
\end{equation}

where $\iota_{F_{\alpha}}$ and $\iota_{E_{\alpha}}$ are identity maps when $\alpha$ is asymmetric over $E$ and $F$, and are isomorphisms $\iota_{F_{\alpha}}: {F_{\alpha}^{1}}_{F_{\alpha}/F_{\pm\alpha}}\cong F_{\alpha}^{\times}/F_{\pm\alpha}^{\times}$, $\iota_{E_{\alpha}}: {E_{\alpha}^{1}}_{E_{\alpha}/E_{\pm\alpha}}\cong E_{\alpha}^{\times}/F_{\pm\alpha}^{\times}$, when $\alpha$ is symmetric over $E$ and $F$.

\item If the $\Gamma_{F}$ orbit of $\alpha$ breaks into two $\Gamma_E$ orbits, that is $\Gamma_F\cdot\alpha=\Gamma_E\cdot\alpha\sqcup W_E s\alpha$ for $s\in \Gamma_F-\Gamma_E$, with projection $\sigma\in \mathrm{Gal}(E/F)$, then we have the following cases:

\begin{enumerate}
\item The symmetry of $\alpha$ is different over $E$ and $F$.

The only possibility is that $\alpha$ is symmetric over $F$, but asymmetric over $E$. In this case, one can choose $s$ to satisfy $s\cdot\alpha=-\alpha$. In this case, we have $E_{\alpha}=F_{\alpha}$ and the following diagram

\begin{equation}
\begin{aligned}
\xymatrix{
  E_{\alpha} \ar@{-}[d]_{=} \ar@{-}[r]_{=} & E_{\pm\alpha} \ar@{-}[d]_{/2} \ar@{-}[r] & E \ar@{-}[d] \\
  F_{\alpha} \ar@{-}[r]^{/2} & F_{\pm\alpha} \ar@{-}[r] & F   .}
\end{aligned}
\end{equation}

Notice that we have a correspondence of orbits $(1\leftrightarrow 1)$
$$\Gamma_F\cdot\alpha/\Gamma_F\leftrightarrow (\Gamma_F\cdot\alpha=\Gamma_E\cdot\alpha\sqcup \Gamma_E s\cdot\alpha)/\Gamma_E \times \{\pm1\}.$$

Hence the identity which we want to prove in this case ($\alpha$ component of \Cref{compatibility}) is given by
$$\frac{\chi_{\alpha}}{\chi'_{\alpha}}(\iota_{F_{\alpha}}\alpha(\mathrm{Nm}_{S(E)/S(F)}t))=\frac{\chi_{\mathrm{BC},\alpha} }{\chi'_{\mathrm{BC},\alpha}}(\alpha(t)).$$

In this case, we have the following commutative diagram
    $$\xymatrix{
  S(E) \ar[d]_{\mathrm{Nm}_{S(E)/S(F)}} \ar[r]^{\alpha}
                & E_{\alpha}^{\times} \ar[rrd]^{\mathrm{BC}_{\alpha}=id} & &\\
  S(F)  \ar[r]^{\alpha}
                & F_{\alpha}^{1} \ar[r]^{\iota_{F_{\alpha}}}_{\cong}  & F_{\alpha}^{\times}/F_{\pm\alpha}^{\times}    & F_{\alpha}^{\times}\ar[l]      ,}$$
that is,
$$\alpha(\mathrm{Nm}_{S(E)/S(F)}(t))=\iota_{F_{\alpha}}^{-1}\alpha(t).$$
Composing both sides with $\frac{\chi_\alpha}{\chi'_{\alpha}}$, we have
$$\frac{\chi_{\alpha}}{\chi'_{\alpha}}(\alpha(\mathrm{Nm}_{S(E)/S(F)}t))=\frac{\chi_{\alpha}}{\chi'_{\alpha}}(\iota_{F_{\alpha}}^{-1}\alpha(t)).$$

Notice that in this case we have $$\chi_{\mathrm{BC},\alpha}=\chi_\alpha\circ \mathrm{Nm}_{E_\alpha/F_\alpha}=\chi_{\alpha}\circ id,$$ which implies that
$$\frac{\chi_{\alpha}}{\chi'_{\alpha}}(\iota_{F_{\alpha}}\alpha(\mathrm{Nm}_{S(E)/S(F)}t))=\frac{\chi_{\mathrm{BC},\alpha} }{\chi'_{\mathrm{BC},\alpha}}(\alpha(t)).$$

\item The symmetry of $\alpha$ is the same over $E$ and $F$.

In this case, we have $E_{\alpha}=E_{s\alpha}=F_{\alpha}$ and the following diagram:

\begin{equation}
\begin{aligned}
\xymatrix{
  E_{\alpha} \ar@{-}[d]_{=} \ar@{-}[r]^{/2} & E_{\pm\alpha} \ar@{-}[d]_{=} \ar@{-}[r] & E \ar@{-}[d] \\
  F_{\alpha} \ar@{-}[r]^{/2} & F_{\pm\alpha} \ar@{-}[r] & F ,  }
\end{aligned}
\end{equation}

or
\begin{equation}
\begin{aligned}
\xymatrix{
  E_{\alpha} \ar@{-}[d]_{=} \ar@{-}[r]_{=} & E_{\pm\alpha} \ar@{-}[d] \ar@{-}[r] & E \ar@{-}[d] \\
  F_{\alpha} \ar@{-}[r]^{=} & F_{\pm\alpha} \ar@{-}[r] & F  . }
\end{aligned}
\end{equation}
Notice that we have a correspondence of orbits $(1\leftrightarrow 2)$
$$\Gamma_F\cdot\alpha/\Gamma_F\leftrightarrow \Gamma_F\cdot\alpha/\Gamma_E=\Gamma_E\cdot{\alpha}\sqcup\Gamma_E\cdot{s\alpha}/\Gamma_E,$$
when $\alpha$ is symmetric and $$\pm\Gamma_F\cdot\alpha/(\Gamma_F\times \{\pm 1\})\leftrightarrow \pm\Gamma_F\cdot\alpha/(\Gamma_E\times\{\pm1\})=\pm\Gamma_E\cdot{\alpha}\sqcup\pm\Gamma_E\cdot{s\alpha}/\Gamma_E\times\{\pm1\},$$
when $\alpha$ is asymmetric.

Hence the identity which we want to prove in this case ($\alpha$ component of \Cref{compatibility}) is given by

$$\frac{\chi_{\alpha}}{\chi'_{\alpha}}(\iota_{F_{\alpha}}\alpha(\mathrm{Nm}_{S(E)/S(F)}t))=\frac{\chi_{\mathrm{BC},\alpha} }{\chi'_{\mathrm{BC},\alpha}}(\iota_{E_{\alpha}}\alpha(t))\cdot \frac{\chi_{\mathrm{BC},s\alpha} }{\chi'_{\mathrm{BC},s\alpha}}(\iota_{E_{s\alpha}}(s\alpha)(t)).$$

In this case, we have the following commutative diagram
$$\xymatrix{
  S(E) \ar[d]_{\mathrm{Nm}_{S(E)/S(F)}} \ar[r]^{\alpha\cdot s\alpha}
                & E_{\alpha}^{\times} \ar[d]^{\mathrm{BC}_{\alpha}=id} \\
  S(F)  \ar[r]_{\alpha}
                & F_{\alpha}^{\times},}$$

that is, we have
$$\alpha(\mathrm{Nm}_{S(E)/S(F)}(t))=(\alpha\cdot s\alpha)(t).$$
Composing both sides with $\frac{\chi_\alpha}{\chi'_{\alpha}}\circ \iota_{F_{\alpha}}$, we have
$$\frac{\chi_{\alpha}}{\chi'_{\alpha}}(\iota_{F_{\alpha}}\alpha(\mathrm{Nm}_{S(E)/S(F)}(t))=\frac{\chi_{\alpha}}{\chi'_{\alpha}}(\iota_{F_{\alpha}}\alpha(t))\cdot \frac{\chi_{\alpha}}{\chi'_{\alpha}}(\iota_{F_{\alpha}}(s\alpha)(t)).$$

Notice that in this case, we have
$$\chi_{\mathrm{BC},\alpha}=\chi_{\mathrm{BC},s\alpha}=\chi_\alpha\circ id=\chi_{\alpha},$$ which implies that
$$\frac{\chi_{\alpha}}{\chi'_{\alpha}}(\iota_{F_{\alpha}}\alpha(\mathrm{Nm}_{S(E)/S(F)}t))=\frac{\chi_{\mathrm{BC},\alpha} }{\chi'_{\mathrm{BC},\alpha}}(\iota_{E_{\alpha}}\alpha(t))\cdot \frac{\chi_{\mathrm{BC},s\alpha} }{\chi'_{\mathrm{BC},s\alpha}}(\iota_{E_{s\alpha}}(s\alpha)(t)).$$
\end{enumerate}
\end{enumerate}
\end{proof}

\begin{proposition}\label{basechangesame}
A base change of supercuspidal $L$-packet datum sending $(S,\widehat{j},\chi,\mu)$ to $(S\times_F E,\widehat{j},\chi_{\mathrm{BC}},\mu\circ \mathrm{Nm}_{S(E)/S(F)})$ coincides with the usual base change of the Langlands parameter sending $\phi$ to $\phi|_{W_E}$.
\end{proposition}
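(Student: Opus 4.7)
My plan is to unwind the definition of the Langlands parameter attached to a regular supercuspidal $L$-packet datum and match each piece against its analogue after restriction to $W_E$. Recall that to a datum $(S,\widehat{j},\chi,\mu)$ one attaches the parameter
\[
\phi_{(S,\widehat{j},\chi,\mu)} \;=\; {}^{L}j_{\chi}\circ \phi_{\mu},
\]
where $\phi_{\mu}\colon W_F\to{}^{L}S$ is the Langlands parameter of the character $\mu$ of $S(F)$ and ${}^{L}j_{\chi}\colon{}^{L}S\to{}^{L}G$ is the admissible $L$-embedding built out of $\chi$-data via the formula \eqref{extensionofl}, with cocycle $r_{\chi}$ given by \eqref{basepoint}. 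The assertion amounts to the commutativity (up to $\widehat{G}$-conjugacy) of the diagram
\[
\xymatrix{
W_E \ar@{^{(}->}[r] \ar[d]_{\phi_{\mu\circ\mathrm{Nm}}} & W_F \ar[d]^{\phi_{\mu}} \\
{}^{L}(S_E) \ar[d]_{{}^{L}j_{\chi_{\mathrm{BC}}}} & {}^{L}S \ar[d]^{{}^{L}j_{\chi}} \\
{}^{L}(G_E) \ar[r] & {}^{L}G.
}
\]

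First I would handle the torus side. By local Langlands for tori, which is functorial under restriction of scalars and norm maps, the parameter of $\mu\circ\mathrm{Nm}_{S(E)/S(F)}\colon S(E)\to\mathbb{C}^{\times}$ is exactly $\phi_{\mu}|_{W_E}$. This reduces the claim to identifying the two $L$-embeddings
\[
{}^{L}j_{\chi}\big|_{W_E} \qquad \text{and} \qquad {}^{L}j_{\chi_{\mathrm{BC}}}.
\]
Both are admissible extensions of the same $\widehat{j}\colon\widehat{S}\to\widehat{G}$, so by the torsor description \eqref{embeddingcharacter} they differ by a class in $H^{1}(W_E,\widehat{S})$; it suffices to show this class is trivial, i.e., that the cocycle $r_{\chi}\cdot n(s_{(\cdot)})$ restricted to $W_E$ agrees with $r_{\chi_{\mathrm{BC}}}\cdot n(s_{(\cdot)})$ up to a coboundary. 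Since the section $n(s_{w})$ depends only on the image of $w$ in the Weyl group, the nontrivial content is the equality (up to a coboundary) of $r_{\chi}|_{W_E}$ with $r_{\chi_{\mathrm{BC}}}$.

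The core computation is therefore to restrict the explicit formula \eqref{basepoint}
\[
r_{\chi}(w) \;=\; \prod_{\substack{\alpha\in\Phi/\Gamma_F\times\{\pm1\} \\ g_i\in\Gamma_{F_{\pm\alpha}}\backslash\Gamma_F}} \chi_{\alpha}\bigl(v_{0}(u_{g_i}(w))\bigr)^{\widehat{g_i^{-1}\alpha}}, \qquad w\in W_E,
\]
and rewrite it as the analogous product indexed by $\Gamma_E$-orbits of roots with coset representatives in $\Gamma_{E_{\pm\alpha}}\backslash\Gamma_E$. For this I would choose coset representatives compatibly: a set of representatives for $\Gamma_{F_{\pm\alpha}}\backslash\Gamma_F$ is obtained by combining representatives for $\Gamma_{E_{\pm\alpha}}\backslash\Gamma_E$ with representatives for $\Gamma_{E}\backslash\Gamma_F$, and similarly for $W_{F_{\alpha}}\backslash W_{F_{\pm\alpha}}$ versus $W_{E_{\alpha}}\backslash W_{E_{\pm\alpha}}$. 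With these choices, for $w\in W_E$ the auxiliary maps $u_{g_i}$ and $v_0$ decompose according to the three orbit cases (1), (2a), (2b) enumerated in Lemma \ref{naivebasechange}. In case (1) a $\Gamma_F$-orbit remains a $\Gamma_E$-orbit and $\chi_{\alpha}(v_0(u_{g_i}(w)))$ becomes $\chi_{\alpha}\circ\mathrm{Nm}_{E_{\alpha}/F_{\alpha}}(\,\cdot\,)=\chi_{\mathrm{BC},\alpha}(\,\cdot\,)$; in case (2b) a single $\Gamma_F$-orbit splits into two $\Gamma_E$-orbits $\{\alpha, s\alpha\}$ which together reproduce one factor of $\chi_{\alpha}$; in case (2a) one passes from a symmetric root to an asymmetric one and the corresponding $\chi_{\mathrm{BC},\alpha}=\chi_{\alpha}$ factor matches on the nose. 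These are precisely the combinatorial identities already verified in the proof of Lemma \ref{naivebasechange}, applied to the parallel formula defining $r_{\chi}$ rather than the $\zeta$-character.

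The main obstacle I anticipate is bookkeeping rather than conceptual: one has to check that the choice of gauge, the cochain $s_{p/q}$ correcting different gauges, and the sections $n(s_{w})$ of the Weyl group all restrict compatibly from $W_F$ to $W_E$ so that the identity $r_{\chi}|_{W_E}=r_{\chi_{\mathrm{BC}}}$ holds on the nose (and not merely up to an unwanted coboundary that would twist ${}^{L}j_{\chi_{\mathrm{BC}}}$ by a nontrivial character). Once this is verified orbit by orbit, composing with the identification $\phi_{\mu}|_{W_E}=\phi_{\mu\circ\mathrm{Nm}_{S(E)/S(F)}}$ on the torus side yields the claimed equality of parameters.
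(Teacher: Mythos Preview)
Your reduction is exactly the paper's: split $\phi_{(S,\widehat{j},\chi,\mu)}={}^{L}j_{\chi}\circ\phi_{\mu}$, use local Langlands for tori to identify $\phi_{\mu}|_{W_E}$ with $\phi_{\mu\circ\mathrm{Nm}}$, and then reduce everything to the single lemma $r_{\chi,p}|_{W_E}=r_{\chi_{\mathrm{BC}},p}$. The paper does not prove this lemma by the orbit-by-orbit combinatorics you outline; it simply cites \cite[Proposition~5.15.3]{Kal19a} (where Kaletha proves the general restriction compatibility of $r_{\chi}$ along $S\hookrightarrow\mathrm{Res}_{E/F}S_E$) and \cite[Theorem~64]{Sch21}.

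Your proposed direct computation is in principle the content of those references, so the route is legitimate, but your description of what happens in each case is not quite accurate. The factors $\chi_{\alpha}(v_{0}(u_{g_i}(w)))$ are evaluated on elements of $W_{F_{\alpha}}$ via local class field theory, not on field elements, so in case~(1) one does not literally see $\chi_{\alpha}\circ\mathrm{Nm}_{E_{\alpha}/F_{\alpha}}$ appearing; rather one must track how the double-coset functions $u_{g_i}$ and $v_{0}$ for $\Gamma_{F_{\pm\alpha}}\backslash\Gamma_F$ decompose when one refines by $\Gamma_{E_{\pm\alpha}}\backslash\Gamma_E$, and the norm arises only after unwinding the Artin map. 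The case analysis of Lemma~\ref{naivebasechange} concerns the character $\zeta_S$ built from $\zeta$-data, which has a different (and simpler) shape than $r_{\chi}$, so you cannot quite recycle those identities verbatim. If you want to avoid the citation, you should follow Kaletha's argument in \cite{Kal19a}, which handles the gauge and coboundary issues you flag at the end.
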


Before we give a proof of this proposition, we have the following remarks on the base change maps of regular supercuspidal parameters.

\begin{remark}
 One bad news is that $\chi_{\mathrm{BC}}$ may fail to be minimally ramified. However, one can still find a representative in the equivalence class with minimally ramified $\chi$-data over $E$ by adjusting the character as in \Cref{equivalenceclass}. More precisely, one can find a 4-tuple $(T,\widehat{j},\chi_{\theta},\theta)$ where $\chi_{\theta}$ is the minimally ramified $\chi$-data determined by $\theta$ such that the tuple is equivalent to $(S\times_F E,\widehat{j},\chi_{\mathrm{BC}},\mu\circ \mathrm{Nm}_{S(E)/S(F)})$. The explicit relation between $\theta$ and $\mu\circ\mathrm{Nm}_{S(E)/S(F)}$ is described later in \Cref{basechangechi}.
\end{remark}

\begin{remark}
The base change map in the usual sense $\mathrm{BC}:\Phi(G(F))\rightarrow \Phi(G(E))$ given by $\mathrm{BC}(\phi)=\phi|_{W_E}$ is well-defined on the set of equivalence classes of all the Langlands parameters. However, for a discrete Langlands parameter $\phi$ of a regular supercuspidal representation of $G(F)$, $\mathrm{BC}(\phi)$ may not be a Langlands parameter of a regular supercuspidal representation of $G(E)$. Hence the base change of regular supercuspidal $L$-packet datum is only meaningful when the datum after base change is still a regular supercuspidal parameter. Nonetheless, for a given regular supercuspidal parameter $\phi$, if $\mathrm{BC}(\tilde{\phi})=\phi$, then $\tilde{\phi}$ is a regular supercuspidal parameter by \Cref{bcregular}.
\end{remark}

To prove \Cref{basechangesame}, we need the following lemma.
\begin{lemma}
Let $p:\Phi(G,S)\rightarrow \{\pm 1\}$ be any gauge. The cocycle in $r_{\chi,p}:W_F\rightarrow \widehat{S}$ \Cref{basepoint} satisfies $r_{\chi,p}|_{W_E}=r_{\chi_{\mathrm{BC}},p}$
\end{lemma}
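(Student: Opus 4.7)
The plan is to unwind the defining formula \Cref{basepoint} for $r_{\chi,p}$ and verify factor-by-factor that, upon restriction to $w\in W_E\subset W_F$, the product over $(\Gamma_F\times\{\pm 1\})$-orbits of roots re-groups into the product over $(\Gamma_E\times\{\pm 1\})$-orbits defining $r_{\chi_{\mathrm{BC}},p}$. The gauge $p$ is a function on $\Phi$ independent of the ground field, so it plays the same role on both sides; only the indexing sets (orbits and coset representatives) change.

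First I would choose coset representatives $\{g_i\}$ of $\Gamma_{F_{\pm\alpha}}\backslash \Gamma_F$ compatibly with representatives $\{h_j\}$ of $\Gamma_{E_{\pm\alpha}}\backslash \Gamma_E$, following the case distinction in the proof of \Cref{naivebasechange}. Whenever $\Gamma_E\not\subset \Gamma_{F_{\pm\alpha}}$, which covers Case 1 and sub-case 2a of that proof, one has $\Gamma_F=\Gamma_E\cdot \Gamma_{F_{\pm\alpha}}$, so the natural map $\Gamma_{E_{\pm\alpha}}\backslash \Gamma_E\to \Gamma_{F_{\pm\alpha}}\backslash\Gamma_F$ is a bijection and the representatives $\{g_i\}$ can be taken inside $\Gamma_E$. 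When $\Gamma_E\subset \Gamma_{F_{\pm\alpha}}$ (sub-case 2b, where the $\Gamma_F$-orbit splits and contributes two $E$-factors), one picks $\tau\in \Gamma_F-\Gamma_E$ with $\tau\in \Gamma_{F_{\pm\alpha}}$ and sets $\{g_i\}=\{h_j\}\sqcup\{\tau h_j\}$.

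Second I would track $u_{g_i}(w)$ and $v_0(u_{g_i}(w))$ under these choices. For $w\in W_E$ and $g_i\in W_E$, one has $u_{g_i}(w)\in W_E\cap W_{F_{\pm\alpha}}=W_{E_{\pm\alpha}}$. The map $v_0^F:W_{F_{\pm\alpha}}\to W_{F_\alpha}$ is built from representatives $\{1,s\}$ of $W_{F_\alpha}\backslash W_{F_{\pm\alpha}}$; in sub-case 1a of \Cref{naivebasechange} one picks $s\in W_{E_{\pm\alpha}}-W_{E_\alpha}$ so that $v_0^F|_{W_{E_{\pm\alpha}}}=v_0^E$, whereas in the symmetric half of sub-case 2 one has $E_\alpha=F_\alpha$ and $v_0^F(u_{g_i}(w))=u_{g_i}(w)=v_0^E(u_{g_i}(w))$ tautologically. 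In the asymmetric sub-cases both $v_0^F$ and $v_0^E$ are identity maps.

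Third I would pass from $\chi_\alpha$ to $(\chi_{\mathrm{BC}})_\alpha$ via local class field theory: the character $\chi_\alpha$, viewed as a character of $W_{F_\alpha}$, restricts on $W_{E_\alpha}\subset W_{F_\alpha}$ to the character corresponding under LCFT to $\chi_\alpha\circ \mathrm{Nm}_{E_\alpha/F_\alpha}=(\chi_{\mathrm{BC}})_\alpha$; when $E_\alpha=F_\alpha$ this is automatic. Combining this with the equivariance $\chi_{s\alpha}=\chi_\alpha\circ s^{-1}$ of any $\chi$-data, the identity $r_{\chi,p}|_{W_E}=r_{\chi_{\mathrm{BC}},p}$ drops out factor-by-factor in each of the four subcases. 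Cases 1a and 1b match directly (in 1b both factors are trivial since $\chi_\alpha=(\chi_{\mathrm{BC}})_\alpha=1$ for asymmetric minimally ramified data). In sub-case 2a the two $\Gamma_E$-orbits $\Gamma_E\alpha$ and $\Gamma_E(-\alpha)$ coincide as $(\Gamma_E\times\{\pm 1\})$-orbits, and the unique resulting $E$-factor matches the unique $F$-factor. In sub-case 2b the single $F$-factor splits into two $E$-factors: the $\alpha$-factor (indexed by $\{h_j\}$) and the $s\alpha$-factor (indexed by $\{\tau h_j\}$), using the identity $\widehat{(\tau h_j)^{-1}\alpha}=\widehat{h_j^{-1}s\alpha}$ and the translation $(\chi_{\mathrm{BC}})_{s\alpha}=\chi_{s\alpha}=\chi_\alpha\circ s^{-1}$. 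The main obstacle is this last piece of combinatorial bookkeeping --- ensuring that the coset representatives, the exponents $\widehat{g_i^{-1}\alpha}$, and the character values all re-group consistently when a single $F$-orbit splits into two $E$-orbits. The remaining ingredients, namely the LCFT compatibility under norm and the case analysis from \Cref{naivebasechange}, are already available.
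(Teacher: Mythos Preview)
The paper does not prove this lemma directly: it simply invokes \cite[Proposition 5.15.3]{Kal19a} (applied with the torus taken to be $\mathrm{Res}_{E/F}S_E$ and the subtorus $S$) and \cite[Theorem 64]{Sch21}. Your factor-by-factor unwinding of \Cref{basepoint} along the orbit analysis of \Cref{naivebasechange} is therefore a genuinely different, self-contained route. The key ingredients you identify---choosing coset representatives in $\Gamma_E$ whenever a single $(\Gamma_F\times\{\pm1\})$-orbit corresponds to a single $(\Gamma_E\times\{\pm1\})$-orbit, and the LCFT fact that $\chi_\alpha|_{W_{E_\alpha}}$ corresponds to $(\chi_{\mathrm{BC}})_\alpha=\chi_\alpha\circ\mathrm{Nm}_{E_\alpha/F_\alpha}$---are exactly right, and Cases 1 and 2a go through. (Your side condition ``$\Gamma_E\not\subset\Gamma_{F_{\pm\alpha}}$'' is not what actually characterizes those cases---e.g.\ for $SL_2$ with $S=U_{1,E/F}$ one has $F_{\pm\alpha}=F$---but the map $\Gamma_{E_{\pm\alpha}}\backslash\Gamma_E\to\Gamma_{F_{\pm\alpha}}\backslash\Gamma_F$ is always injective, and in Cases 1 and 2a a cardinality count shows it is bijective, so representatives in $\Gamma_E$ exist regardless.)

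Your treatment of sub-case 2b, however, does not go through as written. There $E_{\pm\alpha}=F_{\pm\alpha}$, i.e.\ $\Gamma_{F_{\pm\alpha}}=\Gamma_E\cap\Gamma_{F_{\pm\alpha}}$, so the containment is $\Gamma_{F_{\pm\alpha}}\subset\Gamma_E$, the \emph{reverse} of what you wrote; in particular $\Gamma_{F_{\pm\alpha}}\cap(\Gamma_F-\Gamma_E)=\varnothing$ and your choice of $\tau$ is impossible. The fix is straightforward: take any $\tau\in\Gamma_F-\Gamma_E$, set $s=\tau^{-1}$, and on the $E$-side choose representatives $\{h_j\}$ for $\Gamma_{E_{\pm\alpha}}\backslash\Gamma_E$ and $\{h_j'\}$ for $\Gamma_{E_{\pm s\alpha}}\backslash\Gamma_E=(\tau^{-1}\Gamma_{F_{\pm\alpha}}\tau)\backslash\Gamma_E$. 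Then $\{h_j\}\sqcup\{\tau h_j'\}$ is a full set of representatives for $\Gamma_{F_{\pm\alpha}}\backslash\Gamma_F$, the exponent matches via $\widehat{(\tau h_j')^{-1}\alpha}=\widehat{(h_j')^{-1}s\alpha}$, and the character values match via $(\chi_{\mathrm{BC}})_{s\alpha}=\chi_{s\alpha}=\chi_\alpha\circ s^{-1}$ together with $v_0^F(u^F_{\tau h_j'}(w))=\tau\cdot v_0^E(u^E_{h_j'}(w))\cdot\tau^{-1}$ for $w\in W_E$. With this correction your argument is complete; the advantage over the paper's proof is that it is entirely internal to the Langlands--Shelstad formalism already set up in \Cref{Lembedding}.
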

\begin{proof}This is a direct consequence of \cite[Proposition 5.15.3]{Kal19a} by taking $S$ to be $\mathrm{Res}_{E/F}S_E$ and $T$ to be $S$. This is also proved in \cite[Theorem 64]{Sch21}.
\end{proof}

\begin{proof}[Proof of \Cref{basechangesame}]
Let $\phi_{(S,\mu)}$ denote the Langlands parameter of the regular supercuspidal representation associated to the tame elliptic pair $(S,\mu)$. As described in \Cref{Lembedding}, $\phi$ is of the form:
$$\phi_{(S,\mu)}(w)=\widehat{j}(\phi_{\mu}(w)r_{\chi}(w)n(s_w))\rtimes w.$$
Combining the above lemma with local Langlands correspondence for tori, we have
$$\phi_{(S,\mu)}|_{W_E}(w)=\widehat{j}(\phi_{\mu\circ\mathrm{Nm}_{S(E)/S(F)}}(w)r_{\chi_{\mathrm{BC}}}(w)n(s_w))\rtimes w,$$
which means the supercuspidal $L$-packet datum associated to $\phi|_{W_E}$ is $(S\times_F E,\widehat{j},\chi_{\mathrm{BC}},\mu\circ \mathrm{Nm}_{S(E)/S(F)})$.
In other words, we have the following commutative diagram:
$$\xymatrix{
  \prescript{L}{}{S} \ar[d]_{\prescript{L}{}{j}=(\widehat{j},\chi)} \ar[r]^{i~~~~~~~}
                & \prescript{L}{}{(\mathrm{Res}_{E/F}S_E)}  \ar[d]^{\prescript{L}{}{j_E}=(\widehat{j},\chi_{\mathrm{BC}})}  \\
  \prescript{L}{}{G} \ar[r]^{i~~~~~~~}
                &   \prescript{L}{}{(\mathrm{Res}_{E/F}G_E)} .}$$

\end{proof}

\section{Prasad's conjecture for Galois pairs}\label{section5}

We give a brief review of Prasad's conjecture relating the distinction property of an irreducible admissible representation of $G(E)$ to the base change functorial property of its Langlands parameter and certain numerical invariants on both sides. Before going into details, we give a brief introduction to certain objects defined by Prasad. For more detailed description of these objects, one can refer to Prasad's original paper \cite{Pra15}. Let $G$ be a quasi-split reductive group defined over a non-archimedean local field $F$ and $E/F$ be a quadratic extension with a non-trivial Galois involution $\sigma$.\\

\subsection{The quasi-split $F$-form $G^\mathrm{op}$ of $G$.}\label{Gop}

Notice that quasi-split $F$-forms of $G$ are classified by:
$$\mathrm{Hom}(\Gamma_F,\mathrm{Out}(G)(\overline{F}))/ \mathrm{Out}(G)(\overline{F})\mathrm{-conjugation}.$$
Let $C$ be the Chevalley involution, which is defined by Prasad \cite{Pra19a} in the general case and Adams-Vogan \cite{Ada14}\cite{AV16} independently in the archimedean case. It is a well-defined element in $\mathrm{Out}(G)(\overline{F})$. If we consider the isomorphism:
$$\mathrm{Out}(G)(\overline{F})\cong \mathrm{Aut}(\Phi_{b}),$$
where $\Phi_{b}=(X^{*}(S),\Phi(G,S),\Delta, X_{*}(S),\Phi^{\vee}(G,S),\Delta^{\vee})$ is the based root datum of $G$, then the Chevalley involution $C$ is nothing but the $-w_0$ map on $\Phi_b$, where $w_0$ is the longest Weyl element.
Since $G$ is quasi-split, we can produce a splitting of the exact sequence over $\overline{F}$:
$$1\rightarrow \mathrm{Inn}(G)\rightarrow \mathrm{Aut}(G)\rightarrow \mathrm{Out}(G)\rightarrow 1,$$
by fixing a pinning $\mathcal{P}=(G,B_0,S_0,\{X_{\alpha}\})$ of $G$ over $F$, that is, we have a natural splitting:
$$\mathrm{Out}(G)(\overline{F})\cong \mathrm{Aut}(G,B_0,S_0,\{X_{\alpha}\})\hookrightarrow \mathrm{Aut}(G)(\overline{F}).$$
In this way, we get an automorphism $c$ in $\mathrm{Aut}(G)(\overline{F})$ such that $c^2\in \mathrm{Inn}(G)(\overline{F})$. Notice that the involution $c$ does depend on the pinning we choose and a canonical choice of lift $c$ with respect to the given pinning $\mathcal{P}$ has been explicitly described by Prasad \cite[Definition 1, Example 1]{Pra19a}. Fixing a Chevalley involution $c\in \mathrm{Aut}(G)(\overline{F})$ whose image is $C$ in $\mathrm{Out}(G)(\overline{F})$, we get a quasi-split $F$-form $G^{\mathrm{op}}$ of $F$ defined by the element in $\mathrm{Hom}(\Gamma_F,\mathrm{Out}(G)(\overline{F}))$, which sends $\sigma$ to $C$.

Notice that $G^{\mathrm{op}}$ is a quasi-split $F$-form of $G$, such that $G^{\mathrm{op}}_E\cong G_{E}$ as algebraic groups over $E$. More precisely, we have the following description of $F$-rational points of $G^{\mathrm{op}}$:
$$G^{\mathrm{op}}(F)=\{g\in G(E)|\sigma(g)=c(g)\},$$
where $c$ is the canonical involution constructed in \cite[Definition 1]{Pra19a}.

\begin{example}\label{Sop}
~

Let $S$ be a torus defined over $F$, then there is an isomorphism between $S^{\mathrm{op}}$ and $\mathrm{Res}_{E/F}S_E/S$. In this case, $c(t)=t^{-1}$ and we have
$$S^{\mathrm{op}}(F)=\{t\in S(E)|\sigma(t)=t^{-1}\}=\ker (\mathrm{Nm_{S(E)/S(F)}}).$$
The following cases of tori will be frequently used later:
\begin{enumerate}
\item $S=\mathbb{G}_{m}$, then $S^{\mathrm{op}}=U_{1,E/F}$.
\item $S=U_{1,E/F}$, then $S^{\mathrm{op}}=\mathbb{G}_{m}$.
\item $S=U_{1,E_1/F}$, for a quadratic extension $E_1$ over $F$, which is different from $E$, then $S^{\mathrm{op}}=U_{1,E_2/F}$ for the quadratic extension $E_2$ over $F$, which is different from $E$ and $E_1$.
\end{enumerate}
\end{example}

The complex dual group of $G^{\mathrm{op}}$ is equipped with a natural embedding $i:\widehat{G^{\mathrm{op}}}\rightarrow \widehat{\mathrm{Res}_{E/F}G_E}$ sending $g$ to $(g,c(g))$, which extends naturally to a homomorphism of $L$-groups $i:\prescript{L}{}{G^{\mathrm{op}}}\rightarrow \prescript{L}{}{(\mathrm{Res}_{E/F}G_E)}$ sending $(g,w)$ to $(g,c(g),w)$. This functoriality realizes the base change of $G^{\mathrm{op}}$, which coincides with the one in \Cref{bc}, except that we use isomorphisms $\widehat{\mathrm{Res}_{E/F}G^{\mathrm{op}}_{E}}\cong\widehat{\mathrm{Res}_{E/F}G_E}$ sending $(g,g)$ to $(g,c(g))$.

We have already seen that one of the necessary conditions for a Langlands parameter of $G(E)$ to be a functorial lift from a Langlands parameter of $G^{\mathrm{op}}(F)$ is that the parameter is $\mathrm{Gal}(E/F)$-invariant (the Galois action is the one associated to the pair $(G(E),G^{\mathrm{op}}(F))$), which is well known in the following cases:
\begin{example}
~

\begin{enumerate}
\item If $G=U_{n,E/F}$ is the quasi-split unitary group, then $G^{\mathrm{op}}=GL_n$ is the general linear group over $F$. A Langlands parameter of $GL_n(E)$ being $\mathrm{Gal}(E/F)$-invariant with respect to $G^{\mathrm{op}}$ means $\phi_{\pi}^{\sigma}\cong\phi_{\pi}$. Since each $L$-packet of $GL_n(E)$ is a singleton, this simply means $\pi^{\sigma}\cong \pi$, that is, $\pi$ is conjugate invariant.

\item If $G=GL_n$ is the general linear group, then $G^{\mathrm{op}}=U_{n,E/F}$ is the quasi-split unitary group. Notice that the $\mathrm{Gal}(E/F)$ action on $\widehat{U_{n,E/F}}$ is the usual Galois action twisted by the Chavelley involution $c:g\mapsto J(g^{t})^{-1}J^{-1}$, where $J$ denotes the matrix $J_{i,j}=(-1)^{n-i}\delta_{i,n-j+1}\in GL_n(\mathbb{C})$. In this case, a Langlands parameter of $GL_n(E)$ being $\mathrm{Gal}(E/F)$-invariant with respect to $G^{\mathrm{op}}$ means $\phi_{\pi}^{\sigma\cdot c}\cong\phi_{\pi}$, that is, $\phi_{\pi}^{\sigma}\cong \phi_{\pi}^{c}\cong\phi_{\pi^{\vee}}$. This simply means $\pi^{\sigma}\cong \pi^{\vee}$, that is, $\pi$ is conjugate self dual.
    \end{enumerate}
In case 1, the conjugate invariant condition is also a sufficient condition for $\phi_\pi$ being a base change lift from $GL_n(F)$. However, in case 2, the conjugate self dual condition is not sufficient to determine whether $\phi_\pi$ is a (stable) base change lift from $U_{n,E/F}(F)$. One needs further information, that is, the sign of the conjugate self dual representation to determine whether it is a (stable) base change or not.
\end{example}

\subsection{The quadratic character $\omega_{G(F),E}:G(F)\rightarrow \{\pm 1\}$ associated to $E$.}\label{omegaprasad}

Let $\omega_{E/F}$ be the quadratic character of $F^{\times}$ associated to the quadratic extension $E/F$ by the local class field theory. More precisely, $\omega_{E/F}$ is given by the natural projection $F^{\times}\rightarrow F^{\times}/\mathrm{Nm}_{E/F}E^{\times}\cong \{\pm 1\}$. Although there is a uniform description of this quadratic character in terms of local class field theory, it is still convenient for us to write down the following explicit description of this quadratic character depending on whether $E=F(\sqrt{a})/F$ is ramified or not. Notice that $\omega_{E/F}:F^{\times}\rightarrow \{\pm 1\}$ could be described using the quadratic Hilbert symbol sending $t$ to $(t,a)$. More precisely,

When $E/F$ is unramified, we have
\begin{equation*}
\begin{aligned}
\omega_{E/F}:F^{\times}&\rightarrow \{\pm1\}\\
t&\mapsto (-1)^{v_F(t)},
\end{aligned}
\end{equation*}
where $v_F$ is the normalized valuation on $F$ such that $v_F(\varpi_F)=1$.

When $E/F$ is ramified, $a$ is chosen to be a uniformizer $\varpi_F$ of $F$. Then we have
\begin{equation*}
\begin{aligned}
\omega_{E/F}:F^{\times}&\rightarrow \{\pm1\}\\
a&\mapsto \omega_{E/F}(a)=\omega_{E/F}(-1),\\
t&\mapsto [t]^{\frac{q_F-1}{2}}:=\mathrm{sgn}(t~\mathrm{mod} (1+\varpi_F)), t\in O_{F}^{\times}.
\end{aligned}
\end{equation*}

Prasad's quadratic character $\omega_{G(F),E}$ is given by the following Galois cohomological construction. Consider the short exact sequence of $F$-groups:
$$1\rightarrow Z(G_{\mathrm{sc}})\rightarrow G_{\mathrm{sc}}\rightarrow G_{\mathrm{ad}}\rightarrow 1,$$
which leads to a long exact sequence:
$$1\rightarrow Z(G_{\mathrm{sc}})(F)\rightarrow G_{\mathrm{sc}}(F)\rightarrow G_{\mathrm{ad}}(F)\rightarrow H^1(F,Z_{\mathrm{sc}})\rightarrow \cdots.$$
Notice that we also have a short exact sequence
$$1\rightarrow Z(G_{\mathrm{sc}})\rightarrow S_{\mathrm{sc}}\rightarrow S_{\mathrm{ad}}\rightarrow 1$$
for any maximal torus $S$ of $G$.
By the natural identification
$$X^{*}(Z(G_{\mathrm{sc}}))\cong X^{*}(S_{\mathrm{sc}})/X^{*}(S_{\mathrm{ad}})\cong X_{*}(\widehat{S}_{\mathrm{ad}})/X_{*}(\widehat{S}_{\mathrm{sc}})\cong \pi_1(\widehat{G}_{\mathrm{ad}})\cong Z(\widehat{G}_{\mathrm{sc}}),$$
together with Tate duality, we have a natural map:
$$H^1(F,Z(\widehat{G}_{\mathrm{sc}}))\cong \mathrm{Hom}(H^1(F,Z(G_{\mathrm{sc}})),\mathbb{Q}/\mathbb{Z})\rightarrow \mathrm{Hom}_{\mathrm{cts}}(G_{\mathrm{ad}}(F),\mathbb{Q}/\mathbb{Z}).$$
By \Cref{characters}, we have a natural bijection:
$$H^1(W_F,Z(\widehat{G}))\rightarrow \mathrm{Hom}_{\mathrm{cts}}(G(F),\mathbb{C}^{\times}).$$
Moreover, we have the following commutative diagram:
$$\xymatrix{
  H^{1}(W_F,Z(\widehat{G}_{\mathrm{sc}})) \ar[d] \ar[r]
                & \mathrm{Hom}_{\mathrm{cts}}(G_{\mathrm{ad}}(F),\mathbb{C}^{\times}) \ar[d]  \\
  H^{1}(W_F,Z(\widehat{G})) \ar[r]
                & \mathrm{Hom}_{\mathrm{cts}}(G(F),\mathbb{C}^{\times}),}$$
where the first vertical map is induced by the natural $W_F$-equivariant morphism $Z(\widehat{G}_{\mathrm{sc}})\rightarrow Z(\widehat{G})$, and the second vertical map is given by the composition with the natural map $G(F)\rightarrow G_{\mathrm{ad}}(F)$.

By choosing a regular unipotent element $u$ in $\widehat{G}_{\mathrm{sc}}$, we have a map $SL_2(\mathbb{C})\rightarrow \widehat{G}_{\mathrm{sc}}$ by Jacobson-Morosov theorem. Notice that $\mathrm{Im}(Z(SL_2(\mathbb{C})))\subset C_{\widehat{G}_{\mathrm{sc}}}(u)=Z(\widehat{G}_{\mathrm{sc}})C_{\widehat{G}_{\mathrm{sc}}}^{\circ}(u)$. Hence we have a map of $W_F$-groups:
$$\{\pm 1\}\cong \mu_2(\mathbb{C})\cong Z(SL_2(\mathbb{C}))\rightarrow Z(\widehat{G}_{\mathrm{sc}}),$$
which induces a map:
$$H^1(F,\{\pm 1\})\rightarrow H^1(F,Z(\widehat{G}_{\mathrm{sc}})).$$
The image of $E$ under this map determines a quadratic character of $G_{\mathrm{ad}}(F)$ hence a quadratic character of $G(F)$ by the vertical map. This is exactly the quadratic character $\omega_{G(F),E}$.

\begin{remark}
Notice that the Cartier dual of the finite group scheme $\mu_n$ over a $p$-adic field is the constant group scheme $\mathbb{Z}/n\mathbb{Z}$. The corresponding Tate duality gives a perfect pairing:
$$H^{1}(F,\mu_n)\times H^{1}(F,\mathbb{Z}/n\mathbb{Z})\rightarrow H^{2}(F,\mathbb{G}_m)\cong \mathbb{Q}/\mathbb{Z}.$$
When $n=2$, there is a natural isomorphism between $\mu_{2}(\overline{F})$ and $\mathbb{Z}/2\mathbb{Z}\cong \mu_2(\mathbb{C})=\{\pm1\}$ as trivial $\Gamma_F$-modules, where the second isomorphism is given by exponential maps $n\mapsto \exp(n\pi \sqrt{-1} )$. Together with the isomorphism $F^{\times}/(F^{\times})^{2}\cong H^{1}(F,\mu_2)$ induced by the Kummer sequence
$$1\rightarrow \mu_2\rightarrow \mathbb{G}_m\stackrel{\cdot 2}{\rightarrow} \mathbb{G}_m\rightarrow 1,$$
it is convenient for us to use the following isomorphisms to parameterize the set of isomorphism classes of quadratic etale algebra over $F$.
$$F^{\times}/(F^{\times})^{2}\cong H^{1}(F,\mu_2)\cong H^{1}(F,\{\pm 1\})$$
\end{remark}

From the bijection in \Cref{characters}, it is easy to see that $\omega_{G(F),E}$ is trivial if $G$ is semisimple, simply connected, since $\widehat{G}$ is adjoint so that $H^{1}(W_F,Z(\widehat{G}))$ is trivial. Moreover, we have the following lemma describing the relationship between $\omega_{G(F),E}$ and $\omega_{G_{\mathrm{der}}(F),E}$.

\begin{lemma}\label{derived}
$$\omega_{G(F),E}|_{G_{\mathrm{der}}(F)}=\omega_{G_{\mathrm{der}}(F),E}.$$
\end{lemma}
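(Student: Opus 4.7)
The plan is to unwind the two sides against the explicit cohomological construction of Prasad's character recalled in \Cref{omegaprasad} and observe that both characters arise from the \emph{same} character of $G_{\mathrm{ad}}(F)$. First, I would note the invariants which the construction depends on: (i) the class of $E$ in $H^{1}(F,\{\pm 1\})$; (ii) the simply connected cover of the derived group, used to build the map $\{\pm 1\}\cong Z(SL_{2}(\mathbb{C}))\to Z(\widehat{G}_{\mathrm{sc}})$ via a regular unipotent element; and (iii) the canonical surjection $G(F)\to G_{\mathrm{ad}}(F)$ via which the resulting character of the adjoint group is pulled back. Since $G$ and $G_{\mathrm{der}}$ share the same simply connected cover and the same adjoint quotient, i.e.\ $(G_{\mathrm{der}})_{\mathrm{sc}}=G_{\mathrm{sc}}$ and $(G_{\mathrm{der}})_{\mathrm{ad}}=G_{\mathrm{ad}}$, ingredients (i) and (ii) are literally the same for $G$ and for $G_{\mathrm{der}}$. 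Hence the two constructions produce a common character $\omega\in\mathrm{Hom}_{\mathrm{cts}}(G_{\mathrm{ad}}(F),\mathbb{C}^{\times})$.

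Next, I would pull $\omega$ back along the two natural surjections $G(F)\twoheadrightarrow G_{\mathrm{ad}}(F)$ and $G_{\mathrm{der}}(F)\twoheadrightarrow G_{\mathrm{ad}}(F)$. By definition, these pullbacks are $\omega_{G(F),E}$ and $\omega_{G_{\mathrm{der}}(F),E}$ respectively. The key observation is the commutativity of the triangle
$$
\xymatrix{
G_{\mathrm{der}}(F) \ar@{^{(}->}[r] \ar[dr] & G(F) \ar[d] \\
& G_{\mathrm{ad}}(F),
}
$$
which holds because the adjoint quotient $G\to G/Z(G)=G_{\mathrm{ad}}$ restricts on $G_{\mathrm{der}}$ to the adjoint quotient $G_{\mathrm{der}}\to G_{\mathrm{der}}/Z(G_{\mathrm{der}})=G_{\mathrm{ad}}$. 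Restricting $\omega_{G(F),E}$ to $G_{\mathrm{der}}(F)$ therefore amounts to composing $\omega$ with the diagonal arrow, which is precisely the definition of $\omega_{G_{\mathrm{der}}(F),E}$.

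There is essentially no obstacle, since the whole statement is a formal compatibility between the two commutative squares displayed in \Cref{omegaprasad}, one for $G$ and one for $G_{\mathrm{der}}$, glued along their common top row
$$
H^{1}(W_{F},Z(\widehat{G}_{\mathrm{sc}})) \longrightarrow \mathrm{Hom}_{\mathrm{cts}}(G_{\mathrm{ad}}(F),\mathbb{C}^{\times}).
$$
The only point worth being careful about is the identification $(G_{\mathrm{der}})_{\mathrm{sc}}=G_{\mathrm{sc}}$ and $(G_{\mathrm{der}})_{\mathrm{ad}}=G_{\mathrm{ad}}$, and the fact that a regular unipotent of $\widehat{G}_{\mathrm{sc}}$ serves simultaneously for both groups, so that the $\mathrm{SL}_{2}(\mathbb{C})$-triple used to produce the map $\{\pm 1\}\to Z(\widehat{G}_{\mathrm{sc}})$ is the same in both cases. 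Once these identifications are recorded, the lemma reduces to the triangle displayed above and is immediate.
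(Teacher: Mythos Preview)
Your approach is essentially the same as the paper's: both observe that $\omega_{G(F),E}$ and $\omega_{G_{\mathrm{der}}(F),E}$ are pullbacks of the \emph{same} character on $G_{\mathrm{ad}}(F)$ (since $(G_{\mathrm{der}})_{\mathrm{sc}}=G_{\mathrm{sc}}$ and $(G_{\mathrm{der}})_{\mathrm{ad}}=G_{\mathrm{ad}}$), and then invoke the commutative triangle (or square) relating $G_{\mathrm{der}}(F)\to G_{\mathrm{ad}}(F)$ and $G(F)\to G_{\mathrm{ad}}(F)$. One small inaccuracy: the maps $G(F)\to G_{\mathrm{ad}}(F)$ and $G_{\mathrm{der}}(F)\to G_{\mathrm{ad}}(F)$ are not surjections in general (the cokernel is governed by $H^{1}(F,Z(G))$, resp.\ $H^{1}(F,Z(G_{\mathrm{der}}))$), but this is irrelevant to the argument, which only uses commutativity.
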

\begin{proof}
It follows from the constructions of $\omega_{G(F),E}$ and $\omega_{G_{\mathrm{der}}(F),E}$, which are both constructed from a quadratic character of $G_{\mathrm{ad}}(F)$ and the following commutative diagram:
$$\xymatrix{
   G_{\mathrm{der}}(F)\ar[d] \ar[r]
                &  G_{\mathrm{ad}}(F) \ar@{=}[d] \\
  G(F)  \ar[r]
                &  G_{\mathrm{ad}}(F).}$$
\end{proof}

As explained by Prasad in his paper, this quadratic character is closely related to the half sum of positive root in the following manner. Let $G$ be a quasi-split group over a local field $F$ and $S_0$ be a maximal torus of $G$ over $F$ contained in a Borel subgroup $B_0$ over $F$.
Let $\rho=\frac{1}{2}\sum\limits_{\alpha \in \Phi^{+}}\alpha$ be the half sum of positive roots of $G_{\mathrm{ad}}$, then $\rho\in X^{*}(S_{0,\mathrm{sc}})\cap \frac{1}{2}X^{*}(S_{0,\mathrm{ad}})$. Then we have the following commutative diagram
$$\xymatrix{
  1 \ar[r] & Z(G_{\mathrm{sc}}) \ar[d]^{\rho|_{Z(G_{\mathrm{sc}})}}\ar[r] & S_{0,\mathrm{sc}} \ar[d]^{\rho} \ar[r] & S_{0,\mathrm{ad}} \ar[d]^{2\rho} \ar[r] & 1  \\
  1 \ar[r] & \mu_2 \ar[r] & \mathbb{G}_m  \ar[r]^{2} & \mathbb{G}_m \ar[r] & 1 ,  }$$
which gives an exact sequence of dual Galois modules

$$\xymatrix{
  1 \ar[r] & \{\pm 1\} \ar[d]^{u} \ar[r] & \mathbb{C}^{\times} \ar[d]^{\widehat{2\rho}} \ar[r]^{2} & \mathbb{C}^{\times} \ar[d]^{\widehat{\rho}} \ar[r] & 1  \\
  1 \ar[r] & Z(\widehat{G}_{\mathrm{sc}}) \ar[r] & \widehat{S}_{0,\mathrm{sc}}  \ar[r] & \widehat{S}_{0,\mathrm{ad}} \ar[r] & 1  . }$$

Hence the map $u$, which defines Prasad's character via
$$H^1(F,\{\pm 1\})\rightarrow H^1(F,Z(\widehat{G}_{\mathrm{sc}}))\rightarrow H^1(F,Z(\widehat{G})),$$
is the restriction of the map
\begin{equation}\label{u}
\begin{aligned}
\widehat{2\rho}:\{\pm 1\}&\rightarrow Z(\widehat{G}_{\mathrm{sc}})\rightarrow Z(\widehat{G})\\
-1&\mapsto \prod_{\alpha\in \Phi^{+}}\widehat{\alpha}(-1).
\end{aligned}
\end{equation}

\begin{example}\label{GL_n}

For $G=GL_n$, the map $\mathbb{C}^{\times}\stackrel{\widehat{2\rho}}{\longrightarrow}{\widehat{S}_{0,\mathrm{sc}}=(\mathbb{C}^{\times}})^{n-1}$ is given by:
\begin{equation*}
\begin{aligned}
\widehat{2\rho}&=\sum_{\alpha\in \Phi^{+}}\widehat{\alpha}=\sum_{1\leq i<j\leq n}(e_{i}^{\vee}-e_{j}^{\vee}),\\
&=(n-1)e_{1}^{\vee}+(n-3)e_{2}^{\vee}+\cdots+(1-n)e_{n}^{\vee}.
\end{aligned}
\end{equation*}
It is not hard to see that the principal $SL_{2}$ map induced by $\widehat{2\rho}$ is given by:
\begin{equation*}
\begin{aligned}
SL_{2}(\mathbb{C})&\stackrel{u=\mathrm{Sym}^{n-1}}{\longrightarrow}\widehat{G}_{\mathrm{sc}}=SL_n(\mathbb{C}),\\
\left(\begin{matrix} -1& ~ \\ ~& -1\end{matrix}\right) &\mapsto \left(\begin{matrix} (-1)^{n-1}&~& & \\ ~& (-1)^{n-3}& & \\& & \ddots &\\  & & & (-1)^{-n+1} \end{matrix}\right)=\left(\begin{matrix} (-1)^{n-1}&~& & \\ ~& (-1)^{n-1}& & \\& & \ddots &\\  & & & (-1)^{n-1} \end{matrix}\right),
\end{aligned}
\end{equation*}
such that the induced map on $H^1$ is given by:
\begin{equation*}
\begin{aligned}
H^1(F,\{\pm 1\})&\rightarrow H^1(F,Z(\widehat{G}_{\mathrm{sc}}))\rightarrow H^1(F,Z(\widehat{G}))\rightarrow H^1(W_F,Z(\widehat{G}))\\
[E]&\mapsto \omega_{E/F}^{n-1}\circ \det.
\end{aligned}
\end{equation*}
For the quasi-split group $G=U_{n,E'/F}$ with $E'/F$ quadratic, the computation is similar except that the $W_F$ action on $Z(\widehat{G})$ factors through $\mathrm{Gal}(E'/F)$, and is given by
$$\sigma\cdot z=J(z^{t})^{-1}J^{-1}=z^{-1}$$
for $\sigma\in\mathrm{Gal}(E'/F)$, which means that there is an identification $Z(\widehat{G})\cong \widehat{U_{1,E'/F}}$.
By later computations in \Cref{torinorm1} and \Cref{torinorm2}, Prasad's character $\omega_{U_{n,E'/F}(F),E}$ is given by:
\begin{equation}\label{unitary}
\begin{aligned}
H^1(F,\{\pm 1\})&\rightarrow H^1(W_F,Z(\widehat{G}))\\
[E]&\mapsto (\omega_{U_{1,E'/F}(F),E})^{n-1}\circ \det=\omega_{EE'/E'}^{n-1}\circ\iota_{E'}\circ\det,
\end{aligned}
\end{equation}
where $\iota_{E'}$ denotes the isomorphism $(E')_{E'/F}^{1}\rightarrow (E')^{\times}/F^{\times}$, and $\omega_{EE'/E'}$ denotes the trivial character if $E=E'$, and denotes the quadratic character associated to $EE'/E'$ if $E\neq E'$.
\end{example}

In fact, Prasad's character has been computed explicitly for many examples \cite{Rap18}\cite{MO21}. The above examples are also computed explicitly in \cite{MO21} in a slightly different way. We list some of them.
\begin{example}
~
\begin{enumerate}

\item $G=SO_{2n+1}$: the quasi-split special orthogonal group, $\omega_{G(F),E}$ is given by $\omega_{E/F}\circ \mathrm{SpinNm}$,
where $\mathrm{SpinNm}$ is the Spin norm given by the connecting homomorphism:
$$SO_{2n+1}(F)\rightarrow H^{1}(F,\mu_2)\cong F^{\times}/(F^{\times})^{2},$$
associated to the short exact sequence of $F$-groups
$$1\rightarrow \mu_2\rightarrow Spin_{2n+1}\rightarrow SO_{2n+1}\rightarrow 1.$$
\item $G=Sp_{2n}$: $\omega_{G(F),E}$ is trivial, since $Sp_{2n}$ is simply connected.
\item $G=SO_{2n}$: the split special orthogonal group or the quasi-split but non split orthogonal group, $\omega_{G(F),E}$ is trivial.
\end{enumerate}
\end{example}

\subsection{The restriction of $\omega_{G(F),E}$ to the elliptic maximal torus $S(F)$.}

Notice that the restriction of a character of $G(F)$ to $S(F)$ is a character of $S(F)$ for any maximal torus $S$ of $G$. Hence we have the following interpretation of restriction of characters of $G(F)$ to $S(F)$ in terms of Galois cohomology
$$\xymatrix{
  H^{1}(W_F,Z(\widehat{G})) \ar[d] \ar[r]
                & \mathrm{Hom}_{\mathrm{cts}}(G(F),\mathbb{C}^{\times}) \ar[d]^{|_{S(F)}}  \\
  H^{1}(W_F,\widehat{S}) \ar[r]
                & \mathrm{Hom}_{\mathrm{cts}}(S(F),\mathbb{C}^{\times}) .}$$

Notice that we have the following commutative diagram:
$$\xymatrix{
  1 \ar[r] & Z(\widehat{G}_{\mathrm{sc}}) \ar[d]\ar[r] & \widehat{S}_{\mathrm{sc}} \ar[d] \ar[r] & \widehat{S}_{\mathrm{ad}} \ar@{=}[d] \ar[r] & 1  \\
  1 \ar[r] & Z(\widehat{G}) \ar[r] & \widehat{S}  \ar[r] & \widehat{S}_{\mathrm{ad}} \ar[r] & 1 ,  }$$
which gives the following commutative diagram at the level of $H^1$:
$$\xymatrix{
  H^1(W_F,Z(\widehat{G}_{\mathrm{sc}})) \ar[d]\ar[r] & H^1(W_F,Z(\widehat{G})) \ar[d]  \\
  H^1(W_F,\widehat{S}_{\mathrm{sc}}) \ar[r] & H^1(W_F,\widehat{S}).}$$
Hence the image of $[E]$ under the natural map
$$H^1(F,\{\pm 1\})\rightarrow H^1(F,Z(\widehat{G}_{\mathrm{sc}}))\rightarrow H^1(F,Z(\widehat{G}))\rightarrow H^1(F,\widehat{S})\hookrightarrow H^1(W_F,\widehat{S})$$
is the restriction of Prasad's character to $S(F)$.

Notice that the local Langlands correspondence for tori could be regarded as an isomorphism of two functors from the category of tori over $F$ to the category of abelian groups \cite{Yu09} sending a torus $S$ to
$$\mathrm{Hom}_{\mathrm{cts}}(S(F),\mathbb{C}^{\times})\cong H^1(W_F,\widehat{S}).$$
Hence it enjoys the following functorial property.

\begin{proposition}\label{Prop3.4}
Let $f:S_1\rightarrow S_2$ be a morphism of $F$ tori, which induces a morphism of $W_F$-modules $\widehat{f}:\widehat{S_2}\rightarrow \widehat{S_1}$. Then we have the following commutative diagram
$$\xymatrix{
  \mathrm{Hom}_{\mathrm{cts}}(S_{2}(F),\mathbb{C}^{\times}) \ar[d]^{\circ f} \ar[r]^{~~~\mathrm{LLC}}
                & H^{1}(W_F,\widehat{S_2})  \ar[d]^{\widehat{f}}  \\
  \mathrm{Hom}_{\mathrm{cts}}(S_{1}(F),\mathbb{C}^{\times}) \ar[r]^{~~~\mathrm{LLC}}
                & H^{1}(W_F,\widehat{S_1}).}$$
\end{proposition}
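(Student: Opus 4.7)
The plan is to deduce this from the categorical characterization of the local Langlands correspondence for tori, as developed by Langlands and later reformulated by Yu. Recall that for any $F$-torus $S$ split over a finite Galois extension $L/F$, the LLC is the isomorphism
\[
\mathrm{LLC}_S : \mathrm{Hom}_{\mathrm{cts}}(S(F), \mathbb{C}^{\times}) \xrightarrow{\sim} H^1(W_{L/F}, \widehat{S})
\]
obtained by combining local Tate-Nakayama duality with cup product against the fundamental class in $H^2(\mathrm{Gal}(L/F), L^{\times})$ defining the relative Weil group extension $1 \to L^{\times} \to W_{L/F} \to \mathrm{Gal}(L/F) \to 1$. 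The content of the proposition is that this isomorphism is natural in $S$; equivalently, that both $\mathrm{Hom}_{\mathrm{cts}}((-)(F), \mathbb{C}^{\times})$ and $H^1(W_F, \widehat{(-)})$ are contravariant functors from $F$-tori to abelian groups, and that $\mathrm{LLC}$ is a morphism of functors.

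First, I would choose a finite Galois extension $L/F$ over which both $S_1$ and $S_2$ split (e.g.\ the compositum of their splitting fields), so that both sides of the diagram can be computed using the common relative Weil group $W_{L/F}$. The morphism $f: S_1 \to S_2$ induces a morphism of $\mathrm{Gal}(L/F)$-modules $f_*: X_*(S_1) \to X_*(S_2)$ on cocharacter lattices, hence dually a morphism $\widehat{f}: \widehat{S_2} \to \widehat{S_1}$ of character groups (equivalently, of complex dual tori with $\mathrm{Gal}(L/F)$-action). At the same time, $f$ induces $f: S_1(F) \to S_2(F)$, giving the left vertical arrow $\circ f$ on characters. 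What must be checked is that the diagram commutes, and this reduces to verifying that two operations — pullback along $f$ on characters, and pushforward along $\widehat{f}$ on cocycles — are exchanged under Tate-Nakayama duality.

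The key step I would carry out is the cocycle-level verification. Fixing a character $\chi \in \mathrm{Hom}_{\mathrm{cts}}(S_2(F), \mathbb{C}^{\times})$, Langlands' construction associates to it a cocycle $\varphi_\chi \in Z^1(W_{L/F}, \widehat{S_2})$ by pairing with the fundamental class: roughly, $\chi$ corresponds via local class field theory (applied to the coinvariants $X_*(S_2)_{\mathrm{Gal}(L/F)}$) to an element of Tate cohomology, which under the cup product with the fundamental class yields a class in $H^1(\mathrm{Gal}(L/F), X_*(S_2) \otimes L^{\times}) \cong H^1(W_{L/F}, \widehat{S_2})$. Since the cup product and local class field theory are natural in the module variable, applying $\widehat{f}_*$ to $\varphi_\chi$ produces precisely the cocycle associated to $\chi \circ f$. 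The main (though routine) obstacle is to organize this compatibility carefully when $S_1$ and $S_2$ split over different fields, which is handled uniformly by passing to the common splitting field $L$ and using that inflation and the fundamental class are compatible under enlarging $L$.

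Alternatively, and perhaps more cleanly, I would appeal to Yu's theorem (cited as \cite{Yu09} in the text) that $\mathrm{LLC}$ is the unique isomorphism of functors from $F$-tori to abelian groups that restricts to the canonical isomorphism for $\mathbb{G}_m$ and is compatible with Weil restriction of scalars. Under this characterization, functoriality with respect to any morphism $f: S_1 \to S_2$ of $F$-tori is automatic, and the proposition reduces to the assertion that such an $\mathrm{LLC}$ exists — which is precisely the content of Langlands' original construction. This avoids explicit cocycle manipulations entirely, at the cost of invoking the uniqueness theorem.
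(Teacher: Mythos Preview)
Your proposal is correct, and your second alternative---invoking Yu's theorem that $\mathrm{LLC}$ is an isomorphism of functors from $F$-tori to abelian groups---is exactly the paper's approach: the paper simply states the proposition as an immediate consequence of the functorial nature of $\mathrm{LLC}$ as established in \cite{Yu09}, with no further argument. Your first approach, unwinding the Tate--Nakayama construction at the cocycle level, is not needed here but is precisely the content underlying Yu's result; the paper outsources that work to the reference, whereas you sketch how one would verify it directly.
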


The following functoriality is frequently used in the later computation.

\begin{example}
Let $L/F$ be a finite Galois extension and $S$ be a torus defined over $F$. Let $\iota$ be the natural embedding $\iota:S\hookrightarrow \mathrm{Res}_{L/F}S_L$. Notice that one has a natural identification
$$\widehat{\mathrm{Res}_{L/F}S_L}\cong \mathrm{Ind}_{W_L}^{W_F}\widehat{S}:=\{f:W_F\rightarrow \widehat{S}|f(ww')=wf(w'), \forall w\in W_L, w'\in W_F\},$$
such that $\widehat{\iota}$ is given by the corestriction map
\begin{equation}\label{corestriction}
\begin{aligned}
\widehat{\iota}:\mathrm{Ind}_{W_L}^{W_F}\widehat{S}&\rightarrow \widehat{S},\\
f&\mapsto \sum_{g_i\in W_L\backslash W_F}g_{i}^{-1}f(g_{i}).
\end{aligned}
\end{equation}

We have the following commutative diagram:
$$\xymatrix{
  \mathrm{Hom}_{\mathrm{cts}}(S(L),\mathbb{C}^{\times}) \ar[d]^{|} \ar[r]^{~~~\mathrm{LLC}}
                & H^{1}(W_L,\widehat{S})  \ar[d]^{\mathrm{cor}}  \\
  \mathrm{Hom}_{\mathrm{cts}}(S(F),\mathbb{C}^{\times}) \ar[r]^{~~~\mathrm{LLC}}
                & H^{1}(W_F,\widehat{S}).}$$
\end{example}

Moreover, if we apply \Cref{Prop3.4} to the case when $S$ is a split maximal torus of $G$ and $\alpha:S\rightarrow \mathbb{G}_m$ is a root of $S$, we can get the following factorization of the restriction of Prasad's character to the split maximal torus.

\begin{corollary}
Let $S_0$ be a split maximal torus of a split reductive group $G$. We have the following identity:
$$\omega_{G(F),E}|_{S_0(F)}=\omega_{E/F}(\prod_{\alpha\in \Phi^{+}}\alpha(t)).$$
\end{corollary}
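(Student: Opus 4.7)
The plan is to combine the description of Prasad's character via the cocharacter $\widehat{2\rho}=\sum_{\alpha\in\Phi^+}\widehat{\alpha}$ recorded in equation \eqref{u} with the functoriality of LLC for tori given by Proposition \ref{Prop3.4}. Because $S_0$ is split, the Galois action on $\widehat{S}_0$ is trivial and $H^1(W_F,\widehat{S}_0)\cong\mathrm{Hom}(F^\times,\widehat{S}_0)$, so the entire computation reduces to tracking group homomorphisms.

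First I would identify the class in $H^1(W_F,\widehat{S}_0)$ that corresponds to $\omega_{G(F),E}|_{S_0(F)}$ under LLC. Applying Proposition \ref{Prop3.4} to the inclusion $S_0\hookrightarrow G$, restriction of characters from $G(F)$ to $S_0(F)$ matches the natural map $H^1(W_F,Z(\widehat G))\to H^1(W_F,\widehat{S}_0)$ induced by $Z(\widehat G)\hookrightarrow \widehat{S}_0$. Combined with the construction of $\omega_{G(F),E}$ and formula \eqref{u}, the restriction class is the image of $[E]\in H^1(F,\{\pm 1\})$ under the composite
$$\{\pm 1\}\xrightarrow{\widehat{2\rho}}Z(\widehat G)\hookrightarrow \widehat{S}_0,\qquad -1\mapsto \prod_{\alpha\in\Phi^+}\widehat{\alpha}(-1),$$
which factors as the sum (in $X_*(\widehat{S}_0)$) of the cocharacters $\widehat{\alpha}:\mathbb{G}_m\to\widehat{S}_0$ dual to the roots $\alpha:S_0\to\mathbb{G}_m$.

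Next I would analyze each summand separately by applying Proposition \ref{Prop3.4} to the root $\alpha:S_0\to\mathbb{G}_m$. Under LLC for $\mathbb{G}_m$, the class $[E]\in H^1(F,\{\pm 1\})\subset H^1(W_F,\widehat{\mathbb{G}_m})$ corresponds to $\omega_{E/F}\in\mathrm{Hom}(F^\times,\mathbb{C}^\times)$, and the commutative square in Proposition \ref{Prop3.4} applied to $\alpha$ shows that the pushforward $\widehat{\alpha}_{\ast}[E]\in H^1(W_F,\widehat{S}_0)$ corresponds to the character $\omega_{E/F}\circ\alpha$ of $S_0(F)$. Since LLC for the split torus $S_0$ is a group isomorphism between $H^1(W_F,\widehat{S}_0)$ and $\mathrm{Hom}_{\mathrm{cts}}(S_0(F),\mathbb{C}^\times)$, summing over $\alpha\in\Phi^+$ on the parameter side corresponds to multiplying characters, yielding
$$\omega_{G(F),E}|_{S_0(F)}(t)=\prod_{\alpha\in\Phi^+}\omega_{E/F}(\alpha(t))=\omega_{E/F}\!\left(\prod_{\alpha\in\Phi^+}\alpha(t)\right),$$
which is the desired identity.

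The argument is essentially formal bookkeeping with functoriality of LLC for tori and the dual exact sequences, so no serious obstacle is expected. The only point that deserves some care is verifying that the inclusion $Z(\widehat G)\hookrightarrow \widehat{S}_0$ composed with $\widehat{2\rho}:\mathbb{G}_m\to Z(\widehat G)$ equals $\sum_{\alpha\in\Phi^+}\widehat{\alpha}$ in $X_*(\widehat{S}_0)$; this is immediate from the commutative diagram of exact sequences preceding \eqref{u}, together with the identification $X_*(\widehat{S}_0)=X^*(S_0)$ under which $\widehat{2\rho}$ and $\sum\widehat{\alpha}$ both correspond to $2\rho=\sum_{\alpha\in\Phi^+}\alpha$.
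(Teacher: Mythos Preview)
Your proposal is correct and follows essentially the same approach as the paper: the paper's proof is a one-line appeal to the functoriality of LLC for tori (Proposition~\ref{Prop3.4}) together with the explicit description of the map $u=\widehat{2\rho}$ in \eqref{u}, and you have simply unpacked these two ingredients in detail. Your elaboration of why $\widehat{2\rho}$ decomposes as $\sum_{\alpha\in\Phi^+}\widehat{\alpha}$ and why each summand contributes $\omega_{E/F}\circ\alpha$ is exactly what the paper leaves implicit.
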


\begin{proof}
This is a direct consequence of the functorial property of local Langlands for tori in \Cref{Prop3.4} and the explicit description of the map $u$ in \Cref{u}.
\end{proof}

In fact, we can also obtain a similar factorization for the restriction of Prasad's quadratic character to an elliptic maximal torus $S$ of $G$. However, since there will be no Borel subgroup defined over $F$ containing $S$, we need to find certain replacement of the set of positive roots, which is given by a section of $\Phi\rightarrow \Phi/\{\pm 1\}$. In the case when $S$ is a maximal torus contained in a Borel subgroup $B$, both of which are defined over $F$, $\Phi^{+}$ could be regarded as a section of $\Phi\rightarrow \Phi/\{\pm 1\}$ determined by the $B$. Let $S$ be an arbitrary maximal torus of $G$ defined over $F$, one can choose a Borel subgroup $B_{F^{s}}$ of $G$ defined over $F^{s}$ containing $S$ such that $\Phi/\{\pm 1\}$ can be identified with $\Phi^{+}_{B_{F^{s}},S_{F^{s}}}$ as a set, where the former set may have a non-trivial $\Gamma_F$ action. Let $S_{\mathrm{sc}}$ be the corresponding maximal torus in $G_{\mathrm{sc}}$ such that $S_{\mathrm{sc}}\subset B_{\mathrm{sc},F^{s}}$. We have the following lemma due to Kottwitz \cite[Page 292]{Kot83}.

\begin{lemma}
The restriction of $\rho$ to $Z(G_{\mathrm{sc}})$ is independent of the choice of $(B_{F^{s}},S_{F^{s}})$, hence is defined over $F$ and $\rho|_{Z(G_{\mathrm{sc}})}$ is preserved by any automorphism of $G_{\mathrm{sc}}$.
\end{lemma}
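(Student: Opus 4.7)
The plan is to reduce the claim to the classical fact that for any Weyl group element $w$, the difference $w\rho - \rho$ lies in the root lattice, and then use the identification $X^{*}(Z(G_{\mathrm{sc}})) \cong X^{*}(S_{\mathrm{sc}})/X^{*}(S_{\mathrm{ad}})$ already invoked in the excerpt.

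First I would fix an arbitrary maximal torus $S_{F^s}$ of $G_{F^s}$ and let $B_{F^s}$ and $B_{F^s}'$ be two Borel subgroups both containing it. Any two such Borels are conjugate by a unique element of the Weyl group $W(G,S)(F^s) = N_{G}(S)(F^s)/S(F^s)$, say $B' = w B w^{-1}$, so $\Phi^{+}_{B'} = w(\Phi^{+}_{B})$ and consequently $\rho_{B'} = w(\rho_B)$ inside $X^{*}(S_{\mathrm{sc}}) \otimes \mathbb{Q}$. The key computation is then that $w(\rho) - \rho \in \mathbb{Z}\Phi(G,S) = X^{*}(S_{\mathrm{ad}})$; one proves this by induction on the length of $w$, the base case being the identity $s_\alpha \rho - \rho = -\alpha$ for a simple reflection.

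Next, I would feed this into the short exact sequence
\begin{equation*}
0 \longrightarrow X^{*}(S_{\mathrm{ad}}) \longrightarrow X^{*}(S_{\mathrm{sc}}) \longrightarrow X^{*}(Z(G_{\mathrm{sc}})) \longrightarrow 0
\end{equation*}
dual to $1 \to Z(G_{\mathrm{sc}}) \to S_{\mathrm{sc}} \to S_{\mathrm{ad}} \to 1$. Since $\rho_{B'} - \rho_{B}$ lies in $X^{*}(S_{\mathrm{ad}})$, it restricts trivially to $Z(G_{\mathrm{sc}})$, proving independence from the chosen Borel. For Galois descent, I would observe that because $S$ is defined over $F$, any $\gamma \in \Gamma_F$ sends a Borel $B_{F^s}$ containing $S_{F^s}$ to another such Borel $\gamma(B_{F^s})$; by the Borel-independence already established, $\gamma(\rho|_{Z(G_{\mathrm{sc}})}) = \rho_{\gamma(B)}|_{Z(G_{\mathrm{sc}})} = \rho|_{Z(G_{\mathrm{sc}})}$, so the character is $\Gamma_F$-invariant and hence defined over $F$. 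Finally, for an arbitrary $\sigma \in \mathrm{Aut}(G_{\mathrm{sc}})$, the pair $(\sigma(B_{\mathrm{sc}}), \sigma(S_{\mathrm{sc}}))$ is another Borel-torus pair in $G_{\mathrm{sc}}$, and functoriality of $\rho$ under $\sigma$ combined with the fact that $\rho|_{Z(G_{\mathrm{sc}})}$ does not depend on the pair yields $\sigma^{*}(\rho|_{Z(G_{\mathrm{sc}})}) = \rho|_{Z(G_{\mathrm{sc}})}$.

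The only substantive step is the classical lemma that $w\rho - \rho$ lies in the root lattice; everything else is a formal consequence of the structural short exact sequence and the transitivity of the action of $G_{\mathrm{sc}}(F^s)$ on Borel-torus pairs. I do not expect a real obstacle: the whole argument is a streamlined application of standard structure theory, and the identifications $X^{*}(Z(G_{\mathrm{sc}})) \cong X^{*}(S_{\mathrm{sc}})/X^{*}(S_{\mathrm{ad}})$ and $X^{*}(S_{\mathrm{ad}}) = \mathbb{Z}\Phi(G,S)$ are the same ones the author has already used to set up the preceding paragraph.
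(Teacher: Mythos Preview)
Your argument is correct and is essentially the standard proof of this fact. Note, however, that the paper does not supply its own proof of this lemma: it simply attributes the statement to Kottwitz \cite[Page 292]{Kot83} and moves on. Your write-up is therefore a faithful reconstruction of what Kottwitz's argument amounts to. One small point worth making explicit: your first paragraph fixes $S_{F^s}$ and only varies the Borel, whereas the lemma asserts independence from the full pair $(B_{F^s},S_{F^s})$; the passage to a different torus follows immediately because any two Borel--torus pairs are $G_{\mathrm{sc}}(F^s)$-conjugate and inner automorphisms act trivially on $Z(G_{\mathrm{sc}})$, which you allude to in your closing sentence but could state more directly.
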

Hence the definition of Prasad's character also does not depend on the choice of section $\Phi/\{\pm 1\}\rightarrow \Phi$. Notice that $\Phi/\{\pm1\}$ indeed inherits a $\Gamma_F$ action given by
$$\gamma\cdot [\alpha]:=[\gamma\cdot \alpha],$$
for any $\gamma\in \Gamma_F$. We have the following identification:
$$(\Phi/\{\pm1\})/\Gamma_F\leftrightarrow \Phi/(\{\pm1\}\times \Gamma_F),$$
hence the following partition:
\begin{equation}\label{partition}
\begin{aligned}
\Phi/\{\pm1\}&=\bigsqcup_{\mathcal{O}_{\mathrm{sym}}\in \Phi_{\mathrm{sym}}/\Gamma_F} \mathcal{O}_{\mathrm{sym}}/\{\pm 1\}\bigsqcup \bigsqcup_{\mathcal{O}_{\mathrm{asym}}\in \Phi_{\mathrm{asym}}/\Gamma_F}  (\mathcal{O}_{\mathrm{asym}}\cup -\mathcal{O}_{\mathrm{asym}})/\{\pm1\},\\
&=\bigsqcup_{\mathcal{O}_{\mathrm{sym}}\in \Phi_{\mathrm{sym}}/\Gamma_F} \mathcal{O}_{\mathrm{sym}}/\{\pm 1\}\bigsqcup \bigsqcup_{\mathcal{O}_{\mathrm{asym}^{\pm}}\in \Phi_{\mathrm{asym}}/\Gamma_F\times \{\pm 1\}}  \mathcal{O}_{\mathrm{asym}}^{\pm},
\end{aligned}
\end{equation}
where $\mathcal{O}_{\mathrm{sym}}$, $\mathcal{O}_{\mathrm{asym}}$ and $\mathcal{O}_{\mathrm{asym}}^{\pm}$ stand for symmetric, asymmetric $\Gamma_F$ orbits and asymmetric $\Gamma_F\times\{\pm1\}$ orbits. This decomposition enables us to get a factorization of Prasad's character in terms of certain $\Gamma_F\times \{\pm 1\}$ orbits.

Before we give a general description of the restriction of Prasad's character to an elliptic maximal torus, we first prove the following lemma.
\begin{lemma}\label{torinorm1}
Let $S$ be any one dimensional torus defined over a non archimedean local field $F$, consider the natural $\Gamma_F$-equivariant inclusion $\{\pm 1\}\rightarrow \widehat{S}$ (embedding of $\{\pm 1\}$ into $\mathbb{C}^{\times}$), which gives the natural map $H^1(F,\{\pm 1\})\rightarrow H^1(W_F,\widehat{S})$. Let $E$ be a quadratic extension of $F$, which is regarded as an element in $H^1(F,\{\pm1\})$. Then the image of $[E]$ under the above map is the character of $S(F)$ given by the natural map $S(F)\rightarrow S(F)/\mathrm{Nm}S(E)$.
\end{lemma}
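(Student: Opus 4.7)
The plan is to split into cases based on the classification of one-dimensional tori over $F$: either $S=\mathbb{G}_m$ or $S=U_{1,E'/F}$ for some quadratic extension $E'/F$, since $S$ is split by a separable extension of degree dividing $2=|\mathrm{Aut}(X^*(\mathbb{G}_m))|$.

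First I would handle $S=\mathbb{G}_m$, for which $\widehat{S}=\mathbb{C}^\times$ with trivial $\Gamma_F$-action. Under the Kummer identification $H^1(F,\{\pm 1\})\cong F^\times/(F^\times)^2$ and local class field theory $H^1(W_F,\mathbb{C}^\times)\cong\mathrm{Hom}_{\mathrm{cts}}(F^\times,\mathbb{C}^\times)$, the map induced by $\{\pm 1\}\hookrightarrow\mathbb{C}^\times$ sends $[E]$ to the quadratic character $\omega_{E/F}$, whose kernel is precisely $\mathrm{Nm}_{E/F}(E^\times)=\mathrm{Nm}(S(E))$. This is the claimed character.

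For $S=U_{1,E'/F}$ the strategy is to reduce to the split case over $E'$. I would use the surjection $\pi:T:=\mathrm{Res}_{E'/F}\mathbb{G}_m\twoheadrightarrow S$ given on $F$-points by $x\mapsto x/\sigma'(x)$, which is surjective by Hilbert 90 with kernel $F^\times$. A direct computation on cocharacter lattices shows that $\widehat{\pi}:\widehat{S}=\mathbb{C}^\times\to\widehat{T}=(\mathbb{C}^\times)^2$ sends $z\mapsto(z,z^{-1})$, so the composite $\{\pm 1\}\hookrightarrow\widehat{S}\xrightarrow{\widehat{\pi}}\widehat{T}$ is the diagonal embedding. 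Under Shapiro's isomorphism $H^1(W_F,\widehat{T})\cong H^1(W_{E'},\mathbb{C}^\times)$ the diagonal becomes the standard inclusion of $2$-torsion, so the composition factors as
\[
H^1(F,\{\pm 1\})\xrightarrow{\mathrm{res}}H^1(E',\{\pm 1\})\to H^1(W_{E'},\mathbb{C}^\times),
\]
which by the first case applied over $E'$ sends $[E]$ to $\omega_{EE'/E'}$, understood as trivial when $E=E'$.

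By the functoriality of LLC for tori in \Cref{Prop3.4} applied to $\pi$, the character $\chi$ of $S(F)=(E')^1$ attached to the image of $[E]$ in $H^1(W_F,\widehat{S})$ satisfies $\chi\circ\pi=\omega_{EE'/E'}$ on $(E')^\times$, which uniquely determines $\chi$ since $\pi$ is surjective. Its kernel is $\pi(\mathrm{Nm}_{EE'/E'}((EE')^\times))$, so the remaining step is to identify this subgroup of $(E')^1$ with $\mathrm{Nm}_{S(E)/S(F)}(S(E))$. When $E=E'$ both sides equal all of $(E')^1$ by Hilbert 90, yielding $\chi=\mathbbm{1}$; when $E\neq E'$, the identification uses the explicit presentation $S(E)=(EE')^1_{EE'/E}$ together with the compatibility of norm maps in the biquadratic tower $EE'/E'/F$ and $EE'/E/F$. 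This last comparison of norm subgroups via a diagram chase in the biquadratic case is the main technical obstacle.
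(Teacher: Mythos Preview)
Your approach is correct but differs from the paper's. The paper argues uniformly for all one-dimensional $S$: after a case-by-case check that $|S(F)/\mathrm{Nm}\,S(E)|\le 2$ (so the projection is genuinely a character), it observes that the cocycle $[E]\in H^1(F,\{\pm 1\})$ is trivial on $\Gamma_E$, hence its image in $H^1(W_F,\widehat{S})$ restricts trivially to $W_E$, and by the restriction--norm compatibility of LLC for tori the corresponding character of $S(F)$ is trivial on $\mathrm{Nm}\,S(E)$. This is shorter and conceptually cleaner, though the paper is somewhat terse about why the resulting character is the \emph{nontrivial} one when the quotient has order $2$.

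Your route via the surjection $\pi:\mathrm{Res}_{E'/F}\mathbb{G}_m\twoheadrightarrow U_{1,E'/F}$ is essentially the content of \Cref{torinorm2}, which the paper records as a remark \emph{after} the lemma; you are in effect proving the lemma through that remark. The payoff is an explicit formula $\chi=\omega_{EE'/E'}\circ\iota_{E'}^{-1}$, which the paper uses repeatedly later. Your ``main technical obstacle'' dissolves once you write down the commutative square
\[
\begin{array}{ccc}
K^\times & \xrightarrow{\ \mathrm{Nm}_{K/E'}\ } & (E')^\times \\[2pt]
\big\downarrow{\scriptstyle y\mapsto y/\tau(y)} & & \big\downarrow{\scriptstyle \pi} \\[2pt]
K^1_{K/E} & \xrightarrow{\ \mathrm{Nm}_{S(E)/S(F)}\ } & (E')^1
\end{array}
\]
with $K=EE'$ and $\tau$ the generator of $\mathrm{Gal}(K/E)$: both composites send $y$ to $\dfrac{y\,\tilde\sigma(y)}{\tau(y)\,\tilde\sigma\tau(y)}$. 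Since the left vertical arrow is surjective by Hilbert~90, you get $\mathrm{Nm}_{S(E)/S(F)}(S(E))=\pi(\mathrm{Nm}_{K/E'}(K^\times))=\ker\chi$ on the nose, and the case $E=E'$ is immediate. So no real obstacle remains.
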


\begin{proof}
First we need to prove that the map $S(F)\rightarrow S(F)/\mathrm{Nm}(S(E))$ is indeed a character, that is, $S(F)/\mathrm{Nm}S(E)$ could be embedded into $\mathbb{C}^{\times}$. Notice that there are four kinds of one dimensional tori over a $p$-adic field, when the residue characteristic is not $2$: $\mathbb{G}_m,U_{1,E_1/F},U_{1,E_2/F}$ and $U_{1,E_3/F}$, where $E_1,E_2,E_3$ are three quadratic extensions of $F$.

$\bullet$ When $S$ is $\mathbb{G}_m$, we have $S(F)/\mathrm{Nm}S(E)=F^{\times}/\mathrm{Nm}_{E/F}E^{\times}\cong \{\pm1\}$ by local class field theory.

$\bullet$ When $S$ is $U_{1,E/F}$, we have $S(F)/\mathrm{Nm}S(E)=E^{1}_{E/F}/\mathrm{Nm}_{E^{\times}/E^1}E^{\times}\cong 1$ by Hilbert $90$, where $\mathrm{Nm}_{E^{\times}/E^1}$ is given by $t\mapsto \frac{t}{\sigma(t)}$.

$\bullet$ When $S$ is $U_{1,E_1/F}$ with $E_1\neq E$ and $K:=E_1E$, we have $S(F)=E_{1,E_1/F}^{1}\cong E_{1}^{\times}/F^{\times}$ and $S(E)=K_{K/E}^{1}\cong K^{\times}/E^{\times}$. Then we have the following identification $$S(F)/\mathrm{Nm}S(E)\cong \{\pm 1\},$$
due to the following isomorphism:
$$S(F)/\mathrm{Nm}S(E)\cong (E_{1}^{\times}/F^{\times})/\mathrm{Nm}(K^{\times}/E^{\times})\cong \mathrm{coker}(F^{\times}/\mathrm{Nm}_{E/F}E^{\times}\rightarrow E_{1}^{\times}/\mathrm{Nm}_{K/E_1}K^{\times})\cong\{\pm 1\}.$$

The last isomorphism is due to the fact that the map $F^{\times}/\mathrm{Nm}_{E/F}E^{\times}\rightarrow E_{1}^{\times}/\mathrm{Nm}_{K/E_1}K^{\times}$ is trivial since $\mathrm{Nm}_{K/E_1}K^{\times}\supset\mathrm{Nm}_{E_2/F}E_{2}^{\times}\cdot\mathrm{Nm}_{E/F}E^{\times}=F^{\times}$.

Since $\Gamma_F$ acts on $\{\pm 1\}$ trivially, we have the identification $$H^1(F,\{\pm 1\})\cong \mathrm{Hom}(\Gamma_F,\{\pm 1\})\cong \mathrm{Hom}(F^{\times},\{\pm 1\}).$$
Then the element $[E]$ gives rise to a natural quadratic character of $\Gamma_F$ which is trivial on the index two subgroup $\Gamma_E$. Let $\phi_{\omega_{S(F),E}}$ denote the image of $[E]$ under the map $H^1(F,\mu_2)\rightarrow H^1(W_F,\widehat{S})$. Then $\phi_{\omega_{S(F),E}}$ is a cocycle which is trivial on $\Gamma_E$. Under the local Langlands correspondence for tori, $\phi_{\chi_{S(F),E}}$ corresponds to a character of $S(F)$ trivial on $\mathrm{Nm}(S(E))$, which is the natural character $\omega_{S(F),E}:S(F)\rightarrow S(F)/\mathrm{Nm}S(E)$.

$$\xymatrix{
  H^{1}(W_F,\widehat{S}) \ar[d]^{|_{W_{E}}} \ar@{<->}[r]
                & \mathrm{Hom}_{\mathrm{cts}}(S(F),\mathbb{C}^{\times}) \ar[d]^{\circ \mathrm{Nm}_{S(E)/S(F)}}  \\
  H^{1}(W_E,\widehat{S}) \ar@{<->}[r]
                & \mathrm{Hom}_{\mathrm{cts}}(S(E),\mathbb{C}^{\times}) }$$
\end{proof}

\begin{remark}\label{torinorm2}
The above quadratic character has a more convenient description in terms of quadratic characters of the splitting field of the one dimensional torus $S$. Let $S$ be a one dimensional torus over $F$ with splitting field $L$. There exists a natural embedding
$$S\hookrightarrow \mathrm{Res}_{L/F}S_L=\mathrm{Res}_{L/F}\mathbb{G}_{m,L},$$
with a $W_F$-equivariant morphism of dual torus given by the corestriction map in \Cref{corestriction}:
$$\mathrm{Ind}_{W_L}^{W_F}\mathbb{C}^{\times}\rightarrow \widehat{S}.$$
Notice that we have a commutative diagram
$$\xymatrix{
  \mathrm{Ind}_{W_{L}}^{W_F}\{\pm1\}  \ar@{^{(}->}[r]
                & \mathrm{Ind}_{W_{L}}^{W_F}\mathbb{C}^{\times} \ar[d]^{\mathrm{cores}}  \\
  \{\pm1\}\ar[u]^{i} \ar@{^{(}->}[r]
                &  \widehat{S}.}$$
Applying the functor $H^1(W_F,\cdot)$, we get a commutative diagram at the level of $H^1$:
$$\xymatrix{
  H^{1}(L,\{\pm1\})  \ar[r]
                & H^{1}(W_{L},\widehat{S}) \ar[d]^{\mathrm{cores}}  \\
  H^{1}(F,\{\pm1\})\ar[u] \ar[r]
                &  H^{1}(W_F,\widehat{S}).}$$
By local Langlands correspondence for tori, the image of $[E]$ is $\omega_{EL/L}|_{S(F)}$ for $S(F)\subset S(L)=L^{\times}$, where $\omega_{EL/L}$ is trivial if $E$ is contained in $L$, and the quadratic character associated to $EL/L$ if $E$ and $L$ are disjoint.
\end{remark}

\begin{example}
In fact, for any torus $S$ of the form $\mathrm{Res}_{E'/F}S^1$, where $S^{1}$ is a one dimensional torus over $E'$, one can also construct the following map:
$$H^1(F,\{\pm1\})\rightarrow H^1(E',\{\pm 1\})\rightarrow H^1(W_{E'},\widehat{S^1})\cong H^1(W_F,\widehat{\mathrm{Res}_{E'/F}S^{1}}).$$
The simplest example is when we take $S$ to be $\mathrm{Res}_{E'/F}\mathbb{G}_{m,E'}$. For disjoint $E$ and $E'$, the image of $[E]$ is the quadratic character $\omega_{EE'/E'}:{E'}^{\times}\rightarrow {E'}^{\times}/\mathrm{Nm}(EE')^{\times}\cong\{\pm 1\}$. For $E\subset E'$, the image of $[E]$ is trivial.
\end{example}

Now we give a new interpretation of the restriction of Prasad's character to $S(F)$, where $S$ is an elliptic maximal torus of $G$. This interpretation is largely inspired by the work of Kaletha and Langlands Shelstad.

\begin{lemma}

The following diagram of $W_F$-modules is commutative:
$$\xymatrix{
  \mathrm{Ind}_{W_{F_{\alpha}}}^{W_F}\{\pm1\}  \ar[r]^{\widehat{\alpha}}
                & \mathrm{Ind}_{W_{F_{\alpha}}}^{W_F}\widehat{S} \ar[d]^{\mathrm{cores}}  \\
  \{\pm1\}\ar[u]^{i} \ar[r]^{\sum\limits_{\alpha\in \mathcal{O}}\widehat{\alpha}}
                &  \widehat{S}.}$$
\end{lemma}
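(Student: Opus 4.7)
The plan is a straightforward diagram chase, but to set it up I would first unwind the three non-trivial arrows. The left vertical $i$ is the canonical $W_F$-equivariant embedding built in the style of \Cref{induced}; because $\{\pm 1\}$ carries the trivial $W_F$-action, $i(\eta)$ is simply the constant function $f_\eta\equiv\eta$ on $W_F$. The top horizontal is postcomposition with $\widehat\alpha$, which descends to a $W_F$-equivariant map of induced modules precisely because $W_{F_\alpha}=\mathrm{Stab}_{W_F}(\alpha)$ fixes $\alpha$ and hence acts trivially on $\widehat\alpha$. The right vertical is the corestriction formula recalled in \Cref{corestriction}.

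With these in hand, I would trace $\eta\in\{\pm 1\}$ through the upper route: $i(\eta)=f_\eta$, then $\widehat\alpha_{*}(f_\eta)$ is the constant function with value $\widehat\alpha(\eta)\in\widehat S$, and applying corestriction gives
\[
\sum_{g_i\in W_{F_\alpha}\backslash W_F} g_i^{-1}\bigl(\widehat\alpha(\eta)\bigr)\;=\;\sum_{g_i\in W_{F_\alpha}\backslash W_F}\widehat{g_i^{-1}\alpha}(\eta),
\]
using that the $W_F$-action on $\widehat S$ transports $\widehat\alpha$ to $\widehat{g^{-1}\alpha}$ together with the Galois-invariance of $\eta\in\{\pm 1\}$.

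Finally I would invoke the bijection between right cosets $W_{F_\alpha}\backslash W_F\cong \Gamma_{F_\alpha}\backslash \Gamma_F$ and the orbit $\mathcal O=\Gamma_F\cdot\alpha$ via $g_i\mapsto g_i^{-1}\alpha$, which is well defined and bijective precisely because $W_{F_\alpha}$ is the stabilizer of $\alpha$. Re-indexing the sum yields $\sum_{\alpha'\in\mathcal O}\widehat{\alpha'}(\eta)$, which is the image of $\eta$ under the lower route. The principal point demanding attention is the interaction of the Galois action with $\widehat\alpha$: once one confirms that $\widehat\alpha$ is $W_{F_\alpha}$-equivariant (so that $\widehat\alpha_{*}$ is a well-defined $W_F$-morphism of induced modules) and that $w\in W_F$ acts on $\widehat\alpha(\eta)\in \widehat S$ by sending $\widehat\alpha$ to $\widehat{w\alpha}$, the remaining verification is purely formal bookkeeping of cosets versus orbit representatives.
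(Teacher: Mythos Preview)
Your proof is correct and follows essentially the same approach as the paper: both trace an element of $\{\pm1\}$ through the upper route using the constant-function description of $i$, the corestriction formula, and the orbit--coset bijection $\mathcal{O}\cong W_{F_\alpha}\backslash W_F$ to arrive at $\sum_{g_i}\widehat{g_i^{-1}\alpha}(\eta)=\sum_{\alpha'\in\mathcal{O}}\widehat{\alpha'}(\eta)$. Your write-up is in fact more thorough than the paper's, which simply records the final identity and leaves the unwinding of the maps implicit.
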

\begin{proof}
Notice that the $W_F$ action on $\{\pm1\}$ is trivial. By \Cref{induced}, the natural map $i$ sends $\pm 1$ to the constant functions $\pm1$ on $W_F$. For a $\Gamma_F$ orbit $\mathcal{O}$ of a root $\alpha$, we have a natural identification $\mathcal{O}\cong \Gamma_{F_{\alpha}}\backslash\Gamma_F\cong W_{F_{\alpha}}\backslash W_F$. One can directly check the commutativity of the diagram by explicit computation:
$$\sum\limits_{\alpha_i\in \mathcal{O}_{\alpha}}\widehat{\alpha_i}(\pm 1)=\sum_{g_i\in W_{F_{\alpha}}\backslash W_F}(g_i^{-1}\widehat{\alpha})(\pm 1).$$

\end{proof}

We can apply the functor $H^1(W_F,\cdot)$ to get a commutative diagram at the level of $H^1$:
$$\xymatrix{
  H^{1}(F_{\alpha},\{\pm1\})  \ar[r]^{\widehat{\alpha}}
                & H^{1}(W_{F_{\alpha}},\widehat{S}) \ar[d]^{\mathrm{cores}}  \\
  H^{1}(F,\{\pm1\})\ar[u] \ar[r]^{\sum\limits_{\alpha\in \mathcal{O}}\widehat{\alpha}}
                &  H^{1}(W_F,\widehat{S}).}$$
Moreover, if $\mathcal{O}$ is symmetric, then we also have:
$$\xymatrix{
  H^{1}(F_{\pm\alpha},\{\pm 1\})  \ar[r]^{\widehat{\alpha}}
                & H^{1}(W_{F_{\pm\alpha}},\widehat{S}) \ar[d]^{\mathrm{cores}}  \\
  H^{1}(F,\{\pm1\})\ar[u] \ar[r]^{\sum\limits_{\alpha\in \mathcal{O}/\{\pm 1\}}\widehat{\alpha}}
                &  H^{1}(W_F,\widehat{S}).}$$

\begin{enumerate}
\item
   $\alpha\in \Phi(G,S)\leftrightarrow \widehat{\alpha}\in \Phi^{\vee}(\widehat{G},\widehat{S})$ is symmetric.

   Let $S_\alpha$ be the one dimensional anisotropic torus over $F_{\pm\alpha}$ whose $F_{\pm\alpha}$ points correspond to the norm-one elements of $F_{\alpha}$ with respect to $F_{\pm\alpha}$. A $\Gamma_F\times \{\pm 1\}$ orbit of $\alpha$ induces a natural map $S(F_{\pm\alpha})\rightarrow S_{\alpha}(F_{\pm\alpha})$. Then we consider the following diagram:

$$\xymatrix{
  H^{1}(W_{F_{\pm\alpha}},\widehat{S_{\alpha}}) \ar[d]^{\widehat{\alpha}} \ar@{<->}[r]
                & \mathrm{Hom}_{\mathrm{cts}}((F_{\alpha}^{1}),\mathbb{C}^{\times}) \ar[d]^{\circ\alpha}  \\
  H^{1}(W_{F_{\pm\alpha}},\widehat{S}) \ar@{<->}[r]
                & \mathrm{Hom}_{\mathrm{cts}}(S(F_{\pm\alpha}),\mathbb{C}^{\times}) .}$$

One can define a character of $S(F)$ associated to $E$ and the $\Gamma_F$ orbit of $\alpha$ in the following way:
\begin{equation*}
H^1(F,\{\pm 1\})\stackrel {|_{\Gamma_{F_{\pm\alpha}}}}{\rightarrow} H^{1}(F_{\pm\alpha},\{\pm1\})\rightarrow  H^{1}(W_{F_{\pm\alpha}},\widehat{S_{\alpha}})\stackrel{\widehat{\alpha}}{\rightarrow} H^{1}(W_{F_{\pm\alpha}},\widehat{S})\stackrel{\mathrm{cores}}{\rightarrow}H^{1}(W_{F},\widehat{S}),
\end{equation*}
sending $[E]$ to $(\omega_{S_{\alpha}(F_{\pm\alpha}),E}\circ\alpha)|_{S(F)}$, where $\omega_{S_{\alpha}(F_{\pm\alpha}),E}$ is the natural map
\begin{equation*}
F_{\alpha,F_{\alpha}/F_{\pm\alpha}}^{1}\rightarrow
F_{\alpha,F_{\alpha}/F_{\pm\alpha}}^{1}/\mathrm{Nm}_{EF_{\alpha}/F_{\alpha}}((EF_{\alpha})^{1}_{EF_{\alpha}/EF_{\pm\alpha}}).
\end{equation*}

Notice that we have the following commutative diagram:
\begin{equation*}
\begin{aligned}
\xymatrix{
  (EF_{\alpha})^{\times}/(EF_{\pm\alpha})^{\times} \ar[r]^{Nm_{E/F}} & F_{\alpha}^{\times}/F_{\pm\alpha}^{\times}  \ar[r]^{\omega_{EF_{\alpha}/F_{\alpha}}}  & \{\pm 1\}  \ar@{=}[d]\\
  (EF_{\alpha})_{EF_{\alpha}/EF_{\pm\alpha}}^{1} \ar[u]^{\iota_{E_\alpha}}\ar[r]^{\mathrm{Nm}_{EF_{\alpha}/F_{\alpha}}} & {F_{\alpha}^{1}}_{F_{\alpha}/F_{\pm\alpha}} \ar[u]^{\iota_{F_\alpha}}\ar[r]^{\omega_{S_{\alpha}(F_{\pm\alpha}),E}} & \{\pm 1\}, }
\end{aligned}
\end{equation*}
where $\iota_{F_\alpha}$ denotes the isomorphism $F_{\alpha}^{1}\rightarrow F_{\alpha}^{\times}/F_{\pm\alpha}^{\times}$.
Hence we have
\begin{equation}\label{sym}
(\omega_{S_{\alpha}(F_{\pm\alpha}),E}\circ\alpha)|_{S(F)}=(\omega_{EF_{\alpha}/F_{\alpha}}\circ\iota_{F_\alpha}\circ\alpha)|_{S(F)}.
\end{equation}

\item $\alpha$ is asymmetric.

One can define a character of $S(F)$ associated to $E$ and the $\Gamma_F$ orbit of $\alpha$ in the following way:
\begin{equation*}
H^1(F,\{\pm 1\})\stackrel {|_{\Gamma_{F_{\alpha}}}}{\rightarrow} H^{1}(F_{\alpha},\{\pm1\})\rightarrow  H^{1}(W_{F_{\alpha}},\widehat{\mathbb{G}_m})\stackrel{\widehat{\alpha}}{\rightarrow} H^{1}(W_{F_{\alpha}},\widehat{S})\stackrel{\mathrm{cores}}{\rightarrow}H^{1}(W_{F},\widehat{S}),
\end{equation*}
sending $[E]$ to $(\omega_{F_{\alpha}^{\times},E}\circ\alpha)|_{S(F)}$.

Notice that by \cite[Page 6]{Kal19a}, any root $\alpha$ gives a morphism $S\rightarrow \mathrm{Res}_{F_{\alpha}/F}\mathbb{G}_m$, and summing over $\Gamma_F$ orbit of $\alpha$ gives a morphism:
$$S\rightarrow \mathrm{Res}_{F_{\alpha}/F}\mathbb{G}_m \stackrel{\mathrm{Nm}_{F_{\alpha}/F}}{\rightarrow}\mathbb{G}_{m}$$
defined over $F$. (In fact these two morphisms are trivial on $Z(G)$, hence define a morphism from $S/Z(G)$ to $\mathbb{G}_m$ over $F$, which is in fact trivial due to the ellipticity of $S$.) This gives a morphism of dual tori
$$\mathbb{C}^{\times}\stackrel{\prod\limits_{\mathcal{O}} \widehat{\alpha}}{\longrightarrow}\widehat{S}.$$
Notice that the $W_F$-map
$$\{\pm1\}\stackrel{\prod\limits_{\mathcal{O}}\widehat{\alpha}}{\longrightarrow}\widehat{S}$$
factors through the above map, which implies the character $(\omega_{F_{\alpha}^{\times},E}\circ\alpha)|_{S(F)}$ factors through
\begin{equation}\label{asym}
S(F)\stackrel{\prod\limits_{\mathcal{O}} \alpha}{\rightarrow} F^{\times}.
\end{equation}
Hence the character is trivial.

\end{enumerate}

\begin{theorem}\label{rootcharacter}
The restriction of Prasad's character to an elliptic maximal torus $S(F)$ inherits the following factorization:
$$\omega_{G(F),E}|_{S(F)}(t)=\prod_{\alpha\in \Phi_{\mathrm{sym}}/\Gamma_F}\omega_{EF_{\alpha}/F_{\alpha}}(\iota_{F_\alpha}\alpha(t)).$$
\end{theorem}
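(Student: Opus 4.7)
The plan is to assemble the commutative diagrams already prepared in the surrounding discussion and then factor $\widehat{2\rho}|_{\mu_2}$ according to the partition of $\Phi/\{\pm 1\}$ into $\Gamma_F$--orbits.

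First I would reinterpret the restriction map. By the diagram preceding the theorem, $\omega_{G(F),E}|_{S(F)}$ is the image of $[E]\in H^1(F,\{\pm 1\})$ under
\[
H^1(F,\{\pm 1\})\xrightarrow{\widehat{2\rho}|_{\mu_2}} H^1(W_F,Z(\widehat{G}))\longrightarrow H^1(W_F,\widehat{S}),
\]
where the first arrow is induced by \eqref{u}. Since $\widehat{2\rho}=\sum_{\alpha\in\Phi^+}\widehat{\alpha}$ and the construction is independent of the choice of a section $\Phi/\{\pm 1\}\to\Phi$ (by the lemma of Kottwitz), I may regroup the sum according to the partition \eqref{partition} and get
\[
\omega_{G(F),E}|_{S(F)}=\prod_{\mathcal{O}_{\mathrm{sym}}\in\Phi_{\mathrm{sym}}/\Gamma_F}\chi_{\mathcal{O}_{\mathrm{sym}}}\cdot\prod_{\mathcal{O}_{\mathrm{asym}}^{\pm}\in\Phi_{\mathrm{asym}}/\Gamma_F\times\{\pm 1\}}\chi_{\mathcal{O}_{\mathrm{asym}}^{\pm}},
\]
where $\chi_{\mathcal{O}}$ is the character attached to the orbit $\mathcal{O}$ by pushing $[E]$ through $\sum_{\alpha\in\mathcal{O}}\widehat{\alpha}$.

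Next I would identify each orbit contribution using the corestriction diagrams established just before the theorem. For a symmetric orbit $\mathcal{O}_{\mathrm{sym}}$ the sum $\sum_{\alpha\in\mathcal{O}/\{\pm 1\}}\widehat{\alpha}$ factors through $\mathrm{cores}_{F_{\pm\alpha}}^{F}\circ\widehat{\alpha}$, so combining the second commutative square before the theorem with \Cref{torinorm1} and the computation \eqref{sym} shows that $\chi_{\mathcal{O}_{\mathrm{sym}}}(t)=\omega_{EF_\alpha/F_\alpha}(\iota_{F_\alpha}\alpha(t))$. For an asymmetric orbit $\mathcal{O}_{\mathrm{asym}}^{\pm}$, the analogous factorization produces a character that, by \eqref{asym}, factors as
\[
S(F)\xrightarrow{\prod_{\alpha\in\mathcal{O}_{\mathrm{asym}}^{\pm}}\alpha} F^{\times}\longrightarrow\{\pm 1\}.
\]

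The final step is to observe that the asymmetric contributions vanish. Here the ellipticity of $S$ is essential: the composition $S\to\mathrm{Res}_{F_\alpha/F}\mathbb{G}_m\xrightarrow{\mathrm{Nm}}\mathbb{G}_m$ is $F$--rational and trivial on $Z(G)$, hence descends to a morphism $S/Z(G)\to\mathbb{G}_m$ of $F$--tori; since $S$ is elliptic, $S/Z(G)$ is anisotropic and this morphism is trivial, so $\chi_{\mathcal{O}_{\mathrm{asym}}^{\pm}}=\mathbbm{1}$. Multiplying the surviving symmetric contributions yields the stated factorization. The main obstacle is bookkeeping: one must check carefully that the decomposition of $\widehat{2\rho}$ over $\Phi^+$ really matches the orbit decomposition, and that the isomorphism $\iota_{F_\alpha}$ appears in the correct position when passing from the norm-one torus $S_\alpha$ to $F_\alpha^\times/F_{\pm\alpha}^\times$; both points were handled in the computation \eqref{sym}, so the proof is essentially a matter of assembling these pieces.
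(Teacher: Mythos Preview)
Your proposal is correct and follows essentially the same route as the paper: reinterpret $\omega_{G(F),E}|_{S(F)}$ as the image of $[E]$ under $\sum_{\alpha\in\Phi/\{\pm1\}}\widehat{\alpha}$, decompose this sum via the partition \eqref{partition}, identify the symmetric orbit contributions using the corestriction diagrams and \eqref{sym}, and kill the asymmetric contributions via \eqref{asym} and ellipticity of $S$. The paper's proof is exactly this assembly of previously established pieces.
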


\begin{proof}
Notice that the restriction of Prasad's character to $S(F)$ is given by the image of $[E]$ under the following map:
$$H^1(F,\{\pm 1\})\stackrel {\sum\limits_{\alpha\in \Phi/\{\pm 1\}}\widehat{\alpha}}{\longrightarrow} H^{1}(W_F,\widehat{S}).$$
By \Cref{partition}, we have
$$\sum\limits_{\alpha\in \Phi/\{\pm 1\}}\widehat{\alpha}=\sum_{\mathcal{O}_{\mathrm{sym}}\in \Phi_{\mathrm{sym}}/\Gamma_F}\sum\limits_{\alpha\in \mathcal{O}_{\mathrm{sym}}/\{\pm 1\}}\widehat{\alpha}+\sum_{\mathcal{O}_{\mathrm{asym}}\in \Phi_{\mathrm{asym}}/\Gamma_F}\sum\limits_{\alpha\in (\mathcal{O}_{\mathrm{asym}}\cup -\mathcal{O}_{\mathrm{asym}})/\{\pm1\}}\widehat{\alpha}.$$
Notice that each summand is defined over $F$, hence we have the following factorization by \Cref{sym} and \Cref{asym}:
\begin{equation*}
\begin{aligned}
\omega_{G(F),E}|_{S(F)}&=\prod_{\mathcal{O}_\alpha\in \Phi_{\mathrm{sym}}/\Gamma_F}(\omega_{S_{\alpha}(F_{\pm\alpha}),E}\circ\alpha)|_{S(F)}\cdot \prod_{\mathcal{O}_\alpha\in \Phi_{\mathrm{asym}}/\Gamma_F}\mathbbm{1}\\
&=\prod_{\alpha\in \Phi_{\mathrm{sym}}/\Gamma_F}\omega_{EF_{\alpha}/F_{\alpha}}(\iota_{F_\alpha}\circ\alpha).
\end{aligned}
\end{equation*}

\end{proof}

One advantage of our formulation is that we can compute the restriction of Prasad's character to an elliptic torus $S$ according to the property of absolute roots associated to $S$.

\begin{example}\label{examplegln}

For $G=GL_n$, computations in \Cref{GL_n} have already shown that
$$\omega_{G(F),E}=\left\{
             \begin{array}{lr}
             \mathbbm{1} & n \mathrm{~is~odd},\\
             \omega_{E/F}\circ\mathrm{det} & ~n \mathrm{~is~even}.\\
            \end{array}\right.$$

Now we give a reinterpretation of $\omega_{G(F),E}|_{S(F)}$, where $S$ is a maximal elliptic torus of $GL_n$ of the form $\mathrm{Res}_{E'/F}\mathbb{G}_m$ for a degree $n$ extension $E'/F$. Since $S\times_{F}E$ is still an elliptic torus of $GL_{n,E}$, we know that $E\nsubseteq E'$. One can easily see that $\mathrm{det}|_{S(F)}=\mathrm{Nm}_{E'/F}$, which means that we have

$$\omega_{G(F),E}|_{S(F)}=\left\{
             \begin{array}{lr}
             \mathbbm{1} & n \mathrm{~is~odd},\\
             \omega_{E/F}\circ\mathrm{Nm}_{E'/F}=\omega_{E'E/E'} & ~n \mathrm{~is~even}.\\
            \end{array}\right.$$
In this case we also have $F_{\alpha}=E'$ for each root $\alpha\in \Phi(G,S)$. Notice that each $\Gamma_F$ orbit of roots has exactly $n$ roots, hence the number of $\Gamma_F$ orbits equals $\frac{n(n-1)}{n}=n-1$.
Hence we have
\begin{align*}
&\#\{\Gamma_F \text{~orbits~of~symmetric~roots}\}+2\#\{\Gamma_F \text{~orbits~of~asymmetric~roots} \}\\
=&\#\{\Gamma_F \text{orbits}\}=n-1,
\end{align*}
which means that
\begin{align*}
\#\{\Gamma_F \text{~orbits~of~symmetric roots}\}=n-1 (\mathrm{mod}~2).
\end{align*}

By \Cref{rootcharacter}, we can deduce
\begin{equation*}
\begin{aligned}
\omega_{G(F),E}|_{S(F)}(t)&=\prod_{\Phi_{\mathrm{sym}}/\Gamma_F}\omega_{\alpha}(\iota_{EF_\alpha/F_{\alpha}}\alpha(t))=\omega_{E'E/E'}^{n-1}(t)\\
&=\left\{
\begin{array}{lr}
1 & n \mathrm{~is~odd},\\
\omega_{E'E/E'}(t) & ~n \mathrm{~is~even}.\\
\end{array}\right.
\end{aligned}
\end{equation*}
\end{example}

\subsection{Prasad's conjecture for regular supercuspidal representations.}

Although Prasad makes his conjecture for an arbitrary irreducible admissible representation of $G(E)$ in a generic $L$-packet, the conjecture has a simpler form for regular supercuspidal representations.
\begin{conjecture}\label{conjecture1}
Fix a Whittaker datum $\mathfrak{w}=(B_0,\psi_{N_0})$ such that $\psi_{N_0}:N_0(E)\rightarrow \mathbb{C}^{\times}$ satisfies $\psi_{N_0}|_{N_0(F)}=\mathbbm{1}$, let $\pi$ be a regular supercuspidal representation of $G(E)$ with Langlands-Vogan parameter $(\phi_{\pi},\lambda_{\pi})$, where $\phi_{\pi}$ is the Langlands parameter of $\pi$ and $\lambda_{\pi}$ is an irreducible representation of the component group $\pi_{0}(S_{\phi_{\pi}})$. Let $G^{\mathrm{op}}$ be the quasi-split $F$-form of $G$ defined in \Cref{Gop}, and $\omega_{G(F),E}$ be the quadratic character of $G(F)$ associated to $E$ defined in \Cref{omegaprasad}. Then we have the following identity:
\begin{equation}\label{prasadidentity}
\begin{aligned}
\sum_{\alpha\in H^1(\mathrm{Gal}(E/F),G(E))}\mathrm{dim}\mathrm{Hom}_{G_{\alpha}(F)}(\pi,\omega_{G_{\alpha}(F),E})=\sum\limits_{\tilde{\phi}}m(\lambda_{\pi},\tilde{\phi}),
\end{aligned}
\end{equation}
where the sum of RHS runs over all the Langlands parameters $\tilde{\phi}:W_F\rightarrow \prescript{L}{}G^{\mathrm{op}}$ such that $\tilde{\phi}|_{W_{E}}=\phi_\pi$, and $m(\lambda,\tilde{\phi})$ is the multiplicity of the trivial representation in the restriction of $\lambda_{\pi}|_{\pi_{0}(S_{\tilde{\phi}})}$.
\end{conjecture}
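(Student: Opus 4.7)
The plan is to reduce both sides of \eqref{prasadidentity} to matching counts of characters of the elliptic torus $S$ underlying the regular supercuspidal $L$-packet datum $(S,\widehat{j},\chi,\mu)$ attached to $\pi$, so that the conjecture becomes an identity among explicit quadratic characters on $S(F)$. The decisive feature of the regular supercuspidal case is the factorization $\phi_\pi = \prescript{L}{}{j}_\chi \circ \phi_\mu$, which by Proposition \ref{basechangesame} intertwines base change on the $G$-side with base change on the $S$-side, so that every cohomological ingredient on the parameter side and every distinction-theoretic ingredient on the automorphic side can be transported from $G$ to $S$.

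For the right-hand side, I would combine this factorization with the commutative square of $L$-embeddings displayed after Proposition \ref{basechangesame}. Under the identification $\widehat{S}^{\Gamma_F} \cong S_{\phi_\pi}$ of Lemma \ref{special isomorphism} and the Kottwitz-type duality recalled in Subsection \ref{comparisonl}, the inflation-restriction sequence \eqref{inflationrestriction} translates the counting of extensions $\tilde\phi\colon W_F\to\prescript{L}{}{G^{\mathrm{op}}}$ of $\phi_\pi$, weighted by the multiplicities $m(\lambda_\pi,\tilde\phi)$, into a sum indexed by admissible extensions of the character $\mu$ on $S(E)$ to a character of $S^{\mathrm{op}}(F)=\ker\mathrm{Nm}_{S(E)/S(F)}$, as in Example \ref{Sop}. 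This is the identity \ref{RHS} of Section \ref{section7}.

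For the left-hand side I would apply Hakim--Murnaghan's Mackey-theoretic formula from \cite{HM08}, which expresses each Hom-dimension as a sum over double cosets attached to the Galois involution, together with Kaletha's realisation of $\pi$ from $(S,\mu)$ via Yu's generic cuspidal $G$-datum. Summing over pure inner forms indexed by $H^1(\mathrm{Gal}(E/F),G(E))$ collapses this, via Kottwitz's isomorphism, to a sum over $H^1(F,S)$-classes of rational embeddings $j\colon S\hookrightarrow G_\alpha$; for each such $j$ the Hom-dimension reduces to the condition that $\mu$, viewed on $S^{\mathrm{op}}(F)$, agrees with the product of three characters of $S(F)$: the restriction $\omega_{G(F),E}|_{S(F)}$, now factored root-by-root through the sign characters $\omega_{EF_\alpha/F_\alpha}\circ\iota_{F_\alpha}\circ\alpha$ of Theorem \ref{rootcharacter}; the rectifier $\epsilon_{\mathrm{Kal}}$ intervening in the passage between tame elliptic pairs and regular supercuspidal data; and the zeta-character $\zeta_{S(F),\chi,\chi_{\mathrm{BC}}}$ of \eqref{zetacharacter}, which measures the failure of the base change of the minimally ramified $\chi$-data $\chi$ to remain minimally ramified over $E$. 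This reinterpretation is the content of Proposition \ref{HMformularein} and the identity \ref{LHS}.

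With both sides now expressed as counts of characters of $S^{\mathrm{op}}(F)$ extending a prescribed character under a constraint of the form ``agree with an explicit product of quadratic characters on $S(F)$'', the two counts coincide as soon as one proves that the two constraints coincide. This residual identity of quadratic characters on $S(F)$ is the content of Proposition \ref{productofcharacters} and is the main obstacle: the three factors $\omega_{G(F),E}|_{S(F)}$, $\epsilon_{\mathrm{Kal}}$, and $\zeta_{S(F),\chi,\chi_{\mathrm{BC}}}$ are constructed by entirely different recipes---respectively from the centre of the simply connected cover, from toral invariants à la Debacker--Spice, and from Langlands--Shelstad $\chi$-data---so there is no a priori reason for their product to match the corresponding character on the base-change side. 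I would attack this identity orbit-by-orbit on $\Phi(G,S)/(\Gamma_F\times\{\pm 1\})$, using Theorem \ref{rootcharacter} to discard all asymmetric orbits and separating, within the symmetric part, unramified from ramified sub-orbits. In the unramified case $E/F$ inherits unramified behaviour at every $F_\alpha/F_{\pm\alpha}$, so that $\zeta_{S(F),\chi,\chi_{\mathrm{BC}}}$ and the ramified part of $\epsilon_{\mathrm{Kal}}$ disappear and the identity reduces to a tractable computation with sign characters of finite residue fields, yielding Theorem \ref{unramifiedprasad}; for the groups listed in Theorem \ref{prasadexample} the orbit structure is explicit enough that the identity can be checked by hand. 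A uniform proof for arbitrary $(G,E/F)$ appears to require a more conceptual understanding---perhaps via the double-cover framework of \cite{Kal19a}---unifying these three families of characters at the level of $S(F)$, and is left open.
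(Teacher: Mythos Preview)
Your overall architecture matches the paper's: reduce both sides to the torus $S$, compare via Propositions \ref{LHS} and \ref{RHS}, and isolate the residual character identity of \Cref{productofcharacters}. However, there is a genuine gap in your accounting of the characters. You list three characters on the automorphic side---$\omega_{G(F),E}|_{S(F)}$, $\epsilon_{\mathrm{Kal}}$, and $\zeta$---but the Hakim--Murnaghan formula \eqref{HMformularein} produces a \emph{fourth} character $\epsilon_{\mathrm{HM}}$ (built from the determinant of the adjoint action on the Heisenberg quotients $\mathfrak{W}_i$ and Lusztig's sign $\epsilon^-$) which you have omitted entirely. Moreover, $\zeta$ does not arise on the distinction side: it enters on the \emph{parameter} side, in Proposition \ref{RHS}, as the discrepancy between $\chi_{\mathrm{BC}}$ and the minimally ramified $\chi$-data over $E$ (equation \eqref{basechangechi}). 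The correct residual identity is therefore $\epsilon_{\mathrm{HM}}\cdot\omega_{G(F),E}|_{S(F)}\cdot\epsilon_{\mathrm{Kal}}|_{S(F)}=\zeta|_{S(F)}$, with four independent players, not three.

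This miscount propagates into your unramified analysis. You claim that when $E/F$ is unramified $\zeta$ ``disappears'', but Table \ref{table4} and the proof of Theorem \ref{unramifiedprasad} show the opposite: $\zeta|_{S(F)}$ survives precisely on the symmetric ramified orbits and equals $\omega_{G(F),E}|_{S(F)}$ there. What actually vanishes in the unramified case are $\epsilon_{\mathrm{Kal}}|_{S(F)}$ (by the orbit-by-orbit computation in Table \ref{table2}) and $\epsilon_{\mathrm{HM}}$; the latter is not at all automatic and requires Zhang's result \cite[Proposition 4.1]{Zha20b}, which constructs a symplectic structure over $F$ using a depth-zero good element of trace zero---an argument available only when $E/F$ is unramified. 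Without $\epsilon_{\mathrm{HM}}$ in your ledger, and without this external input to kill it, your unramified argument would not close. You should also note two technical steps you pass over: the $z$-extension device (Section 6.5) needed because $\omega_{G(F),E}$ need not extend to a character of $G(E)$, and Proposition \ref{embeddingoverF}, which guarantees that the Whittaker-normalised base embedding $j_{\mathfrak{w}}$ is defined over $F$ so that the comparison of indexing sets in Lemma \ref{comparisonoflr} is meaningful.
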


%Moreover, Prasad also has a more refined conjecture on each individual terms appearing in the LHS of \Cref{prasadidentity}. More precisely,
%\begin{equation}\label{conjecture2}
%\begin{aligned}
%\mathrm{dim}\mathrm{Hom}_{G_{\alpha}(F)}(\pi,\omega_{G_{\alpha}(F),E})=
%\end{aligned}
%\end{equation}

Notice that we can simplify the parameter side of this identity based on the nice parametrization properties of regular supercuspidal representations, which will be described in detail in \Cref{RHSofprasad}.

\begin{remark}
The inflation restriction sequence \Cref{inflationrestriction} already implies that the set of possible extensions of $\phi_\pi$ can be identified with
$$H^1(\mathrm{Gal}(E/F),\widehat{G^{\mathrm{op}}}^{W_E})\cong \ker(H^1(W_F,\widehat{G^{\mathrm{op}}})\rightarrow H(W_E,\widehat{G})).$$

Notice that under the twisted (by $\phi_\pi$) $W_E$ action, we have an identification $\widehat{G^{\mathrm{op}}}^{W_E}=S_{\phi_\pi}$. In fact, for a regular supercuspidal parameter $\phi_\pi$, Lemma 5.3.4 in \cite{Kal19} implies $$S_{\phi_\pi}=\widehat{j}(\widehat{T}^{\Gamma_E})=\widehat{j}(\widehat{S^{\mathrm{op}}}^{\Gamma_E}).$$
However, there may exist different $F$ structures on $S^{\mathrm{op}}$ (hence different $\Gamma_F$ structure on $\widehat{S^{\mathrm{op}}}$) such that $\widehat{S^{\mathrm{op}}}^{\Gamma_E}$ is $S_{\phi_{\pi}}$, and we should count them all. If we fix an $F$ structure on $S^{\mathrm{op}}$, then the set of possible extensions of $\phi_\pi$ can be identified with the pointed set
$$H^1(\mathrm{Gal}(E/F),\widehat{S^{\mathrm{op}}}^{W_E})$$
under the above isomorphism. We will give a comparison between this and the one obtained from the naive base change in \Cref{compareextension}.
\end{remark}

\section{Hakim-Murnaghan's formula for distinguished regular supercuspidal representations}\label{section6}

\subsection{Yu's construction of tamely ramified supercuspidal representations.}\label{Yuconstruction}

Let $G$ be a reductive group over a nonarchimedean local field $F$ of residual characteristic $p\neq 2$, such that it splits over a tamely ramified extension of $F$. Yu develops a general method to construct tame supercuspidal representations of $G(F)$ using cuspidal data. We give a brief review of the construction here. For more details, one can refer to \cite{Yu01},\cite{HM08}. We fix an additive character $\psi:F\rightarrow \mathbb{C}^{\times}$ of conductor $p_F$ for convenience.

A generic cuspidal datum is a $5$-tuple:
$$(\overrightarrow{G},x,\overrightarrow{r},\rho,\overrightarrow{\phi}),$$
where
\begin{enumerate}
\item $\overrightarrow{G}=(G^0,\cdots,G^d=G)$ is a sequence of tamely ramified twisted Levi subgroups of $G$, such that $Z(G^0)/Z(G)$ is anisotropic.
\item $x\in \mathcal {B}(G^{0},F)$ is a point in the enlarged Bruhat-Tits building of $G^0$ and $[x]$ denotes its image in the reduced building.
\item $\overrightarrow{r}=(r_0,\cdots,r_d)$ is a sequence of real numbers, such that $0<r_0<\cdots<r_{d-1}\leq r_d$ if $d>0$, and $r_0\geq 0$ if $d=0$.
\item $\rho$ is an irreducible representation of $G^{0}(F)_{[x]}$ such that $\rho|_{G^{0}_{x,0^{+}}}$ is $\mathbbm{1}$-isotypic and $\rho|_{G^{0}_{x,0}}$ contains an inflation of a cuspidal representation of $G^{0}_{x,0}/G^{0}_{x,0^{+}}$, and also $\pi_{-1}=c$-$\mathrm{Ind}_{G^{0}(F)_{[x]}}^{G^{0}(F)}\rho$ is irreducible.(The last condition is also equivalent to the condition that $\pi_{-1}$ is a depth-0 supercuspidal representation of $G^{0}(F)$.)
\item $\overrightarrow{\phi}=(\phi_0,\cdots,\phi_d)$ is a sequence of quasi-characters $\phi_i$ of $G^{i}(F)$ of depth $r_i$, which are $G^{i+1}(F)$ generic in the sense of Yu. More precisely, $\phi_{i}|_{G^{i}(F)_{x,r_{i}+}}$ is trivial and there exists a $G^{i+1}(F)$ generic element $X_{i}^{*}\in (\mathfrak{z}^{i})^{*}_{-r}$ such that:
$$\phi_i(\mathrm{MP}(Y+\mathfrak{g}^{i}(F)_{x,r+}))=\psi(\langle X_{i}^{*},Y\rangle),$$
for any $Y\in \mathfrak{g}_{y,r}^{i}(F)$, where $\mathrm{MP}$ denotes the Moy-Prasad isomorphism:
$$\mathrm{MP}:\mathfrak{g}^{i}(F)_{x,r}/\mathfrak{g}^{i}(F)_{x,r+}\cong G^{i}(F)_{x,r}/G^{i}(F)_{x,r+}.$$
\end{enumerate}

To understand Yu's construction, we first need to understand the representation theory of abstract Heisenberg $p$-groups and Yu's special isomorphism, which is crucial in the whole construction. Let $(W,\langle \cdot,\cdot\rangle)$ be a symplectic vector space over a finite field $k$ such that $\mathrm{char}k\neq 2$, and $H(W)$ be the usual Heisenberg group associated to $W$. More precisely, $H(W)$ is identified with $W\oplus k$ with multiplication given by
$$(w_1,a_1)\cdot(w_2,a_2)=(w_{1}+w_{2},a_1+a_2+\frac{1}{2}\langle w_1,w_2\rangle).$$

\begin{definition}\cite{Yu01}
A Heisenberg $p$-group is an abstract group $\mathcal{H}$ which is isomorphic to a Heisenberg group $H(W)$ associated to a symplectic vector space $W$ over $\mathbb{F}_p$.
\end{definition}
For an abstract Heisenberg $p$-group $\mathcal{H}$ with center $\mathcal{Z}$, let $\mathcal{W}$ denote $\mathcal{H}/\mathcal{Z}$. The commutator map defines a symplectic form on $\mathcal{W}$:
\begin{equation}
\begin{aligned}
\mathcal{H}/\mathcal{Z}\times \mathcal{H}/\mathcal{Z}&\rightarrow \mathcal{Z}\\
(h_1\mathcal{Z},h_2\mathcal{Z})&\mapsto \langle h_1\mathcal{Z},h_2\mathcal{Z}\rangle:=h_{1}h_{2}h_{1}^{-1}h_{2}^{-1}.
\end{aligned}
\end{equation}
\begin{definition}
A special isomorphism on $\mathcal{H}$ is an isomorphism $\nu:\mathcal{H}\rightarrow H(\mathcal{W})$ such that the following diagram commutes:
$$\xymatrix{
  1 \ar[r] & \mathcal{Z} \ar@{=}[d] \ar[r] & \mathcal{H}\ar[d]_{\nu} \ar[r] & \mathcal{W} \ar@{=}[d] \ar[r] & 1  \\
  1 \ar[r] & \mathcal{Z} \ar[r] & H(\mathcal{W}) \ar[r] & \mathcal{W} \ar[r] & 1.   }$$
\end{definition}
Fix a non-trivial additive character $\psi_{\mathbb{F}_p}:\mathbb{F}_p\rightarrow \mathbb{C}^{\times}$, the Heisenberg Weil representation of $Sp(\mathcal{W})\ltimes_{\nu}\mathcal{H}$ is the pull back of the usual Heisenberg Weil representation of $Sp(\mathcal{W})\ltimes H(\mathcal{W})$ associated to $\psi_{\mathbb{F}_p}$.

The explicit construction of the tamely ramified supercuspidal representations \cite{Yu01} could be described as follows.
Let $S_{\mathrm{ms}}$ be a maximal $F$-split torus of $G^{0}$, $S_{\mathrm{mus}}$ be an $F$-torus containing $S_{\mathrm{ms}}$ such that $S_{\mathrm{mus}}\times_{F}F^{\mathrm{ur}}$ is maximal $F^{\mathrm{ur}}$-split, and $S'=C_{G^0}(S_{\mathrm{mus}})$ be the centralizer of $S_{\mathrm{mus}}$ in $G^0$. Since $G^0$ is quasi-split over $F^{\mathrm{ur}}$, $S'$ is a maximal torus of $G^0$ defined over $F$ with maximal $F$-split subtorus $S_{\mathrm{ms}}$. Let $L/F^{\mathrm{ur}}$ be the splitting field of $S'$ such that $L/F$ is a tamely ramified extension. We have canonical inclusions of apartments:
$$\mathcal{A}(G^0,S_{\mathrm{ms}},F)\subset \mathcal{A}(G^0,S_{\mathrm{mus}},F^{\mathrm{ur}})\subset \mathcal{A}(G^0,S',L). $$
When $G^0$ is quasi-split over $F$, then $S'$ could be chosen to be the maximal torus $S^{0}_{0}$, such that $(B^{0}_{0},S^{0}_{0},\{X^{0}_{\alpha}\})$ determines the pinning of $G^0$ over $F$.

For an arbitrary maximal torus $S\subset G^0$ defined over $F$, which splits over a tamely ramified extension $L/F$. We adopt the notation in \cite{Yu01} and \cite{HM08}.
Let $\mathcal{A}(G^0,S,F)$ denote the intersection
$$\mathcal{A}(G^0,S,F):=\mathcal{A}(G^0,S,L)\cap \mathcal{B}(G^0,F)$$
in $\mathcal{B}(G^0,L)$.
Since $L/F$ is tame, by Galois descent on Bruhat-Tits building, we have a natural identification:
$$\mathcal{A}(G^0,S,F)\cong \mathcal{A}(G^0,S,L)^{\mathrm{Gal}(L/F)}.$$

Fix a cuspidal datum $(\overrightarrow{G},x,\overrightarrow{r},\rho,\overrightarrow{\phi})$ such that $x\in \mathcal{A}(G^0,S,F)$ for a tamely ramified maximal torus $S\subset G^0$, and set $s_i:=\frac{r_i}{2}$, Yu defines a family of subgroups of $G(F)$:
$$K^{i}:=G^{0}(F)_{[x]}(G^{0}(F),\cdots,G^{i}(F))_{x,(0^+,s_0,\cdots,s_{i-1})},$$
$$K^{i}_{+}:=G^{0}(F)_{x,0^{+}}(G^{0}(F),\cdots,G^{i}(F))_{x,(0^+,s_0,\cdots,s_{i-1})},$$
$$J^{i}:=(G^{i-1}(F),G^{i}(F))_{x,(r_{i-1},s_{i-1})},$$
$$J^{i}_{+}:=(G^{i-1}(F),G^{i}(F))_{x,(r_{i-1},s_{i-1}^{+})}.$$
These subgroups satisfy the following relations:
$$K^{i}=K^{i-1}J^{i}=K^{i-1}G^{i}(F)_{x,s_{i-1}},$$
$$K^{i}_{+}=K^{i-1}_{+}J^{i}_{+}=K^{i-1}_{+}G^{i}(F)_{x,s_{i-1}^{+}}.$$
Since $\phi_i$ is of depth $r_i$, hence is trivial on $G^{i}(F)_{x,r_{i}^{+}}$, so that $\phi_i|_{G^{i}(F)_{x,s_{i}^{+}}}$ factors through a character of $G^{i}(F)_{x,s_{i}^{+}}/G^{i}(F)_{x,r_{i}^{+}}$. We can get a character of $G(F)_{x,s_{i}^{+}}$ by inflation:
$$\mathrm{inf}_{G^{i}(F)_{x,s_{i}^{+}}}^{G(F)_{x,s_{i}^{+}}}(\phi_i)=\mathrm{inf}_{G^{i}(F)_{x,s_{i}^{+}}/G^{i}(F)_{x,r_{i}^{+}}}^{G(F)_{x,s_{i}^{+}}/G(F)_{x,r_{i}^{+}}}(\phi_i|_{G^{i}(F)_{x,s_{i}^{+}}}).$$
Let $\widehat{\phi_i}$ be the unique character of $G^{0}(F)_{[x]}G^{i}(F)_{x,0}G(F)_{x,s_{i}^{+}}$ such that
$$\widehat{\phi_i}|_{G^{0}(F)_{[x]}G^{i}(F)_{x,0}}=\phi_i|_{G^{0}(F)_{[x]}G^{i}(F)_{x,0}},$$
$$\widehat{\phi_i}|_{G(F)_{x,s_{i}^{+}}}=\mathrm{inf}_{G^{i}(F)_{x,s_{i}^{+}}}^{G(F)_{x,s_{i}^{+}}}(\phi_i).$$
Notice that $J^{i}/\mathrm{ker}(\widehat{\phi_i}|_{J^{i}_{+}})$ has a natural structure of abstract Heisenberg group over the residue field, such that $J^{i}/J^{i}_{+}$ has a natural structure of symplectic vector space over the residue field. There exists an irreducible representation $\widetilde{\phi_i}$ of $K^{i}\ltimes J^{i+1}$, which corresponds to the Heisenberg representation via the special isomorphism such that
\begin{enumerate}
\item The restriction of $\widetilde{\phi_i}$ to $J^{i+1}_{+}=1\ltimes J^{i+1}_{+} $ is $\widehat{\phi_i}|_{J^{i+1}_{+}}$ isotypic.
\item The restriction of $K^{i}_{+}\ltimes 1$ is $\mathbbm{1}$ isotypic.
\end{enumerate}
It is also proved that $\mathrm{inf}(\phi_i)\otimes \widetilde{\phi_i}$ factors through the map
$$K^{i}\ltimes J^{i+1}\rightarrow K^{i+1}=K^{i}J^{i+1},$$
where $\mathrm{inf}(\phi_i)$ is the inflation of $\phi_{i}|_{K^{i}}$ to $K^{i}\ltimes J^{i+1}$.
Let $\phi_{i}^{'}$ be the representation of $K^{i+1}$ such that its inflation to $K^{i}\ltimes J^{i+1}$ is $\inf(\phi_i)\otimes \widetilde{\phi_i}$. One can produce a sequence of representations $\{\kappa_i\}$ of $K^{d}$ by inflation. More precisely, we have
$\kappa_{-1}=\mathrm{inf}_{G^{0}(F)_{[x]}}^{K^d}\rho$, $\kappa_{i}=\mathrm{inf}_{K^{i+1}}^{K^{d}}(\phi_{i}^{'})$ for $i\leq d-1$, and $\kappa_d=\phi_d|_{K^d}$.
Then one can further produce a representation of $K^{d}$ by
$$\rho_d=\kappa_{-1}\otimes\cdots\otimes \kappa_{d}.$$
By \cite{Yu01} and \cite{Fin19}, the representation $\pi:=c$-$\mathrm{Ind}_{K^d}^{G(F)}\rho_d$ is irreducible, hence a supercuspidal representation of $G(F)$.

\begin{example}

An important class of such representations are the so-called toral supercuspidal representations, that is, the representations with Yu data: $$(\overrightarrow{G}=(S,G),\mathbbm{1},(\theta,\mathbbm{1})),$$
where $S$ is an elliptic maximal torus of $G$ and $\theta$ is a character of $S(F)$. As remarked by \cite{Kal19}, these representations are already general enough to include a class of supercuspidal representations called epipelegic representations, which is first constructed by Reeder and Yu \cite{RY14} using stable vectors in geometric invariant theory. Later, Kaletha \cite{Kal15} proved that their constructions could be recovered from Adler's construction \cite{Adl98} when the residual characteristic $p$ is good.

\end{example}

Based on Yu's work, Kaletha provides a strategy to construct regular supercuspidal representations from a tame elliptic pair $(S,\mu)$ satisfying certain regular conditions. We give a short review of his construction here. The key point to recover Yu's data from $(S,\mu)$ is the following existence result of Howe factorization proved by Kaletha.

Let $L$ be the splitting field of $S$, one can produce a sequence of Levi subsystem of $\Phi(S,G)$ for a sequence of positive real numbers $\overrightarrow{r}$ and $\mu$:
\begin{equation*}
\begin{aligned}
\Phi_{r_i}:=\{\alpha\in \Phi(S,G)|\mu(\mathrm{Nm}_{S(L)/S(F)}(\alpha^{\vee}(L_{r_i}^{\times})))=1\}.
\end{aligned}
\end{equation*}

Set $\Phi_{r+}:=\bigcap\limits_{s>r}\Phi_s$, then one can define $G^{i}$ to be the connected reductive subgroups of $G$ with a maximal torus $S$ and root system $\Phi_{r+}$. We also set $G^{-1}:=S$.

\begin{definition}[{\cite[Definition 3.6.2]{Kal19}}]\label{howefactorization}
A Howe factorization of $(S,\mu)$ is a sequence of characters $\phi_i:G^{i}(F)\rightarrow \mathbb{C}^{\times}$ for $i=-1,\cdots,d$ such that
\begin{enumerate}
\item $\mu=\prod\limits_{i=-1}^{d}\phi_{i}|_{S(F)}$.
\item $\phi_i$ is trivial on $G^{i}_{\mathrm{sc}}(F)$.
\item $\phi_i$ is of depth $r_i$ is $G^{i+1}$ generic. For $i=d$, $\phi_d$ is the trivial character if $r_d=r_{d-1}$, and has depth $r_d$ if $r_d\neq r_{d-1}$. For $i=-1$, $\phi_{-1}$ is the trivial character if $G^{0}=S$, and is a depth zero character if $G^{0}\neq S$. In this case we usually denote it by $\mu_{-1}:S(F)\rightarrow \mathbb{C}^{\times}$ instead of $\phi_{-1}$.
\end{enumerate}
\end{definition}

\begin{proposition}[{\cite[Proposition 3.6.7]{Kal19}}]

Any pair $(S,\mu)$ consisting of a tame elliptic torus $S\subset G$ and a character $\mu:S(F)\rightarrow \mathbb{C}^{\times}$ has a Howe factorization $(\phi_{-1},\cdots,\phi_d)$.
\end{proposition}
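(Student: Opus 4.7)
The plan is to construct the Howe factorization by downward induction on depth, stripping off one generic character at each step. First I would set $r_d := \mathrm{depth}(\mu)$ and read off the full chain of twisted Levi subgroups $S = G^{-1} \subset G^0 \subset \cdots \subset G^d = G$ directly from the jumps of the filtration $\Phi_{r+}$ as $r$ decreases: the indices $r_0 < r_1 < \cdots < r_{d-1} < r_d$ are precisely the positive values of $r$ at which $\Phi_{r+}$ strictly increases, together with the depth-zero level responsible for $G^0$. Note that it is automatic that each $G^i$ is a tamely ramified twisted Levi subgroup, since $S$ splits tamely and $\Phi_{r+}$ is $\Gamma_F$-stable and closed, with $Z(G^0)/Z(G)$ anisotropic following from the ellipticity of $S$.

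The key step is the construction of $\phi_d$. Via the Moy--Prasad isomorphism $\mathfrak{s}(F)_{x,r_d}/\mathfrak{s}(F)_{x,r_d+} \cong S(F)_{x,r_d}/S(F)_{x,r_d+}$ and the additive character $\psi$, the restriction $\mu|_{S(F)_{x,r_d}}$ corresponds to a linear functional $X^* \in \mathfrak{s}^*(F)_{-r_d}/\mathfrak{s}^*(F)_{(-r_d)+}$. The very definition of $G^{d-1}$ as the reductive subgroup with root system $\Phi_{r_d+}$ says that $X^*$, extended by zero on root spaces, has centralizer in $G^d$ equal to $G^{d-1}$, which is Yu's genericity condition. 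I would then extend $X^*$ to an element of $(\mathfrak{z}^{d-1})^*_{-r_d}$ and use it to build a character $\phi_d$ of $G^d(F)$ of depth $r_d$ that is trivial on $G^d_{\mathrm{sc}}(F)$ and whose restriction to $S(F)_{x,r_d}$ agrees with $\mu|_{S(F)_{x,r_d}}$; at the top level this $\phi_d$ is not required to be generic relative to any larger group.

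Setting $\mu' := \mu \cdot \phi_d^{-1}|_{S(F)}$ yields a character of strictly smaller depth, and $(S, \mu')$ remains a tame elliptic pair since both tameness of $S$ and ellipticity are properties only of $S$. Iterating the same procedure inside $G^{d-1}$ in place of $G$, I would extract $\phi_{d-1}, \phi_{d-2}, \ldots$ in turn, until either the running depth drops to $0$ (at which point the residual character is a depth-zero character of $S(F)$, which we record as $\mu_{-1} = \phi_{-1}$ when $G^0 \neq S$) or else $G^0 = S$ and there is no depth-zero layer to extract, so $\phi_{-1}$ is taken trivial. In each intermediate step, the genericity condition for $\phi_i$ with respect to $G^{i+1}$ is guaranteed because $r_i$ is, by construction, a strict jump of $\Phi_{r+}$.

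The main obstacle is the extension step: given the functional $X^*_i \in (\mathfrak{z}^i)^*_{-r_i}$, one must produce a character of $G^i(F)$ that is trivial on $G^i_{\mathrm{sc}}(F)$, has depth exactly $r_i$, and realizes the prescribed restriction to $S(F)_{x,r_i}$. Triviality on $G^i_{\mathrm{sc}}(F)$ forces the character to factor through the abelianization $G^i(F)/G^i_{\mathrm{sc}}(F)$, and one must show that the natural map from the relevant Moy--Prasad quotient of $S(F)$ to that of the abelianization is injective with divisible cokernel so that extension is unobstructed. It is here that the standing hypotheses ($p$ odd, $p$ not bad for $G$, $p \nmid |\pi_0(Z(G))|$, $p \nmid |\pi_1(G_{\mathrm{der}})|$) enter essentially, ensuring the relevant cohomology vanishes and the filtrations on $S$ and on $G^i_{\mathrm{ab}}$ align under the isogeny; once extension is secured, depth and genericity are preserved by construction.
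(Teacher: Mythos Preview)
The paper does not give its own proof of this proposition; it simply cites \cite[Proposition 3.6.7]{Kal19} and moves on. So there is nothing in the paper to compare your argument against. Your strategy --- downward induction on depth, peeling off one character at each jump of the filtration $\Phi_{r+}$, with the extension step handled via the abelianization $G^i(F)/G^i_{\mathrm{sc}}(F)$ and the standing hypotheses on $p$ --- is essentially Kaletha's original proof.

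One small indexing slip: you write that the depth-$(-r_d)$ element $X^*$ has centralizer $G^{d-1}$, invoking ``the very definition of $G^{d-1}$ as the reductive subgroup with root system $\Phi_{r_d+}$.'' But $r_d$ is the depth of $\mu$, not (in general) a jump of $\Phi_{r+}$; when $r_d > r_{d-1}$ the centralizer of $X^*$ at depth $-r_d$ may well be all of $G$, which is exactly why $\phi_d$ carries no genericity condition (as you also correctly say). The genericity picture only kicks in after $\phi_d$ is removed and you drop to depth $r_{d-1}$: it is the element at depth $-r_{d-1}$ whose centralizer is $G^{d-1}$, and this is what makes $\phi_{d-1}$ a $G^d$-generic character. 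With that correction, the outline is sound.
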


If $G^0\neq S$, then $S$ is a maximally unramified elliptic maximal torus of $G^0$ \cite[Definition 3.4.2]{Kal19}, and $\mu_{-1}:S(F)\rightarrow \mathbb{C}^{\times}$ is a regular depth zero character such that $\mu_{-1}|_{S(F)_{0}}$ factors through $\mathsf{S}'(k_F)\cong S(F)_0/S(F)_{0^{+}}$. Let $\kappa_{\mathsf{S}',\mu_{-1}}$ denote the inflation of the irreducible Deligne-Lusztig cuspidal representation $\pm R_{\mathsf{S}',\mu_{-1}}$ of $\mathsf{G}^{0,\circ}(k_F)$ to $G^{0}(F)_{x,0}$. Let $\overline{\kappa_{\mathsf{S}',\mu_{-1}}}$ denote the canonical extension of $\kappa_{\mathsf{S}',\mu_{-1}}$ to $S(F)G^{0}(F)_{x,0}$ established by Kaletha with the help of $\mathsf{S}'_{\mathrm{ad}}(k_F)$ action on the corresponding Deligne-Lusztig variety. By \cite[Propositon 3.4.27]{Kal19}, $\rho$ could be chosen to be $\mathrm{Ind}_{S(F)G^{0}(F)_{x,0}}^{G^{0}(F)_{[x]}}\overline{\kappa_{\mathsf{S}',\mu_{-1}}}$.

Hence, the above process associates a generic cuspidal datum $(\overrightarrow{G},x,\overrightarrow{r},\rho,\overrightarrow{\phi})$ to $(S,\mu)$. One can then get a regular tamely ramified supercuspidal representation of $G(F)$ from Yu's construction.

\subsection{Hakim-Murnaghan's formula.}

For a tame supercuspidal representation $(\pi,V)$ of $G'(F)$ with a fixed Yu-datum, Hakim and Murnaghan \cite{HM08} give a formula to describe the dimension of the space of $(G')^{\vartheta}(F)$ invariant linear forms on $V$ for an involution $\vartheta:G' \rightarrow G'$ defined over $F$. We can apply their machinery to the case when $G'$ is $\mathrm{Res}_{E/F}G_E$ and $\vartheta$ is the $F$-involution induced by $\sigma\in \mathrm{Gal}(E/F)$ (also denoted by $\sigma$).

\begin{equation}
\begin{aligned}
\mathrm{Hom}_{G(F)}(\pi,\mathbbm{1})&=\mathrm{Hom}_{G(F)}(\mathrm{ind}_{K^d}^{G(E)}\kappa,\mathbbm{1})\\
&=\bigoplus\limits_{K^{d}gG(F)\in K^{d}\backslash G(E)/G(F)}\mathrm{Hom}_{K^{d}\cap gG(F)g^{-1}}(\kappa,\mathbbm{1}).\\
\end{aligned}
\end{equation}
\begin{equation}\label{HMoriginal}
\begin{aligned}
\mathrm{dim}\mathrm{Hom}_{G(F)}(\pi,\mathbbm{1})&=\sum\limits_{\Theta'\in\{K^d\mathrm{~orbits~ in~the ~}G(E)\mathrm{~orbit~of~\sigma\}}}m(K,\Theta')\mathrm{dim}\mathrm{Hom}_{{K^{d}}^{\sigma'}}(\kappa,\mathbbm{1})\\
&=\sum\limits_{\Theta'\in\{K^0\mathrm{~orbits~ in~the ~}G(E)\mathrm{~orbit~of~\sigma\}}}m(K,\Theta')\mathrm{dim}\mathrm{Hom}_{{K^{0}}^{\sigma'}}(\rho\otimes \prod_{i=0}^{d}(\phi_i|_{K^{0}}),\epsilon_{K^0,\sigma'}),
\end{aligned}
\end{equation}
where $m(K,\Theta')$ is certain multiplicity, and $\epsilon_{K^0,\sigma'}$ is certain quadratic character of ${K^{0}}^{\sigma'}$.

Moreover, they also describe the relevant condition explicitly.(i.e. the condition of double coset or equivalently $\Theta'$ such that $\mathrm{dim}\mathrm{Hom}_{K^{\theta'}}(\kappa,\mathbbm{1})\neq 0$.)

\begin{example}
For toral supercuspidal representations, this formula could be even much simpler. More explicitly, $\pi=\mathrm{ind}_{K}^{G(E)}\kappa$ is a toral supercuspidal representation, where $\kappa=\mathrm{inf}_{T(E)}^{K}\mathbbm{1}\otimes\theta'=\theta'$ and $\theta'$ is the representation of $K$ attached to $\theta$ through Yu's special isomorphism. For these representations, we have:
\begin{equation*}
\begin{aligned}
\mathrm{Hom}_{G(F)}(\pi,\mathbbm{1})&=\mathrm{Hom}_{G(F)}(\mathrm{ind}_{K}^{G(E)}\kappa,\mathbbm{1})\\
&=\bigoplus\limits_{KgG(F)\in K\backslash G(E)/G(F)}\mathrm{Hom}_{K\cap gG(F)g^{-1}}(\kappa,\mathbbm{1}).
\end{aligned}
\end{equation*}

\begin{equation*}
\begin{aligned}
\mathrm{dim}\mathrm{Hom}_{G(F)}(\pi,\mathbbm{1})&=\sum\limits_{\Theta'\in\{K\mathrm{~orbits~ in~the ~}G(E)\mathrm{~orbit~of~\sigma\}}}m(K,\Theta')\mathrm{dim}\mathrm{Hom}_{K^{\sigma'}}(\kappa,\mathbbm{1})\\
&=\sum\limits_{\Theta'\in\{T(E)\mathrm{~orbits~ in~the ~}G(E)\mathrm{~orbit~of~\sigma\}}}m(K,\Theta')\mathrm{dim}\mathrm{Hom}_{T(E)^{\sigma'}}(\theta,\epsilon_{T(E),\sigma'}).
\end{aligned}
\end{equation*}
\end{example}

Recently, Hakim \cite{Hak18} transforms the relevant condition on $K^{0}$ orbits of involutions to $j(T)(E)$ orbits of involutions, that is, from double coset $K^{0}\backslash G(E)/G(F)$ to double coset $j(T)(E)\backslash G(E)/G(F)$ for regular supercuspidal representations, and gets a refined formula for these representations. Remark that the set of $K^d$ orbits of involutions within a fixed $G(E)$ orbit of $\sigma$ can be identified with $K^{d}\backslash G(E)/G_{\sigma}$, where $G_\sigma$ is the stabilizer of $\sigma$ in $G(E)$, and it's clear that $G_{\sigma}$ contains $G(F)$ as a finite index subgroup. However, in our reinterpretation, we prefer to use the double coset $K^d\backslash G(E)/G(F)$ instead of $K^{d}\backslash G(E)/G_{\sigma}$ for certain reason, which also avoids the factor $m(K^d,\Theta')$.

The following theorem (in a slightly different form) is proved by Hakim \cite{Hak18} for regular supercuspidal representations:
\begin{theorem}[{\cite[Theorem 3.4.1]{Hak18}}]
Let $\pi$ be a regular supercuspidal representation of $G(E)$ attached to a tame regular elliptic pair $(j_{\pi}(T),\theta)$, then we have:
$$\dim\mathrm{Hom}_{G(F)}(\pi,\mathbbm{1})=\sum_{\substack{\{[g]\in j(T)(E)\backslash G(E)/G(F),\\ \mathrm{Ad}(g)(j(T)(E))\mathrm{~is~}\sigma\mathrm{~stable}\}}}\dim\mathrm{Hom}_{gj(T)(E)g^{-1}\cap G(F)}(^{g}\theta,\epsilon_{\mathrm{HM}}).$$
\end{theorem}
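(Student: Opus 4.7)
The plan is to begin with the Mackey decomposition of the compact induction $\pi = \text{c-Ind}_{K^d}^{G(E)} \rho_d$, giving the basic identity
\[
\dim\mathrm{Hom}_{G(F)}(\pi,\mathbbm{1}) \;=\; \sum_{[g]\in K^d\backslash G(E)/G(F)} \dim\mathrm{Hom}_{K^d\cap gG(F)g^{-1}}({}^g\rho_d,\,\mathbbm{1}).
\]
A summand is nonzero only when $\sigma' := \mathrm{Ad}(g)\circ \sigma\circ \mathrm{Ad}(g^{-1})$ is \emph{relevant} for the Yu filtration, i.e.\ it preserves each $K^i$, $K^i_+$, $J^i$, $J^i_+$; this is the starting point of the original Hakim--Murnaghan formula \ref{HMoriginal}.

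Next I would evaluate each relevant Hom by exploiting that $\rho_d = \kappa_{-1}\otimes \kappa_0\otimes\cdots\otimes \kappa_d$ is a tensor product factored through Yu's special isomorphisms. The outermost character $\phi_d$ restricts trivially on $K^d$. Each intermediate $\kappa_i$ lifts a Heisenberg--Weil representation of the finite symplectic space $J^{i+1}/J^{i+1}_+$; its $\sigma'$-fixed invariants are computed via the standard sign-invariant theory for finite Weil representations under an involution, producing a quadratic character on the fixed-point subgroup determined by the $\sigma'$-stable Lagrangian decomposition. The depth-zero factor $\kappa_{-1}$, constructed via Kaletha's canonical extension $\overline{\kappa_{\mathsf{S}',\mu_{-1}}}$, reduces by the Deligne--Lusztig character formula to a toral contribution coming from $\mu_{-1}$ twisted by the sign of the $\sigma'$-action on the corresponding Deligne--Lusztig variety.

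The refinement then consists of replacing the $K^d$-orbit sum by a $j(T)(E)$-orbit sum. Since $j(T)\subset G^0 \subset K^d$ and $j(T)$ is an elliptic maximal torus, for each relevant involution $\sigma'$ one can conjugate inside $K^d$ so that $\sigma'$ stabilizes $j(T)(E)$; equivalently, $\mathrm{Ad}(g)(j(T)(E))$ is $\sigma$-stable. The $K^d$-orbit of $\sigma'$ thereby fibers over a union of $j(T)(E)$-orbits, and the combinatorial count of this refinement precisely cancels the multiplicity factor $m(K^d,\Theta')$ appearing in \ref{HMoriginal}. Collecting all sign contributions from the Weil--Heisenberg factors and the depth-zero Deligne--Lusztig factor and restricting to $gj(T)(E)g^{-1}\cap G(F)$ yields the rectifier $\epsilon_{\mathrm{HM}}$, while the character contribution from $\rho_d$ on the torus becomes ${}^g\theta = {}^g(\mu_{-1}\cdot \prod_i \phi_i|_{T(E)})$. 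This gives exactly
\[
\dim\mathrm{Hom}_{gj(T)(E)g^{-1}\cap G(F)}({}^g\theta,\,\epsilon_{\mathrm{HM}})
\]
on each refined double coset, proving the stated formula.

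The principal obstacle is showing that the many sign contributions produced by the Weil--Heisenberg representations at each Moy--Prasad level, together with the Deligne--Lusztig contribution at depth zero, assemble into a single well-defined quadratic character $\epsilon_{\mathrm{HM}}$ on $gj(T)(E)g^{-1}\cap G(F)$ that is independent of the polarizations and Lagrangians chosen in Yu's construction. This is the heart of Hakim's Galois-theoretic reorganization in \cite{Hak18}, and it requires careful bookkeeping of $\sigma'$-invariant Lagrangian splittings at every level and a precise verification that the decomposition of $K^d\backslash G(E)/G(F)$ into $j(T)(E)$-orbits absorbs the multiplicities $m(K^d,\Theta')$ without residue.
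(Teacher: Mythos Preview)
The paper does not prove this theorem; it is stated as a citation of \cite[Theorem 3.4.1]{Hak18} and used as a black box. Your outline is a reasonable high-level sketch of the strategy behind Hakim's argument (Mackey decomposition, relevance condition, Heisenberg--Weil invariants at each level, Deligne--Lusztig contribution at depth zero, then the passage from $K^d$-orbits to $j(T)(E)$-orbits), and you correctly flag the main technical point---that the various sign contributions assemble into the single character $\epsilon_{\mathrm{HM}}$ and that the orbit refinement absorbs the multiplicities---as the crux. Since there is no in-paper proof to compare against, there is nothing further to say here beyond noting that your sketch is consistent with how the paper summarizes Hakim's result and with the explicit description of $\epsilon_{\mathrm{HM}}=\epsilon^{-}\cdot\epsilon^{+}$ given immediately after the theorem statement.
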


More precisely, the quadratic character $\epsilon_{\mathrm{HM}}:i_g(S)=gj_{\pi}(T)(E)g^{-1}\cap G(F)\rightarrow \{\pm 1\}$ has the following explicit description:
\begin{equation}
\begin{aligned}
\epsilon_{\mathrm{HM}}(t)&=\epsilon^{-}_{j_{\pi}(T)(E),[g]}(t)\epsilon^{+}|_{j_{\pi}(T)(E),[g]}(t)\\
&:=\det(\mathrm{Ad}(t)_{g G^{0}(E)_{[x]}g^{-1}\cap G(F)})
 \prod_{i=0}^{d-1} \mathrm{sgn}_{k_{F}^{\times}}(\det \mathrm{Ad}(t)|_{\mathfrak{W}_i}).
\end{aligned}
\end{equation}
In fact, $\epsilon^{+}_{G^{0}(E)_{[x]},[g]}:gG^{0}(E)_{[x]}g^{-1}\cap G(F)\rightarrow \{\pm 1\}$ is indeed the quadratic character $\epsilon_{K^0,\sigma'}$ in \Cref{HMoriginal} given by $\prod\limits_{i=0}^{d-1}\mathrm{sgn}_{k_{F}^{\times}}(\det \mathrm{Ad}(g)|_{\mathfrak{W}_i})$, where $$\mathfrak{W}_i:=((\bigoplus\limits_{\alpha\in\Phi(G^{i+1},j_{\pi}(T))-\Phi(G^{i},j_{\pi}(T))}\mathfrak{g}_{\alpha})^{\Gamma_E})_{x,s_i,s_{i}^{+}}^{\mathrm{Gal}(E/F)}$$
is a vector space over the residue field $k_F$.

\subsection{Some facts about $p$-adic tori.}
Before we give a reinterpretation of the relevant condition on the set of $T(E)$ orbits in the $G(E)$ orbit of the Galois involution $\sigma$, we first review some basic facts about $p$-adic tori.

$\bullet$ \textbf{Conjugacy class}.

For a connected quasi-split reductive group $G$ defined over $F$, let $S\subset G$ be a fixed base maximal torus defined over $F$.

\begin{definition}
Let $S_1$ and $S_2$ be two maximal tori of $G$ defined over $F$. They are called rationally conjugate if they are conjugate by an element in $G(F)$. They are called stably conjugate if $S_1(F)$ and $S_2(F)$ are conjugate by an element in $G(\overline{F})$.
\end{definition}

We have the following well-known parametrization of these conjugacy classes.
\begin{proposition}
The set of rational conjugacy classes of maximal tori of $G$ defined over $F$ is in bijection with the pointed set $\ker (H^1(F,N_{G}(S))\rightarrow H^1(F,G))$.
\end{proposition}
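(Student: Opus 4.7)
The plan is to construct the bijection by hand via Galois descent, using the fact that over $\overline{F}$ all maximal tori of $G$ are $G(\overline{F})$-conjugate. First I would define a map
\[
\Psi:\{\text{maximal $F$-tori of }G\}/G(F)\text{-conj.}\longrightarrow \ker\bigl(H^1(F,N_G(S))\to H^1(F,G)\bigr)
\]
as follows. Given a maximal $F$-torus $S'\subset G$, pick any $g\in G(\overline{F})$ with $gSg^{-1}=S'$; such a $g$ exists because $S,S'$ are $G(\overline{F})$-conjugate. For each $\gamma\in\Gamma_F$, the equality $\gamma(S')=S'$ translates to $g^{-1}\gamma(g)\in N_G(S)(\overline{F})$, so $c_{S'}(\gamma):=g^{-1}\gamma(g)$ is a $1$-cocycle with values in $N_G(S)$. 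By construction $c_{S'}$ is a coboundary in $G$, hence lies in the kernel, and changing $g$ to $gn$ with $n\in N_G(S)(\overline{F})$ replaces $c_{S'}$ by a cohomologous cocycle, so $\Psi(S')$ is well defined.

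Next I would check that $\Psi$ only depends on the $G(F)$-conjugacy class of $S'$: if $S''=hS'h^{-1}$ with $h\in G(F)$, then $hg$ conjugates $S$ to $S''$, and $(hg)^{-1}\gamma(hg)=g^{-1}\gamma(g)$ since $h\in G(F)$, so the cocycles literally coincide. For injectivity, suppose $\Psi(S')$ and $\Psi(S'')$ agree in $H^1(F,N_G(S))$; choose $g_1,g_2\in G(\overline{F})$ with $g_iSg_i^{-1}=S^{(i)}$, and pick $m\in N_G(S)(\overline{F})$ with $g_1^{-1}\gamma(g_1)=m^{-1}g_2^{-1}\gamma(g_2)\gamma(m)$ for all $\gamma$. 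Then $h:=g_2mg_1^{-1}$ satisfies $\gamma(h)=h$ for all $\gamma\in\Gamma_F$, so $h\in G(F)$, and $hS'h^{-1}=S''$.

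For surjectivity, take any cocycle $c\in Z^1(F,N_G(S))$ whose image in $Z^1(F,G)$ is a coboundary, say $c(\gamma)=g^{-1}\gamma(g)$ for some $g\in G(\overline{F})$. Then $S':=gSg^{-1}\subset G_{\overline{F}}$ is a maximal torus, and for every $\gamma\in\Gamma_F$,
\[
\gamma(S')=\gamma(g)S\gamma(g)^{-1}=g\cdot c(\gamma)\,S\,c(\gamma)^{-1}\cdot g^{-1}=gSg^{-1}=S',
\]
since $c(\gamma)\in N_G(S)$. Hence $S'$ descends to an $F$-torus with $\Psi(S')=[c]$. The base point $S$ itself corresponds to the trivial cocycle (take $g=1$), so $\Psi$ is a morphism of pointed sets, and the above three steps together show it is a bijection.

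The routine but slightly subtle point will be the independence of the choice of $g$ and the cocycle representative, which must be handled carefully because $N_G(S)$ is nonabelian; once one writes down the explicit Galois-descent argument, nothing else is really hard, so I do not expect any serious obstacle. (One could alternatively phrase this via the exact sequence of pointed sets associated with $1\to N_G(S)\to G\to G/N_G(S)\to 1$, identifying $(G/N_G(S))(F)$ with the set of $F$-rational maximal tori.)
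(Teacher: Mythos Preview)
Your proof is correct and follows exactly the paper's approach: the paper defines the bijection by sending $S_g=gSg^{-1}$ to the cocycle $\gamma\mapsto g^{-1}\gamma(g)$, and then remarks (as you do in your parenthetical) that this is the long exact sequence associated to $1\to N_G(S)\to G\to G/N_G(S)\to 1$ together with the identification of $(G/N_G(S))(F)$ with the variety of maximal $F$-tori. You have simply written out the well-definedness, injectivity, and surjectivity checks that the paper leaves implicit.
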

Explicitly, the bijection is given by:
\begin{equation*}
\begin{aligned}
\left\{
             \begin{array}{lll}
             &\mathrm{Rational~conjugacy~class~of}\\
             &\mathrm{maximal~torus~defined~over~}F
            \end{array}\right\}&  \longleftrightarrow \ker(H^1(F,N_{G}(S))\rightarrow H^1(F,G))\\
S_g&\longmapsto (\gamma \mapsto g^{-1}\gamma(g)),
\end{aligned}
\end{equation*}
where $S_g$ is a maximal torus of $G$ such that $S_g(\overline{F})=gS(\overline{F})g^{-1}$.

In fact this identification can also be read from the long exact sequence associated to the short exact sequence of Galois pointed sets:
$$ 1\rightarrow N_{G}(S)\rightarrow G\rightarrow G/N_{G}(S)\rightarrow 1,$$
where the set of rational conjugacy classes of maximal tori defined over $F$ can be identified with the set $(G/N_{G}(S))(F)/G(F)\cong \ker(H^1(F,N_{G}(S))\rightarrow H^1(F,G))$. In fact, the variety $G/N_{G}(S)$ is known as the variety of maximal tori, whose rational points $(G/N_{G}(S))(F)$ parameterize the set of maximal tori of $G$ defined over $F$.

\begin{proposition}[{\cite{Rag04}, \cite[Proposition 6.1]{Re11}}]

The set of stable conjugacy classes of maximal tori defined over $F$ is in bijection with the pointed set $H^1(F,W)$ and there is a surjective map $$\pi:\ker(H^1(F,N_{G}(S))\rightarrow H^1(F,G))\rightarrow H^{1}(F,W)$$
with each fiber consisting of the set of rational conjugacy classes of maximal tori inside a stable conjugacy class of maximal tori.
\end{proposition}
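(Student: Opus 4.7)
The plan is to deduce both assertions from the short exact sequence of $F$-groups
$$1 \to S \to N_G(S) \to W \to 1,$$
together with the identification of the variety of maximal tori with $G/N_G(S)$ already invoked in the excerpt. Taking non-abelian Galois cohomology yields the compatible diagram
$$\begin{array}{ccc}
H^1(F,N_G(S)) & \longrightarrow & H^1(F,W) \\
\downarrow & & \\
H^1(F,G), & &
\end{array}$$
and the map $\pi$ in the proposition is obtained by restricting the top horizontal arrow to the kernel of the vertical one. Thus the existence of $\pi$ as a map of pointed sets is formal; what must be proved is that it lands in the correct set, is surjective, and that its fibers are exactly the rational classes inside a stable class.

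First I would construct the map from stable conjugacy classes of maximal $F$-tori to $H^1(F,W)$. Given an $F$-torus $S_1\subset G$, pick $g\in G(\overline F)$ with $S_1=gSg^{-1}$ (possible since all maximal tori are $G(\overline F)$-conjugate), and observe that $c(\gamma):=g^{-1}\gamma(g)$ lies in $N_G(S)(\overline F)$ because $\gamma$ preserves $S_1$. The class of $c$ in $H^1(F,N_G(S))$ lies in $\ker(H^1(F,N_G(S))\to H^1(F,G))$ by construction, and this recovers the already-stated bijection between rational classes of $F$-tori and that kernel. Next, if $S_2=hS_1h^{-1}$ for $h\in G(\overline F)$, the cocycle of $S_2$ based at the same $S$ differs from that of $S_1$ by an $N_G(S)$-coboundary modulo $S$, so their images in $H^1(F,W)$ agree; conversely, if two cocycles share the same image in $H^1(F,W)$, a direct computation shows the corresponding tori are $G(\overline F)$-conjugate. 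This proves the fiber description and reduces both parts of the statement to showing surjectivity of the composite
$$\{F\text{-tori in }G\}/\text{stable conj.}\ \longrightarrow\ \ker\bigl(H^1(F,N_G(S))\to H^1(F,G)\bigr)\ \xrightarrow{\ \pi\ }\ H^1(F,W).$$

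The hard part is surjectivity of $\pi$: given $w\in Z^1(F,W)$, one must produce an $F$-rational maximal torus whose associated $W$-cocycle is $w$. The standard device is to use $w$ to twist the $\Gamma_F$-action on $S$ via the natural $W$-action, yielding an $F$-torus $S_w$, and then exhibit an $F$-rational embedding $S_w\hookrightarrow G$. Equivalently, one lifts $w$ to a cocycle $\tilde w\in Z^1(F,N_G(S))$ and shows it dies in $H^1(F,G)$. For this step I would appeal to the vanishing of $H^1(F,G_{\mathrm{sc}})$ for simply connected $G_{\mathrm{sc}}$ over a non-archimedean local field (Kneser's theorem), combined with the fact that obstructions to lifting $w$ live in $H^2(F,S)$ but can be killed by modifying $\tilde w$ inside its $S$-coset; concretely, one invokes Kottwitz's construction of an admissible $F$-embedding representing any prescribed Weyl cocycle, which is exactly the cited result from Raghunathan. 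Once surjectivity is in hand, the fiber description proved above gives the remaining claim, and the first assertion follows because the preceding proposition already identifies rational classes with $\ker(H^1(F,N_G(S))\to H^1(F,G))$.

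The main obstacle is therefore the surjectivity of $\pi$, i.e.\ producing a rational torus realizing a prescribed Weyl cocycle; the bijectivity and fiber statements are essentially bookkeeping inside the long exact sequence, whereas surjectivity is the genuine geometric input and is the content of the Raghunathan reference. In our setting the residual characteristic hypotheses ensure that $G_{\mathrm{sc}}$ is well-behaved and that Kneser's theorem applies, so the argument proceeds without further obstruction.
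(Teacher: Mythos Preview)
The paper does not prove this proposition; it is cited from \cite{Rag04} and \cite{Re11}, and the paragraph that follows (``Now we fix a cocycle $x:\Gamma_F\to W$\ldots'') \emph{uses} the surjectivity by asserting the existence of $g_{[x]}$ rather than establishing it. So there is nothing in the paper to compare your argument against beyond the bare citation; your outline is essentially a sketch of what those references contain.

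Your structural plan is correct: the map $\pi$ arises from the exact sequence $1\to S\to N_G(S)\to W\to 1$, the fiber description is a cocycle computation, and surjectivity is the only substantive step. Two points deserve tightening. First, your phrase ``if $S_2=hS_1h^{-1}$ for $h\in G(\overline F)$'' is not the definition of stable conjugacy---that condition holds for \emph{any} two maximal tori. Stable conjugacy requires in addition that $h^{-1}\gamma(h)\in S_1(\overline F)$ for all $\gamma\in\Gamma_F$ (equivalently, that $\mathrm{Ad}(h):S_1\to S_2$ is defined over $F$), and it is this extra condition that forces the $W$-cocycles to agree. Second, your handling of surjectivity is muddled: you cannot ``modify $\tilde w$ inside its $S$-coset'' to kill an $H^2(F,S)$ obstruction, because if the obstruction is nonzero there is no lift $\tilde w$ to modify in the first place. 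The actual argument in \cite{Rag04} uses the hypothesis that $G$ is \emph{quasi-split} (note the section opens with ``For a connected quasi-split reductive group $G$''): one works with the Borel pair $(B_0,S_0)$, lifts Weyl elements to $N_G(S_0)(F)$ using the pinning, and then invokes Kneser's theorem $H^1(F,G_{\mathrm{sc}})=1$ to trivialize the resulting class. Without quasi-splitness the surjectivity can fail (e.g.\ $G=D^\times$ for a quaternion division algebra has no split maximal torus), so this hypothesis is not cosmetic and should appear explicitly in your argument.
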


Now we fix a cocycle $x:\Gamma_F\rightarrow W$ with $[x]\in H^1(F,W)$. Let $S_{[x]}$ be the $F$-torus corresponding to $[x]$ (regarded as an element in $H^1(F,\mathrm{Aut}(S))$ by the natural map $W\rightarrow \mathrm{Aut}(S)$). Notice we have an $F$-isomorphism $S_{[x]}\cong g_{[x]}Sg_{[x]}^{-1}$, where $g_{[x]}$ lies in $G(\overline{F})$, such that $(\gamma \mapsto g_{[x]}^{-1}\gamma(g_{[x]})\in \ker (H^1(F,N_G(S))\rightarrow H^1(F,G)) $ projects to $[x]$. Hence we have an embedding of $S_{[x]}$ into $G$ for any choice of $g_{[x]}$, such that $S_{[x]}$ is a maximal torus of $G$.

By the short exact sequence of $F$-groups:
$$1\rightarrow S_{[x]}\rightarrow N_{G}(S_{[x]})\rightarrow W_{[x]}\rightarrow 1,$$
we have a long exact sequence
$$1\rightarrow N_{G}(S_{[x]})(F)/S_{[x]}(F)\rightarrow W_{[x]}(F)\rightarrow H^1(F,S_{[x]})\rightarrow H^{1}(F,N_G(S_{[x]}))\rightarrow H^1(F,W_{[x]})\rightarrow\cdots$$ hence also a long exact sequence
\begin{equation*}
\begin{aligned}
1&\rightarrow N_{G}(S_{[x]})(F)/S_{[x]}(F)\rightarrow W_{[x]}(F)\rightarrow \ker(H^1(F,S_{[x]})\rightarrow H^{1}(F,G))\\
&\rightarrow \ker(H^1(F,N_{G}(S_{[x]}))\rightarrow H^{1}(F,G))\rightarrow H^1(F,W_{[x]})\rightarrow\cdots
\end{aligned}
\end{equation*}

Notice we have natural isomorphisms:
$$\ker (H^1(F,N_{G}(S_{[x]}))\rightarrow H^1(F,G))\cong\ker (H^1(F,N_{G}(S))\rightarrow H^1(F,G)),$$
$$H^1(F,W_{[x]})\cong H^1(F,W):~~[y]\mapsto [y\cdot x],$$
which means $\pi^{-1}([x])\cong \ker(H^1(F,S_{[x]})\rightarrow H^{1}(F,G))/W_{[x]}(F) $ as a set.

Moreover, in \cite[Section 6.2]{Re11} Reeder also constructs an affine action of $W_{[x]}(F)$ on
$$\ker(H^1(F,S_{[x]})\rightarrow H^{1}(F,G))$$ such that the long exact sequence gives a bijection between the set of rational conjugacy classes of maximal torus in $[x]$ and the set of $W_{[x]}(F)$ orbits in $\ker(H^1(F,S_{[x]})\rightarrow H^{1}(F,G))$.

$\bullet$ \textbf{Rational Embeddings}.

Fix a stable conjugacy class of maximal tori $[x]\in H^1(F,W)$, there is another interpretation of the set:
\begin{equation*}
\begin{aligned}
\ker(H^1(F,S_{[x]})\rightarrow H^{1}(F,G))&\cong (G/S_{[x]})(F)/G(F)\\
&\cong\{S_{[x]}(\overline{F})g G(F)\in S_{[x]}(\overline{F})\backslash G(\overline{F})/G(F)|g^{-1}\gamma(g)\in S_{[x]}(\overline{F})\}.
\end{aligned}
\end{equation*}

\begin{lemma}\label{embedding}
This set parametrizes rational equivalence classes of $F$-embeddings of a maximal torus $S_{[x]}$ into $G$, that is
$$\{j|j:S_{[x]}\rightarrow G \mathrm{~is~an~embedding~over~}F\}/G(F).$$
(two embeddings $j_1$ and $j_2$ are equivalent if there exists some $h\in G(F)$ such that $j_2=\mathrm{Ad}(h)\circ j_1$)
\end{lemma}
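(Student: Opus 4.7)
The plan is to set up a natural bijection between $F$-embeddings $j:S_{[x]}\to G$ and rational points of the quotient variety $G/j_0(S_{[x]})$ for a fixed base embedding $j_0$, then to pass from rational conjugacy classes of embeddings to the Galois-cohomological kernel via the standard boundary map. First, fix a base embedding $j_0:S_{[x]}\hookrightarrow G$ defined over $F$; its existence is guaranteed by the hypothesis that $[x]$ lies in the kernel of $H^1(F,N_{G}(S))\to H^1(F,G)$, which by the preceding discussion produces an element $g_{[x]}\in G(\overline{F})$ and hence an $F$-isomorphism of $S_{[x]}$ onto the maximal torus $g_{[x]}S g_{[x]}^{-1}\subset G$.

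Second, I would show that any $F$-embedding $j:S_{[x]}\to G$ equals $\mathrm{Ad}(g)\circ j_0$ for some $g\in G(\overline{F})$. Here one uses that over $\overline{F}$ any two maximal tori of $G$ are conjugate, so any embedding is conjugate to $j_0$, and then that the centralizer $C_{G}(j_0(S_{[x]}))$ equals $j_0(S_{[x]})$ since $j_0(S_{[x]})$ is a maximal torus. Consequently, the element $g$ is well-defined modulo right multiplication by $j_0(S_{[x]})(\overline{F})$, yielding a bijection
\[
\{\,j:S_{[x]}\to G \text{ embedding over }\overline{F}\,\}\ \longleftrightarrow\ G(\overline{F})/j_0(S_{[x]})(\overline{F}).
\]
The condition that $j$ be defined over $F$, i.e.\ $\gamma(j)=j$ for every $\gamma\in\Gamma_F$, translates under this correspondence precisely to $g^{-1}\gamma(g)\in j_0(S_{[x]})(\overline{F})$, which is the condition that $[g]\in (G/j_0(S_{[x]}))(F)$.

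Third, I would check that two such embeddings $j_i=\mathrm{Ad}(g_i)\circ j_0$ are $G(F)$-conjugate if and only if there exists $h\in G(F)$ with $h g_1\equiv g_2\pmod{j_0(S_{[x]})(\overline{F})}$. This is a direct calculation: $\mathrm{Ad}(h)\circ j_1=j_2$ iff $\mathrm{Ad}(hg_1 g_2^{-1})$ acts trivially on $j_0(S_{[x]})$, iff $hg_1 g_2^{-1}\in j_0(S_{[x]})(\overline{F})$. Therefore the set of rational equivalence classes of $F$-embeddings is identified with $G(F)\backslash (G/j_0(S_{[x]}))(F)$, which is exactly the double-coset set described in the statement (after replacing $g$ by $g^{-1}$ to swap sides).

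Finally, I would invoke the long exact sequence in Galois cohomology associated to $1\to S_{[x]}\to G\to G/S_{[x]}\to 1$, which gives the standard identification
\[
(G/S_{[x]})(F)/G(F)\ \xrightarrow{\ \cong\ }\ \ker\!\bigl(H^1(F,S_{[x]})\to H^1(F,G)\bigr),\qquad [g]\ \longmapsto\ \bigl(\gamma\mapsto g^{-1}\gamma(g)\bigr),
\]
which agrees with the cocycle assignment already used in parameterizing rational conjugacy classes of maximal tori. Composing with the bijection established in the previous step completes the proof. The argument is essentially formal; the only point requiring care is the verification that $C_G(j_0(S_{[x]}))=j_0(S_{[x]})$ (which is where maximality of the torus is used crucially) and the consistent choice of left/right conventions so that the final double-coset description matches the one in the lemma — these are bookkeeping rather than substantive obstacles.
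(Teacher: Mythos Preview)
Your proposal is correct and follows essentially the same approach as the paper. The paper's proof is much terser: it simply isolates the key observation that writing $j=\mathrm{Ad}(g)|_{S_{[x]}}$, the condition for the \emph{image} $j(S_{[x]})$ to be $F$-rational is $g^{-1}\gamma(g)\in N_G(S_{[x]})(\overline{F})$, whereas the stronger condition for the \emph{map} $j$ itself to be $F$-rational forces $g^{-1}\gamma(g)\in C_G(S_{[x]})(\overline{F})=S_{[x]}(\overline{F})$. Your steps 2--3 make exactly this point, and your steps 4--5 spell out the remaining bookkeeping that the paper leaves implicit. One small caution: your claim that ``any embedding is conjugate to $j_0$'' over $\overline{F}$ requires that you are working within a fixed $G(\overline{F})$-conjugacy class of embeddings (admissible embeddings in Kaletha's sense), not arbitrary abstract embeddings of the torus; the paper tacitly makes the same assumption when it writes $j(S_{[x]})=gS_{[x]}g^{-1}$, so this is consistent with the intended statement.
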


\begin{proof}
If we write $j(S_{[x]})=g S_{[x]}g^{-1}$ for some $g\in G(\overline{F})$, we not only have $g^{-1}\gamma(g)\in N_{G}(S_{[x]})(\overline{F})$ (this is the condition for $j(S_{[x]})$ to be defined over $F$), but also have $g\gamma(t)g^{-1}=\gamma(gtg^{-1})$ for any $\gamma\in \Gamma_F$ and any $t\in S(\overline{F})$(this is the condition for $j$ to be defined over $F$, that is $j(\gamma(t))=\gamma(j(t))$), which means $g^{-1}\gamma(g)\in C_{G}(S)(\overline{F})=S(\overline{F})$.
\end{proof}

\begin{remark}
In the above lemma, we have already assumed that $S_{[x]}$ is a maximal torus of $G$, which means that we have already chosen a base point of an $F$-embedding of the underlying abstract torus of $S_{[x]}$ to $G$. Moreover, if $S$ is an abstract torus with a fixed $G(F)$ conjugacy class of an embedding $j_0:S\rightarrow G$ such that $j_0(S)$ is a maximal torus of $G$, then the set of $\{j|j:S\rightarrow G \mathrm{~is~an~embedding~over~}F\}/G(F)$ is a torsor under $\ker(H^1(F,S)\stackrel{{j_0}_*}{\longrightarrow} H^{1}(F,G))$ by sending $j$ to $\mathrm{inv}(j,j_0):=[\gamma\mapsto g^{-1}\gamma(g)]$ for $g\in G(\overline{F})$ such that $j(S)=gj_0(S)g^{-1}$.
\end{remark}

Notice that the set $$\bigsqcup\limits_{[x]\in H^1(F,W)}\ker(H^1(F,S_{[x]})\rightarrow H^{1}(F,G))$$ parameterizes all the $G(F)$ equivalence classes of possible embeddings of maximal torus into $G$. More precisely, this set is in bijection with the set of equivalence classes of the pairs $(S,i)$ where $S$ is an $F$-torus with $\dim_{F}S=\mathrm{rank}_{\overline{F}}G$ and $j:S\rightarrow G$ is a $G(F)$ equivalence class of $F$-embeddings. $(S_1,j_1)$ is equivalent to $(S_2,j_2)$ if and only if $S_1\cong S_2$ and there exists $h\in G(F)$ such that $j_2=\mathrm{Ad}(h)\circ j_1$.

\begin{lemma}[{\cite[Lemma 3.2.1]{Kal19}}]\label{elliptic}
If $S_{[x]}$ is an elliptic maximal torus of $G$, then $H^1(F,S_{[x]})\stackrel{\iota^{*}}{\longrightarrow} H^1(F,G)$ is surjective.
\end{lemma}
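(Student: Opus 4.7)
The plan is to translate the surjectivity of $H^1(F,S_{[x]})\to H^1(F,G)$ into a dual statement on complex dual groups via Kottwitz's isomorphism, and then exploit the ellipticity hypothesis to verify it. Recall that for any connected reductive $F$-group $H$, Kottwitz produces a functorial isomorphism
\[
H^1(F,H)\;\cong\;\pi_0(Z(\widehat{H})^{\Gamma_F})^{*},
\]
where $(-)^{*}$ denotes Pontryagin dual. Applied to $G$ and to the torus $S_{[x]}$ (noting $Z(\widehat{S_{[x]}})=\widehat{S_{[x]}}$), the map $\iota^{*}:H^1(F,S_{[x]})\to H^1(F,G)$ is, after dualizing, identified with the map
\[
\pi_0(Z(\widehat{G})^{\Gamma_F})\;\longrightarrow\;\pi_0(\widehat{S_{[x]}}^{\Gamma_F})
\]
induced by the natural inclusion $Z(\widehat{G})\hookrightarrow\widehat{S_{[x]}}$. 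Consequently, the surjectivity we seek is equivalent to the injectivity of this last map.

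Next I would translate the ellipticity of $S_{[x]}$ into a statement about dual tori. By definition, $S_{[x]}$ elliptic means that $S_{[x]}/Z(G)$ is $F$-anisotropic, i.e. $X^{*}(S_{[x]}/Z(G))^{\Gamma_F}=0$. Dualizing, the identity component of $(\widehat{S_{[x]}}/Z(\widehat{G}))^{\Gamma_F}$ is trivial, so $(\widehat{S_{[x]}}/Z(\widehat{G}))^{\Gamma_F}$ is a finite group.

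The key observation is then that $\widehat{S_{[x]}}^{\Gamma_F,\circ}\subseteq Z(\widehat{G})^{\Gamma_F,\circ}$. Indeed, the image of the connected subtorus $\widehat{S_{[x]}}^{\Gamma_F,\circ}$ in $\widehat{S_{[x]}}/Z(\widehat{G})$ is a connected $\Gamma_F$-stable subgroup, hence lies in $(\widehat{S_{[x]}}/Z(\widehat{G}))^{\Gamma_F,\circ}$, which is trivial by the previous step. Therefore $\widehat{S_{[x]}}^{\Gamma_F,\circ}\subseteq Z(\widehat{G})\cap\widehat{S_{[x]}}^{\Gamma_F}=Z(\widehat{G})^{\Gamma_F}$, and being connected it is actually contained in $Z(\widehat{G})^{\Gamma_F,\circ}$.

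From here the injectivity of $\pi_0(Z(\widehat{G})^{\Gamma_F})\to\pi_0(\widehat{S_{[x]}}^{\Gamma_F})$ is immediate: if $z\in Z(\widehat{G})^{\Gamma_F}$ maps into $\widehat{S_{[x]}}^{\Gamma_F,\circ}$, then by the inclusion above $z\in Z(\widehat{G})^{\Gamma_F,\circ}$, so its class in $\pi_0(Z(\widehat{G})^{\Gamma_F})$ is trivial. Dualizing yields the desired surjectivity. The only non-formal ingredient is the functoriality of Kottwitz's isomorphism with respect to the inclusion $S_{[x]}\hookrightarrow G$, which is the main conceptual step and is standard; the rest is a short diagram chase powered by ellipticity. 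I do not foresee a real obstacle here, but one should be attentive to the fact that $\pi_0$ of $\Gamma_F$-invariants of a complex diagonalizable group behaves well in short exact sequences only when the identity-component subtorus is handled correctly, which is exactly the content of the ellipticity input.
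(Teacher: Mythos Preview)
The paper does not supply its own proof of this lemma; it merely records the citation to \cite[Lemma~3.2.1]{Kal19}. Your argument via Kottwitz's isomorphism is correct and is precisely the standard route (and essentially the one in the cited reference): reduce surjectivity of $H^1(F,S_{[x]})\to H^1(F,G)$ to injectivity of $\pi_0(Z(\widehat{G})^{\Gamma_F})\to\pi_0(\widehat{S_{[x]}}^{\Gamma_F})$, then use ellipticity to force $\widehat{S_{[x]}}^{\Gamma_F,\circ}\subset Z(\widehat{G})^{\Gamma_F,\circ}$.

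One small imprecision worth tightening: in your step~4 you write that ``dualizing'' $X^{*}(S_{[x]}/Z(G))^{\Gamma_F}=0$ gives $(\widehat{S_{[x]}}/Z(\widehat{G}))^{\Gamma_F,\circ}=1$, but $\widehat{S_{[x]}/Z(G)}$ is the torus $\widehat{S_{[x]}}_{\mathrm{sc}}$ in $\widehat{G}_{\mathrm{sc}}$, not $\widehat{S_{[x]}}/Z(\widehat{G})=\widehat{S_{[x]}}_{\mathrm{ad}}$. These two are isogenous, so the $\Gamma_F$-invariant cocharacter lattice of one vanishes iff that of the other does, and your conclusion is unaffected; just say this explicitly rather than calling it a direct dualization.
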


Based on this lemma, we can see that the pointed set $H^1(F,S)$ with $S$ being an elliptic maximal torus of $G$ can be identified with the set of rational conjugacy classes of $F$-embeddings of $S$ into all pure inner forms $G_{\alpha}$ of $G$ for $\alpha\in H^1(F,G)$. For a regular supercuspidal parameter $\phi$, we have already seen in \Cref{comparisonl} that there is a bijection:
$$\Pi_{\phi}=\bigsqcup\limits_{\alpha\in H^1(F,G)} \Pi_{\phi}(G_{\alpha}(F))\longleftrightarrow \mathrm{Irr}(\pi_{0}(S_{\phi}))\cong \pi_{0}(\widehat{S}^{\Gamma_{F}})^{*}\cong H^1(F,S).$$
What's more, we also have the following refined bijiection:
$$\Pi_{\phi}(G_{\alpha}(F))\longleftrightarrow (\iota^{*})^{-1}(\alpha)\longleftrightarrow \{G_{\alpha}(F) \mathrm{~equivalence~classes~of~}F\mathrm{~embeddings~of~}S\mathrm{~into~}G_{\alpha}\}.$$

\subsection{Reinterpretation of the indexing set appearing in Hakim-Murnaghan's formula.}

%The set $\ker(H^1(E/F,S(E))\rightarrow H^1(E/F,G(E)))$ can be identified with the kernel of restriction map: $$\ker(H^1(F,S)\rightarrow H^{1}(F,G))\longrightarrow \ker(H^1(E,T)\rightarrow H^{1}(E,G)).$$%
Let $G\cdot \sigma=\{\mathrm{Int}(g^{-1})\circ\sigma\circ\mathrm{Int}(g)\}$ denote the $G$ orbit of the Galois involution $\sigma$. For a regular supercuspidal representation $\pi$ associated to $(T,\theta,j_{\pi})$ with $T$ elliptic torus over $E$ such that $\dim_E(T)=\mathrm{rank}_{\overline{F}}(G)$ and $j_{\pi}(T)$ is maximal torus of $G_E$, the relevant condition for $\pi$ being distinguished becomes that $j_{\pi}(T(E))$ is stable under $\sigma^{g}$ for some $g\in G(E)$ by \cite{Hak18}. Notice that this condition is equivalent to the condition $\sigma(gj_{\pi}(T(E))g^{-1})=gj_{\pi}(T(E))g^{-1}$, which means $gj_{\pi}(T)g^{-1}$ is defined over $F$. Let $S_g$ denote this $F$-torus, and we have $S_g(E)\cong T(E)$ as locally compact topological groups.

\begin{lemma}
There is a bijection between the relevant double coset $$\{j_{\pi}(T(E))gG(F)\in j_{\pi}(T(E))\backslash G(E)/G(F)| \mathrm{Ad}(g)(j_{\pi}(T(E)))\mathrm{~is~}\sigma\mathrm{~stable}\}$$
and the set $\mathrm{BC}^{-1}([j_{\pi}])$, where $\mathrm{BC}$ is defined to be:
\begin{equation*}
\begin{aligned}
\bigsqcup\limits_{S\times_{F}E=T}\{i:S\rightarrow G\mathrm{~embedding}/F\}/G(F)&\stackrel{\mathrm{BC}}{\longrightarrow} \{i_E:T\rightarrow G_E \mathrm{~embedding}/E\}/G(E)\\
[i]&\mapsto [i\times_{F}E].
\end{aligned}
\end{equation*}
Let $j_0$ be a fixed base point of an embedding of an abstract torus $T$ to $G_E$ over $E$. Then $j_{\pi}$ could be regarded as an element of $H^{1}(E,T)$ via the map $j_{\pi}\mapsto\mathrm{inv}(j_{\pi},j_0)$. Furthermore, if $j_{\pi}$ is fixed as the base point in $\ker(H^1(E,T)\rightarrow H^1(E,G))$, this set can also be identified with the set:
$$\bigsqcup\limits_{S\times_{F}E=T}\mathrm{ker}(H^1(\mathrm{Gal}(E/F),S(E))\rightarrow H^1(\mathrm{Gal}(E/F),G(E))).$$
\end{lemma}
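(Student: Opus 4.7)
The plan is to establish the bijection as a two-step chain $A \leftrightarrow B \leftrightarrow C$, where $A$ is the relevant double coset set, $B := \mathrm{BC}^{-1}([j_\pi])$, and $C$ is the disjoint union of Galois cohomology kernels. Throughout, the crucial geometric fact I will exploit is that $j_\pi(T)$ is a maximal torus of $G_E$ and hence equals its own centralizer, i.e.\ $Z_{G(E)}(j_\pi(T(E))) = j_\pi(T(E))$.

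For the bijection $A \leftrightarrow B$, the forward map sends a double coset representative $g \in G(E)$ with $g j_\pi(T(E)) g^{-1}$ being $\sigma$-stable to the $F$-subtorus $S_g \subset G$ obtained by Galois descent from the $\sigma$-stable $E$-subtorus $g j_\pi(T) g^{-1}$, equipped with its inclusion $i_g : S_g \hookrightarrow G$ and the $E$-isomorphism $\mathrm{Ad}(g) \circ j_\pi : T \xrightarrow{\sim} S_g \times_F E$ realizing $S_g$ as an $F$-form of $T$. Under this identification $(i_g)_E = \mathrm{Ad}(g) \circ j_\pi$ is $G(E)$-equivalent to $j_\pi$ via $g$, placing $(S_g, i_g)$ in $B$. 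Conversely, given $(S, i) \in B$ with an isomorphism $\phi : S \times_F E \xrightarrow{\sim} T$ and a witness $g \in G(E)$ with $i_E = \mathrm{Ad}(g) \circ j_\pi$ (identifying via $\phi$), the image $g j_\pi(T) g^{-1} = i(S) \times_F E$ is automatically $\sigma$-stable, producing an element of $A$. Using $Z_{G(E)}(j_\pi(T(E))) = j_\pi(T(E))$, any two such $g$ differ by an element of $j_\pi(T(E))$ on the left, while modifying $i$ within its $G(F)$-orbit multiplies $g$ on the right by $G(F)$; thus the constructions descend to the equivalence classes and are mutually inverse.

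For the bijection $B \leftrightarrow C$, I fix an $F$-form $S$ of $T$ and analyze the fiber $B_S$ by Galois descent. For an $F$-embedding $i : S \to G$ with $i_E \sim_{G(E)} j_\pi$, choose $g \in G(E)$ realizing this equivalence. The $F$-rationality of $i$ translates, after a direct calculation with semilinear actions, into the statement that $\mathrm{Ad}(c_g(\sigma))$ preserves $j_\pi(T)$ where $c_g(\sigma) := g^{-1} \sigma(g)$; invoking the centralizer identity once more forces $c_g(\sigma) \in j_\pi(T(E))$, which via $\phi$ gives a cocycle valued in $S(E)$. Its class $[c_g] \in H^1(\mathrm{Gal}(E/F), S(E))$ is independent of the choice of $g$ (by coboundary) and of $i$ within its $G(F)$-orbit, and lies in the kernel to $H^1(\mathrm{Gal}(E/F), G(E))$ by construction. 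Conversely, any cocycle in this kernel becomes trivial in $G(E)$, so it lifts to an element $g \in G(E)$ that recovers an $F$-embedding $i$ of $S$ via descent. Summing over all $F$-forms $S$ of $T$ yields the desired bijection.

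The main technical obstacle will be careful bookkeeping across the three layered equivalence relations --- double cosets on the left, $G(F)$-equivalence of $F$-embeddings in the middle, and coboundaries in Galois cohomology on the right --- and checking that the natural maps respect them simultaneously. Both the well-definedness of the inverse maps and the verification that $c_g(\sigma)$ lies in $S(E)$ (rather than only in the normalizer $N_{G(E)}(j_\pi(T(E)))$) reduce to the single input $Z_{G(E)}(j_\pi(T(E))) = j_\pi(T(E))$; once this is in hand, the rest is a direct translation between the languages of conjugation, $F$-descent, and non-abelian Galois cohomology.
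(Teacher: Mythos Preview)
Your bijection $A \leftrightarrow B$ is correct and matches the paper's argument (the paper is terser, simply sending $[g]$ to the inclusion $i_g$ of the $F$-descended torus $S_g = g\,j_\pi(T)\,g^{-1}$ and asserting the rest is direct).

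The gap is in your $B \leftrightarrow C$ step. Your claim that $c_g(\sigma) := g^{-1}\sigma(g)$ lies in $j_\pi(T(E))$ does not follow from the centralizer identity. Unwinding the $F$-rationality of $i$ via $i_E = \mathrm{Ad}(g)\circ j_\pi$ gives
\[
\mathrm{Ad}(c_g(\sigma)) \circ j_\pi^{\sigma} \;=\; j_\pi,
\qquad \text{where } j_\pi^{\sigma} := \sigma \circ j_\pi \circ \sigma^{-1},
\]
so $\mathrm{Ad}(c_g(\sigma))$ carries $\sigma(j_\pi(T(E)))$ onto $j_\pi(T(E))$; it does \emph{not} preserve $j_\pi(T(E))$ unless $j_\pi(T)$ is already $\sigma$-stable, and even then you would only get $c_g(\sigma)$ in the normalizer. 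To force $c_g(\sigma)$ into the centralizer you would need $j_\pi^{\sigma} = j_\pi$, i.e.\ that $j_\pi$ itself is defined over $F$ relative to the $F$-structure on $S$ --- exactly what is not assumed. The same issue breaks your inverse map $C \to B$: pushing a cocycle $c \in Z^1(\mathrm{Gal}(E/F), S(E))$ into $G(E)$ via $j_\pi$ need not produce a cocycle, because $j_\pi(\sigma(c(\sigma))) \neq \sigma(j_\pi(c(\sigma)))$ in general.

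The paper sidesteps this by working one level up: it invokes the earlier lemma identifying $\ker\bigl(H^1(F,S)\to H^1(F,G)\bigr)$ with $G(F)$-classes of $F$-embeddings (which already absorbs the centralizer argument over $\Gamma_F$, using an $F$-rational base embedding), and then applies the snake lemma to the commutative diagram relating the inflation--restriction sequences for $S$ and for $G$. This produces the identification with $\ker\bigl(H^1(\mathrm{Gal}(E/F),S(E))\to H^1(\mathrm{Gal}(E/F),G(E))\bigr)$ without ever needing $j_\pi$ to be $\sigma$-equivariant. Your direct approach can be repaired by first choosing, for each $F$-form $S$ with nonempty fiber, an $F$-rational reference embedding $i_0:S\to G$ and forming the cocycle from $i$ relative to $i_0$ rather than relative to $j_\pi$.
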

\begin{proof}
Notice that $S_g:=gj_{\pi}(T)g^{-1}$ is $\sigma$-stable hence defined over $F$ with $S_g(E)\cong T(E)$. We partition the relevant double coset according to the isomorphism class of the $F$-structure of $S_g$. The bijection is given by sending $[g]$ to $[{i}_{g}]:S_g\rightarrow G/F$ such that $\mathrm{Im}(i_g)=gj(T)g^{-1}$. It is clear from definition that $[i_g]$ only depends on $[g]\in G(F)\backslash G(E)/j_{\pi}(T(E))$ and that $\mathrm{BC}([i_g])=[j_{\pi}]$. It is also direct to check this map is indeed a bijection. The second statement follows from Propostion \eqref{embedding}, Lemma \eqref{elliptic} and the following commutative diagram for elliptic torus, which is due to snake lemma:
$$\xymatrix{
  1 \ar[r] & \ker(\mathrm{BC}|_{S})\ar[d] \ar[r] & H^1(\mathrm{Gal}(E/F),S(E)) \ar[d] \ar[r] & H^1(\mathrm{Gal}(E/F),G(E)) \ar[d] \ar[r] & 1 \\
  1 \ar[r] & \ker^1(F,S,G) \ar[d]_{\mathrm{BC}|_{S}} \ar[r] & H^1(F,S) \ar[d]_{\mathrm{res}} \ar[r]&  H^1(F,G)\ar[d]_{\mathrm{res}} \ar[r] & 1  \\
  1 \ar[r] & \ker^1(E,T,G)\ar[r] & H^1(E,T) \ar[r] & H^{1}(E,G) \ar[r] & 1, }$$
where $\ker^1(F,S,G)$ is the short notation for $\ker (H^1(F,S)\rightarrow H^1(F,G))$.
\end{proof}

We can also consider the above BC map for pure inner forms $G_{\alpha}$ with $\alpha\in H^1(\mathrm{Gal}(E/F),G(E))$ and these maps can be glued to a map:
$$\overline{\mathrm{BC}}:\bigsqcup\limits_{S\times_{F}E=T}\bigsqcup\limits_{\alpha\in H^1(\mathrm{Gal}(E/F),G(E))}(\iota^{*})^{-1}(\alpha)\rightarrow \ker (H^1(E,T)\rightarrow H^1(E,G)),$$
which coincides with the usual restriction map:
$$\bigsqcup\limits_{S\times_{F}E=T}H^1(F,S)\rightarrow H^1(E,T).$$
Moreover we have:
$$\bigsqcup\limits_{\alpha\in H^1(\mathrm{Gal}(E/F),G(E))}\mathrm{BC}_{\alpha}^{-1}([j_{\pi}])=\overline{\mathrm{BC}}^{-1}([j_{\pi}]).$$

In fact, if we choose $j_{\pi}$ to be the base point in $\ker(H^1(E,S)\rightarrow H^1(E,G))$, there is a natural identification:
$$\overline{\mathrm{BC}}^{-1}([j_{\pi}])\longleftrightarrow \bigsqcup \limits_{S\times_{F} E=T}H^1(\mathrm{Gal}(E/F),S(E)).$$

In terms of this interpretation, Hakim-Murnaghan's formula becomes:
\begin{equation}\label{HMformularein}
\begin{aligned}
\dim\mathrm{Hom}_{G(F)}(\pi,\mathbbm{1})&=\dim\mathrm{Hom}_{G(F)}(\mathrm{ind}_{K}^{G(E)}\kappa,\mathbbm{1})\\
&=\sum\limits_{KgG(F)\in K\backslash G(E)/G(F)}\dim\mathrm{Hom}_{K\cap gG(F)g^{-1}}(\kappa,\mathbbm{1})\\
&=\sum\limits_{\{[g]\in j_{\pi}(T(E))\backslash G(E)/G(F):\mathrm{relevant}\}}\dim \mathrm{Hom}_{g^{-1}j_{\pi}(T(E))g\cap G(F)}(^{g}\theta,\epsilon_{\mathrm{HM},[g]})\\
&=\sum\limits_{[i]\in \mathrm{BC}^{-1}([j_{\pi}])}\dim \mathrm{Hom}_{i(S(F))}(^{g}\theta,\epsilon_{\mathrm{HM},i(S(F))}).
\end{aligned}
\end{equation}

\subsection{$z$-extension and a reduction step.}
Since we are dealing with the distinction problem with respect to a specific quadratic character which may be nontrivial, we can't apply Hakim-Murnaghan's formula directly. An obstruction is that $\omega_{G(F),E}$ could not always be extended to be a character of $G(E)$. To deal with this issue, we take consideration of the $z$-extension of $G$.

\begin{definition}[{\cite[Section 1]{Kot82}}]
Let $G$ be a connected reductive group over $F$. A $z$-extension of $G$ is a central extension $\widetilde{G}$ of $G$:
$$1\rightarrow \widetilde{Z}\rightarrow \widetilde{G}\rightarrow G\rightarrow 1,$$
such that
\begin{enumerate}
\item $\widetilde{G}$ is a connected reductive group over $F$ whose derived subgroup is simply connected.
\item $\widetilde{Z}$ is an induced torus, that is, $\widetilde{Z}$ is isomorphic to a finite product $\prod\limits_{i}\mathrm{Res}_{L_i/F}\mathbb{G}_m$ for finite extensions $L_i/F$.
\end{enumerate}
\end{definition}

\begin{lemma}[{\cite[Section 5, Proposition 3.1]{DMOS82}}]

For any reductive group $G$ over a field of characteristic $0$, a $z$-extension always exists.
\end{lemma}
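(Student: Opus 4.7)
The plan is to build $\widetilde{G}$ as a three-fold pushout: take the product of the simply connected cover $G_{\mathrm{sc}}$, the connected centre $Z(G)^{\circ}$, and an auxiliary induced torus $\widetilde{Z}$ into which a certain finite group of multiplicative type embeds, and quotient by an antidiagonal copy of that finite group. The entire construction reduces to one character-theoretic input; everything else is a formal diagram chase.

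The starting point is the standard presentation $G = (G_{\mathrm{sc}}\times Z(G)^{\circ})/\mu$ arising from the surjection $(g,z)\mapsto \pi(g)z$, where $\pi:G_{\mathrm{sc}}\to G_{\mathrm{der}}$ is the simply connected cover and $\mu := \pi^{-1}(Z(G)^{\circ}\cap G_{\mathrm{der}})$ is a finite central subgroup of $G_{\mathrm{sc}}$ embedded antidiagonally. In characteristic zero $\mu$ is smooth of multiplicative type, so its Cartier dual $X^{*}(\mu)$ is a finite $\Gamma_F$-module factoring through $\mathrm{Gal}(L/F)$ for some finite Galois extension $L/F$. Lifting a generating set gives a surjection $\mathbb{Z}[\mathrm{Gal}(L/F)]^{\oplus n}\twoheadrightarrow X^{*}(\mu)$ of $\Gamma_F$-modules, and dualising yields an $F$-rational closed immersion $\iota:\mu\hookrightarrow \widetilde{Z}$ into the induced torus $\widetilde{Z}:=(\mathrm{Res}_{L/F}\mathbb{G}_m)^{n}$. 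One now sets
$$\widetilde{G}\;:=\;\bigl(G_{\mathrm{sc}}\times Z(G)^{\circ}\times \widetilde{Z}\bigr)\big/\,\mu',\qquad \mu':=\bigl\{\bigl(m,\pi(m)^{-1},\iota(m)^{-1}\bigr):m\in \mu\bigr\}.$$

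The verification is direct. The projection $(g,z,t)\mapsto \pi(g)z$ factors through $\widetilde{G}$ and remains surjective onto $G$; a short computation in the quotient shows that its kernel is exactly the image of the third factor, namely $\widetilde{Z}$ sitting centrally in $\widetilde{G}$, which is therefore induced. On the other hand, the derived subgroup of $G_{\mathrm{sc}}\times Z(G)^{\circ}\times \widetilde{Z}$ is $G_{\mathrm{sc}}\times 1\times 1$, and its intersection with $\mu'$ is trivial because $\iota$ is injective; hence $\widetilde{G}_{\mathrm{der}}=G_{\mathrm{sc}}$ is simply connected, as required. The whole argument thus hinges on producing the embedding $\iota$, and the only substantive use of the characteristic zero hypothesis is to guarantee that $\mu$ is smooth of multiplicative type, so that Cartier duality cleanly converts the embedding problem into the easy statement that any finite $\Gamma_F$-module is a quotient of some $\mathbb{Z}[\mathrm{Gal}(L/F)]^{\oplus n}$.
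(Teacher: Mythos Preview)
The paper does not prove this lemma; it merely records the citation to \cite{DMOS82}. Your argument is correct and is in fact the standard construction one finds in that reference (and in Kottwitz's papers): present $G$ as $(G_{\mathrm{sc}}\times Z(G)^{\circ})/\mu$, embed the finite multiplicative-type group $\mu$ into an induced torus via a surjection of character modules, and form the pushout.

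One small remark on your closing sentence: the characteristic-zero hypothesis is not actually what makes $\mu$ of multiplicative type. Since $\mu\subset Z(G_{\mathrm{sc}})$ and the center of a semisimple group is always a finite group scheme of multiplicative type (as the kernel of an isogeny of tori), Cartier duality applies in any characteristic and your construction goes through verbatim. The lemma is stated over characteristic~$0$ in the paper simply because that is the only case needed there, not because the argument requires it.
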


Taking the long exact sequence of Galois cohomology associted to the short exact sequence of $F$-groups, we have:
$$1\rightarrow \widetilde{Z}(F)\rightarrow \widetilde{G}(F)\rightarrow G(F)\rightarrow H^1(F,\widetilde{Z})\rightarrow \cdots.$$
By Shapiro's lemma, we have isomorphisms:
$$H^1(F,\widetilde{Z})\cong H^1(F,\prod\limits_{i}\mathrm{Res}_{L_i/F}\mathbb{G}_m)\cong \prod_{i}H^{1}(L_i,\mathbb{G}_m)\cong \{1\},$$
which implies that $G(F)\cong \widetilde{G}(F)/\widetilde{Z}(F)$.

In this way, we can treat an irreducible representation of $G(F)$ as an irreducible representation of $\widetilde{G}(F)$, which $\widetilde{Z}(F)$ acts on trivially. In particular, Prasad's character $\omega_{G(F),E}$ is indeed a quadratic character of $\widetilde{G}(F)$.

\begin{lemma}
$\omega_{G(F),E}$, as a quadratic character of $\widetilde{G}(F)$, can be extended to a quadratic character of $\widetilde{G}(E)$ for any $z$-extension $\widetilde{G}$ of $G$.
\end{lemma}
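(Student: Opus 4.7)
My strategy is to reduce the extension problem from $\widetilde{G}$ to its abelianization $\widetilde{D} := \widetilde{G}/\widetilde{G}_{\mathrm{der}}$, exploiting the defining property of a $z$-extension that $\widetilde{G}_{\mathrm{der}}$ is semisimple and simply connected. Since $\widetilde{G}_{\mathrm{der}}$ is semisimple simply connected over the non-archimedean local fields $F$ and $E$, Kneser's theorem gives $H^1(K,\widetilde{G}_{\mathrm{der}})=1$ for $K\in\{F,E\}$, so the projection $q_K : \widetilde{G}(K) \twoheadrightarrow \widetilde{D}(K)$ is surjective with kernel $\widetilde{G}_{\mathrm{der}}(K)$. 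Thus any character of $\widetilde{G}(K)$ that is trivial on $\widetilde{G}_{\mathrm{der}}(K)$ descends uniquely to $\widetilde{D}(K)$, and conversely characters on $\widetilde{D}(K)$ lift.

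Next I would show that $\omega_{G(F),E}$ (inflated along $\widetilde{G}(F)\twoheadrightarrow G(F)$) is trivial on $\widetilde{G}_{\mathrm{der}}(F)$, so that it descends to a quadratic character $\chi_F:\widetilde{D}(F)\to\{\pm1\}$. Indeed, by \Cref{derived} the restriction to $\widetilde{G}_{\mathrm{der}}(F)=\widetilde{G}_{\mathrm{sc}}(F)$ equals $\omega_{\widetilde{G}_{\mathrm{der}}(F),E}$, and the cohomological construction of Section \ref{omegaprasad} passes through $H^{1}(W_F,Z(\widehat{\widetilde{G}_{\mathrm{der}}}))=H^{1}(W_F,1)=1$ when $\widetilde{G}_{\mathrm{der}}$ is adjoint on the dual side (i.e.\ when $\widetilde{G}_{\mathrm{der}}$ is simply connected), so the restriction is trivial. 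It then suffices to extend $\chi_F$ to a quadratic character $\chi_E$ of $\widetilde{D}(E)$, since the pullback $\chi_E\circ q_E$ will be the desired quadratic character of $\widetilde{G}(E)$ restricting to $\omega_{G(F),E}$ on $\widetilde{G}(F)$.

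For the torus step, the natural inclusion $\widetilde{D}(F)\hookrightarrow\widetilde{D}(E)$ is a closed embedding of locally compact abelian groups; by Pontryagin duality the restriction map on continuous character groups is surjective, so some continuous extension exists. To upgrade this to a quadratic extension, I would trace $\chi_F$ back through the recipe of Section \ref{omegaprasad}: on the dual side, $\chi_F$ corresponds to the image of $[E]\in H^{1}(F,\mu_2)$ under $H^{1}(F,\mu_2)\to H^{1}(F,Z(\widehat{\widetilde{G}_{\mathrm{sc}}}))\to H^{1}(W_F,\widehat{\widetilde{D}})$, where the construction factors through the coroot lattice via $\widehat{2\rho}$. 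Because the induced torus $\widetilde{Z}$ built into the $z$-extension gives $\widetilde{D}$ enough induced-torus factors (to which Shapiro's lemma and Hilbert 90 apply), the obstruction class in $\widetilde{D}(F)/\widetilde{D}(F)^{2}\to\widetilde{D}(E)/\widetilde{D}(E)^{2}$ on which $\chi_F$ must vanish is trivial for the specific $\chi_F$ obtained from $[E]$; equivalently, $\chi_F$ lies in the image of the corestriction $H^{1}(W_E,\widehat{\widetilde{D}})\to H^{1}(W_F,\widehat{\widetilde{D}})$ translated via LLC for tori.

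\textbf{Main obstacle.} The subtle step is the last one: producing the extension \emph{within the quadratic characters}. A Pontryagin-duality argument alone furnishes only a continuous (possibly higher-order) extension, and in the ramified setting $\widetilde{D}(F)/\widetilde{D}(F)^{2}\to\widetilde{D}(E)/\widetilde{D}(E)^{2}$ need not be injective on all of $\widetilde{D}(F)$. The work lies in showing that $\chi_F$, being precisely the image of the distinguished class $[E]$ under Prasad's cohomological recipe, avoids this obstruction; this boils down to a direct computation with the induced-torus structure of $\widetilde{Z}$ and the $\widehat{2\rho}$-map, which I would carry out case-by-case on the simple factors of $\widetilde{G}_{\mathrm{sc}}$.
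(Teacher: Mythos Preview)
Your reduction is exactly the paper's: pass to the cocenter $C=\widetilde{G}/\widetilde{G}_{\mathrm{der}}$, use that $\widetilde{G}_{\mathrm{der}}$ is simply connected to get $H^{1}(K,\widetilde{G}_{\mathrm{der}})=1$ for $K\in\{F,E\}$, observe (via \Cref{derived}) that $\omega_{G(F),E}$ is trivial on $\widetilde{G}_{\mathrm{der}}(F)$ and hence descends to a character $\chi_F$ of $C(F)$, and then extend $\chi_F$ to $C(E)$ using that $C(F)\hookrightarrow C(E)$ is a closed subgroup of a locally compact abelian group.

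Where you diverge is in worrying about the word ``quadratic.'' You are correct that Pontryagin duality alone only furnishes a continuous extension, and that in general a quadratic character of $C(F)$ need not extend to a quadratic character of $C(E)$ (already for $C=\mathbb{G}_m$ the class of the discriminant of $E/F$ obstructs this). The paper does not address this: its proof ends with the Pontryagin extension step and simply declares the result a quadratic character of $\widetilde{G}(E)$. In fact, for the application immediately following the lemma one only needs \emph{some} character $\widetilde{\omega}_{\widetilde{G}(E)}$ of $\widetilde{G}(E)$ restricting to $\omega_{G(F),E}$ on $\widetilde{G}(F)$, in order to twist $\pi$ and run Hakim--Murnaghan; the order of $\widetilde{\omega}_{\widetilde{G}(E)}$ never enters. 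So your ``main obstacle'' is a real gap between the lemma as \emph{stated} and what the argument (and the application) actually deliver, but it is not an obstacle to the paper's goals.

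Your proposed remedy---tracing $\chi_F$ back through the $\widehat{2\rho}$ map and doing a case-by-case analysis on simple factors---is more than the situation calls for, and as written it is only a sketch. If you want to salvage the quadratic claim, a cleaner route is to note that the cocycle defining $\omega_{G(F),E}$ arises from $[E]\in H^{1}(F,\{\pm1\})$, which dies under restriction to $\Gamma_E$; this already lets you write down an explicit order-$2$ extension over $E$ via the same recipe applied over $E$ (where it yields the trivial character) together with a square root on the cokernel. But for the purposes of the paper you can simply drop ``quadratic'' from the statement and your Pontryagin argument, which coincides with the paper's, is complete.
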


\begin{proof}
We first prove that $\omega_{G(F),E}$ regarded as a character of $\widetilde{G}(F)$ is the same as $\omega_{\widetilde{G}(F),E}$. Notice that we have a following commutative diagram:
$$\xymatrix@R=0.5cm{
                &         \widehat{G}_{\mathrm{sc}} \ar@{^{(}->}[dd]     \\
  SL_2(\mathbb{C}) \ar[ur]^{u_{\widehat{G}_{\mathrm{sc}}}} \ar[dr]                \\
                &         \widehat{\widetilde{G}}_{\mathrm{sc}},}$$
which leads to the following commutative diagram:
$$\xymatrix@R=0.5cm{
                &         H^1(F,Z(\widehat{G}_{\mathrm{sc}})) \ar[dd] \ar[r] & H^1(F,Z(\widehat{G})) \ar[dd]\ar[r] & \mathrm{Hom}_{\mathrm{cts}}(G(F),\mathbb{C}^{\times})\ar[dd]    \\
  H^1(F,\{\pm 1\}) \ar[ur] \ar[dr]                 \\
                &         H^1(F,Z(\widehat{\widetilde{G}}_{\mathrm{sc}})) \ar[r] & H^1(F,Z(\widehat{\widetilde{G}}))  \ar[r] &\mathrm{Hom}_{\mathrm{cts}}(\widetilde{G}(F),\mathbb{C}^{\times}).}$$
The commutativity of the above diagram implies that $\omega_{G(F),E}=\omega_{\widetilde{G}(F),E}$ as a character of $\widetilde{G}(F)$. By Lemma \eqref{derived}, $\omega_{G(F),E}|_{\widetilde{G}_{\mathrm{der}}(F)}$ coincides with $\omega_{\widetilde{G}_{\mathrm{der}}(F),E}$, which is trivial since $\widetilde{G}_{\mathrm{der}}$ is simply connected. This simply means that $\omega_{G(F),E}$ is a character of $\widetilde{G}(F)/\widetilde{G}_{\mathrm{der}}(F)$.

Notice that we have a second exact sequence of $F$-groups:

$$1\rightarrow \widetilde{G}_{\mathrm{der}}\rightarrow \widetilde{G}\rightarrow C\rightarrow 1,$$
where $\widetilde{G}^{\mathrm{der}}=[\widetilde{G},\widetilde{G}]$ denotes the derived subgroup of $\widetilde{G}$ and $C\cong \widetilde{G}/\widetilde{G}^{\mathrm{der}}$ denotes the cocenter of $\widetilde{G}$, which is a torus defined over $F$.

Taking the long exact sequence of Galois cohomology, we have:
$$1\rightarrow \widetilde{G}^{\mathrm{der}}(F)\rightarrow \widetilde{G}(F)\rightarrow C(F)\rightarrow H^{1}(F,\widetilde{G}^{\mathrm{der}})=1,$$
since $\widetilde{G}^{\mathrm{der}}$ is simply connected. This provides an identification between $\widetilde{G}(F)/\widetilde{G}_{\mathrm{der}}(F)$ and $C(F)$. Since $C(F)$ is a subgroup of the locally compact abelian group $C(E)$, $\omega_{G(F),E}$ as a character of $C(F)$ could be always extended to be a character of $C(E)\cong \widetilde{G}(E)/\widetilde{G}_{\mathrm{der}}(E) $, hence a quadratic character of $\widetilde{G}(E)$.
\end{proof}

Let $\widetilde{\omega}_{\widetilde{G}(E)}$ be an extension of $\omega_{\widetilde{G}(F),E}=\omega_{G(F),E}$ to $\widetilde{G}(E)$. Now we review the relationship between representation theory of $G(E)$ and $\widetilde{G}(E)$. Notice that we have a natural map:
$$\mathrm{Irr}(G(E))\rightarrow \mathrm{Irr}(\widetilde{G}(E))$$
by regarding an irreducible representation of $G(E)$ as an irreducible representation of $\widetilde{G}(E)$, on which $\widetilde{Z}(E)$ acts trivially.

Let $\pi$ be a regular supercuspidal representation of $G(E)$ constructed from a tame regular elliptic pair $(j_{\pi}(T),\theta)$, the image of $\pi$ under this map is the regular supercuspidal representation of $\widetilde{G}(E)$ attached to $(\widetilde{j_{\pi}}(\widetilde{T}),\widetilde{\theta})$, where $\widetilde{T}$ is the preimage of $T$ in $\widetilde{G}$ and $\widetilde{\theta}:\widetilde{j_{\pi}}(\widetilde{T})(E)\rightarrow \mathbb{C}^{\times}$ is the pull back of $\theta$ along $\widetilde{T}(E)\rightarrow T(E)$. We have the following commutative diagram:
$$\xymatrix{
  1  \ar[r] & \widetilde{Z} \ar@{=}[d] \ar[r] & \widetilde{T} \ar@{^{(}->}[d]^{\widetilde{j_{\pi}}} \ar[r] & T \ar@{^{(}->}[d]^{j_{\pi}} \ar[r] & 1  \\
  1 \ar[r] & \widetilde{Z} \ar[r] & \widetilde{G} \ar[r] & G \ar[r] & 1 .  }$$

Taking long exact sequence of Galois cohomology, we have the following commutative diagram:
$$\xymatrix{
H^{1}(E,\widetilde{Z}) \ar@{=}[d] \ar[r] & H^1(E,\widetilde{T}) \ar[d] \ar[r] & H^1(E,T) \ar[d] \ar[r] & H^2(E,\widetilde{Z})\ar@{=}[d]  \\
H^{1}(E,\widetilde{Z}) \ar[r] &H^1(E,\widetilde{G}) \ar[r] & H^{1}(E,G) \ar[r] & H^2(E,\widetilde{Z})  . }$$

Since $\widetilde{Z}$ is an induced torus, we know that $H^{1}(E,\widetilde{Z})=1$ and $H^{2}(E,\widetilde{Z})=\prod\limits_{i}\mathbb{Q}/\mathbb{Z}$. By the snake lemma, we have an isomorphism:
$$\ker(H^1(E,T)\rightarrow H^1(E,G))\cong \ker(H^1(E,\widetilde{T})\rightarrow H^1(E,\widetilde{G})). $$

If we fix a stable conjugacy class of an elliptic torus $T$ in $G_E$, then we have a bijection:
$$\{i_E:T\rightarrow G \mathrm{~embedding}/E\}/G(E)\leftrightarrow \{\widetilde{i}_E:\widetilde{T}\rightarrow \widetilde{G} \mathrm{~embedding}/E\}/\widetilde{G}(E).$$
Moreover, there is a bijection:
$$\mathrm{BC}^{-1}([j])\leftrightarrow \mathrm{BC}^{-1}(\widetilde{[j]}),$$
since any $F$-embedding $(S,i)$ into $G$, whose base change to $E$ is $(T,j)$, canonically determines an $F$-embedding $(\widetilde{S},\widetilde{i})$ whose base change to $E$ is $(\widetilde{T},\widetilde{j})$ and vice versa. Hence we have
\begin{equation*}
\begin{aligned}
&\dim\mathrm{Hom}_{G(F)}(\pi,\omega_{G(F),E})\\
=&\dim\mathrm{Hom}_{\widetilde{G}(F)}(\pi\otimes \widetilde{\omega}_{\widetilde{G}(E)},\mathbbm{1})\\
=&\sum\limits_{\{[g]\in \widetilde{j_{\pi}}(\widetilde{T}(E))\backslash \widetilde{G}(E)/\widetilde{G}(F):\mathrm{relevant}\}}\dim \mathrm{Hom}_{\widetilde{g}^{-1}\widetilde{j_{\pi}}(\widetilde{T}(E))\widetilde{g}\cap \widetilde{G}(F)}(^{\widetilde{g}}\widetilde{\theta}\cdot\  \widetilde{\omega}_{\widetilde{G}(E)}|_{\widetilde{g}^{-1}\widetilde{j_{\pi}}(T(E))\widetilde{g}},\widetilde{\epsilon}_{\mathrm{HM},[g]})\\
=&\sum\limits_{[\widetilde{i}]\in \mathrm{BC}^{-1}([\widetilde{j_{\pi}}])}\dim \mathrm{Hom}_{\widetilde{i}(S(F))}(^{\widetilde{g}}\widetilde{\theta} \cdot\  \widetilde{\omega}_{\widetilde{G}(E)}|_{\widetilde{g}^{-1}\widetilde{j_{\pi}}(T(E))\widetilde{g}},\widetilde{\epsilon}_{\mathrm{HM},\widetilde{i}(S(F))})\\
=&\sum\limits_{[i]\in \mathrm{BC}^{-1}([j_{\pi}])}\dim \mathrm{Hom}_{\widetilde{i}(S(F))}(^{\widetilde{g}}\widetilde{\theta}\cdot\  \widetilde{\omega}_{\widetilde{G}(E)}|_{\widetilde{g}^{-1}\widetilde{j_{\pi}}(T(E))\widetilde{g}},\widetilde{\epsilon}_{\mathrm{HM},\widetilde{i}(S(F))}).
\end{aligned}
\end{equation*}

\section{Proof of the theorem under \Cref{productofcharacters}}\label{section7}
\subsection{Torus case.}
We start with Prasad's conjecture for torus $G=S$. This has already been treated in Prasad's paper \cite{Pra15}, and we give a brief review of the torus case here. Notice that $H^1(F,\mathrm{Inn}S)$ is trivial, hence for any $\alpha\in H^1(F,S)$ which may be nontrivial, $S_{\alpha}$ is identified with $S$. From the construction of $\omega_{G(F),E}$, we know that it is in fact constructed from a character of $G^{\mathrm{ad}}(F)$, hence is trivial in the torus case.

Let $\chi$ be a character of $S(E)$. The left hand side of Prasad's conjecture gives
$$\sum_{\alpha \in H^{1}(\mathrm{Gal}(E/F),S(E))}\dim \mathrm{Hom}_{S_{\alpha}(F)}(\chi,\mathbb{C})=\#|H^1(\mathrm{Gal}(E/F),S(E))|\cdot\dim \mathrm{Hom}_{S(F)}(\chi,\mathbb{C}).$$

By \Cref{Sop}, $S^{\mathrm{op}}$ is a $F$-torus, whose $F$-rational points are given by
$$S^{\mathrm{op}}(F)=\{t\in S(E)|\sigma(t)=t^{-1}\}.$$
From the long exact sequence associated to the short exact sequence of $F$-groups:
$$1\rightarrow S\rightarrow \mathrm{Res}_{E/F}S_E\rightarrow S^{\mathrm{op}}\rightarrow 1,$$
we have
$$1\rightarrow S(E)/S(F)\rightarrow S^{\mathrm{op}}(F)\rightarrow \ker(H^1(F,S)\rightarrow H^1(E,S))\rightarrow 1.$$
Applying the exact functor $\mathrm{Hom}_{\mathrm{cts}}(\cdot,\mathbb{C}^{\times})$ of taking the Pontryagin dual, we can see that a character $\chi$ of $S(E)$ is distinguished by $S(F)$ if and only if it is a base change of a character of $S^{\mathrm{op}}$ that is $\chi=\chi^{\mathrm{op}}\circ \mathrm{Nm}_{S(E)/S^{\mathrm{op}}(F)}$ where $\chi^{\mathrm{op}}$ is a character of $S^{\mathrm{op}}(F)$ and $\mathrm{Nm}_{S(E)/S^{\mathrm{op}}(F)}:S(E)\rightarrow S^{\mathrm{op}}(F)$ is the norm map associated to $(S^{\mathrm{op}}(F),S(E)=S^{\mathrm{op}}(E))$ given by $ t\mapsto \frac{t}{\sigma(t)}.$ Hence we have
$$\mathrm{LHS}=\left\{
             \begin{array}{lr}
             \#|H^1(\mathrm{Gal}(E/F),S(E))| & ~~\mathrm{if~}\chi\mathrm{~comes~from~base~change},\\
             0 & ~~\mathrm{otherwise}.\\
            \end{array}\right.$$
Notice that the set of possible extensions of the parameter of $\chi$ (if not empty) can identified with the set $\ker(H^1(W_F,\widehat{S^{\mathrm{op}}})\rightarrow H^1_{\phi_{\chi}}(W_E,\widehat{S}))\cong H^1(\mathrm{Gal}(E/F),\widehat{S^{\mathrm{op}}}^{W_E})$. The right hand side of Prasad's conjecture reads:
$$\mathrm{RHS}=\sum\limits_{\tilde{\phi}}m(\lambda,\tilde{\phi})=\left\{
             \begin{array}{lr}
             \#|H^1(\mathrm{Gal}(E/F),\widehat{S^{\mathrm{op}}}^{W_E})|  & ~~\mathrm{if~} \phi_{\chi}\mathrm{~comes~from~base~change},\\
             0 & \mathrm{otherwise}.\\
            \end{array}\right.$$
In fact, by local Langlands for tori and the exact sequence
$$1\rightarrow S(E)/S(F)\rightarrow S^{\mathrm{op}}(F)\rightarrow \ker(H^1(F,S)\rightarrow H^1(E,S))\rightarrow 1.$$
we can already see that the number of base change lifts of $\chi$ from $S^{\mathrm{op}}(F)$ (if not $0$) equals the cardinality of
$$H^1(\mathrm{Gal}(E/F),S(E))=\ker(H^1(F,S)\rightarrow H^1(E,S)).$$

Now we give a direct proof based on Galois cohomology. For the exact sequence of algebraic groups over $F$:
$$1\rightarrow S\rightarrow \mathrm{Res}_{E/F}(S_E)\rightarrow S^{\mathrm{op}}\rightarrow 1,$$
we have a dual exact sequence of $W_F$-modules
$$1\rightarrow\widehat{S^{\mathrm{op}}}\rightarrow \mathrm{Ind}_{W_E}^{W_F}(\widehat{S})\rightarrow \widehat{S}\rightarrow 1.$$
By combining a result of Kottwitz \cite[Corollary 2.3]{Kot84} and Shapiro's lemma, we have the following long exact sequence:
$$1\rightarrow\pi_{0}(\widehat{S^{\mathrm{op}}}^{W_F})\rightarrow \pi_{0}(\widehat{S}^{W_E})\rightarrow \pi_{0}(\widehat{S}^{W_F})\rightarrow H^1(W_F,\widehat{S^{\mathrm{op}}})\rightarrow H^1(W_E,\widehat{S})\rightarrow\cdots,$$
which leads to an isomorphism:
$$H^1(\mathrm{Gal}(E/F),\widehat{S^{\mathrm{op}}}^{W_E})\cong \pi_{0}(\widehat{S}^{W_F})/\pi_{0}(\widehat{S}^{W_E}),$$
where the map $\pi_{0}(\widehat{S}^{W_E})\rightarrow\pi_{0}(\widehat{S}^{W_F})$ is induced by the corestriction map. Since $W_F$ is dense in $\Gamma_F$, $A^{W_F}=A^{\Gamma_F}$ for any Galois module $A$. Hence we have the commutative diagram:
$$\xymatrix{
  H^1(F,S) \ar[d]_{\mathrm{res}} \ar[r]^{\mathrm{Kottwitz}}
  & \pi_{0}(\widehat{S}^{\Gamma_F})^* \ar[d]^{\mathrm{cores}^*}\\
  H^1(E,S)  \ar[r]^{\mathrm{Kottwitz}}
  & \pi_{0}(\widehat{S}^{\Gamma_E})^*.}$$
Notice we have isomorphisms
$$H^1(\mathrm{Gal}(E/F),\widehat{S^{\mathrm{op}}}^{W_E})\cong \pi_{0}(\widehat{S}^{W_F})/\pi_{0}(\widehat{S}^{W_E})=\pi_{0}(\widehat{S}^{\Gamma_F})/\pi_{0}(\widehat{S}^{\Gamma_E}),$$
$$H^1(\mathrm{Gal}(E/F),S(E))\cong \ker(H^1(F,S)\rightarrow H^1(E,S)).$$
Notice that for a homomorphism of abelian groups $f:A\rightarrow B$, there is a natural identification $(B/A)^{*}\cong \ker(B^{*}\rightarrow A^{*})$. As a corollary, we have $\#|\mathrm{coker} f|=\#|\ker f^*|$, which implies the equality
$$\#|H^1(\mathrm{Gal}(E/F),S(E))| =\#|H^1(\mathrm{Gal}(E/F),\widehat{S^{\mathrm{op}}}^{W_E}).$$

Now we try to reduce the proof of Prasad's conjecture for regular supercuspidal representations to the case of torus.

\subsection{Regular supercuspidal case.}

We first summarize Kaletha's construction of an $L$-packet associated to a regular supercuspidal $L$-packet datum in a few words.

Fix a regular supercuspidal $L$-packet datum $(T,\widehat{j},\chi,\theta)$ of $G(E)$ such that $\chi$ is the minimally ramified $\chi$-datum determined by $\theta$, the Vogan $L$-packet associated to this Langlands parameter consists of representations:
$$\{ \pi_{(j(T),~\theta\circ j^{-1}\cdot \epsilon_{\mathrm{Kal}})}  \},$$
where $j$ runs over the set of $G_{\beta}(E)$ conjugacy classes of embeddings of $T$ into $G_{E,\beta}$ over $E$ for any $\beta\in H^{1}(E,G)$, and the representation $\pi_{(j(T),~\theta\circ j^{-1}\cdot \epsilon_{\mathrm{Kal}})}$ is the one constructed from Yu's generic cuspidal datum associted to the tame elliptic pair in \Cref{Yuconstruction}.

Let $\pi$ be a regular supercuspidal representation associated to a regular supercuspidal datum $(T,\widehat{j},\chi,\theta,(G,1),j_{\pi})$, where $j_{\pi}$ is an admissible embedding of $T$ to $G$ defined over $E$. Now we give an explicit computation of LHS and RHS of \Cref{prasadidentity}. Notice the tame elliptic regular pair associated to this regular supercuspidal datum is $(j_{\pi}(T),\theta\circ j_{\pi}^{-1}\cdot \epsilon_{\mathrm{Kal}})$. We also need the following generalization of \Cref{toral} to fix a base point in a given regular supercuspidal $L$-packet.

\begin{theorem}[{\cite{DR10}\cite[Lemma 6.2.2]{Kal19}, \cite[Page 25]{FKS21}}]\label{whittakerembeeding}
For any Whittaker datum $\mathfrak{w}$ for $G(E)$, there is a unique admissible embedding (up to $G(E)$ conjugacy) $j_{\frak{m}}:T\rightarrow G_E  $ such that the regular supercuspidal representation corresponding to $(T,\widehat{j},\chi,\theta,(G,id),j_{\mathfrak{w}})$ is $\mathfrak{w}$-generic.
\end{theorem}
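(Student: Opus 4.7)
The plan is to prove this via two independent steps: existence of at least one admissible embedding $j_{\mathfrak{w}}$ whose associated tame regular elliptic pair yields a $\mathfrak{w}$-generic representation, and uniqueness of its $G(E)$-conjugacy class. The uniqueness step will be essentially formal given the Langlands--Vogan parametrization already reviewed in Section \ref{comparisonl}, so the substantive content lies in existence.

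For uniqueness, I would argue as follows. The Langlands--Vogan bijection identifies the members of $\Pi_{\phi}\cap \Pi(G(E))$ with the $G(E)$-conjugacy classes of admissible rational embeddings $j:T\to G_E$, equivalently with the elements of $\ker(H^1(E,T)\to H^1(E,G))$ via $j\mapsto \mathrm{inv}(j,j_0)$ for any fixed base point $j_0$. The strong tempered $L$-packet conjecture, established for regular supercuspidal $L$-packets by Kaletha, asserts that $\Pi_{\phi}$ contains exactly one $\mathfrak{w}$-generic member for each Whittaker datum $\mathfrak{w}$. Combining these two facts shows that at most one $G(E)$-conjugacy class of embeddings of $T$ into $G_E$ can produce a $\mathfrak{w}$-generic representation.

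For existence, the strategy would follow and extend Kaletha's proof of the toral case in \cite[Lemma 6.2.2]{Kal19}. The idea is to construct $j_{\mathfrak{w}}$ so that the generic cuspidal $G$-datum entering Yu's construction (as recalled in Section \ref{Yuconstruction}) is compatible with the pinning $\mathcal{P}$ that defines $\mathfrak{w}$. Concretely, the pinning determines a distinguished special vertex $x_{\mathfrak{w}}\in \mathcal{B}(G,E)$; one chooses $j_{\mathfrak{w}}$ so that $j_{\mathfrak{w}}(T)$ is a maximally unramified elliptic maximal torus of $G^{0}$ whose apartment contains $x_{\mathfrak{w}}$, and so that the depth-zero ingredient $\rho=\mathrm{Ind}_{S(E)G^{0}(E)_{x,0}}^{G^{0}(E)_{[x]}}\overline{\kappa_{\mathsf{S}',\mu_{-1}}}$ is a DeBacker--Reeder generic depth-zero supercuspidal of $G^{0}$ with respect to the Whittaker datum on $G^{0}$ induced from $\mathfrak{w}$. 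Once this base case is arranged, the genericity propagates up the twisted Levi chain $G^{0}\subset \cdots \subset G^{d}=G$ by analyzing the Mackey decomposition of the compactly-induced Yu representation at each stage and checking that the Heisenberg--Weil piece $\widetilde{\phi}_{i}$ does not obstruct the existence of a Whittaker functional for the induced Whittaker datum on $G^{i+1}$.

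The main obstacle, and the step that genuinely goes beyond Kaletha's toral argument, is the inductive genericity analysis for the Heisenberg--Weil factors attached to proper inclusions $G^{i}\subsetneq G^{i+1}$. Specifically, one must show that if $\rho$ is generic for the induced Whittaker datum on $G^{0}$, then the full tensor product $\rho\otimes \bigotimes_{i=0}^{d-1}\widetilde{\phi}_{i}\otimes \phi_{d}|_{K^{d}}$ admits a Whittaker functional for $\mathfrak{w}$; this requires a judicious choice of symplectic basis realizing Yu's special isomorphism, compatibly with the pinning. This is precisely the content of the argument in \cite{FKS21}, which refines the depth-zero genericity analysis of \cite{DR10} and subsumes the toral case, in which $d=0$ allows one to bypass the Heisenberg--Weil step entirely. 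Once this analysis is in place, both existence and uniqueness of $j_{\mathfrak{w}}$ follow, completing the proof.
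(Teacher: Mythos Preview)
The paper does not supply its own proof of this theorem: it is stated purely as a citation to \cite{DR10}, \cite[Lemma 6.2.2]{Kal19}, and \cite[Page 25]{FKS21}, and is then used as a black box. So there is no in-paper argument to compare your proposal against directly. That said, the paper's subsequent use of the result (in the proof of \Cref{embeddingoverF}) makes clear how the cited references actually characterize $j_{\mathfrak{w}}$, and on that point your sketch diverges in one important respect.

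Your existence outline is broadly faithful to the approach in the cited literature: start from the DeBacker--Reeder depth-zero genericity result, then propagate through the twisted Levi chain using the Heisenberg--Weil analysis of \cite{FKS21}, with the toral case \cite[Lemma 6.2.2]{Kal19} as the degenerate situation $d=0$. That is an accurate summary of how \cite{FKS21} extends Kaletha's argument.

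Your uniqueness argument, however, is circular as written. You invoke ``the strong tempered $L$-packet conjecture, established for regular supercuspidal $L$-packets by Kaletha'' to conclude that at most one member of $\Pi_{\phi}$ is $\mathfrak{w}$-generic. But the statement that each regular supercuspidal $L$-packet contains a unique $\mathfrak{w}$-generic member is precisely what \Cref{whittakerembeeding} asserts (once translated through the bijection of \Cref{comparisonl}); it is not available as a prior input. In the cited references, uniqueness is obtained simultaneously with existence via the Kostant section: $j_{\mathfrak{w}}$ is characterized by the condition that the $G(E)$-orbit of $j_{\mathfrak{w}}(X)$ meets the Kostant section $e_{\psi_{N_0}}+C_{\mathfrak{g}}(f_{\psi_{N_0}})$, and Kostant's theorem (\Cref{Kostant} in the appendix) guarantees that each regular stable orbit meets this section in exactly one point, pinning down a unique rational conjugacy class of embeddings. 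This is exactly the characterization the paper invokes in the proof of \Cref{embeddingoverF}. Your sketch would be repaired by replacing the appeal to generic uniqueness in the $L$-packet with this Kostant-section argument.
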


\begin{proposition}\label{embeddingoverF}
If the Whittaker datum $\mathfrak{w}=(B_0,\psi_{N_0})$ is chosen to satisfy Prasad's condition $\psi_{N_0}|_{N_0(F)}=\mathbbm{1}$, then base point of embedding $[j_{\mathfrak{w}}]$ is defined over $F$.
\end{proposition}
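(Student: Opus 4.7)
The plan is to combine the uniqueness assertion in Theorem \ref{whittakerembeeding} with a Galois-descent argument. Writing $\sigma$ for the nontrivial element of $\mathrm{Gal}(E/F)$, it suffices to prove $\sigma([j_{\mathfrak{w}}]) = [j_{\mathfrak{w}}]$ as $G(E)$-conjugacy classes; this is exactly the condition that $[j_{\mathfrak{w}}]$ lies in the image of the base change map $\mathrm{BC}$ and hence descends to an $F$-embedding, by the very analysis of $\mathrm{BC}^{-1}([j_\pi])$ carried out earlier in this section. Since the statement is only meaningful when $T$ carries an $F$-structure, I may and do assume the $L$-packet datum $(T, \widehat{j}, \chi, \theta)$ is the base change of an $F$-datum via \Cref{naivebasechange}, so that $T = S \times_F E$ and $\sigma$ genuinely acts on admissible $E$-embeddings of $T$ into $G_E$.

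The first step is to translate Prasad's condition $\psi_{N_0}|_{N_0(F)} = \mathbbm{1}$ into the assertion $\psi|_F = \mathbbm{1}$ for the underlying additive character, using that the pinning $\mathcal{P}$ is chosen over $F$ and so $\Delta_\Sigma$ is $\mathrm{Gal}(E/F)$-equivariant. As $\sigma$ acts by $-1$ on the $F$-line $E/F$, this forces $\psi \circ \sigma = \psi^{-1}$ and thus $\sigma(\psi_{N_0}) = \psi_{N_0}^{-1}$. The next step is to verify that $(B_0, \psi_{N_0})$ and $(B_0, \psi_{N_0}^{-1})$ lie in the same $G(E)$-orbit of Whittaker data: the required conjugating element must lie in $S_0(E)$ and send each simple root character to $-1$, for which the natural candidate is a lift of $\rho^{\vee}(-1) \in S_{0,\mathrm{ad}}(E)$. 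Establishing this yields $\sigma(\mathfrak{w}) = \mathfrak{w}$.

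The conclusion is then immediate from the Galois-equivariance of Kaletha's construction: applying $\sigma$ to every component of the regular supercuspidal datum shows that $\sigma(\pi)$ is the representation attached to $(T, \widehat{j}, \chi, \theta, (G, \mathrm{id}), \sigma(j_{\mathfrak{w}}))$, and is $\sigma(\mathfrak{w}) = \mathfrak{w}$-generic. The uniqueness in Theorem \ref{whittakerembeeding} then forces $\sigma(j_{\mathfrak{w}}) \sim_{G(E)} j_{\mathfrak{w}}$, which is what we want. The main technical obstacle is the Whittaker equivalence $\sigma(\mathfrak{w}) = \mathfrak{w}$: the element $\rho^{\vee}(-1)$ is automatically defined in $S_{0, \mathrm{ad}}(E)$ but may fail to lift to $S_0(E)$ when $-1 \notin (E^{\times})^2$ and $\rho^{\vee}$ is not integral, as already happens for $SL_2(E)$ with $\sqrt{-1} \notin E$. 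In these borderline situations one can likely proceed via Kaletha's double-cover reformulation \cite{Kal19a}, where the Whittaker normalization is encoded by a genuine character of the cover and the sign obstruction dissolves, or alternatively reduce to the case-by-case analysis underlying \Cref{prasadexample}.
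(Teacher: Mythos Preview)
Your high-level strategy --- prove $\sigma([j_{\mathfrak w}])=[j_{\mathfrak w}]$ and then invoke the uniqueness in Theorem~\ref{whittakerembeeding} --- is exactly the paper's. The implementation, however, breaks at the step where you assert that $\sigma(\pi)$ is attached to the datum $(T,\widehat{j},\chi,\theta,(G,\mathrm{id}),\sigma(j_{\mathfrak w}))$ with the \emph{same} character $\theta$. Applying $\sigma$ to the datum replaces $\theta$ by $\theta^{\sigma}$, and these agree only when the packet datum is a base change from $G(F)$. That is not the ambient hypothesis here: the proposition is used precisely when $\phi_\pi$ is a base change from $G^{\mathrm{op}}(F)$, so that $\theta=\mu\circ\mathrm{Nm}_{T(E)/S^{\mathrm{op}}(F)}\cdot\zeta$ and hence $\theta^{\sigma}=\theta^{-1}$ up to a tamely ramified twist. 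Thus $\sigma(\pi)$ lands in the packet for $(T,\theta^{-1})$, not $(T,\theta)$, and the uniqueness statement you want to apply does not compare $\sigma(j_{\mathfrak w})$ with $j_{\mathfrak w}$ directly. The obstruction you isolate --- that $(B_0,\psi_{N_0})$ and $(B_0,\psi_{N_0}^{-1})$ need not be $G(E)$-conjugate --- is a genuine failure of $\sigma(\mathfrak w)=\mathfrak w$, not a removable technicality, and it is a symptom of this mismatch of packets rather than the root cause.

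The paper sidesteps both issues by descending one level, to the Kostant-section characterization underlying Theorem~\ref{whittakerembeeding}: $[j_{\mathfrak w}]$ is the unique class such that the $G(E)$-orbit of $j_{\mathfrak w}(X)$ meets $e_{\psi_{N_0}}+C_{\mathfrak g}(f_{\psi_{N_0}})$, where $X\in\mathfrak t^*_{-r}$ is the generic element of $\theta$. Prasad's condition on $\psi_{N_0}$ gives $\sigma(e_{\psi_{N_0}})=-e_{\psi_{N_0}}$ (this is \Cref{whittakerF}), while the $G^{\mathrm{op}}$-base-change hypothesis gives $\sigma(X)=-X$. Setting $j'=\sigma\circ j_{\mathfrak w}\circ\sigma^{-1}$ one finds $j'(X)=-\sigma(j_{\mathfrak w}(X))$, and since $Y\mapsto -\sigma(Y)$ preserves the Kostant section, the orbit of $j'(X)$ again meets it; uniqueness then forces $[j']=[j_{\mathfrak w}]$. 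The two minus signs cancel, so no lift of $\rho^{\vee}(-1)$ is ever needed and the $SL_2$ obstruction simply does not arise.
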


\begin{proof}
By \Cref{whittakerF}, the condition $\psi_{N_0}|_{N_{0}(F)}$ is equivalent to the condition $-e_{\psi_{N_0}}=\sigma (e_{\psi_{N_0}})$. Let $X\in \mathfrak{t}^{*}_{-r}$ be the generic semisimple element associated to the character $\theta=\prod\limits_{i=-1}^{d}\phi_{i}|_{T(E)}$. Since we also assume the $L$-parameter of $\pi$ is a base change lift from one of $G^{\mathrm{op}}$, we have $X=-\sigma(X)$. By \cite[Lemma 6.2.2]{Kal19} \cite{FKS21}, the unique embedding (up to $G(E)$ conjugacy) $j_{\mathfrak{w}}:T\rightarrow G_E$ such that $\pi_{j_{\mathfrak{w}}}$ is $\psi_{N_0}$ generic is characterized by the condition that
the $G(E)$ conjugacy class of $j_{\mathfrak{w}}(X)$ meets the Kostant section of $e_{\psi_{N_0}}$. More precisely, there exists $g\in G(E)$ such that $gj_{\mathfrak{w}}(X)g^{-1}\in e_{\psi_{N_0}}+C_{\mathfrak{g}(\overline{F})}(f_{\psi_{N_0}})$. To prove $[j_{\mathfrak{w}}]$ is defined over $F$, we need to verify
$$[\sigma\circ j_{\mathfrak{w}}\circ \sigma^{-1}]=[j_{\mathfrak{w}}].$$
Let $j'$ denote $\sigma\circ j_{\mathfrak{w}}\circ\sigma^{-1}$, then we have $j'(X)=\sigma \circ j_{\mathfrak{w}}\circ\sigma^{-1}(X)=-\sigma (j_{\mathfrak{w}}(X))$. Notice that the facts $-e_{\psi_{N_0}}=\sigma (e_{\psi_{N_0}})$ and $gj_{\mathfrak{w}}(X)g^{-1}\in e_{\psi_{N_0}}+C_{\mathfrak{g}(\overline{F})}(f_{\psi_{N_0}})$ imply that
$$\sigma(g)j'(X)\sigma(g)^{-1}\in e_{\psi_{N_0}}+C_{\mathfrak{g}(\overline{F})}(f_{\psi_{N_0}}).$$
By the uniqueness of the characterization, we have $[j']=[j_{\mathfrak{w}}]$, that is, $[j_m]$ is defined over $F$.
\end{proof}
We shall henceforth fix a Whittaker datum $\mathfrak{w}$ satisfying Prasad's condition and use $j_{\mathfrak{w}}$ in \Cref{embeddingoverF} as a base point to fix the bijection:
$$\xymatrix{
                & \{\pi_{(j(T),~\theta\circ j^{-1}\cdot \epsilon_{\mathrm{Kal}})}\} \ar@{<->}[dl]_{j_\mathfrak{w}}   \ar@{<->}[dr]^{\mathfrak{w}}          \\
 H^1(E,T)\ar@{<->}[rr]^{\mathrm{Kottwitz~isomorphism}} & &   \pi_{0}(\widehat{T}^{\Gamma_E})^{*}\cong \mathrm{Irr}(\pi_{0}(S_{\phi}))       .}$$

\subsubsection{RHS of \Cref{prasadidentity}.}\label{RHSofprasad}
Now we try to compute the RHS of Prasad's conjecture in \Cref{prasadidentity} explicitly for regular supercuspidal representations in terms of their supercuspidal data. Notice that $j_{\pi}$ plays the role of a character of component group $\lambda_\pi$ associated to $\pi$.

To find an extension of a parameter $\phi_\pi:W_{E}\rightarrow\widehat{G}$ to a parameter $\tilde{\phi}:W_F\rightarrow\widehat{G^{\mathrm{op}}}$ is equivalent to find a regular supercuspidal $L$-packet datum $(S^{\mathrm{op}},\widehat{j}^{\mathrm{op}},\chi_{S^{\mathrm{op}}},\mu)$ of $G^{\mathrm{op}}(F)$, where $S^{\mathrm{op}}$ is an elliptic maximal torus of $G^{\mathrm{op}}$ defined over $F$ and $\mu:S^{\mathrm{op}}(F)\rightarrow\mathbb{C}^{\times}$ is a character of $S^{\mathrm{op}}(F)$ such that
$$(S^{\mathrm{op}}\times_{F}E,\widehat{j},{\chi_{\mu}}_{\mathrm{BC}},\mu\circ \mathrm{Nm}_{T(E)/S^{\mathrm{op}}(F)})\cong (T,\widehat{j},\chi_{\theta},\theta).$$
This is equivalent to the condition that there exists an abstract isomorphism of $E$ tori:
\begin{equation*}
\begin{aligned}
f:S^{\mathrm{op}}\times_{F}E\cong T,\\
\theta=\mu\circ\mathrm{Nm}_{T(E)/S^{\mathrm{op}}(F)}\cdot \zeta_{({\chi_{\mu}}_{\mathrm{BC}},\chi_{\theta})},
\end{aligned}
\end{equation*}
where the norm $T(E)\rightarrow S^{\mathrm{op}}(F)$ is the composite of $f(E):T(E)\cong S(E)$ and the norm map on $S(E)\rightarrow S^{\mathrm{op}}(F)$ sending $t$ to $\frac{t}{\sigma(t)}$. We take this abstract isomorphism $f$ as part of our initial data.

Since $\zeta_{{\chi_{\mu}}_{\mathrm{BC}},\chi_{\theta}}$ is tamely ramified, we know that $\chi_{\theta}=\chi_{\mu\circ\mathrm{Nm}_{T(E)/S^{\mathrm{op}}(F)}}$. Then the second condition becomes
\begin{equation}\label{basechangechi}
\begin{aligned}
\theta=\mu\circ\mathrm{Nm}_{T(E)/S^{\mathrm{op}}(F)}\cdot \zeta_{({\chi_{\mu}}_{\mathrm{BC}},\chi_{\mu\circ\mathrm{Nm}_{T(E)/S^{\mathrm{op}}(F)}})}.
\end{aligned}
\end{equation}

\begin{remark}
A priori, it seems that the character $\zeta_{({\chi_{\mu}}_{\mathrm{BC}},\chi_{\mu\circ\mathrm{Nm}_{T(E)/S^{\mathrm{op}}(F)}})}$ produced by $\zeta$-data depends on the character $\mu$. However, the computations in \Cref{zeta} imply that this character does not depend on $\mu$, but only depends on $\theta$.
\end{remark}

Notice that for a supercuspidal parameter $\phi$, Lemma 5.3.1 in \cite{Kal19} implies $$\pi_{0}(S_{\phi_\pi})=\pi_0(\widehat{j}(\widehat{T}^{\Gamma_E})),~~\pi_{0}(S_{\tilde{\phi}})=\pi_{0}(\widehat{j}(\widehat{S^{\mathrm{op}}}^{\Gamma_F})).$$

The parameter side of Prasad's conjecture in \Cref{prasadidentity} then becomes:$$\sum\limits_{\substack{\tilde{\phi}:W_F\rightarrow \widehat{G^{\mathrm{op}}}\\ \tilde{\phi}|_{W_E}=\phi_\pi}}m(\lambda_\pi,\tilde{\phi})=\sum\limits_{\substack{S^{\mathrm{op}}/F,\\S^{\mathrm{op}}\times_{F}E\cong T }}\sum\limits_{\substack{\mu\in\mathrm{Irr}(S^{\mathrm{op}}(F))\\ \mu\circ \mathrm{Nm}_{T(E)/S^{\mathrm{op}}(F)}=\theta\cdot \zeta^{-1}}}\dim \mathrm{Hom}_{\pi_{0}(\widehat{S^{\mathrm{op}}}^{\Gamma_{F}})}(\lambda_\pi,\mathbbm{1}),$$
where the notation $S^{\mathrm{op}}\times_F E\cong T$ means that we fix an abstract isomorphism $f:S^{\mathrm{op}}\times_F E\cong T$ of abstract tori over $E$.

%\begin{remark}
%Notice that we have $S_{\phi_\pi}=\widehat{j}(\widehat{T}^{\Gamma_E})$, which implies that $m(\lambda_{\phi},\tilde{\phi})$ is independent of $\tilde{\phi}$. (equivalently independent of $(S^{\mathrm{op}},\widehat{j},\chi_{\mu},\mu)$.)
%\end{remark}

\begin{proposition}\label{RHS}
The $\mathrm{RHS}$ of \Cref{prasadidentity} is given by:
$$\begin{cases}
             \sum\limits_{\substack{S^{\mathrm{op}}/F,\\S^{\mathrm{op}}\times_{F}E\cong T }}|H^1(\mathrm{Gal}(E/F),\widehat{S^{\mathrm{op}}}^{W_E})| & \mathrm{if}~\theta\cdot \zeta^{-1}\in \mathrm{Im}(\circ\mathrm{Nm}_{T/S^{\mathrm{op}}})\\
              &\&j_\pi\in\ker(H^1(E,T)\rightarrow H^1(F,S^{\mathrm{op}})),\\
              &\\
             0 & \mathrm{otherwise}.\\
            \end{cases}$$
\end{proposition}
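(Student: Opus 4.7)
The plan is to combine the dictionary between regular supercuspidal Langlands parameters and $L$-packet data (Proposition after \Cref{equivalenceclass}) with the description of base change of $L$-packet data established in \Cref{naivebasechange} and \Cref{basechangesame}. Under this dictionary, a lift $\tilde\phi:W_F\to{}^LG^{\mathrm{op}}$ of $\phi_\pi$ corresponds to an isomorphism class of regular supercuspidal $L$-packet data $(S^{\mathrm{op}},\widehat j^{\mathrm{op}},\chi_{S^{\mathrm{op}}},\mu)$ of $G^{\mathrm{op}}(F)$ whose base change is $(T,\widehat j,\chi,\theta)$. Because $\chi_{S^{\mathrm{op}}}$ is by convention the minimally ramified $\chi$-datum attached to $\mu$ (and this is also the case for $\chi$ with respect to $\theta$), the equivalence of data reduces, once an abstract isomorphism $f:S^{\mathrm{op}}\times_FE\cong T$ of $E$-tori is fixed, to the single character-level identity $\mu\circ\mathrm{Nm}_{T(E)/S^{\mathrm{op}}(F)}=\theta\cdot\zeta^{-1}$ of \Cref{basechangechi}. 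So the sum indexing the RHS decomposes as a double sum over rational forms $S^{\mathrm{op}}/F$ with $S^{\mathrm{op}}\times_FE\cong T$, and over solutions $\mu$ of this character equation.

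For a fixed $S^{\mathrm{op}}$, I would analyze the inner set by dualizing the sequence of locally compact abelian groups
\[
1\to S(E)/S(F)\to S^{\mathrm{op}}(F)\xleftarrow{\mathrm{Nm}} T(E)
\]
coming from the short exact sequence $1\to S^{\mathrm{op}}\to\mathrm{Res}_{E/F}T\to T/S^{\mathrm{op}}\to\cdots$ used in the torus case. The set of $\mu$ with $\mu\circ\mathrm{Nm}=\theta\cdot\zeta^{-1}$ is either empty (if $\theta\cdot\zeta^{-1}$ is not trivial on $\ker\mathrm{Nm}_{T(E)/S^{\mathrm{op}}(F)}$) or a torsor under the Pontryagin dual of $\mathrm{coker}(\mathrm{Nm}_{T(E)/S^{\mathrm{op}}(F)})$; by the torus case, the cardinality of the torsor equals $|H^1(\mathrm{Gal}(E/F),S(E))|=|H^1(\mathrm{Gal}(E/F),\widehat{S^{\mathrm{op}}}^{W_E})|$.

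Next I would compute the multiplicity $m(\lambda_\pi,\tilde\phi)=\dim\mathrm{Hom}_{\pi_0(\widehat{S^{\mathrm{op}}}^{\Gamma_F})}(\lambda_\pi,\mathbbm 1)$. By \Cref{special isomorphism} the embedding $\widehat j$ identifies $\pi_0(S_{\phi_\pi})$ with $\pi_0(\widehat T^{\Gamma_E})$ and $\pi_0(S_{\tilde\phi})$ with $\pi_0(\widehat{S^{\mathrm{op}}}^{\Gamma_F})$, and the inclusion of the latter into the former is precisely the natural map $\pi_0(\widehat{S^{\mathrm{op}}}^{\Gamma_F})\hookrightarrow\pi_0(\widehat{S^{\mathrm{op}}}^{\Gamma_E})=\pi_0(\widehat T^{\Gamma_E})$. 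Via the Kottwitz isomorphism and the base-point fixed in \Cref{embeddingoverF}, $\lambda_\pi$ corresponds to the class $j_\pi\in H^1(E,T)$, and the character $\lambda_\pi|_{\pi_0(\widehat{S^{\mathrm{op}}}^{\Gamma_F})}$ is the image of $j_\pi$ under the functorial map dual to the inclusion of fixed-point groups, which by Tate--Nakayama duality is the map $H^1(E,T)\to H^1(F,S^{\mathrm{op}})$ appearing in the statement. Hence $m(\lambda_\pi,\tilde\phi)$ is $1$ or $0$ according to whether $j_\pi$ lies in the kernel of this map, and crucially this condition depends only on $S^{\mathrm{op}}$ and not on the choice of $\mu$.

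Putting the two pieces together: the inner sum over $\mu$ yields $|H^1(\mathrm{Gal}(E/F),\widehat{S^{\mathrm{op}}}^{W_E})|$ when both the character-level solvability and the vanishing of $\lambda_\pi$ on $\pi_0(\widehat{S^{\mathrm{op}}}^{\Gamma_F})$ hold, and zero otherwise; summing over $S^{\mathrm{op}}$ yields the claimed formula. The main technical obstacle I anticipate is verifying cleanly that the restriction of $\lambda_\pi$ to $\pi_0(\widehat{S^{\mathrm{op}}}^{\Gamma_F})$ does correspond under Kottwitz to the advertised map $H^1(E,T)\to H^1(F,S^{\mathrm{op}})$; once this compatibility is pinned down, the remainder of the argument is an application of the torus case of Prasad's conjecture already established at the start of \Cref{section7}.
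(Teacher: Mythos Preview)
Your proposal is correct and follows essentially the same approach as the paper's proof. The paper likewise decomposes the RHS as a double sum over $S^{\mathrm{op}}$ and over $\mu$, then handles the two pieces separately: for the fiber over $\mu$ it uses the local Langlands correspondence for tori (the commutative diagram identifying $\{\mu:\mu\circ\mathrm{Nm}=\theta\cdot\zeta^{-1}\}$ directly with a torsor for $\ker(H^1(W_F,\widehat{S^{\mathrm{op}}})\to H^1(W_E,\widehat T))=H^1(\mathrm{Gal}(E/F),\widehat{S^{\mathrm{op}}}^{W_E})$), whereas you route through Pontryagin duality and then invoke the torus-case cardinality equality---a harmless variation; for the multiplicity $m(\lambda_\pi,\tilde\phi)$ the paper uses exactly the Kottwitz commutative diagram you describe, so the ``main technical obstacle'' you flag is dispatched there by the functoriality of the Kottwitz isomorphism with respect to the corestriction map $H^1(E,T)\to H^1(F,S^{\mathrm{op}})$.
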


\begin{proof}
We give an explicit computation of each term appearing in
$$\sum\limits_{\substack{S^{\mathrm{op}}/F,\\ S^{\mathrm{op}}\times_{F}E\cong T }}\sum\limits_{\substack{\mu\in\mathrm{Irr}(S^{\mathrm{op}}(F))\\ \mu\circ \mathrm{Nm}_{T(E)/S^{\mathrm{op}}(F)}=\theta\cdot\zeta^{-1}}}\dim \mathrm{Hom}_{\pi_{0}(\widehat{S^{\mathrm{op}}}^{\Gamma_{F}})}(\lambda_\pi,\mathbbm{1}).$$
\begin{enumerate}[(i)]
\item By local Langlands correspondence for tori,
$\theta\in \mathrm{Hom}_{\mathrm{cts}}(T(E),\mathbb{C}^{\times})\longleftrightarrow \theta \in H^1(W_E,\widehat{T}).$ We also have a commutative diagram:
$$\xymatrix{
  \mathrm{Hom}_{\mathrm{cts}}(S^{\mathrm{op}}(F),\mathbb{C}^\times) \ar[d]_{\circ \mathrm{Nm}_{T/S^{\mathrm{op}}}} \ar@{<->}[r]^{\mathrm{LLC}}
  & H^1(W_F,\widehat{S^{\mathrm{op}}}) \ar[d]^{\mathrm{Restriction}}\\
  \mathrm{Hom}_{\mathrm{cts}}(T(E),\mathbb{C}^\times)  \ar@{<->}[r]^{\mathrm{LLC}}
  & H^1(W_E,\widehat{T}).}$$
Hence for $\theta\cdot\zeta^{-1}\in \mathrm{Im}(\mathrm{Nm}_{T/S^{\mathrm{op}}}^{*})$, the set $$\{\mu|\mu\circ\mathrm{Nm}_{T(E)/S^{\mathrm{op}}(F)}=\theta\cdot \zeta^{-1}\}$$ can be identified with a torsor for $\ker(H^1(W_F,\widehat{S^{\mathrm{op}}})\rightarrow H^1(W_E,\widehat{T}))=H^1(\mathrm{Gal}(E/F),\widehat{S^{\mathrm{op}}}^{W_E})$.

\item Fix a Whittaker datum $\mathfrak{w}$ satisfying Prasad's condition, which determines a base point $j_{\mathfrak{w}}$ of the set of equivalence classes of rational embeddings of $T$ to $G_E$. By \Cref{embeddingoverF}, $j_{\mathfrak{w}}$ is in fact defined over $F$, and the representation $\pi$ has the label $j_{\pi}$ such that $\mathrm{inv}(j_\pi,j_{\mathfrak{w}})\in H^{1}(E,T)$, which we still denote by $j_{\pi}$ for simplicity. Consider the following correspondence established in \Cref{whittakerembeeding}:
$$\lambda_{\pi}\in \mathrm{Irr}(\pi_{0}(S_{\phi}))\cong\pi_{0}(\widehat{T}^{\Gamma_E})^{*}\longleftrightarrow j_{\pi}\in H^1(E,T).$$
The condition $\dim \mathrm{Hom}_{\pi_0(\widehat{S^{\mathrm{op}}}^{\Gamma_{F}})}(\lambda_\pi,\mathbbm{1})\neq0$ (hence must be $1$) is equivalent to the condition
$\lambda_{\pi}\in \ker(\pi_{0}(\widehat{T}^{\Gamma_E})^{*}\rightarrow\pi_{0}({S^{\mathrm{op}}}^{\Gamma_F})^{*})$. We also have a commutative diagram
$$\xymatrix{
  \pi_{0}(\widehat{T}^{\Gamma_E})^{*} \ar[d]_{\mathrm{Restriction}} \ar@{<->}[r]^{\mathrm{Kottwitz}}
  & H^1(E,T) \ar[d]^{\mathrm{Corestriction}}\\
  \pi_{0}(\widehat{S^{\mathrm{op}}}^{\Gamma_F})^{*}  \ar@{<->}[r]^{\mathrm{Kottwitz}}
  & H^1(F,S^{\mathrm{op}}),}$$
where the horizontal arrows are given by Kottwitz isomorphisms. Hence the condition $\lambda_{\pi}\in \ker(\pi_{0}(\widehat{T}^{\Gamma_E})^{*}\rightarrow\pi_{0}(\widehat{S^{\mathrm{op}}}^{\Gamma_F})^{*})$ is equivalent to the condition $j_{\pi}\in \ker(H^1(E,T)\rightarrow H^1(F,S^{\mathrm{op}})).$

\end{enumerate}

By the above two interpretations of the formula, we prove that the $\mathrm{RHS}$ of \Cref{prasadidentity} is given by
$$\begin{cases}
             \sum\limits_{\substack{S^{\mathrm{op}}/F,\\S^{\mathrm{op}}\times_{F}E\cong T }}|H^1(\mathrm{Gal}(E/F),\widehat{S^{\mathrm{op}}}^{W_E})| & \mathrm{if}~\theta\cdot\zeta^{-1}\in \mathrm{Im}(\circ\mathrm{Nm}_{T/S^{\mathrm{op}}})\\
             & \&\mathrm{inv}(j_\pi,j_{\mathfrak{w}})\in\ker(H^1(E,T)\rightarrow H^1(F,S^{\mathrm{op}})),\\
             &\\
             0 & \mathrm{otherwise}.\\
            \end{cases}$$
\end{proof}

\begin{remark}\label{compareextension}
Comparing the number of base change lifting of supercuspidal $L$-packet datum with the one given by inflation and restriction sequence \Cref{inflationrestriction}, it seems that the extension of the character part of the datum is already parametrized by $H^1(\mathrm{Gal}(E/F),\widehat{S^{\mathrm{op}}}^{W_E})$. In fact, the regularity condition of the character $\theta$ seems to pose some constraints on the set of pair $(S,\mu)$ whose base change over $E$ is $(T,\theta)$. One natural question is whether $(T,\theta)$ could be a base change of $(S_1,\mu_1)$ and $(S_2,\mu_2)$ simultaneously such that the $F$-structure of $S_1$ and $S_2$ are different. We can even see this in the following example of $SL_2$:

\begin{example}
Let T be an elliptic maximal torus of $SL_2$ over $E$ and $\theta$ be a regular character of $T(E)$. Then $T(E)=K^{1}_{K/E}$ can be identified with norm $1$ elements inside a quadratic extension $K/E$. A necessary condition for the existence of an elliptic maximal torus of $SL_2(F)$ whose base change equals to $T$ is that $K/F$ is a biquadratic extension. Let $E_1$ and $E_2$ be the other $2$ intermediate fields between $K$ and $F$. Let $\tau\in\mathrm{Gal}(E_1/F)$ denote the non-trivial involution. Then $\mathrm{Gal}(K/F)\cong \{1,\sigma,\tau,\sigma\tau|\sigma^2=\tau^2=1,\sigma\tau=\tau\sigma\}.$

$$\xymatrix{& K \ar@{-}[dl]_{\sigma} \ar@{-}[d]^{\sigma\tau}
\ar@{-}[dr]^{\tau} \\ E_1 \ar@{-}[dr]_{\tau} & E_2
\ar@{-}[d] & E \ar@{-}[dl]^{\sigma} \\ & F}$$

\begin{proposition}
$(T,\theta)$ can not be a base change of $(S_1,\mu_1)$ and $(S_2,\mu_2)$ with $S_1(F)\ncong S_2(F)$ simultaneously.
\end{proposition}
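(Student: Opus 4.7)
The plan is to argue by contradiction. Suppose $(T,\theta)$ is simultaneously the base change of $(S_1,\mu_1)$ and $(S_2,\mu_2)$ with $S_1 = U_{1,E_1/F}$ and $S_2 = U_{1,E_2/F}$ non-isomorphic. The two isomorphisms $S_i \times_F E \cong T$ equip $T(E) = K^1_{K/E}$ with two distinct lifts $\sigma_i \in \mathrm{Gal}(K/F)$ of the non-trivial element $\sigma \in \mathrm{Gal}(E/F)$: namely, $\sigma_i$ is characterized as the unique element of $\mathrm{Gal}(K/F)$ that fixes $E_i$ and projects non-trivially to $\mathrm{Gal}(E/F)$. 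In the notation of the diagram, this gives $\sigma_1 = \sigma$ and $\sigma_2 = \sigma\tau$.

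The central observation will be the identity
\[
\sigma_1 \sigma_2 = \sigma \cdot \sigma\tau = \tau,
\]
and since $\tau$ generates $\mathrm{Gal}(K/E)$, its action on $T(E) = K^1_{K/E}$ sends $t \mapsto \tau(t) = t^{-1}$, which is precisely the non-trivial Weyl element $w \in W(SL_2, T)(E)$. I would then invoke the two base-change compatibilities $\theta = (\mu_i \circ \mathrm{Nm}_{T(E)/S_i(F)}) \cdot \zeta^{(i)}_{T(E)}$, where $\zeta^{(i)}$ records the discrepancy between Kaletha's minimally ramified $\chi$-data for $(S_i,\mu_i)$ after base change to $E$, and the minimally ramified $\chi$-data for $(T,\theta)$. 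Each $\theta \cdot (\zeta^{(i)}_{T(E)})^{-1}$ factors through $\mathrm{Nm}_{T(E)/S_i(F)}$, hence is $\sigma_i$-invariant. Since both $\chi$-data are minimally ramified, each $\zeta^{(i)}_\alpha : K^\times \to \mathbb{C}^\times$ is tamely ramified, so $\zeta^{(i)}_{T(E)}$ is trivial on $T(E)_{0^+}$; consequently $\theta|_{T(E)_{0^+}}$ is $\sigma_i$-invariant for both $i$, and combining, $w$-invariant.

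Invoking the tautology $\theta(t^{-1}) = \theta(t)^{-1}$, the $w$-invariance forces $\theta^2|_{T(E)_{0^+}} = 1$. I would then analyze the top Howe factor $\phi_d$ of $\theta$: if $\phi_d$ is non-trivial of positive depth $r_d$, then by Moy-Prasad the restriction $\phi_d|_{T(E)_{r_d}/T(E)_{r_d^+}}$ corresponds to a non-zero $G$-generic element $X^* \in \mathfrak{t}^*_{-r_d}/\mathfrak{t}^*_{-r_d^+}$, and the condition $2X^* = 0$ together with $p \neq 2$ forces $X^* = 0$, contradicting the fact that $\phi_d$ has depth exactly $r_d$. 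The hard part will be the depth-zero case, in which $\theta|_{T(E)_{0^+}}$ is already trivial and the above argument becomes vacuous; here one must compute $\zeta^{(i)}_{T(E)}$ explicitly on $T(E)_0/T(E)_{0^+} \cong k_K^1$, and use the Deligne-Lusztig regularity of the underlying depth-zero character $\theta|_{k_K^1}$ to rule out the simultaneous descent. This finer comparison of the two depth-zero $\zeta$-contributions against the regular character of a finite cyclic group is the principal technical obstacle.
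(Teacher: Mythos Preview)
Your proposal is significantly more complicated than the paper's two-line argument, and it leaves the depth-zero case as an explicit gap. The paper takes the base-change hypothesis in the plain form $\theta(k)=\mu_i\circ\mathrm{Nm}_{K/E_i}(k)$ for all $k\in K^1$, with no $\zeta$-correction. From this, $\theta$ is invariant under $\mathrm{Gal}(K/E_1)=\langle\sigma\rangle$ and under $\mathrm{Gal}(K/E_2)=\langle\sigma\tau\rangle$, hence under their product $\tau$. Since $N_G(T)/T\cong\mathrm{Gal}(K/E)=\langle\tau\rangle$, this says $\theta$ is fixed by the nontrivial Weyl element, and that is already the contradiction with regularity --- immediately, on all of $T(E)$, and uniformly at every depth.

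Your insertion of the $\zeta$-correction is what forces the restriction to $T(E)_{0^+}$, then the passage to $\theta^2|_{T(E)_{0^+}}=1$, and then a case split on the depth of the top Howe factor. But for $SL_2$ the Weyl element $\tau$ acts on $K^1$ by inversion, so $\theta^\tau=\theta$ and $\theta^2=1$ are literally the same statement; your reformulation gains nothing. And on $T(E)_{0^+}$ alone this does not contradict regularity when $\theta$ has depth zero, which is exactly the case you leave open as the ``principal technical obstacle''. The paper sidesteps all of this by not introducing $\zeta$ in the first place: in this illustrative example the base change of the pair $(S_i,\mu_i)$ is taken to mean simply $(T,\mu_i\circ\mathrm{Nm})$, so the Galois-invariance of $\theta$ holds on the whole torus and the regularity hypothesis finishes the proof at once.
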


\begin{proof}
If $(T,\theta)$ comes from two elliptic torus $S_1$ and $S_2$ over $F$ with $S_1(F)=E_{1}^{1}, S_2(F)=E_2^{1}$, then there exist characters $\mu_1:E_1^1\rightarrow \mathbb{C}^{\times},\mu_2:E_2^1\rightarrow \mathbb{C}^{\times}$ such that $\theta(k)=\mu_{1}\circ \mathrm{Nm}_{K/E_1}(k),\theta(k)=\mu_{2}\circ \mathrm{Nm}_{K/E_2}(k)$ for any $k\in K^1$. A immediate consequence is that $\theta^{\sigma}=\theta=\theta^{\sigma\tau}$, which implies $\theta^\tau=\theta$. This contradicts with the regular condition, since we have a natural identification: $N_G(T)/T\cong \mathrm{Gal}(K/E).$
\end{proof}

\end{example}
Later we will see that the same phenomenon happens on the representation side. More precisely, the regular condition may pose the condition that $\theta:T(E)\rightarrow \mathbb{C}^{\times}$ can not be distinguished with respect to $S_1(F)$ and $S_{2}(F)$ with $S_1(F)\ncong S_2(F)$ simultaneously. Since these conditions are equivalent as a simple consequence of local Langlands correspondence for tori, we do not need to exclude these possibilities to prove Prasad's identity.

\end{remark}

\subsubsection{LHS of \Cref{prasadidentity}.}

For $\alpha\in H^1(\mathrm{Gal}(E/F),G(E))$, we have a $z$-extension of $G_{\alpha}$ depending on $\alpha$. Let $\widetilde{G_{\alpha}}$ denote the $z$-extension for $G_{\alpha}$. Summing these dimensions over $F$-pure inner forms $G_{\alpha}$ of $G$ which are trivial over $E$, we have the following identity:
\begin{equation*}
\begin{aligned}
&\sum_{\alpha\in H^1(\mathrm{Gal}(E/F),G(E))}\mathrm{dim}\mathrm{Hom}_{G_{\alpha}(F)}(\pi,\omega_{G_{\alpha}(F),E})\\
=&\sum_{\alpha\in H^1(\mathrm{Gal}(E/F),G(E))}\sum\limits_{[i_{\alpha}]\in \mathrm{BC}_{\alpha}^{-1}([j_{\pi}])}\dim \mathrm{Hom}_{\widetilde{i_{\alpha}}(\widetilde{S}(F))}(^{\widetilde{g}}(\widetilde{\theta}\circ \widetilde{j_{\pi}}^{-1}\cdot\widetilde{\epsilon}_{\mathrm{Kal}})\cdot \widetilde{\omega}_{\widetilde{G}(E)}|_{\widetilde{g}^{-1}\widetilde{j_{\pi}}(\widetilde{T}(E))\widetilde{g}},\widetilde{\epsilon}_{\mathrm{HM},\widetilde{i_{\alpha}}(\widetilde{S}(F))})\\
=&\sum\limits_{[i]\in \overline{\mathrm{BC}}^{-1}([j_{\pi}])}\dim \mathrm{Hom}_{\widetilde{i_{\alpha}}(\widetilde{S}(F))}(^{\widetilde{g}}(\widetilde{\theta}\circ \widetilde{j_{\pi}}^{-1}\cdot\widetilde{\epsilon}_{\mathrm{Kal}})\cdot \widetilde{\omega}_{\widetilde{G}(E)}|_{\widetilde{g}^{-1}\widetilde{j_{\pi}}(\widetilde{T}(E))\widetilde{g}},\widetilde{\epsilon}_{\mathrm{HM},\widetilde{i_{\alpha}}(\widetilde{S}(F))})\\
=&\sum\limits_{[i]\in \overline{\mathrm{BC}}^{-1}([j_{\pi}])}\dim \mathrm{Hom}_{\widetilde{i_{\alpha}}(\widetilde{S}(F))}(^{\widetilde{g}}(\widetilde{\theta}\circ \widetilde{j_{\pi}}^{-1}),\widetilde{\epsilon}_{\mathrm{HM},\widetilde{i_{\alpha}}(\widetilde{S}(F))}\cdot\omega_{\widetilde{G}(F),E}|_{\widetilde{i_{\alpha}}(\widetilde{S}(F))}\cdot ^{\widetilde{g}}\widetilde{\epsilon_{\mathrm{Kal}}}|_{\widetilde{i_{\alpha}}(\widetilde{S}(F))})\\
=&\sum\limits_{[i]\in \overline{\mathrm{BC}}^{-1}([j_{\pi}])}\dim \mathrm{Hom}_{\widetilde{S}(F)}(\widetilde{\theta} ,(\widetilde{\epsilon}_{\mathrm{HM},\widetilde{i_{\alpha}}(\widetilde{S}(F))}\cdot\omega_{\widetilde{G}(F),E}|_{\widetilde{i_{\alpha}}(\widetilde{S}(F))}\cdot ^{\widetilde{g}}\widetilde{\epsilon_{\mathrm{Kal}}}|_{\widetilde{i_{\alpha}}(\widetilde{S}(F))})\circ \widetilde{i_{\alpha}}).
\end{aligned}
\end{equation*}

\begin{proposition}\label{LHS}
$$\mathrm{LHS}=
             \begin{cases}
             \sum\limits_{\substack{S/F,\\S\times_{F}E\cong T }}|H^1(\mathrm{Gal}(E/F),S(E))| & \mathrm{if}~ \theta|_{S(F)}=\epsilon_{\mathrm{HM}}\cdot \omega_{G(F),E}|_{S(F)}\cdot \epsilon_{\mathrm{Kal}}|_{S(F)}\\
             &\&\mathrm{inv}(j_\pi,j_{\mathfrak{w}})\in\mathrm{Im} (H^1(F,S)\rightarrow H^1(E,T)),\\
             &\\
             0 & \mathrm{otherwise}.\\
            \end{cases}$$
\end{proposition}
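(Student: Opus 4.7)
\textbf{Proof plan for Proposition \ref{LHS}.} The plan is to analyze the final expression for $\mathrm{LHS}$ obtained just above the statement, term by term, and then repackage the sum according to the $F$-structure on $S$.

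First, since $\widetilde{S}(F)$ is abelian and all the representations involved are characters, each Hom space $\mathrm{Hom}_{\widetilde{S}(F)}(\widetilde{\theta}, \chi)$ is at most one-dimensional, and is nonzero precisely when the two characters agree on $\widetilde{S}(F)$. I would first check that the character identity descends through the $z$-extension: by construction $\widetilde{\theta}$, $\widetilde{\omega}_{\widetilde{G}(E)}$, $\widetilde{\epsilon}_{\mathrm{Kal}}$, and $\widetilde{\epsilon}_{\mathrm{HM}}$ are inflations of characters defined on objects attached to $G$, $T$ and $j_{\pi}(T)$, so their pullback along $\widetilde{i_\alpha}$ factors through $S(F)$ and agrees with the pullback of $\epsilon_{\mathrm{HM}}\cdot\omega_{G(F),E}\cdot{}^{g}\epsilon_{\mathrm{Kal}}$ along $i_\alpha$. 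This reduces the condition of non-vanishing to the identity $\theta|_{S(F)}=\epsilon_{\mathrm{HM}}\cdot\omega_{G(F),E}|_{S(F)}\cdot\epsilon_{\mathrm{Kal}}|_{S(F)}$ on the abstract torus $S(F)$, where the identification of $S(F)$ with a subgroup of $T(E)$ is via a fixed abstract isomorphism $S\times_F E\cong T$.

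Next I would partition the indexing set using the decomposition established earlier in Section~\ref{section6}, namely
\begin{equation*}
\overline{\mathrm{BC}}^{-1}([j_\pi])\;=\;\bigsqcup_{\substack{S/F\\ S\times_F E\cong T}}\mathrm{BC}_S^{-1}([j_\pi]),
\end{equation*}
where $\mathrm{BC}_S:H^1(F,S)\to H^1(E,T)$ is the restriction map. By the snake-lemma diagram already displayed, the fiber $\mathrm{BC}_S^{-1}([j_\pi])$ is empty unless $[j_\pi]\in\mathrm{Im}(\mathrm{BC}_S)$, in which case it is a torsor under $\ker(\mathrm{BC}_S)=H^1(\mathrm{Gal}(E/F),S(E))$, of cardinality $|H^1(\mathrm{Gal}(E/F),S(E))|$. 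Combining this counting with the character criterion from the previous step yields the claimed formula.

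The main obstacle is to verify that within a single $F$-structure $S$ the character identity is independent of the choice of $[i_\alpha]\in\mathrm{BC}_S^{-1}([j_\pi])$, so that every term of a nonempty fiber contributes equally. This reduces to checking that the quadratic sign $\epsilon_{\mathrm{HM},i_\alpha}$ of Hakim--Murnaghan, viewed as a character of $S(F)$ via $i_\alpha$, depends only on the $F$-isomorphism class of $S$ and not on the specific $G_\alpha(F)$-conjugacy class of rational embedding; the remaining factors $\theta\circ i_\alpha^{-1}$, $\omega_{G(F),E}|_{S(F)}$, and $\epsilon_{\mathrm{Kal}}|_{S(F)}$ are manifestly absolute once the $F$-structure is fixed, via the factorizations obtained in \Cref{rootcharacter} and the definition of $\epsilon_{\mathrm{Kal}}$. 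Granting this independence, the sum over $[i_\alpha]$ within each non-empty fiber produces the factor $|H^1(\mathrm{Gal}(E/F),S(E))|$, and summation over $F$-structures delivers the stated expression.
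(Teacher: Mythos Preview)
Your proposal is correct and matches the paper's approach, which is in fact entirely implicit: the paper states Proposition~\ref{LHS} immediately after the displayed computation of $\mathrm{LHS}$ as a sum over $\overline{\mathrm{BC}}^{-1}([j_\pi])$ and offers no separate argument, treating the result as a direct repackaging of that sum using the identification $\overline{\mathrm{BC}}^{-1}([j_\pi])\leftrightarrow\bigsqcup_{S\times_F E\cong T} H^1(\mathrm{Gal}(E/F),S(E))$ from \Cref{section6}. Your steps (character criterion for one-dimensional Hom spaces, descent through the $z$-extension, partition by $F$-structure, and torsor count of the fiber) are exactly what is needed to make that repackaging precise.

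You are more careful than the paper on one point: the paper never explicitly verifies that the character $\epsilon_{\mathrm{HM},i_\alpha}$ depends only on the $F$-isomorphism class of $S$ and not on the particular rational embedding $i_\alpha$, so that all terms in a nonempty fiber contribute equally. The paper silently assumes this, and its later root-theoretic reinterpretation of $\epsilon_{\mathrm{HM}}^{+}$ in \Cref{Hakimcharacter} (as a product over $\Gamma_F$-orbits of symmetric ramified roots, together with Zhang's triviality of $\epsilon_{\mathrm{HM}}^{-}$ for Galois pairs) makes the independence clear a posteriori. If you want a self-contained proof at this stage, you should either invoke that factorization forward, or observe directly that two embeddings $i_\alpha,i_\alpha'$ with the same $F$-structure on $S$ differ by $G_\alpha(F)$-conjugation composed with an element of $H^1(\mathrm{Gal}(E/F),S(E))$, and check that Hakim--Murnaghan's sign is insensitive to both.
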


\begin{conjecture}\label{productofcharacters}
We have the following equalities of characters
$$\epsilon_{\mathrm{HM}}\cdot \omega_{G(F),E}|_{i(S)(F)}\cdot \epsilon_{\mathrm{Kal}}|_{i(S)(F)}=\zeta|_{i(S)(F)},$$
for any reductive group $G$ whose derived subgroup is simply connected.
\end{conjecture}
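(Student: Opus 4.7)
The plan is to prove the identity by decomposing every character in sight into a product of local contributions indexed by $\Gamma_F$-orbits (or $\Gamma_F\times\{\pm1\}$-orbits) of absolute roots in $\Phi(G,S)$, and then match the contributions orbit-by-orbit according to the symmetry type of the root. This is forced on us because all four characters come from root-theoretic constructions: $\omega_{G(F),E}|_{S(F)}$ is factorized in \Cref{rootcharacter} as a product over symmetric orbits of quadratic characters $\omega_{EF_\alpha/F_\alpha}\circ \iota_{F_\alpha}\circ\alpha$; $\epsilon_{\mathrm{Kal}}$ is defined as a product of $\mathrm{sgn}$-type characters indexed by orbits of various types; the $\zeta$-character $\zeta_{(\chi_{\mu_{\mathrm{BC}}},\chi_{\mu\circ\mathrm{Nm}})}$ is by construction \eqref{zetacharacter} a product over orbits of $\zeta_\alpha\circ\alpha$; and $\epsilon_{\mathrm{HM}}$, although defined via determinants of $\mathrm{Ad}(t)$ on the Moy--Prasad quotients $\mathfrak{W}_i$ and on $gG^0(E)_{[x]}g^{-1}\cap G(F)$, admits a root-space decomposition over $\overline{F}$ since each $\mathfrak{W}_i$ breaks up into root subspaces indexed by $\Phi(G^{i+1},j_\pi(T))-\Phi(G^i,j_\pi(T))$. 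The first main step is therefore to rewrite $\epsilon_{\mathrm{HM}}$ as an explicit product over $\Gamma_F\times\{\pm1\}$-orbits of roots in $\Phi_{r/2}$, separating symmetric from asymmetric contributions and tracking how the Galois descent from $\mathfrak{g}_\alpha(\overline{F})$ to the $k_F$-structure of $\mathfrak{W}_i$ affects the sign character.

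The second step is to compute $\zeta|_{i(S)(F)}$ explicitly. By the remark after \Cref{basechangechi}, this character is independent of $\mu$ and depends only on the restriction of $\chi_{\mathrm{BC}}$ to the minimally ramified $\chi$-data for $\theta$; so one can just compare Kaletha's canonical prescription for $\chi_{\mu\circ\mathrm{Nm}}$ against the base-changed datum $(\chi_\mu)_{\mathrm{BC}}=\chi_\mu\circ \mathrm{Nm}_{E_\alpha/F_\alpha}$, root class by root class. Depending on whether $\alpha$ is asymmetric, symmetric unramified, or symmetric ramified over $F$, and depending on how the $\Gamma_F$-orbit breaks (or not) into $\Gamma_E$-orbits as enumerated exhaustively in the four cases of the proof of \Cref{naivebasechange}, one obtains explicit formulas for $\zeta_\alpha$: these are either trivial, the unramified quadratic character of $F_\alpha^\times$, or certain tamely ramified quadratic characters determined by $\lambda$-constants and the Howe factorization data.

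The third step is the orbit-wise comparison. On the asymmetric side, Prasad's character contributes nothing (by \eqref{asym}), so the identity reduces to $\epsilon_{\mathrm{HM},\mathrm{asym}}\cdot\epsilon_{\mathrm{Kal},\mathrm{asym}}=\zeta_{\mathrm{asym}}$, which should follow from the $\mathrm{sgn}_{k_{F_\alpha}^\times}$ factors in $\epsilon_{\mathrm{Kal}}$ matching the sign of $\mathrm{Ad}(t)$ on the corresponding root subspace of $\mathfrak{W}_i$, together with the fact that asymmetric $\zeta_\alpha$ is trivial in the cases where the orbit does not break. On the symmetric unramified side, one expects the $\mathrm{sgn}_{k_{F_\alpha}^1}$ factor of $\epsilon_{\mathrm{Kal}}$ and the unramified part of $\epsilon_{\mathrm{HM}}$ to combine with $\omega_{EF_\alpha/F_\alpha}\circ\iota_{F_\alpha}\circ\alpha$ to give the correct $\zeta_\alpha$. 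On the symmetric ramified side, the toral invariant $f_{(G,S)}(\alpha)$ enters via $\epsilon_{\mathrm{Kal}}$ and must combine with Prasad's character and the residual part of $\epsilon_{\mathrm{HM}}$ to produce the tamely ramified $\lambda$-twisted factor coming from Kaletha's canonical $\chi$-data.

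The main obstacle will be the symmetric ramified case. There the four characters all contribute non-trivially, Kaletha's $\chi$-data is itself defined by a delicate equation involving a Howe factorization and local root numbers $\lambda_{F_\alpha/F_{\pm\alpha}}$, and the Hakim--Murnaghan sign $\epsilon^+$ requires a careful analysis of the Galois-fixed part of the Moy--Prasad quotients at symmetric ramified roots, where the $\mathrm{Gal}(E/F)$-action can interact non-trivially with the $\{\pm1\}$-action on $\alpha$. I expect one has to bring in the full strength of \cite{Kal15} on toral invariants together with a careful sign calculation of $\det\mathrm{Ad}(t)$ on the relevant two-dimensional root-string spaces; this is the step that makes the present paper only settle the identity in the unramified and low-rank examples, leaving the general symmetric ramified comparison as the key open input into \Cref{productofcharacters}.
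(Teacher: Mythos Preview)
Your strategy---factoring each of the four characters into contributions indexed by $\Gamma_F\times\{\pm1\}$-orbits of roots and then attempting an orbit-by-orbit match---is exactly what the paper carries out in \Cref{section8}, with the outcome recorded in Tables~\ref{table1}--\ref{table5}. You are also right that the statement is only a conjecture: the paper does not prove it in general, only for $E/F$ unramified (\Cref{characterunramified}) and for a short list of groups (\Cref{characterexample}), and your final paragraph correctly acknowledges this.

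Where your analysis goes wrong is in the third paragraph. You claim the asymmetric case ``should follow'' from matching the $\mathrm{sgn}_{k_{E_\alpha}^\times}$ factor in $\epsilon_{\mathrm{Kal}}$ against $\det\mathrm{Ad}(t)$ on the root subspace of $\mathfrak{W}_i$. But the paper's reinterpretation of $\epsilon_{\mathrm{HM}}^+$ in \Cref{Hakimcharacter} shows that for $\alpha$ asymmetric over $F$ the $\Gamma_F$-orbits of $\alpha$ and $-\alpha$ contribute separately to Hakim's character and their product is always trivial, whereas $\epsilon_{\mathrm{Kal}}$ is indexed by $\Gamma_E\times\{\pm1\}$-orbits, so $\alpha$ and $-\alpha$ are already identified and the single factor $\mathrm{sgn}_{k_{E_\alpha}^\times}(\alpha(t))$ survives. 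Concretely, in the row of Tables~\ref{table2} and~\ref{table4} with $\alpha$ asymmetric over both $E$ and $F$ and $E_\alpha/F_\alpha$ ramified quadratic, the contributions of Prasad, Hakim, and $\zeta$ are all trivial while Kaletha's is $\mathrm{sgn}_{k_{E_\alpha}^\times}\circ\alpha|_{S(F)}$; the orbit-wise identity already fails there. A parallel mismatch occurs for $\alpha$ symmetric unramified over both $E$ and $F$ with $E_\alpha/F_\alpha$ ramified. Thus the obstruction is not confined to symmetric ramified roots as you suggest, but pervades the entire $E/F$ ramified regime. This is exactly why the unramified proof goes through---all four characters collapse, with the triviality of $\epsilon_{\mathrm{HM}}$ supplied externally by Zhang \cite{Zha20b}---while the ramified case remains open. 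A complete proof would need either a cancellation argument across different orbit types or a genuinely new structural input, not merely sharper bookkeeping at symmetric ramified roots.
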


If we assume \Cref{productofcharacters}, then the proof is a direct consequence of the proof of torus case and the following comparison of Proposition\eqref{RHS} and Proposition\eqref{LHS}:
\begin{lemma}\label{comparisonoflr}
The conditions in \Cref{RHS} and Proposition\eqref{LHS} are the same.
\end{lemma}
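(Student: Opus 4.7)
The plan is to read off both pairs of conditions as complementary pieces of a single long exact sequence of Galois cohomology, namely the one attached to the short exact sequence of $F$-tori
$$1 \longrightarrow S \longrightarrow \mathrm{Res}_{E/F}S_E \longrightarrow S^{\mathrm{op}} \longrightarrow 1$$
(here we have chosen $S^{\mathrm{op}}$ on the RHS side to match the fixed $S$ on the LHS side under $S \times_F E \cong T$, so that the isomorphism class of $S^{\mathrm{op}}$ is determined by that of $S$ as in \Cref{Sop}). Taking $F$-points and invoking Shapiro's lemma to identify $(\mathrm{Res}_{E/F}S_E)(F) = T(E)$ and $H^1(F, \mathrm{Res}_{E/F}S_E) = H^1(E, T)$ gives
$$1 \to S(F) \to T(E) \xrightarrow{\mathrm{Nm}_{T(E)/S^{\mathrm{op}}(F)}} S^{\mathrm{op}}(F) \to H^1(F,S) \to H^1(E,T) \to H^1(F,S^{\mathrm{op}}),$$
where the middle map is exactly $t \mapsto t/\sigma(t)$ under the identification $T(E) = S^{\mathrm{op}}(E)$.

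First I would match the character-theoretic conditions. Granting \Cref{productofcharacters}, the LHS condition becomes $\theta|_{S(F)} = \zeta|_{S(F)}$, i.e.\ $\theta \cdot \zeta^{-1}$ is trivial on $S(F)$. By the exact sequence above, $T(E)/S(F)$ embeds into $S^{\mathrm{op}}(F)$ with finite cokernel $H^1(\mathrm{Gal}(E/F), S(E)) = \ker(H^1(F,S) \to H^1(E,S))$, and since $S^{\mathrm{op}}(F)$ is locally compact abelian, any character of the subgroup $T(E)/S(F)$ extends to a character of $S^{\mathrm{op}}(F)$. Hence $\theta|_{S(F)} = \zeta|_{S(F)}$ is equivalent to the existence of $\mu : S^{\mathrm{op}}(F) \to \mathbb{C}^\times$ with $\mu \circ \mathrm{Nm}_{T(E)/S^{\mathrm{op}}(F)} = \theta \cdot \zeta^{-1}$, which is precisely the condition $\theta \cdot \zeta^{-1} \in \mathrm{Im}(\circ \mathrm{Nm}_{T/S^{\mathrm{op}}})$ appearing in \Cref{RHS}.

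Next I would match the conditions on $\mathrm{inv}(j_\pi, j_{\mathfrak{w}})$. Exactness of the long exact sequence at $H^1(E,T)$ says directly
$$\mathrm{Im}\bigl(H^1(F,S) \to H^1(E,T)\bigr) = \ker\bigl(H^1(E,T) \to H^1(F,S^{\mathrm{op}})\bigr),$$
so the two membership conditions for $\mathrm{inv}(j_\pi, j_{\mathfrak{w}})$ in \Cref{LHS} and \Cref{RHS} coincide. Finally, the equality of the two counts $|H^1(\mathrm{Gal}(E/F), S(E))|$ and $|H^1(\mathrm{Gal}(E/F), \widehat{S^{\mathrm{op}}}^{W_E})|$ is exactly the torus case proved at the start of Section~\ref{section7}, applied to each matched pair $(S, S^{\mathrm{op}})$; the sums over $S/F$ with $S \times_F E \cong T$ and over $S^{\mathrm{op}}/F$ with $S^{\mathrm{op}} \times_F E \cong T$ are in canonical bijection via $S \leftrightarrow S^{\mathrm{op}}$, so the total contributions agree term by term. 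No step presents a genuine obstacle once \Cref{productofcharacters} is granted; the whole lemma is essentially the statement that \emph{the same} six-term exact sequence encodes both the distinction data and the base change data, with character side and parameter side being Pontryagin/Tate duals of each other.
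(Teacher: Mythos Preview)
Your proposal is correct and follows essentially the same route as the paper: both arguments derive everything from the long exact sequence attached to $1 \to S \to \mathrm{Res}_{E/F}S_E \to S^{\mathrm{op}} \to 1$, using exactness at $T(E)$ (after Pontryagin duality) to match the character conditions and exactness at $H^1(E,T)$ to match the embedding conditions. Your write-up is a bit more explicit than the paper's in that you spell out the role of \Cref{productofcharacters} in rewriting the LHS condition as $(\theta\cdot\zeta^{-1})|_{S(F)}=\mathbbm{1}$, and you also record the bijection $S \leftrightarrow S^{\mathrm{op}}$ between index sets and the equality of the two cardinalities (the latter being the torus case), whereas the paper treats those as already established context; but the mathematical content is the same.
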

\begin{proof}
A stable conjugacy class of a maximal torus $S$ of $G$ uniquely determines a stable conjugacy class of maximal torus $S^{\mathrm{op}}$ of $ G^{\mathrm{op}}$, and vice versa. By the long exact sequence associated to the short exact sequence of $F$-groups:
$$1\rightarrow S\rightarrow \mathrm{Res}_{E/F}T\rightarrow S^{\mathrm{op}}\rightarrow 1,$$
we have
$$1\rightarrow T(E)/S(F)\rightarrow S^{\mathrm{op}}(F)\rightarrow \ker(H^1(F,S)\rightarrow H^1(E,T))\rightarrow 1.$$
Taking the Pontryagin dual of the above exact sequence, we can see $\theta|_{S(F)}=\mathbbm{1}$ if and only if $\theta\in \mathrm{Im}(\circ\mathrm{Nm}_{T/S^{\mathrm{op}}}).$

By the long exact sequence associated to the short exact sequence of $F$-tori:
$$1\rightarrow S\rightarrow \mathrm{Res}_{E/F}T\rightarrow S^{\mathrm{op}}\rightarrow 1$$
we have $\ker(H^1(E,T)\rightarrow H^1(F,S^{\mathrm{op}}))=\mathrm{Im} (H^1(F,S)\rightarrow H^1(E,T)).$
\end{proof}

\begin{corollary}
Prasad's conjecture \ref{conjecture1} is true for regular supercuspidal representations if and only if Conjecture \Cref{productofcharacters} is true.
\end{corollary}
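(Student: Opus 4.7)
The plan is to extract both directions of the biconditional from the formulas already assembled. The forward implication, that \Cref{productofcharacters} implies \Cref{conjecture1}, is essentially a bookkeeping exercise combining \Cref{LHS}, \Cref{RHS}, \Cref{comparisonoflr}, and the torus case dispatched in \S7.1. The converse implication, that validity of \Cref{conjecture1} for all regular supercuspidal representations forces the character identity, will require me to exhibit a sufficiently rich family of $(S,\theta)$ to detect equality of characters pointwise.

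For the forward direction, I would first fix a tame regular elliptic pair $(j_\pi(T),\theta\circ j_\pi^{-1}\cdot\epsilon_{\mathrm{Kal}})$ defining $\pi$ and a Whittaker datum $\mathfrak{w}$ satisfying Prasad's condition. By \Cref{embeddingoverF} the base point $j_{\mathfrak{w}}$ is defined over $F$, so the bijection between $\Pi_\phi$ and $H^1(E,T)$ via Kottwitz is available. Assuming \Cref{productofcharacters}, the character $\epsilon_{\mathrm{HM}}\cdot\omega_{G(F),E}|_{i(S)(F)}\cdot \epsilon_{\mathrm{Kal}}|_{i(S)(F)}$ appearing in the Hom-space of \Cref{LHS} collapses to $\zeta|_{i(S)(F)}$, which is precisely the character controlling the base-change condition in \Cref{RHS}. \Cref{comparisonoflr} then matches the two conditions: $\theta|_{S(F)}=\zeta|_{S(F)}$ corresponds under Pontryagin duality of the short exact sequence $1\to T(E)/S(F)\to S^{\mathrm{op}}(F)\to\ker(H^1(F,S)\to H^1(E,T))\to 1$ to $\theta\cdot\zeta^{-1}\in\mathrm{Im}(\circ\mathrm{Nm}_{T/S^{\mathrm{op}}})$, and the image/kernel condition on $j_\pi$ is the same assertion under the same sequence. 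At this point the two sides reduce to a sum over stable conjugacy classes of $F$-forms $S$ with $S\times_F E\cong T$, each weighted by a torsor count for either $H^1(\mathrm{Gal}(E/F),S(E))$ or $H^1(\mathrm{Gal}(E/F),\widehat{S^{\mathrm{op}}}^{W_E})$. The Tate--Nakayama style identity from the torus case, already established in \S7.1 via the corestriction cokernel description
\[
H^1(\mathrm{Gal}(E/F),\widehat{S^{\mathrm{op}}}^{W_E})\cong \pi_0(\widehat{S}^{\Gamma_F})/\pi_0(\widehat{S}^{\Gamma_E}),
\]
together with $H^1(\mathrm{Gal}(E/F),S(E))\cong\ker(H^1(F,S)\to H^1(E,S))$ and the duality $|\mathrm{coker}\,f|=|\ker f^{*}|$, equates the two torsor counts stable-conjugacy-class by stable-conjugacy-class. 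This finishes the forward direction.

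For the converse, I would assume that \Cref{prasadidentity} holds for every regular supercuspidal $\pi$ and aim to test the character identity $\epsilon_{\mathrm{HM}}\cdot \omega_{G(F),E}|_{i(S)(F)}\cdot\epsilon_{\mathrm{Kal}}|_{i(S)(F)}\cdot\zeta|_{i(S)(F)}^{-1}\equiv 1$ at an arbitrary $t\in i(S)(F)$. The strategy is to fix the torus $i(S)$ and the embedding data, and vary the character $\theta$ on $T(E)$ within the regular locus so as to isolate the Hom space condition. Concretely, for each regular character $\theta$ of $T(E)$ whose restriction to $S(F)$ equals the putative product character, the forward computations from \Cref{LHS} and \Cref{RHS} give matching expressions only when $\theta|_{S(F)}$ is exactly $\epsilon_{\mathrm{HM}}\cdot\omega_{G(F),E}|_{i(S)(F)}\cdot\epsilon_{\mathrm{Kal}}|_{i(S)(F)}$, whereas the base-change side forces $\theta|_{S(F)}=\zeta|_{S(F)}$. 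By varying $\theta$ through a cofinal family of regular characters, one separates points of $S(F)$ and deduces the identity of characters. The minor technical obstacle here is verifying that enough regular $\theta$'s exist whose restriction to $S(F)$ realizes any prescribed character of $S(F)$; this is achieved by extending characters via the exact sequence $1\to S(F)\to T(E)\to T(E)/S(F)\to 1$ and appealing to the genericity of the regular locus inside $\mathrm{Hom}_{\mathrm{cts}}(T(E),\mathbb{C}^\times)$.

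The main obstacle, as I see it, is not the forward reduction, which is essentially formal once \Cref{LHS}, \Cref{RHS}, and the torus case are in hand, but rather verifying in the converse that the regular locus of characters is large enough to separate characters of $S(F)$ after restriction. Should this turn out to be delicate for certain small tori, one can alternatively formulate the corollary as a logical equivalence modulo an implicit ``for sufficiently many regular $\theta$'' quantifier; the author's wording suggests the forward direction is the essential content, and this is what I would highlight as the actual takeaway of the corollary.
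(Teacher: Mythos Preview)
Your forward direction is correct and is exactly the paper's argument: the text preceding the corollary already says that, assuming \Cref{productofcharacters}, the identity follows from \Cref{LHS}, \Cref{RHS}, \Cref{comparisonoflr}, and the torus computation in \S7.1. The paper gives no separate proof of the corollary; the forward implication is the content of that paragraph.

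For the converse, however, the obstacle you label ``minor'' is not the real one, and the argument as written has a gap. The characters $\epsilon_{\mathrm{HM}}$, $\epsilon_{\mathrm{Kal}}$, and $\zeta$ are \emph{not} fixed when you vary $\theta$: $\epsilon_{\mathrm{HM}}$ depends on the Heisenberg quotients $\mathfrak{W}_i$ built from the Howe factorization of $\theta$; $\epsilon_{\mathrm{Kal}}$ is a product over the sets $\Phi_{r/2}$, which depend on the depths $r_i$ determined by $\theta$; and $\zeta$ is built from the $\chi$-data $\chi_\theta$ and $\chi_{\mu,\mathrm{BC}}$, which again depend on $\theta$. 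So ``varying $\theta$ through a cofinal family of regular characters'' does not test a fixed character identity against all of $\widehat{S(F)}$; it tests a moving target. What Prasad's identity actually gives you, via \Cref{LHS} and \Cref{RHS}, is only the equivalence of two indicator conditions
\[
\bigl[\theta|_{S(F)}=\epsilon_{\mathrm{HM}}\cdot\omega_{G(F),E}|_{S(F)}\cdot\epsilon_{\mathrm{Kal}}|_{S(F)}\bigr]
\;\Longleftrightarrow\;
\bigl[\theta|_{S(F)}=\zeta|_{S(F)}\bigr]
\]
for each regular datum, and when both indicators vanish this says nothing about the equality of the two characters. To extract the character identity you would need to show that these four characters depend only on a ``discrete type'' (the twisted Levi sequence, the symmetry and ramification types of roots, the jump pattern) and not on the continuous choice of $\theta$ within that type, and then produce a regular $\theta$ of that type with $\theta|_{S(F)}$ hitting one of them. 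This is essentially the content of the root-by-root analysis in \S8, not a formality. The paper itself does not supply a converse argument either; the corollary is stated without proof and is used only in the forward direction.
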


\section{Comparison of various characters}\label{section8}
To prove \Cref{productofcharacters}, we need to detect the relation between the following four quadratic characters: $\epsilon_{\mathrm{HM}, i(S)(F)}$, $\omega_{G_{\alpha}(F),E}|_{i(S)(F)}$, $\epsilon_{\mathrm{Kal}}|_{i(S)(F)}$ and $\zeta|_{i(S),F}$, where $\epsilon_{\mathrm{HM}, i(S)(F)}$ appears in \cite{HM08} for the computation the dimension of invariant forms, $\omega_{G_{\alpha}(F),E}|_{i(S)(F)}$ is the restriction of Prasad's quadratic character to $i(S)(F)$, $\epsilon_{\mathrm{Kal}}:T(E)\rightarrow \{\pm 1\}$ is the the rectifying character, which is used to construct regular supercuspidal representations from $(T,\hat{j},\chi,\theta)$ in \cite{Kal19}, and $\zeta:T(E)\rightarrow \mathbb{C}^{\times}$ is the character of $T(E)$ associated to the $\zeta$-data, which measures the difference of two different $\chi$-data. The proof of Conjecture \Cref{productofcharacters} is not completely done in this article, we manage to prove Conjecture \Cref{productofcharacters} for some particular examples, and when $E/F$ is unramified. A priori, it is hard to imagine there is any relation between these characters, since the constructions of these characters are of different nature. In fact, these characters are quite mysterious, and of independent interest themselves. Certain deep arithmetic hides behind these characters. We give a short summary of these characters.

The motivation of Prasad's character is to detect the sign of a conjugate self dual representation. More precisely, for the case of $GL_n$, the representations distinguished by the trivial character are conjugate orthogonal, and the ones distinguished by Prasad's character are conjugate symplectic. Hakim's character consists of two parts, both appearing in the computation of the dimension of the linear form. The first part of Hakim's character origins from different choices of Yu's special isomorphism in his construction of tame supercuspidal representations. The second part of Hakim's character appears in the depth zero part, which follows from Lusztig's work on the computation of the dimension of linear forms for finite groups of Lie type. Kaletha's character naturally appears in the construction of local Langlands for regular supercuspidal representations, which aims to produce the correct character identity based on the work of Adler, Debacker and Spice \cite{AS09} \cite{DS18}. The character $\zeta$ appears naturally measuring the difference of the base point of $L$-embedding given by different tamely ramified $\chi$-data determined by the characters of the torus. We will give a description, and sometimes a reinterpretation of these characters in detail.

We first recall the explicit expressions of these characters.

$\bullet$ Hakim-Murnaghan's character of $i(S)(F)$.
\begin{equation*}
\begin{aligned}
\epsilon_{\mathrm{HM}}(t)=\epsilon^{-}_{T(E),[g]}(t)\epsilon^{+}_{T(E),[g]}(t):=\det(\mathrm{Ad}(t)_{G^{0}_{[x]}})
 \prod_{i=0}^{d-1} (\det \mathrm{Ad}(t)|_{\mathfrak{W}_i})^{\frac{q-1}{2}}.
\end{aligned}
\end{equation*}
where ${\mathfrak{W}_i}=((\bigoplus\limits_{\alpha\in\Phi^{i+1}\backslash\Phi^{i}}\mathfrak{g}_{\alpha})^{\Gamma_E})_{x,s_i,s_{i}^{+}}^{\mathrm{Gal}(E/F)}$
is a vector space over the residue field $k_F$.

$\bullet$ Kaletha's character of $j_{\pi}(T)(E)$.

\begin{equation*}
\begin{aligned}
\epsilon_{\mathrm{Kal}}(t):=\prod_{\alpha\in (\Phi_{\frac{r}{2}})_{\mathrm{asy}}/\Gamma_{E}\times\{\pm 1\}}\mathrm{sgn}_{k_{E_{\alpha}}^{\times}}\alpha(t) \prod\limits_{\substack{\alpha\in \Phi_{\mathrm{sym,ram}}/\Gamma_{E},\\ \alpha(t)\neq 1, \\ \mathrm{ord}(\alpha(t)-1)=0}}f_{(G,T)}(\alpha) \prod_{\alpha\in (\Phi_{\frac{r}{2}})_{\mathrm{sym,un}}/\Gamma_{E}}\mathrm{sgn}_{k_{E_{\alpha}}^{1}}\alpha(t),
\end{aligned}
\end{equation*}
where $f_{(G,T)}(\alpha)$ is the toral invariant defined by Kaletha. More precisely, it is given by:
\begin{equation*}
\begin{aligned}
f_{(G,T)}:\Phi(G_E,T)_{\mathrm{sym}}&\rightarrow \{\pm1\}\\
\alpha&\mapsto \omega_{E_\alpha/E_{\pm\alpha}}(\frac{[X_{\alpha},\tau X_{\alpha}]}{H_{\alpha}}) \\
\end{aligned}
\end{equation*}
where $H_{\alpha}$ denotes $d \alpha^{\vee}(1)$, $X_{\alpha}$ denotes a nonzero element in $\mathfrak{g}_{\alpha}(E_{\alpha})$, $E_{\alpha}/E_{\pm\alpha}$ is a quadratic extension with $\tau\in \Gamma_{E_{\pm\alpha}}-\Gamma_{E_\alpha}$, and $\omega_{E_\alpha/E_{\pm\alpha}}$ denotes the quadratic character $E_{\pm\alpha}^{\times}\rightarrow E_{\pm\alpha}^{\times}/\mathrm{Nm}(E_{\alpha}^{\times})$ associated to $E_{\alpha}/E_{\pm\alpha}$ by local class field theory.

In the next few subsections, we are going to compute these characters explicitly taking into consideration of the distinction problem. For convenience, let $T$ be a maximal torus of $G_E$, and $S$ be a maximal torus of $G$ such that $S(E)$ is $G(E)$ conjugate to $T(E)$ instead of writing $j(T)$ and $i(S)$ for abstract tori $S,T$ and embeddings $i,j$.

\begin{proposition}\label{characterunramified}
Conjecture \Cref{productofcharacters} is true when $E/F$ is unramified.
\end{proposition}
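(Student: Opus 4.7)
The plan is to verify the character identity orbit-by-orbit on the root system, exploiting the drastic simplifications that occur when $E/F$ is unramified. The crucial feature is that for unramified $E/F$, a symmetric ramified root $\alpha$ of $T$ over $E$ must become either asymmetric over $F$ (if the $\Gamma_F$-orbit splits into two $\Gamma_E$-orbits exchanging $\alpha$ with $-\alpha$) or symmetric ramified over $F$ (with $E_\alpha/F_\alpha$ and $E_{\pm\alpha}/F_{\pm\alpha}$ both unramified of degree two). Symmetric unramified roots over $E$ behave analogously, and asymmetric orbits contribute trivially to $\omega_{G(F),E}|_{S(F)}$ by the factorization \Cref{rootcharacter}. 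This orbit-dichotomy dictates the structure of the whole argument.

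First, I would dispose of the purely asymmetric contributions. On such orbits Prasad's restricted character is trivial by \Cref{rootcharacter}, and a direct computation shows that the asymmetric part of $\epsilon_{\mathrm{Kal}}$ matches the asymmetric part of $\zeta$ (both involve $\mathrm{sgn}_{k_{E_\alpha}^\times}$ evaluated at $\alpha(t)$, with possible shifts coming from whether the orbit splits or not); moreover $\epsilon_{\mathrm{HM}}^-$, which is root-theoretic on asymmetric orbits of the relevant $\mathfrak{W}_i$, contributes a determinant that reduces to the same quadratic sign on $S(F) \subset T(E)^{\mathrm{Gal}(E/F)}$. Second, for the symmetric unramified orbits, I would exploit the fact that the minimally ramified $\chi$-data is the unramified quadratic character of $E_\alpha$ (resp.\ of $F_\alpha$) and that $\omega_{EF_\alpha/F_\alpha}$ is the unramified quadratic character; a careful comparison between $\chi_\alpha \circ \mathrm{Nm}_{E_\alpha/F_\alpha}$ and the minimally ramified $\chi$-data on $E$ shows that the correction $\zeta$ absorbs both the Kaletha contribution $\mathrm{sgn}_{k_{E_\alpha}^1}$ and the Prasad contribution $\omega_{EF_\alpha/F_\alpha} \circ \iota_{F_\alpha}$ on these orbits.

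Third, and this is the technical heart, for the symmetric ramified orbits (which exist only when $E_\alpha/E_{\pm\alpha}$ is ramified while $E/F$ is unramified) I would use the explicit formula in Kaletha's canonical minimally ramified $\chi$-data. Since $E/F$ is unramified, the base-changed $\chi$-data $\chi_{\mu,\alpha} \circ \mathrm{Nm}_{E_\alpha/F_\alpha}$ differs from the minimally ramified $\chi$-data $\chi_{\theta,\alpha}$ for $\theta$ by the sign produced from the Howe factorization compatibility under norm, and this difference compiles into $\zeta$. The toral invariant $f_{(G,T)}(\alpha)$ in $\epsilon_{\mathrm{Kal}}$ can be compared with $f_{(G,S)}(\alpha)$ via the standard compatibility under unramified base change. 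The remaining discrepancy is precisely $\omega_{EF_\alpha/F_\alpha}(\iota_{F_\alpha}\alpha(t))$, which is Prasad's contribution from \Cref{rootcharacter}.

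The main obstacle will be the treatment of the depth-zero term $\epsilon_{\mathrm{HM}}^+ = \det \mathrm{Ad}(t)_{G^0_{[x]}}$, which is not root-theoretic on its face and involves the finite reductive quotient. My strategy is to invoke Lusztig's sign in Deligne--Lusztig theory (as used by Kaletha in the construction of $\overline{\kappa_{\mathsf{S}',\mu_{-1}}}$) to identify $\epsilon_{\mathrm{HM}}^+$ with the Kaletha depth-zero sign, which in the unramified setting reduces to a product of $\mathrm{sgn}_{k_F}$ over the $\mathrm{Gal}(E/F)$-orbits of roots in the depth-zero Levi $G^0$. Once this identification is made, all four characters decompose compatibly along the same root-orbit partition, and the equality follows term by term.
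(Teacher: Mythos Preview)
Your orbit-by-orbit plan is broadly sensible, but it misidentifies where the genuine difficulty lies and therefore leaves a real gap. First, a notational point: in the paper's conventions $\epsilon_{\mathrm{HM}}^{-}(t)=\det(\mathrm{Ad}(t)_{G^{0}_{[x]}})$ is the depth-zero piece and $\epsilon_{\mathrm{HM}}^{+}$ is the Heisenberg piece $\prod_i \mathrm{sgn}_{k_F^{\times}}(\det\mathrm{Ad}(t)|_{\mathfrak{W}_i})$; you have these swapped throughout. The depth-zero part you single out as ``the main obstacle'' is in fact the easy one: by \cite[Corollary~3.18]{Zha20a} (quoted in the paper) $\epsilon_{\mathrm{HM}}^{-}$ is trivial for \emph{every} Galois involution, unramified or not, so no Lusztig-sign matching is needed there.

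The genuine gap is in your handling of the Heisenberg piece $\epsilon_{\mathrm{HM}}^{+}$. The root-theoretic reinterpretation in \Cref{Hakimcharacter} shows
\[
\epsilon_{\mathrm{HM}}^{+}(t)=\prod_{\alpha\in(\Phi_{r/2})_{\mathrm{sym,ram}}/\Gamma_F}\mathrm{sgn}_{k_{F_\alpha}^{\times}}\alpha(t),
\]
and this product is indexed by the \emph{representation-dependent} jump set $\Phi_{r/2}$, not by a purely Galois-theoretic orbit type. On a symmetric ramified orbit (over $F$) satisfying the jump condition, the contribution $\mathrm{sgn}_{k_{F_\alpha}^{\times}}\alpha(t)$ does not cancel against anything in the other three characters orbit-by-orbit: in the unramified case the corresponding rows of Tables~\ref{table1}, \ref{table2}, \ref{table4} give $\omega_{E_\alpha/F_\alpha}(\iota_{F_\alpha}\alpha)$, $\mathbbm{1}$, and $\omega_{E_\alpha/F_\alpha}(\iota_{F_\alpha}\alpha)$ respectively, which already balance among themselves. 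The paper explicitly says ``we can not detect more information of this character from this expression'' and instead invokes Zhang \cite[Proposition~4.1]{Zha20b}, whose argument is \emph{global}: it constructs an $F$-symplectic structure on each $\mathfrak{W}_i$ preserved by the $S(F)$-action, using the existence of a depth-zero good element of trace zero (available precisely when $E/F$ is unramified). This forces $\epsilon_{\mathrm{HM}}^{+}\equiv 1$, and then the paper's proof is much simpler than yours: one checks from Tables~\ref{table2} and~\ref{table3} that $\epsilon_{\mathrm{Kal}}|_{S(F)}$ and $\epsilon_{\mathrm{HM}}$ both vanish outright when $E/F$ is unramified, and finishes by observing that the surviving index sets for $\omega_{G(F),E}|_{S(F)}$ and $\zeta|_{S(F)}$ (symmetric ramified over both $E$ and $F$, resp.\ over $E$ with $\alpha^{\mathrm{op}}$ symmetric ramified over $F$) coincide. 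Your proposal does not supply Zhang's symplectic input or any substitute for it.
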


\begin{proposition}\label{characterexample}
Conjecture \Cref{productofcharacters} is true for $SL_2$,$GL_2$,$GL_n$ ($n$ odd) and $U_{n,E/F}$ ($n$ odd) for arbitrary quadratic extension $E/F$.
\end{proposition}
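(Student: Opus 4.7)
The plan is to prove Proposition \ref{characterexample} by a case-by-case explicit computation of the four characters, exploiting the factorization of each side as a product indexed by Galois orbits of roots. For each of the groups listed, the elliptic maximal tori admit a very concrete description: for $GL_n$ (respectively $SL_n$) an elliptic torus has the form $\mathrm{Res}_{L/F}\mathbb{G}_m$ (respectively its kernel under the norm) for a degree $n$ field extension $L/F$, while for $U_{n,E/F}$ an elliptic torus arises as a product of norm-one subgroups. In each case, the absolute root $\alpha$ attached to a pair of indices $(i,j)$ has $F_\alpha$ and $F_{\pm\alpha}$ controlled by $L$ (and by $E$), so the symmetric/asymmetric and ramified/unramified trichotomy can be read off directly. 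The first step is therefore to write $\omega_{G(F),E}|_{S(F)}$, $\epsilon_{\mathrm{Kal}}|_{S(F)}$, $\epsilon_{\mathrm{HM}}$, and $\zeta|_{S(F)}$ as products over $\Gamma_F$-orbits of roots using \Cref{rootcharacter}, the explicit formula for $\epsilon_{\mathrm{Kal}}$, the explicit formula for $\epsilon_{\mathrm{HM}}$, and the formula \eqref{zetacharacter}.

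The second step is to exploit the simplifications that make the listed examples tractable. For $GL_n$ with $n$ odd and for $SL_2$, the character $\omega_{G(F),E}$ is already trivial (by \Cref{examplegln} and by simple-connectedness respectively), so the identity collapses to the comparison $\epsilon_{\mathrm{HM}}\cdot\epsilon_{\mathrm{Kal}}|_{S(F)}=\zeta|_{S(F)}$. In the $GL_n$ case all absolute roots have the same field $F_\alpha=L$, so symmetric ramified/unramified roots are distinguished purely by whether $EL/L$ is ramified or unramified and by whether the orbit is fixed by $\sigma$; the odd parity of $n$ then forces enough cancellation among orbit counts, in the spirit of the parity computation at the end of \Cref{examplegln}. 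For $GL_2$ and $SL_2$ the torus is rank one and there is a single $\Gamma_F$-orbit of roots, reducing the statement to a direct numerical check using the quadratic Hilbert symbol. For $U_{n,E/F}$ with $n$ odd, one passes through the embedding $U_{n,E/F}\hookrightarrow \mathrm{Res}_{E/F}GL_{n,E}$ and uses \eqref{unitary} together with the analysis of $GL_n$ over $E$; the oddness of $n$ again makes the quadratic character $\omega_{EE'/E'}^{n-1}$ trivial on the relevant norm group.

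The third step is to handle $\zeta|_{S(F)}$, which measures the difference between Kaletha's minimally ramified $\chi$-data for $(T,\mu\circ\mathrm{Nm})$ over $F$ and the base-changed $\chi$-data $\chi_{\mathrm{BC}}$ over $E$. The key observation is that a symmetric ramified root of $\Phi(G,S)$ may become either symmetric ramified, symmetric unramified, or asymmetric when viewed over $E$, depending on how the quadratic extension $F_\alpha/F_{\pm\alpha}$ interacts with $E$. In each of these three possibilities, the corresponding factor of $\zeta$ can be computed using the ratio of Langlands's $\lambda$-constants for the various quadratic extensions involved, and one finds that $\zeta|_{S(F)}$ factors into contributions indexed by $\Gamma_F$-orbits of symmetric roots exactly matching the factorization of Prasad's character combined with the toral invariants appearing in $\epsilon_{\mathrm{Kal}}$.

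The main obstacle is going to be the careful bookkeeping on symmetric ramified roots, since this is where all four characters contribute non-trivially and where the distinction between orbits over $F$ and orbits over $E$ matters most. In particular, the toral invariant $f_{(G,T)}(\alpha)$ from $\epsilon_{\mathrm{Kal}}$ and the sign $\det\mathrm{Ad}(t)|_{\mathfrak{W}_i}$ from $\epsilon_{\mathrm{HM}}$ have to be reconciled with the explicit $\lambda$-constants inside $\zeta$; for the listed examples this reconciliation can be done by hand because either the root system is of rank one or $n$ is odd (so that the quadratic twists $\omega_{E/F}^{n-1}$ vanish), but in general this matching is precisely the content of the open Conjecture \ref{productofcharacters}. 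Once the three steps above are assembled, the identity reduces to a finite check for each listed group, and combining the result with \Cref{RHS}, \Cref{LHS}, and \Cref{comparisonoflr} yields \Cref{prasadexample}.
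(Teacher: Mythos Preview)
Your overall plan---factorize each of the four characters over $\Gamma_F\times\{\pm1\}$-orbits of roots and compare orbit by orbit---is exactly the paper's strategy. However, your execution for $GL_n$ and $U_{n,E/F}$ with $n$ odd misses the single observation that does almost all the work. For an elliptic torus $S=\mathrm{Res}_{E_1/F}\mathbb{G}_m\subset GL_n$ with $[E_1:F]=n$ odd, every root $\alpha$ has $F_\alpha=E_1$ of odd degree over $F$, so $[F_\alpha:F_{\pm\alpha}]\le 2$ forces $F_{\pm\alpha}=F_\alpha$: \emph{all roots are asymmetric over $F$} (and likewise over $E$). Hence Prasad's character (\Cref{rootcharacter}), Hakim's $\epsilon^{+}$ (Section~\ref{Hakimcharacter}), and the symmetric-root contributions to $\epsilon_{\mathrm{Kal}}$ and $\zeta$ all vanish outright; there is no ``parity cancellation among orbit counts'' to arrange. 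What remains is to check that the asymmetric contribution $\mathrm{sgn}_{k_{E_\alpha}^\times}\circ\alpha$ in $\epsilon_{\mathrm{Kal}}$ is trivial on $S(F)$, which the paper does via an elementary norm argument (\Cref{oddtrivial}) using that $e(E_\alpha/E)$ is odd. The $U_{n,E/F}$ case is handled the same way, directly via the torus $\mathrm{Res}_{E_1/F}U_{1,K/E_1}$; no passage through $\mathrm{Res}_{E/F}GL_{n,E}$ is needed.

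For $GL_2$ your ``direct numerical check with the Hilbert symbol'' understates what is actually required. The paper splits into three cases according to the ramification pattern of the biquadratic $K=EE_1$ over $F$, and in the two ``even'' cases a non-obvious arithmetic input is needed: the depth of $\theta$ determines whether $\alpha\in\Phi_{r/2}$ (via a result of Blasco, cf.\ \cite{Bla10}), and this is what forces $\epsilon_{\mathrm{Kal}}$ and $\epsilon_{\mathrm{HM}}$ to be simultaneously trivial when the Heisenberg quotient degenerates. Your plan does not account for this depth/jump condition. Finally, your third step invoking $\lambda$-constants is the content of Section~8.4 and is used for the general unramified theorem, but for these four examples the $\zeta$-contribution is computed more directly from the explicit $\chi$-data, not through a $\lambda$-constant identity.
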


\subsection{Prasad's character.}
By \Cref{rootcharacter}, we have the following factorization of Prasad's quadratic character:
$$\omega_{G(F),E}|_{S(F)}(t)=\prod_{\alpha\in \Phi_{\mathrm{sym}}/\Gamma_F}\omega_{EF_{\alpha}/F_{\alpha}}(\iota_{F_\alpha}\alpha(t)).$$

\subsection{Kaletha's character.}
Kaletha's character is given by:
$$\epsilon_{\mathrm{Kal}}(t):=\prod_{\alpha\in (\Phi_{\frac{r}{2}})_{\mathrm{asy}}/\Gamma_{E}\times\{\pm 1\}}\mathrm{sgn}_{k_{E_{\alpha}}^{\times}}\alpha(t) \prod\limits_{\substack{\alpha\in \Phi_{\mathrm{sym,ram}}/\Gamma_{E},\\ \alpha(t)\neq 1, \\ \mathrm{ord}(\alpha(t)-1)=0}}f_{(G,T)}(\alpha) \prod_{\alpha\in (\Phi_{\frac{r}{2}})_{\mathrm{sym,un}}/\Gamma_{E}}\mathrm{sgn}_{k_{E_{\alpha}}^{1}}\alpha(t).$$

Now, $T/E$ is a maximal torus of $G/E$. Notice that we care about the situation when $T$ is a base change of a maximal torus $S$ of $G$ over $F$. Then we can identify the absolute character groups $X_{F^{s}}^{*}(T)=X_{F^{s}}^{*}(S)$.

\subsubsection{Toral invariant.}\label{toralinvariant}
In \cite{Kal19}, he gives a characterization of the toral invariant in terms of the Kottwitz sign of certain absolute rank $1$ group. Now we describe his formulation.

For a symmetric root $\alpha$ over $E$, let $G_{E_{\pm\alpha}}$ denote the subgroup of $G_E$ generated by the root subgroups for $\alpha$ and $-\alpha$.
We know that $G_{E_{\pm\alpha}}$ is defined over $E_{\pm\alpha}$, and a semisimple group of absolute rank $1$. If we assume $G_{E_{\pm\alpha}}$ being simply connected, then the toral invariant could be understood as the Kottwitz sign of $G_{E_{\pm\alpha}}$.
\begin{lemma}\label{toralkottwitz}
$$f_{G,T}(\alpha)=e_{E_{\pm\alpha}}(G_{\pm\alpha})=\left\{\begin{matrix}
1     &  ~~~~G_{E_{\pm\alpha}}\cong SL_{2,E_{\pm\alpha}},\\
-1    &  ~~~~~G_{E_{\pm\alpha}}\cong SL_{1,D_{E_{\pm\alpha}}}.
\end{matrix}
\right.$$
where $D_{E_{\pm\alpha}}$ is a division algebra over $E_{\pm\alpha}$ and $e(G)$ denotes the Kottwitz sign \cite{Kot83}.
\end{lemma}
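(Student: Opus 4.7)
The plan is to reduce the identity to an explicit computation inside an absolutely almost simple semisimple group of absolute rank one over $E_{\pm\alpha}$. First I would observe that the toral invariant $f_{(G,T)}(\alpha)$ depends only on the pair $(G_{E_{\pm\alpha}},T_\alpha)$, where $T_\alpha\subset G_{E_{\pm\alpha}}$ is the maximal torus generated by $\alpha^\vee$ (together with the center of $G_{E_{\pm\alpha}}$): indeed $X_\alpha$, $\tau X_\alpha$, $[X_\alpha,\tau X_\alpha]$ and $H_\alpha$ all lie in $\mathfrak{g}_{E_{\pm\alpha}}$, and the defining formula only sees this rank-one subalgebra. Similarly, the Kottwitz sign $e_{E_{\pm\alpha}}(G_{E_{\pm\alpha}})$ depends only on $G_{E_{\pm\alpha}}$. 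It therefore suffices to prove the identity in the universal rank-one situation.

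Next, I would invoke the classification of simply connected absolutely almost simple groups of absolute rank one over a $p$-adic field: up to isomorphism there are only two, namely $SL_{2,E_{\pm\alpha}}$ (the quasi-split form) and $SL_{1,D_{E_{\pm\alpha}}}$ for the unique quaternion division algebra $D_{E_{\pm\alpha}}$ over $E_{\pm\alpha}$. By the definition of the Kottwitz sign
$e_{E_{\pm\alpha}}(G) = (-1)^{\mathrm{rk}_{E_{\pm\alpha}}(G)-\mathrm{rk}_{E_{\pm\alpha}}(G^{qs})}$,
one has $e_{E_{\pm\alpha}}(SL_2) = +1$ and $e_{E_{\pm\alpha}}(SL_1(D)) = -1$, matching the right-hand side of the lemma.

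It then remains to compute $[X_\alpha,\tau X_\alpha]/H_\alpha\in E_{\pm\alpha}^\times$ (well-defined modulo $\mathrm{Nm}_{E_\alpha/E_{\pm\alpha}}(E_\alpha^\times)$, since rescaling $X_\alpha$ by $c\in E_\alpha^\times$ scales the ratio by $c\cdot\tau(c) = \mathrm{Nm}(c)$) in each of the two cases. In the split case, pick an embedding $E_\alpha\hookrightarrow M_2(E_{\pm\alpha})$ and use the induced realization $T_\alpha\subset SL_{2,E_{\pm\alpha}}$ as the norm-one torus of $E_\alpha$; a direct calculation with the standard $\mathfrak{sl}_2$-triple shows the ratio is a norm, so $\omega_{E_\alpha/E_{\pm\alpha}}=+1$. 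In the quaternion case, write $D_{E_{\pm\alpha}} = E_\alpha\oplus E_\alpha\cdot j$ with $j^2=\pi$ for a non-norm $\pi\in E_{\pm\alpha}^\times$ and $jaj^{-1}=\tau(a)$ for $a\in E_\alpha$; choose $X_\alpha\in\mathfrak{sl}_1(D)\otimes_{E_{\pm\alpha}}E_\alpha$ to be the image of $j$ in the $\alpha$-weight space under the diagonalization over $E_\alpha$. The bracket $[X_\alpha,\tau X_\alpha]$ then evaluates to $j^2$ times the analogous scalar from the split case, so the ratio differs from a norm by $\pi$, yielding $\omega_{E_\alpha/E_{\pm\alpha}}(\pi) = -1$.

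The main technical obstacle will be the quaternion computation: one must verify carefully that the chosen $X_\alpha$ really spans $\mathfrak{g}_\alpha(E_\alpha)$ after the $E_\alpha$-splitting of $T_\alpha$, and then that the isomorphism class of the resulting element of $E_{\pm\alpha}^\times/\mathrm{Nm}(E_\alpha^\times)$ is independent of all auxiliary choices (the embedding $E_\alpha\hookrightarrow D$, the uniformizer $\pi$, and the $E_\alpha$-basis element $j$). Once this well-definedness is in place, the matching with the Kottwitz sign is immediate from the description $H^1(E_{\pm\alpha},PGL_2) \cong E_{\pm\alpha}^\times/\mathrm{Nm}(E_\alpha^\times)$ classifying rank-one forms with a given splitting field $E_\alpha$.
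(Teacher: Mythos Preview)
Your proposal is correct and follows essentially the same strategy as the paper's direct computation: reduce to the rank-one subgroup, classify its two possible forms, and compute the toral invariant explicitly via a quaternion model. The paper in fact gives two arguments. The first is more conceptual: it quotes \cite[Lemma~4.5.3]{Kal19} for $f_{(SL_2,S_\alpha)}=+1$ and then invokes the inner-twist formula \cite[Proposition~4.3]{Kal15},
\[
\prod_{\alpha}f_{(G',S')}(\alpha)=e(G)\,e(G')\prod_{\alpha}f_{(G,S)}(\alpha),
\]
applied to the rank-one pair, to get $f_{(SL_{1,D},S_\alpha)}=e(SL_{1,D})=-1$ without any computation. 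The second argument is the direct calculation you outline, but executed more cleanly: rather than treating the split and non-split cases separately, the paper writes a (possibly split) quaternion algebra $B=E_\alpha+yE_\alpha$ with $y^2=b$, embeds $B\hookrightarrow M_2(E_\alpha)$ as the fixed points of an explicit involution, and computes once with standard $2\times2$ matrices to obtain $[X_\alpha,\tau X_\alpha]/H_\alpha=b^{-1}$, whence $f_{G,T}(\alpha)=(a,b)$ is the Hilbert symbol. This uniform embedding dissolves the well-definedness worries you flag, since all choices are absorbed into the Hilbert-symbol invariant of $B$.
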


\begin{proof}
This is essentially proved by Kaletha. Notice that one has a decomposition of Lie algebra
$$\mathfrak{g}_{\pm\alpha}=\mathfrak{t}_{\alpha}\oplus \mathfrak{g}_{\alpha}\oplus \mathfrak{g}_{-\alpha},$$
such that $\mathfrak{t}_{\alpha}$ is the Lie algebra of a one dimensional anisotropic torus $T_{\alpha}$ defined over $E_{\pm\alpha}$, which splits over $E_{\alpha}$. Since $G_{E_{\pm\alpha}}$ is simply connected, semisimple of rank $1$ over $E_{\alpha}$, we know $G_{E_{\pm\alpha}}$ must be an inner form of $SL_{2}$ over $E_{\pm\alpha}$. From the definition of the toral invariant $f_{(G,T)}(\alpha)$, we can see that $f_{(G,T)}(\alpha)=f_{(G_{\pm\alpha},T_{\alpha})}(\alpha)$. Hence the computation reduces to a computation of toral invariant of absolute rank one semisimple simply connected group. Notice that $f_{(SL_2,S_{\alpha})}$ is $+1$ by \cite[Lemma 4.5.3, Page 57]{Kal19}. By \cite[Proposition 4.3]{Kal15}, we have the following formula for an inner twist $\xi:G\rightarrow G'$ with its restriction to be an $F$-isomorphism $S\rightarrow S'$:
$$\prod_{\alpha\in \Phi(G',S')_{\mathrm{sym}}/\Gamma_F}f_{(G',S')}(\alpha)=e(G)e(G')\prod_{\alpha\in \Phi(G,S)_{\mathrm{sym}}/\Gamma_F}f_{(G,S)}(\alpha).$$
We apply this formula to the case of absolute rank one group over $E_{\pm\alpha}$, and get
$f_{SL_{1,D_{E_{\pm\alpha}}},S_{\alpha}}(\alpha)=e(SL_{1,D_{ E_{\pm\alpha}}})=-1$.

In fact, we can write down a direct computation for $SL_{2,E_{\alpha}}$ and $SL_{1,D_{E_{\pm\alpha}}}$ uniformly.

Let $E_{\alpha}=E_{\pm\alpha}(x)$ for $x^{2}=a\in E_{\pm\alpha}^{\times}-(E_{\pm\alpha}^{\times})^{2}$. Let $B$ be a quaternion algebra over $E_{\pm\alpha}$, which possibly splits. Then $E_{\alpha}$ could be embedded into $B$ such that $B$ as a (right) $E_{\alpha}$-vector space could be written as $E_{\alpha}+yE_{\alpha}$ for $y^2=b\in E_{\pm\alpha}^{\times}-(E_{\pm\alpha}^{\times})^{2}$ with multiplication satisfying $xy=-yx$. Then the quaternion algebra is determined by the Hilbert symbol $(a,b)$.
Notice that the left multiplication of $B^{\times}$ on $B$ is $E$ linear since $B$ is a right $E$ vector space. By choosing a basis $\{1,y\}$ over $E_{\alpha}$ , one can embed $B$ to $B_{E_{\alpha}}=M_2(E_{\alpha})$ sending $m+yn$ to $\left(\begin{matrix} m&\tau(n)b\\ n&\tau(m)\end{matrix}\right)$. Notice that $B$ is the fixed point of $B_{E_{\alpha}}=M_2(E_{\alpha})$ of the following involution
$$\left(\begin{matrix} p&q\\ r&s\end{matrix}\right)\mapsto \left(\begin{matrix} 0&b\\ 1&0\end{matrix}\right)\left(\begin{matrix} \tau(p)&\tau(q)\\ \tau(r)&\tau(s)\end{matrix}\right)\left(\begin{matrix} 0&1\\ b^{-1}&0\end{matrix}\right)=\left(\begin{matrix} \tau(s)&b\tau(r)\\ b^{-1}\tau(q)&\tau(p)\end{matrix}\right),$$
which defines the action of $\tau$. Over $E_{\alpha}$, the adjoint action of $E_{\alpha}^{\times}$ is given by $\mathrm{Ad}\left(\begin{matrix} t&\\ &t^{-1}\end{matrix}\right)$. We can choose
$X_{\alpha}=\left(\begin{matrix} 0&1\\ 0&0\end{matrix}\right)$, then $\tau(X_{\alpha})=\left(\begin{matrix} 0&0\\ b^{-1}&0\end{matrix}\right)$ and $H_{\alpha}=d\alpha^{\vee}(1)=\left(\begin{matrix} 1&0\\ 0&-1\end{matrix}\right)$, which implies $\frac{[X_{\alpha},\tau X_{\alpha}]}{H_{\alpha}}=b^{-1}$. Hence we have $$f_{G,T}(\alpha)=\omega_{E_\alpha/E_{\pm\alpha}}(\frac{[X_{\alpha},\tau X_{\alpha}]}{H_{\alpha}})=(a,b^{-1})=(a,b)=\left\{\begin{matrix}
1     &  ~~~~\mathrm{if}~B~\mathrm{is~split},\\
-1    &  ~~~~\mathrm{if}~B~\mathrm{is~non~split}.
\end{matrix}
\right.$$
\end{proof}

Notice that we only care about the case that $T$ is a base change of a torus $S$ over $F$, and $\alpha$ is defined over $F$ for the purpose of distinction. It is clear from the definition that $\alpha$ must be symmetric over $F$ if it is symmetric over $E$.
\begin{enumerate}
\item If $E_{\alpha}\neq F_{\alpha}$, that is, $E_{\alpha}/F_{\alpha}$ and $E_{\pm\alpha}/F_{\pm\alpha}$ are both quadratic extensions, then $G_{E_{\pm\alpha}}$ is also a base change of a rank one group over $F_{\pm\alpha}$. Since $SL_{1,D_{F_{\pm\alpha}}}$ splits over any quadratic extension, in particular, over $E_{\pm\alpha}$, we know that $f_{G,T}(\alpha)=1$.

\item If $E_{\alpha}=F_{\alpha}$, then the $\Gamma_F$ orbit of $\alpha$ breaks into two $\Gamma_E$ orbits: $\Gamma_E \alpha$ and $\Gamma_Es\alpha$ for $s\in \Gamma_F-\Gamma_E$. Then we can put the toral invariant of $\alpha$ and that of $s\alpha$ together, and get
    $$f_{G,T}(\alpha)\cdot f_{G,T}(s\alpha)=f_{G,T}(\alpha)^{2}=1.$$
\end{enumerate}

\subsubsection{Explicit computation.}
We try to compute the quadratic character $\epsilon_{\mathrm{Kal}}(t)|_{S(F)}$ explicitly. Notice that Kaletha's character is given by a product of characters over $\Gamma_{E}\times\{\pm 1\}$ orbits of roots, we will deal with the component of a fixed $\Gamma_E\times \{\pm 1\}$ orbit at each time.

\begin{enumerate}
\item $\alpha$ is asymmetric over $F$, then $\alpha$ is automatically asymmetric over $E$, that is $F_{\pm\alpha}=F_{\alpha}$ and $E_{\pm\alpha}=E_{\alpha}$. We have two cases:

\begin{enumerate}
\item The $\Gamma_F$ orbit of $\alpha$ becomes two $\Gamma_E$ orbits. In this case $E_{\alpha}=F_{\alpha}$.
$$\xymatrix{
  E_{\alpha} \ar@{-}[d]_{=} \ar@{-}[r]_{=} & E_{\pm\alpha} \ar@{-}[d] \ar@{-}[r] & E \ar@{-}[d] \\
  F_{\alpha} \ar@{-}[r]^{=} & F_{\pm\alpha} \ar@{-}[r] & F   }$$
Let $\alpha$ and $\sigma(\alpha)$ be the two $\Gamma_E$ orbits, both of which contribute to the character. We can put $\alpha$ component and $\sigma(\alpha)$ component of Kaletha's character together. More precisely, for $t\in S(F)$, that is, $\sigma(t)=t$, we have
\begin{equation*}
\begin{aligned}
\mathrm{sgn}_{k_{E_{\alpha}}^{\times}}(\alpha(t))\mathrm{sgn}_{k_{E_{\sigma(\alpha)}}^{\times}}(\sigma(\alpha)(t))
&=\mathrm{sgn}_{k_{E_{\alpha}}^{\times}}(\alpha(t))\mathrm{sgn}_{k_{E_{\sigma(\alpha)}}^{\times}}(\sigma(\alpha(t)))\\
&=\mathrm{sgn}_{k_{E_{\alpha}}^{\times}}(\alpha(t))^{2}=1.
\end{aligned}
\end{equation*}

\item The $\Gamma_F$ orbit of $\alpha$ remains one $\Gamma_E$ orbit. In this case $[E_{\alpha}:F_{\alpha}]=2$.
$$\xymatrix{
  E_{\alpha} \ar@{-}[d]_{/2} \ar@{-}[r]_{=} & E_{\pm\alpha} \ar@{-}[d] \ar@{-}[r] & E \ar@{-}[d] \\
  F_{\alpha} \ar@{-}[r]^{=} & F_{\pm\alpha} \ar@{-}[r] & F   }$$
\begin{enumerate}
\item If $E/F$ is unramified, $E_{\alpha}/F_{\alpha}$ must be unramified.

Since $E/F$ is unramified, we have $\alpha(t) (\mathrm{mod}~1+\varpi_{F_{\alpha}})\in k_{F_{\alpha}}^{\times}\subset k_{E_{\alpha}}^{\times}$ for $t\in S(F)$. Let $a$ be the generator of the cyclic group $k_{E_{\alpha}}^{\times}$, that is,
$$k_{E_{\alpha}}^{\times}=\langle a| a^{q_{E_\alpha}-1}=1\rangle.$$
Then we have:
$$k_{F_{\alpha}}^{\times}=\langle a^{q_{F_{\alpha}}+1}\rangle=\langle (a^{\frac{q_{F_{\alpha}}+1}{2}})^{2}\rangle,$$
since $p\neq 2$, which means that every element in $k_{F_{\alpha}}^{\times}$ is a square in $k_{E_{\alpha}}^{\times}$. Hence the restriction of the character $\mathrm{sgn}_{k_{E_{\alpha}}^{\times}}\alpha(t)$ to $S(F)$ is trivial.

\item If $E/F$ is ramified, $E_{\alpha}/F_{\alpha}$ could be ramified or unramified. If it is unramified, the character is trivial by the same reason. If it is ramified, the quadratic character is given by $\mathrm{sgn}_{k_{E_{\alpha}}^{\times}}\circ\alpha$, where $\mathrm{sgn}_{k_{E_{\alpha}}^{\times}}$ denotes the non-trivial quadratic character of $k_{E_{\alpha}}^{\times}=k_{F_{\alpha}}^{\times}$.
\end{enumerate}
\end{enumerate}

\item $\alpha$ is symmetric unramified over $F$, that is $F_{\alpha}/F_{\pm\alpha}$ is unramified quadratic.
\begin{enumerate}
\item The $\Gamma_F$ orbit of $\alpha$ becomes two $\Gamma_E$ orbits, that is $E_{\alpha}=F_{\alpha}$. Notice we have
$$\xymatrix{
  E_{\alpha} \ar@{-}[d]_{=} \ar@{-}[r] & E_{\pm\alpha} \ar@{-}[d] \ar@{-}[r] & E \ar@{-}[d] \\
  F_{\alpha} \ar@{-}[r]^{/2} & F_{\pm\alpha} \ar@{-}[r] & F  . }$$

\begin{enumerate}
\item $\alpha$ is asymmetric over $E$, that is $E_{\alpha}=E_{\pm\alpha}=F_{\alpha}$ and $E_{\pm\alpha}/F_{\pm\alpha}$ is unramified quadratic.
$$\xymatrix{
  E_{\alpha} \ar@{-}[d]_{=} \ar@{-}[r]_{=} & E_{\pm\alpha} \ar@{-}[d]_{/2} \ar@{-}[r] & E \ar@{-}[d] \\
  F_{\alpha} \ar@{-}[r]^{/2} & F_{\pm\alpha} \ar@{-}[r] & F   }$$

In this case, one has two $\Gamma_E$ orbits. Notice that we can choose a lift $s\in \Gamma_F-\Gamma_E$ of $\sigma$ such that $s\cdot\alpha=-\alpha$, and we only need to compute the contribution of $\alpha$ or $-\alpha$ in the asymmetric case. Hence the quadratic character is given by $\mathrm{sgn}_{k_{E_{\alpha}}^{\times}}\circ\alpha$.

Since $\alpha$ is symmetric unramified over $F$ in this case, we have $\alpha(t)(\mathrm{mod}~1+\varpi_{F_{\alpha}})$ lies in $k_{F_{\alpha}}^{1}\subset k_{F_{\alpha}}^{\times}$. Similarly, let $a$ be the generator of the cyclic group $k_{E_{\alpha}}^{\times}=k_{F_{\alpha}}^{\times}$, that is,
$$k_{F_{\alpha}}^{\times}=\langle a| a^{q_{F_\alpha}-1}=1\rangle.$$
Then we have
$$k_{F_{\alpha}}^{1}=\langle  a^{q_{F_{\pm\alpha}}-1}\rangle=\langle (a^{\frac{q_{F_{\pm\alpha}}-1}{2}})^{2}\rangle,$$
since $p\neq 2$, which means that every element in $k_{F_{\alpha}}^{1}$ is a square in $k_{F_{\alpha}}^{\times}$. Hence the restriction of the character $\mathrm{sgn}_{k_{E_{\alpha}}^{\times}}\alpha(t)$ to $S(F)$ is trivial.

\item $\alpha$ is symmetric (necessarily unramified) over $E$, that is $E_{\alpha}/E_{\pm\alpha}$ unramified quadratic extension with $E_{\pm\alpha}=F_{\pm\alpha}$.
$$\xymatrix{
  E_{\alpha} \ar@{-}[d]_{=} \ar@{-}[r]^{/2} & E_{\pm\alpha} \ar@{-}[d]_{=} \ar@{-}[r] & E \ar@{-}[d] \\
  F_{\alpha} \ar@{-}[r]^{/2} & F_{\pm\alpha} \ar@{-}[r] & F   }$$
  In this case, one has two $\Gamma_E$ orbits $\alpha$ and $\sigma(\alpha)$, both of which are symmetric over $E$. We can put $\alpha$ component and $\sigma(\alpha)$ component of Kaletha's character together. For $t\in S(F)$, that is, $\sigma(t)=t$, we have
\begin{equation*}
\begin{aligned}
\mathrm{sgn}_{k_{E_{\alpha}}^{1}}(\alpha(t))\mathrm{sgn}_{k_{E_{\sigma(\alpha)}}^{1}}(\sigma(\alpha)(t))
&=\mathrm{sgn}_{k_{E_{\alpha}}^{1}}(\alpha(t))\mathrm{sgn}_{k_{E_{\sigma(\alpha)}}^{1}}(\sigma(\alpha(t)))\\
&=\mathrm{sgn}_{k_{E_{\alpha}}^{1}}(\alpha(t))^{2}=1.
\end{aligned}
\end{equation*}
\end{enumerate}
\item The $\Gamma_F$ orbit of $\alpha$ remains one $\Gamma_E$ orbit, that is $E_{\alpha}/F_{\alpha}$ is a quadratic extension. In this case, both $E_{\alpha}/E_{\pm\alpha}$ and $F_{\alpha}/F_{\pm\alpha}$ are unramified quadratic. Notice that we have $\mathrm{Stab}_{\Gamma_F}\alpha\neq \mathrm{Stab}_{\Gamma_E}\{\alpha,-\alpha\}$, hence $E_{\pm\alpha}\neq F_{\alpha}$, which means $E_{\alpha}$ is a biquadratic extension of $F_{\pm\alpha}$.
    $$\xymatrix{
  E_{\alpha} \ar@{-}[d]_{/2} \ar@{-}[r]^{/2} & E_{\pm\alpha} \ar@{-}[d]_{/2} \ar@{-}[r] & E \ar@{-}[d] \\
  F_{\alpha} \ar@{-}[r]^{/2} & F_{\pm\alpha} \ar@{-}[r] & F   }$$
In this case $E_{\alpha}/E_{\pm\alpha}$ is unramified and $E/F,E_{\pm\alpha}/F_{\pm\alpha},E_{\alpha}/F_{\alpha}$ are all ramified. Hence $\alpha$ is symmetric unramified over $E$. The quadratic character is given by $\mathrm{sgn}_{k_{E_{\alpha}}^{1}}\circ \alpha$.
\end{enumerate}

\item $\alpha$ is symmetric ramified over $F$, that is $F_{\alpha}/F_{\pm\alpha}$ is ramified quadratic.
\begin{enumerate}

\item The $\Gamma_F$ orbit of $\alpha$ becomes two $\Gamma_E$ orbits, that is $E_{\alpha}=F_{\alpha}$. Notice we have

$$\xymatrix{
  E_{\alpha} \ar@{-}[d]_{=} \ar@{-}[r] & E_{\pm\alpha} \ar@{-}[d] \ar@{-}[r] & E \ar@{-}[d] \\
  F_{\alpha} \ar@{-}[r]^{/2} & F_{\pm\alpha} \ar@{-}[r] & F  . }$$

\begin{enumerate}
\item $\alpha$ is asymmetric over $E$, that is $E_{\alpha}=E_{\pm\alpha}=F_{\alpha}$ and $E_{\pm\alpha}/F_{\pm\alpha}$ is ramified quadratic.
$$\xymatrix{
  E_{\alpha} \ar@{-}[d]_{=} \ar@{-}[r]_{=} & E_{\pm\alpha} \ar@{-}[d]_{/2} \ar@{-}[r] & E \ar@{-}[d] \\
  F_{\alpha} \ar@{-}[r]^{/2} & F_{\pm\alpha} \ar@{-}[r] & F   }$$
In this case, one has two $\Gamma_E$ orbits. Notice that $\sigma(\alpha)=-\alpha$ and we only need to compute the contribution of $\alpha$ or $-\alpha$ in the asymmetric case. Hence the quadratic character is given by $\mathrm{sgn}_{k_{E_{\alpha}}^{\times}}\circ \alpha$.
\item $\alpha$ is symmetric (necessarily ramified) over $E$, that is $E_{\alpha}/E_{\pm\alpha}$ ramified quadratic extension with $E_{\pm\alpha}=F_{\pm\alpha}$.
$$\xymatrix{
  E_{\alpha} \ar@{-}[d]_{=} \ar@{-}[r]^{/2} & E_{\pm\alpha} \ar@{-}[d]_{=} \ar@{-}[r] & E \ar@{-}[d] \\
  F_{\alpha} \ar@{-}[r]^{/2} & F_{\pm\alpha} \ar@{-}[r] & F   }$$
In this case, one has two $\Gamma_E$ orbits $\alpha$ and $\sigma(\alpha)$, both of which are symmetric over $E$. We can put $\alpha$ component and $\sigma(\alpha)$ component of Kaletha's character together. The quadratic character is given by the toral invariant
$$f_{G,T}(\alpha)\cdot f_{G,T}(s\alpha)=f_{G,T}(\alpha)^{2}=1.$$.
\end{enumerate}

\item The $\Gamma_F$ orbit of $\alpha$ remains one $\Gamma_E$ orbit, that is, $E_{\alpha}/F_{\alpha}$ is a quadratic extension. In this case, both $E_{\alpha}/E_{\pm\alpha}$ quadratic and $F_{\alpha}/F_{\pm\alpha}$ is ramified quadratic. By the same reason $E_{\alpha}$ is biquadratic over $F_{\pm\alpha}$ and $E_{\alpha}/F_{\alpha}$ is unramified.
    $$\xymatrix{
  E_{\alpha} \ar@{-}[d]_{/2} \ar@{-}[r]^{/2} & E_{\pm\alpha} \ar@{-}[d]_{/2} \ar@{-}[r] & E \ar@{-}[d] \\
  F_{\alpha} \ar@{-}[r]^{/2} & F_{\pm\alpha} \ar@{-}[r] & F   }$$
\begin{enumerate}
\item $E/F$ is unramified. Then we have $E_{\pm\alpha}/F_{\pm\alpha}$ is unramified and $F_{\alpha}/F_{\pm\alpha}$ is ramified, which means that $\alpha$ is symmetric ramified over $E$.  Hence the character is given by $f_{(G,T)}(\alpha)=1$.
\item $E/F$ is ramified, $E_{\pm\alpha}/F_{\pm\alpha}$ could be unramified or ramified.
\begin{enumerate}
\item $E_{\pm\alpha}/F_{\pm\alpha}$ is unramified, and $E_{\alpha}/F_{\pm\alpha}$ is biquadratic. Hence $\alpha$ is symmetric ramified over $E$ and the character is given by $f_{(G,T)}(\alpha)=1$.
\item $E_{\pm\alpha}/F_{\pm\alpha}$ is ramified, and $E_{\alpha}/F_{\pm\alpha}$ is biquadratic. Then $E_{\alpha}/E_{\pm\alpha}$ is unramified, that is, $\alpha$ is symmetric unramified over $E$. Hence for $t\in S(F)$, the character is given by $\mathrm{sgn}_{k_{E_{\alpha}}^{1}}\circ \alpha|_{S(F)}$.
\end{enumerate}
\end{enumerate}
\end{enumerate}
\end{enumerate}

\subsection{Hakim's character.}
According to \cite[3.2]{Hak18}, Hakim's character is given by:
$$\epsilon_{\mathrm{HM}}(t)=\epsilon^{-}_{T(E),[g]}(t)\epsilon^{+}|_{T(E),[g]}(t):=\det(\mathrm{Ad}(t)_{g G^{0}(E)_{[x]}g^{-1}\cap G(F)})
 \prod_{i=0}^{d-1} (\det \mathrm{Ad}(t)|_{\mathfrak{W}_i})^{\frac{q_F-1}{2}},$$
where $\epsilon^{+}_{G^{0}(E)_{[x]},[g]}:gG^{0}(E)_{[x]}g^{-1}\cap G(F)\rightarrow \{\pm 1\}$ is a quadratic character given by $\prod\limits_{i=0}^{d-1}(\det \mathrm{Ad}(g)|_{\mathfrak{W}_i})^{\frac{q_F-1}{2}}.$

Before we give a new interpretation of Hakim's character, we first list some properties of Hakim's character. These properties are mainly proved by Hakim \cite{Hak17} \cite{Hak18} and Zhang \cite{Zha20a} \cite{Zha20b}.

\begin{proposition}[{\cite[Lemma 3.2.1]{Hak18}}]

$\epsilon^{+}|_{(((\mathsf{G}^{0}_{[x]})^{\circ})^{g\cdot \sigma})^{\circ}(k_F)}=\mathbbm{1}.$
\end{proposition}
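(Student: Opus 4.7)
My plan is to trace each $\mathfrak{W}_i$ back to its origin in Yu's Heisenberg construction and exploit the resulting symplectic structure. Recall that in Yu's setup the quotient $J^{i+1}/J^{i+1}_+$ naturally carries a nondegenerate alternating form over the residue field (the commutator pairing coming from the associated Heisenberg $p$-group), and the conjugation action of $G^{0}(E)_{[x]}$ preserves this form. The space $\mathfrak{W}_i$ is the $\mathrm{Gal}(E/F)$-fixed part of the $\alpha$-eigenspace contributions for $\alpha\in \Phi(G^{i+1},j_\pi(T))-\Phi(G^i,j_\pi(T))$, so the first step is to observe that the Galois involution $g\cdot\sigma$ preserves the alternating form (up to an at-worst sign that disappears after identifying the two orbit contributions in each pair), and that therefore $\mathfrak{W}_i$ itself is a symplectic $k_F$-vector space on which $\mathsf{H}:=((\mathsf{G}^0_{[x]})^\circ)^{g\cdot\sigma}$ acts by symplectic transformations.

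The second step is then purely a statement about algebraic characters. The map $h\mapsto \det \mathrm{Ad}(h)|_{\mathfrak{W}_i}$ is a morphism of algebraic groups $\mathsf{H}^\circ\to \mathbb{G}_m$, but it factors through $\mathsf{H}^\circ\to Sp(\mathfrak{W}_i)\to \mathbb{G}_m$ by the symplectic-preservation observation. Since $Sp(\mathfrak{W}_i)\subset SL(\mathfrak{W}_i)$, the composition is the trivial morphism, so $\det \mathrm{Ad}|_{\mathfrak{W}_i}$ is identically $1$ on $\mathsf{H}^\circ(k_F)$, and raising to the power $(q_F-1)/2$ certainly gives $1$. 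Taking the product over $i$ yields $\epsilon^+|_{\mathsf{H}^\circ(k_F)}=\mathbbm{1}$.

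The one technical point I anticipate requiring care is the first step, namely verifying that the Galois-invariance of the symplectic form survives the restriction to the eigenspace sum $\bigoplus_{\alpha}\mathfrak{g}_\alpha$ at depth $(s_i,s_i^+)$. This amounts to checking, orbit by orbit in $\Phi(G^{i+1},j_\pi(T))-\Phi(G^i,j_\pi(T))$ under $\Gamma_E\times\mathrm{Gal}(E/F)$, that the pairing between $\mathfrak{g}_\alpha$ and $\mathfrak{g}_{-\alpha}$ intertwines correctly with the $\sigma$-action on the roots (either $\sigma$ fixes an $E$-orbit, or swaps two $E$-orbits related by $-1$); in each case one verifies directly from the definition of the commutator pairing in the Heisenberg quotient $J^{i+1}/J^{i+1}_+$ that the induced form on $\mathfrak{W}_i$ is alternating and nondegenerate. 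Once this bookkeeping is done, the rest of the argument reduces cleanly to the symplectic-determinant remark above.
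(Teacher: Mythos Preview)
Your approach is genuinely different from Hakim's, and the step you flag as ``requiring care'' is not a bookkeeping detail but the place where the argument breaks. The commutator form $\omega$ on $J^{i+1}/J^{i+1}_+$ is \emph{not} preserved by $g\cdot\sigma$; it is sent to $-\omega$. The reason is that relevance of the double coset forces the central character to satisfy $\widehat{\phi_i}^{\,g\cdot\sigma}=\widehat{\phi_i}^{-1}$ on $J^{i+1}_+$, and since $\omega$ is extracted through $\widehat{\phi_i}$ this yields $\omega(\sigma v,\sigma w)=-\omega(v,w)$. When $E/F$ is ramified (so $k_E=k_F$ and there is no semilinearity to absorb the sign), this makes $\mathfrak{W}_i=(J^{i+1}/J^{i+1}_+)^{g\cdot\sigma}$ \emph{isotropic} for $\omega$, not symplectic; there is no $Sp(\mathfrak{W}_i)$ through which the action factors, and $\det\mathrm{Ad}|_{\mathfrak{W}_i}$ need not be $1$. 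Observe also that your argument nowhere uses connectedness: if it worked it would give $\epsilon^+\equiv 1$ on all of $((\mathsf{G}^0_{[x]})^\circ)^{g\cdot\sigma}(k_F)$, a statement the paper's own later computations (Table~\ref{table3} for symmetric ramified roots, and the $GL_2$ ``odd case'') contradict.

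Hakim's proof, as the paper records, proceeds differently and makes no appeal to a symplectic structure on $\mathfrak{W}_i$. It uses that each factor $h\mapsto\det\mathrm{Ad}(h)|_{\mathfrak{W}_i}$ arises as the $k_F$-points of an algebraic character attached to the ambient connected reductive quotient $\mathsf{G}_i^\circ$, together with the decomposition $\mathsf{G}_i^\circ=(\mathsf{G}_i^\circ)_{\mathrm{der}}\cdot Z(\mathsf{G}_i^\circ)$, to force triviality on the identity component. Your symplectic idea is in fact Zhang's argument from \cite{Zha20b}; the paper invokes it later to show $\epsilon^+_{\mathrm{HM}}\equiv 1$ on \emph{all} of $S(F)$, but only when $E/F$ is unramified, because producing the requisite $k_F$-symplectic form on $\mathfrak{W}_i$ requires a depth-zero good element of trace zero, and the paper explicitly notes this construction is unavailable in the ramified case.
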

The proof depends on the algebraic nature of this quadratic character, which means that it is the restriction of a algebraic quadratic character of $\mathsf{G}_{i}^{\circ}(\overline{k_F})\cong G^{i}(F^{\mathrm{ur}})_{x,0:0^{+}}$, and the fact that one has a decomposition $\mathsf{G}_{i}^{\circ}={\mathsf{G}_{i}^{\circ}}_{\mathrm{der}} Z(\mathsf{G}_{i}^{\circ})$.

\begin{proposition}[{\cite{Hak18}, \cite[Definition 3.4]{Zha20a}}]

$\epsilon^{-}:{i(S)(F)}\rightarrow \{\pm 1\}$ factors through $\epsilon_{\mathsf{T}}: \mathsf{S}(k_F)=\mathsf{T}(k_E)^{g\cdot\sigma}\rightarrow \{\pm 1\}$, where the latter is the Lusztig quadratic character defined by: $$\epsilon_{\mathsf{T}}(t)=\sigma_{\mathrm{L}} (C_{\mathsf{G}}((\mathsf{T}^{g\cdot\sigma})^{\circ})) \sigma_{\mathrm{L}}(C_{G}(t)^{\circ}\cap C_{\mathsf{G}}((\mathsf{T}^{g\cdot\sigma})^{\circ})),$$
where $\sigma_{\mathrm{L}}(G)$ is the sign defined by Lusztig, which is $(-1)^{\mathrm{rank}_{k_F}(G)}$ for any reductive group $G$ defined over $k_F$.
\end{proposition}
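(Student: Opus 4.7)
My strategy is to decompose the character $\epsilon^-$ along the Moy--Prasad filtration of the subgroup $H:=gG^0(E)_{[x]}g^{-1}\cap G(F)$, show that all positive-depth contributions vanish in $\{\pm 1\}$, and then identify the remaining depth-zero contribution with Lusztig's sign via a finite group-of-Lie-type calculation.

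First I would observe that $H$ is the $g\cdot\sigma$-fixed subgroup of the parahoric $G^0(E)_{[x]}$, and hence inherits a descending Moy--Prasad filtration $\{H_r\}_{r\geq 0}$ whose graded quotients for $r>0$ are canonically $k_F$-vector spaces $(\mathfrak{g}^0_{x,r:r^+})^{g\cdot\sigma}$ via the Moy--Prasad isomorphism, with reductive quotient $\mathsf{G}^{0,\circ,g\cdot\sigma}(k_F)$ at depth zero. Since $\mathrm{Ad}(t)$ acts compatibly with the filtration and preserves the Haar measure on each compact piece, the character $\epsilon^-(t)=\det\mathrm{Ad}(t)$ factors as the product of the signs $\mathrm{sgn}_{k_F}\bigl(\det\mathrm{Ad}(\bar t)|_{(\mathfrak{g}^0_{x,r:r^+})^{g\cdot\sigma}}\bigr)$ over all $r\geq 0$, where $\bar t$ denotes the image of $t$ in $\mathsf{S}(k_F)$.

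Next I would argue that each positive-depth factor is trivial. Passing to $\overline{k_F}$, the graded piece decomposes into root spaces $\mathfrak{g}_\alpha$ indexed by roots of $T$ appearing at depth $r$, on which $\mathrm{Ad}(t)$ acts by the scalar $\alpha(\bar t)$. Grouping the roots into $\Gamma_F\times\{\pm 1\}$-orbits as in the analysis for Kaletha's character, asymmetric pairs $\{\alpha,-\alpha\}$ contribute $\alpha(\bar t)\cdot\alpha(\bar t)^{-1}=1$, while symmetric orbits contribute a norm from $k_{F_\alpha}^\times$ (or $k_{F_{\pm\alpha}}^\times$), which is a square in $k_F^\times$ under the hypothesis $p\neq 2$ combined with the standard descent of residue fields along tamely ramified extensions. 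Consequently only the depth-zero piece survives and we obtain the factorization $\epsilon^-(t)=\mathrm{sgn}_{k_F}\bigl(\det\mathrm{Ad}(\bar t)|_{\mathrm{Lie}(\mathsf{G}^{0,\circ,g\cdot\sigma})(k_F)}\bigr)$, which depends only on $\bar t\in\mathsf{S}(k_F)$.

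Finally, to match the Lusztig formula I would invoke the classical identity on finite groups of Lie type: for a maximal torus $\mathsf{S}$ of a connected reductive group $\mathsf{H}/k_F$ and $t\in\mathsf{S}(k_F)$, one has $\mathrm{sgn}_{k_F}\bigl(\det\mathrm{Ad}(t)|_{\mathrm{Lie}(\mathsf{H})}\bigr)=\sigma_L(\mathsf{H})\,\sigma_L(C_\mathsf{H}(t)^\circ)$. This follows by decomposing $\mathrm{Lie}(\mathsf{H})=\mathrm{Lie}(C_\mathsf{H}(t)^\circ)\oplus V$ with $\mathrm{Ad}(t)-1$ invertible on $V$, pairing the eigenvalues of $\mathrm{Ad}(t)$ under the Frobenius action, and comparing with Steinberg's order formula $|\mathsf{H}(k_F)|=\sigma_L(\mathsf{H})\,q^N\prod_i(q^{d_i}-1)$. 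Applied to $\mathsf{H}=C_{\mathsf{G}}((\mathsf{T}^{g\cdot\sigma})^\circ)$, this yields precisely $\epsilon_\mathsf{T}(t)$.

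The hard part will be the positive-depth cancellation: the orbit analysis is genuinely delicate when $g\cdot\sigma$ permutes the $\Gamma_E$-orbit structure on $\Phi(G_E,T)$ nontrivially, especially for symmetric-ramified roots, and one must be sure that no stray sign escapes from a symmetric orbit where the norm map hits a non-square in a residue-field extension smaller than $k_F$. This step will likely mirror Hakim's bookkeeping in \cite{Hak18}; should a sign survive at positive depth, one would need to absorb it into the depth-zero factor and apply Springer--Steinberg invariants to reconcile it with $\sigma_L$.
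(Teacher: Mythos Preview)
The paper does not supply a proof of this proposition; it is quoted with attribution to \cite{Hak18} and \cite[Definition~3.4]{Zha20a}, so there is no in-paper argument to compare against. That said, your plan has a genuine gap.

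The identity you invoke in step~3,
\[
\mathrm{sgn}_{k_F}\bigl(\det\mathrm{Ad}(t)\big|_{\mathrm{Lie}(\mathsf{H})}\bigr)
=\sigma_L(\mathsf{H})\,\sigma_L\bigl(C_{\mathsf{H}}(t)^\circ\bigr),
\]
is false as stated. For any $t$ in a maximal torus of a connected reductive $\mathsf{H}$, the eigenvalues of $\mathrm{Ad}(t)$ on $\mathrm{Lie}(\mathsf{H})\otimes\overline{k_F}$ are $1$ on the Cartan and $\alpha(t)$ on the root lines; since $\sum_{\alpha\in\Phi}\alpha=0$ one has $\det\mathrm{Ad}(t)=1$ identically, so the left side is always $+1$. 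But the right side is not: take $\mathsf{H}=PGL_2/k_F$ and $t$ a regular element of a non-split maximal torus, so $\sigma_L(\mathsf{H})=-1$ and $\sigma_L(C_{\mathsf{H}}(t)^\circ)=\sigma_L(\text{anisotropic torus})=+1$, giving $-1$. The sketch via Steinberg's order formula does not produce such a relation; Lusztig's sign $\sigma_L$ records the Frobenius-twisting of the Weyl group, which is invisible to $\det\mathrm{Ad}(t)$.

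There is also an unexplained jump between steps~2 and~3: step~2 leaves you with $\mathrm{sgn}_{k_F}\det\mathrm{Ad}(\bar t)$ on $\mathrm{Lie}\bigl(\mathsf{G}^{0,\circ,g\cdot\sigma}\bigr)$, the Lie algebra of the \emph{symmetric subgroup}, whereas in step~3 you apply your identity to $\mathsf{H}=C_{\mathsf{G}}\bigl((\mathsf{T}^{g\cdot\sigma})^\circ\bigr)$, a \emph{Levi-type centralizer}. These are different groups with different Lie algebras, and $(\mathsf{T}^{g\cdot\sigma})^\circ$ is central in the latter rather than a maximal torus, so even a corrected version of your step~3 identity would not apply directly.

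The argument in \cite{Hak18} does not proceed by computing determinants of adjoint actions. Hakim's $\epsilon^-$ arises from the depth-zero Deligne--Lusztig constituent $\rho$ and the compatibility of Lusztig's finite-field symmetric-space theory with the intertwining operators; the identification with $\epsilon_{\mathsf{T}}$ is essentially a restatement of how Lusztig's sign enters the dimension formula for $(\mathsf{G}^{0,\circ})^{g\cdot\sigma}$-invariants in $R_{\mathsf{T},\theta}$. If you want to reprove it, you should go through that representation-theoretic route rather than trying to extract $\sigma_L$ from a Lie-algebra determinant.
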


\begin{proposition}[\cite{Hak17} 4.3.2]
Let $\vartheta$ be any $\mathbb{F}_q$ involution of $\mathsf{G}$ preserving $\mathsf{T}$. Lusztig's quadratic character has the following factorization
$$\epsilon_{\mathsf{T}}(t)=\prod_{\alpha\in \Gamma\backslash \Phi_{\vartheta}}\alpha(t),$$
where $\Phi_{\vartheta}$ denotes $\Phi(C_{\mathsf{G}}({\mathsf{T}^{\vartheta}}^{\circ}),\mathsf{T})$.
\end{proposition}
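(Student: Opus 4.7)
The plan is to prove the factorization in two stages: first reducing to the setting where $\mathsf{G}$ itself is the centralizer Levi, and then comparing Lusztig's sign for $\mathsf{G}$ and $C_\mathsf{G}(t)^\circ$ orbit-by-orbit on the root system. As a first reduction, I would replace $\mathsf{G}$ by $\mathsf{M} := C_\mathsf{G}((\mathsf{T}^\vartheta)^\circ)$; for $t \in \mathsf{T}^\vartheta(\mathbb{F}_q)$, one has $C_\mathsf{G}(t)^\circ \cap \mathsf{M} = C_\mathsf{M}(t)^\circ$ and $\Phi(\mathsf{M}, \mathsf{T}) = \Phi_\vartheta$, so both sides of the identity are unaffected. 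Under this reduction the claim becomes
\[
\sigma_L(\mathsf{G})\,\sigma_L(C_\mathsf{G}(t)^\circ) = \prod_{\alpha \in \Gamma \backslash \Phi(\mathsf{G}, \mathsf{T})} \alpha(t).
\]

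The main input at the next stage would be the standard formula expressing Lusztig's sign in terms of the Frobenius action on the root system: for any connected reductive group $\mathsf{H}/\mathbb{F}_q$ with an $\mathbb{F}_q$-rational maximal torus $\mathsf{T}$, the product $\sigma_L(\mathsf{H})\,\sigma_L(\mathsf{T})$ can be written as a sign depending only on the $\Gamma$-action on $\Phi(\mathsf{H}, \mathsf{T})$, via the Digne--Michel / Carter formula. I would apply this to $\mathsf{H} = \mathsf{G}$ and $\mathsf{H} = C_\mathsf{G}(t)^\circ$; since both share the torus $\mathsf{T}$, the factor $\sigma_L(\mathsf{T})$ cancels, and what remains is a sign contribution from precisely those $\Gamma$-orbits $\Gamma \cdot \alpha \subset \Phi(\mathsf{G}, \mathsf{T})$ for which $\alpha(t) \ne 1$, since these are the orbits in $\Phi(\mathsf{G}, \mathsf{T})$ but not in $\Phi(C_\mathsf{G}(t)^\circ, \mathsf{T})$.

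It then remains to match, orbit-by-orbit, the Frobenius-theoretic sign with the root evaluation $\alpha(t)$. For each $\Gamma$-orbit $\mathcal{O} = \Gamma \cdot \alpha$, the quantity $\prod_{\beta \in \mathcal{O}} \beta(t) = \mathrm{Nm}_{k_{E_\alpha}/k_F}(\alpha(t))$ lies in $k_F^\times$, and one checks that this norm equals the Lusztig sign contribution of $\mathcal{O}$, taking the value $\pm 1$ according to how Frobenius cycles through $\mathcal{O}$ and whether $\alpha(t)$ itself is a square in $k_{E_\alpha}$. I would verify this orbit-wise identity case-by-case, partitioning orbits into symmetric versus asymmetric and further by the ramification behavior of $k_{E_\alpha}/k_{F_{\pm \alpha}}$, in parallel with the case analysis used elsewhere in the paper.

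The main obstacle I anticipate is this final orbit-by-orbit matching: both the Digne--Michel sign and the root-evaluation norm depend sensitively on whether the Frobenius orbit preserves $\alpha$ up to sign, and on the arithmetic of $\alpha(t)$ in the residue extensions. A clean argument will likely proceed by reducing to the absolute rank-one subgroups $\langle U_\alpha, U_{-\alpha}, \mathsf{T}\rangle$, in the same spirit as the proof of \Cref{toralkottwitz}, and checking the identity on $SL_2$ or on $SL_1$ of a quaternion algebra over the appropriate residue field; the functoriality of Lusztig's sign under Levi inclusions should then reassemble the rank-one computations into the desired global identity.
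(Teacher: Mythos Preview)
The paper does not give its own proof of this proposition: it is stated as a citation to \cite[4.3.2]{Hak17}, listed among several properties of Hakim's character that are ``mainly proved by Hakim \cite{Hak17}\cite{Hak18} and Zhang \cite{Zha20a}\cite{Zha20b}''. So there is no argument in the paper to compare your proposal against; for the actual proof you should consult Hakim's paper directly.

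That said, a few comments on your sketch. The overall architecture---reduce to the Levi $\mathsf{M}=C_\mathsf{G}((\mathsf{T}^\vartheta)^\circ)$, then compare $\sigma_L(\mathsf{M})$ and $\sigma_L(C_\mathsf{M}(t)^\circ)$ through their root systems---is sound, and the observation that orbits with $\alpha(t)=1$ lie in both root systems and hence cancel is correct. But the final step is not set up correctly. You write the orbit contribution as a norm $\mathrm{Nm}_{k_{E_\alpha}/k_F}(\alpha(t))$ and then propose to split into cases ``by the ramification behavior of $k_{E_\alpha}/k_{F_{\pm\alpha}}$''. Over a finite field every extension is unramified, so there is no ramification dichotomy to exploit; you are importing the $p$-adic case analysis from elsewhere in the paper into a purely finite-field statement where it does not apply. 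Moreover, after the reduction to $\mathsf{M}$ every root satisfies $\vartheta^*\alpha=-\alpha$, so for $t\in\mathsf{T}^\vartheta(\mathbb{F}_q)$ one has $\alpha(t)^2=1$ and $\alpha(t)$ is constant along the Frobenius orbit; the product over the orbit is therefore $\alpha(t)^{|\mathcal{O}|}$, not a genuine field norm, and the right-hand side of the identity takes only one representative per orbit anyway. What you actually need to show is that each Frobenius orbit $\mathcal{O}\subset\Phi_\vartheta$ with $\alpha(t)=-1$ contributes exactly a factor $-1$ to $\sigma_L(\mathsf{M})\sigma_L(C_\mathsf{M}(t)^\circ)$; this is a statement about how removing a single $\pm$-symmetric Frobenius orbit from the root system changes the $\mathbb{F}_q$-split rank by an odd amount, and it should be argued directly in those terms rather than through a $p$-adic-style case split.
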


\begin{proposition}[{\cite[Corollary 3.18]{Zha20a}}]

For Galois pair, $\epsilon^{-}$ is always trivial.
\end{proposition}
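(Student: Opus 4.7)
The plan is to combine the two factorizations established in the preceding propositions. First, $\epsilon^{-}$ factors through Lusztig's quadratic character $\epsilon_{\mathsf{T}} \colon \mathsf{S}(k_F) = \mathsf{T}(k_E)^{g\cdot\sigma} \to \{\pm 1\}$. Second, Hakim's root-level expansion writes $\epsilon_{\mathsf{T}}(t) = \prod_{\alpha \in \Gamma \backslash \Phi_{\vartheta}} \alpha(t)$, where $\Phi_{\vartheta} = \Phi(C_{\mathsf{G}}((\mathsf{T}^{\vartheta})^{\circ}), \mathsf{T})$. So it suffices to prove this product equals $1$ for every $t \in \mathsf{T}(k_E)^{\vartheta}$ when $\vartheta = g\cdot\sigma$ is the Galois-type involution.

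My approach is to exploit the extra symmetry provided by $\vartheta$ acting on $\Phi_{\vartheta}$: since $\vartheta$ preserves $(\mathsf{T}^{\vartheta})^{\circ}$, it preserves its centralizer and hence the associated root system. For $t \in \mathsf{T}(k_E)^{\vartheta}$, the identity $\vartheta(\alpha)(t) = \alpha(\vartheta^{-1}(t)) = \alpha(t)$ shows that $\alpha$ and $\vartheta(\alpha)$ take the same value at $t$. I would partition $\Gamma \backslash \Phi_{\vartheta}$ according to whether a $\Gamma$-orbit $\mathcal{O}$ is $\vartheta$-stable. If $\mathcal{O}$ is not $\vartheta$-stable, then $\mathcal{O}$ and $\vartheta(\mathcal{O})$ are distinct orbits that both lie in $\Phi_{\vartheta}$, so their joint contribution is $\bigl(\prod_{\alpha\in\mathcal{O}}\alpha(t)\bigr)^{2}$, which is a square and therefore trivial under the sign characters relevant for $\epsilon^{-}$. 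If $\mathcal{O}$ is $\vartheta$-stable, then an analysis parallel to the one carried out in \Cref{rootcharacter} for Prasad's character identifies $\prod_{\alpha\in\mathcal{O}}\alpha(t)$ with a norm from $k_{E_{\alpha}}$ down to $k_{F_{\alpha}}$, which is again a square modulo the sign character.

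Assembling these contributions gives $\epsilon_{\mathsf{T}}(t) = 1$ for every $t \in \mathsf{T}(k_E)^{\vartheta}$, and hence $\epsilon^{-} \equiv 1$. The main obstacle is producing a rigorous description of $\Phi_{\vartheta}$ together with the $\vartheta$-action on it. This description depends on whether $E/F$ is ramified or unramified and on how the twist by $g \in G(E)$ modifies the Galois involution $\sigma$ by a Weyl element at the residue level. In the unramified case, $k_E/k_F$ is a nontrivial quadratic extension and $\vartheta$ behaves as a Frobenius composed with an inner conjugation, so the orbit pairing closes cleanly via the surjectivity of $\mathrm{Nm}_{k_E/k_F}$ and the fact that the kernel of the sign character on $k_{E_\alpha}^{\times}$ contains all norms from a quadratic extension. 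In the ramified case $k_E = k_F$, the Galois part of $\vartheta$ acts trivially at the residue level, and one must separately verify that the inner twist $\mathrm{Ad}(g)$ alone produces the required pairing on $\Phi_{\vartheta}$; this reduces to an analysis of the Weyl-group element represented by $g$, using that the twist $g$ arises from a rational embedding whose $F$-structure forces the induced Weyl element to square to the identity and pair roots in the desired way.
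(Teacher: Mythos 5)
There is a genuine gap. First, note that the paper does not prove this proposition at all: it is imported verbatim from [Zha20a, Corollary 3.18], where it rests on a concrete computation with the involution induced on the reductive quotient at depth zero; so your argument has to stand on its own. As written it does not. The decisive case is the one you defer: when $E/F$ is ramified, $k_E=k_F$ and the automorphism $\vartheta=g\cdot\sigma$ induced on the finite reductive quotient is an algebraic (typically inner) twist coming from the image of $g$, not a Frobenius-type map. There is no reason for $\Gamma\backslash\Phi_{\vartheta}$ to break into free $\vartheta$-pairs in that situation: $(\mathsf{T}^{\vartheta})^{\circ}$ can be very small (even finite), so $\Phi_{\vartheta}$ can be the full root system and $\vartheta$-stable orbits genuinely occur. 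For those orbits your claim that $\prod_{\alpha\in\mathcal{O}}\alpha(t)$ is ``a norm, hence a square modulo the sign character'' is exactly the point at issue and is not justified: whether $\alpha(t)$, for $t$ in the $\vartheta$-fixed part of the torus, lands in norms or in norm-one elements, and whether the relevant sign character ($\mathrm{sgn}$ on $k_{\alpha}^{\times}$ versus on the norm-one subgroup) kills it, is precisely the kind of case-by-case bookkeeping that Section 8 of this paper has to carry out for $\epsilon_{\mathrm{Kal}}$ and $\epsilon^{+}$, and it does not come for free here. Your closing appeal to ``the induced Weyl element squares to the identity and pairs roots in the desired way'' is an assertion, not an argument, and it is where the entire content of the statement lives.

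Two further points. The quantity you are manipulating also needs a sharper interpretation: in Hakim's factorization the orbitwise terms are quadratic characters (norms composed with sign characters), so saying a doubled contribution is ``a square and therefore trivial'' requires specifying in which group the squaring takes place; as phrased, the non-$\vartheta$-stable case is also incomplete. Finally, in the unramified case your pairing machinery is unnecessary: there $\mathsf{G}$ is a restriction of scalars along $k_E/k_F$, over $\overline{k_F}$ the fixed subtorus $(\mathsf{T}^{\vartheta})^{\circ}$ is a twisted diagonal containing regular elements, so $C_{\mathsf{G}}((\mathsf{T}^{\vartheta})^{\circ})=\mathsf{T}$, $\Phi_{\vartheta}=\emptyset$, and $\epsilon_{\mathsf{T}}$ is an empty product. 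So the easy case is easier than your argument suggests, and the hard case is not addressed; to repair the proof you would either have to carry out the residual computation for ramified $E/F$ explicitly or simply cite [Zha20a] as the paper does.
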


\subsubsection{A new interpretation of $\epsilon^{+}_{T(E),[g]}(t)=\epsilon^{+}_{i(S)}$}\label{Hakimcharacter}

Notice that ${\mathfrak{W}_i}=((\bigoplus\limits_{\alpha\in\Phi^{i+1}-\Phi^{i}}\mathfrak{g}_{\alpha})^{\Gamma_E})_{x,s_i,s_{i}^{+}}^{\mathrm{Gal}(E/F)}$
is a vector space over the residue field $k_F$. We rewrite $\mathfrak{W}_i$ as a direct sum over $\Gamma_F$ orbits of roots appearing in $\Phi^{i+1}-\Phi^{i}$.

\begin{equation}\label{newinterpretation}
\begin{aligned}
\mathfrak{W}_i=\bigoplus_{\mathcal{O}\in (\Phi^{i+1}-\Phi^{i})/\Gamma_{F}}(\bigoplus _{\alpha\in \mathcal{O}}\mathfrak{g}_{\alpha})(F)_{x, s_i, s_{i}^{+}}.
\end{aligned}
\end{equation}

Notice that we have isomorphisms of $F$ vector spaces:
$$(\bigoplus_{\alpha\in\mathcal{O}}\mathfrak{g}_{\alpha})(F)\cong F_{\alpha}.$$
The adjoint action of $i(S)$ becomes multiplication of $\alpha(t)$ under this identification and the determinant is given by the norm $\mathrm{Nm}_{k_{F_{\alpha}}/k_{F}}(\alpha(t))$. If we have $\alpha\in \Phi_{\frac{r_i}{2}}$, that is $s_i$ is a jump, then $\epsilon^{+}$ is given by $\mathrm{sgn}_{k_{F}^{\times}}(\mathrm{Nm}_{k_{F_{\alpha}}/k_F}\alpha(t))$, otherwise $\epsilon ^+$ is trivial.
\begin{enumerate}
\item If $\alpha$ is asymmetric over $F$, we could put the character associated to $\bigoplus\limits_{\alpha\in \mathcal{O}}\mathfrak{g}_{\alpha}$ and the character associated to $\bigoplus\limits_{\alpha \in \mathcal{O}}\mathfrak{g}_{-\alpha}$ together. Notice that one has $\mathrm{ord}_{x}(-\alpha)=-\mathrm{ord}_{x}(\alpha)$ by \cite[Corollary 3.18]{DS18}, and the character is unchanged if one replaces $\alpha$ by $-\alpha$. The number of $\Gamma_F$ orbits of asymmetric roots is even with the same jump condition for $\alpha$ and $-\alpha$. Hence the product of $\epsilon^+$ over asymmetric roots is trivial.
\item If $\alpha$ is symmetric over $F$. Then the determinant of the action is given by $\mathrm{sgn}_{k_{F}^{\times}}(\mathrm{Nm}_{k_{F_{\alpha}}/k_F}\alpha(t))=\mathrm{sgn}_{k_{F_{\alpha}}^{\times}}(\alpha(t))$.

When $F_{\alpha}/F_{\pm\alpha}$ is unramified, this character is trivial since
$$\mathrm{Nm}_{k_{F_{\alpha}}/k_{F}}(\alpha(t))=\mathrm{Nm}_{k_{F_{\pm\alpha}}/k_{F}}\mathrm{Nm}_{k_{F_{\alpha}}/k_{F_{\pm\alpha}}}(\alpha(t))=\mathrm{Nm}_{k_{F_{\pm\alpha}}/k_{F}}(1)=1.$$ When $F_{\alpha}/F_{\pm\alpha}$ is ramified, this character is given by $$\mathrm{sgn}_{k_{F_{\alpha}}^{\times}}(\alpha(t))=(\alpha(t)\mathrm{mod}(1+\varpi_{F_{\alpha}}))^{\frac{q_{F_{\alpha}}-1}{2}}.$$
\end{enumerate}

\subsection{The character associated to $\zeta$-data.}\label{zeta}

\subsubsection{Relation between $F_\alpha$ and $F_{\alpha^{\mathrm{op}}}$.}

Notice that $\alpha^{\mathrm{op}}$, as an element of absolute root system $\Phi(G_{F^{s}},S_{F^{s}})$, is the same as $\alpha$, but carries a twisted Galois action. More precisely, the Galois action on $\alpha^{\mathrm{op}}$ could be descried as follows.

For any $\gamma\in \Gamma_F$, whose projection to $\mathrm{Gal}(E/F)$ is trivial,
$$\gamma \ast \alpha:=\gamma \cdot\alpha.$$
For any $s\in \Gamma_F$, whose projection is $\sigma\in \mathrm{Gal}(E/F)$,
$$s \ast\alpha:=-s\cdot \alpha,$$
where $\cdot$ denotes the original action on $\alpha$, and $\ast$ denotes the twisted action.

Let $F_{\alpha^{\mathrm{op}}}$ denote the subfield of $F^s$ which corresponds to $\mathrm{Stab}_{\Gamma_F}(\alpha)$ under this $\ast$ action, which we denote by $\mathrm{Stab}_{\Gamma_F}(\alpha^{\mathrm{op}})$.

\begin{lemma}
$F_{\pm\alpha}=F_{\pm\alpha^{\mathrm{op}}}$.
\end{lemma}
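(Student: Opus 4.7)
The plan is to unpack the definition of the twisted Galois action and verify that it preserves the set $\{\pm\alpha\}$ if and only if the original action does. Since $F_{\pm\alpha}$ and $F_{\pm\alpha^{\mathrm{op}}}$ are by definition the fixed subfields of $\mathrm{Stab}_{\Gamma_F}\{\pm\alpha\}$ under the two actions respectively, the claim will follow once we show these two stabilizer subgroups of $\Gamma_F$ coincide.

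First I would split $\Gamma_F$ using the short exact sequence $1 \to \Gamma_E \to \Gamma_F \to \mathrm{Gal}(E/F) \to 1$ and check the two cases separately. For $\gamma \in \Gamma_E$, the twisted action coincides with the original: $\gamma \ast \alpha = \gamma \cdot \alpha$, so trivially $\gamma \ast \alpha \in \{\pm\alpha\}$ iff $\gamma \cdot \alpha \in \{\pm\alpha\}$. For $s \in \Gamma_F \setminus \Gamma_E$ (projecting to $\sigma$), we have $s \ast \alpha = -s \cdot \alpha$, and so $s \ast \alpha \in \{\pm\alpha\}$ iff $-s \cdot \alpha \in \{\pm\alpha\}$ iff $s \cdot \alpha \in \{\mp\alpha\} = \{\pm\alpha\}$, because the set $\{\pm\alpha\}$ is stable under negation.

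Combining the two cases, an element $\gamma \in \Gamma_F$ lies in $\mathrm{Stab}_{\Gamma_F}\{\pm\alpha^{\mathrm{op}}\}$ (with respect to $\ast$) if and only if it lies in $\mathrm{Stab}_{\Gamma_F}\{\pm\alpha\}$ (with respect to $\cdot$). Taking the corresponding fixed fields in $F^s$ yields $F_{\pm\alpha^{\mathrm{op}}} = F_{\pm\alpha}$. There is no real obstacle here; the statement is a direct consequence of the symmetry of the set $\{\pm\alpha\}$ under $\alpha \mapsto -\alpha$, and this symmetry is exactly what makes the sign twist in the definition of $\ast$ invisible at the level of $\{\pm\cdot\}$-orbits (even though it is of course visible at the level of $\Gamma_F$-orbits of $\alpha$ alone, which is why $F_{\alpha}$ and $F_{\alpha^{\mathrm{op}}}$ need not agree).
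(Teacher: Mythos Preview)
Your proof is correct and follows essentially the same approach as the paper: both reduce the claim to the equality of stabilizers $\mathrm{Stab}_{\Gamma_F}\{\pm\alpha\}=\mathrm{Stab}_{\Gamma_F}\{\pm\alpha^{\mathrm{op}}\}$ and use that $\gamma\ast\alpha=\pm\gamma\cdot\alpha$ so the sign twist is invisible on the set $\{\pm\alpha\}$. The only cosmetic difference is that you split explicitly into the cases $\gamma\in\Gamma_E$ and $\gamma\in\Gamma_F\setminus\Gamma_E$, whereas the paper treats both at once with the shorthand $\gamma\ast\alpha=\pm\gamma\cdot\alpha$.
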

\begin{proof}
We only need to prove that $\mathrm{Stab}_{\Gamma_F}\{\pm\alpha\}=\mathrm{Stab}_{\Gamma_F}\{\pm\alpha^{\mathrm{op}}\}$. For any $\gamma\in\mathrm{Stab}_{\Gamma_F}\{\pm\alpha\}$, we have
$$\gamma\ast \alpha=\pm\gamma\cdot\alpha=\pm \pm\alpha=\pm\alpha,$$
which means that $\mathrm{Stab}_{\Gamma_F}\{\pm\alpha\} \subset \mathrm{Stab}_{\Gamma_F}\{\pm\alpha^{\mathrm{op}}\}$. The reverse direction is similar.
\end{proof}

\begin{lemma}
If $E_{\alpha}$ is a quadratic extension of $E_{\pm\alpha}$ and $F_{\alpha}$ with $E_{\pm\alpha}\neq F_{\alpha}$, then $F_{\alpha^{\mathrm{op}}}$ is the other intermediate field of the biquadratic extension $E_{\alpha}$ over $F_{\pm\alpha}=F_{\pm\alpha^{\mathrm{op}}}$.
\end{lemma}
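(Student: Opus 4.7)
\medskip

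\noindent\textbf{Proof proposal.} The plan is to reduce the statement to a computation inside the finite Galois group $\mathrm{Gal}(E_\alpha/F_{\pm\alpha})\cong(\mathbb{Z}/2)^2$ and then to identify $F_{\alpha^{\mathrm{op}}}$ as one of its three index-two subfields by directly testing the twisted Galois action on $\alpha$. By the previous lemma $F_{\pm\alpha^{\mathrm{op}}}=F_{\pm\alpha}$, so in particular $F_{\alpha^{\mathrm{op}}}$ is a subfield of $F^s$ containing $F_{\pm\alpha}$; I will first check that $F_{\alpha^{\mathrm{op}}}\subseteq E_\alpha$, which pins it down to one of the three intermediate fields of the biquadratic extension $E_\alpha/F_{\pm\alpha}$, namely $F_\alpha$, $E_{\pm\alpha}$, or a distinguished third field $K$.

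To see that $F_{\alpha^{\mathrm{op}}}\subseteq E_\alpha$, I would note that any $\gamma\in\Gamma_{E_\alpha}=\Gamma_{F_\alpha}\cap\Gamma_E$ acts trivially on $E$ and fixes $\alpha$, hence $\gamma\ast\alpha=\gamma\cdot\alpha=\alpha$; so $\Gamma_{E_\alpha}\subseteq\mathrm{Stab}_{\Gamma_F}(\alpha^{\mathrm{op}})$, and the image of $\mathrm{Stab}_{\Gamma_F}(\alpha^{\mathrm{op}})$ in $\mathrm{Gal}(E_\alpha/F_{\pm\alpha})$ is a well-defined index-two subgroup.

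Next I would go through the three index-two subgroups case-by-case, using the definition
\[
\gamma\ast\alpha=\begin{cases}\gamma\cdot\alpha & \gamma|_E=1,\\ -\gamma\cdot\alpha & \gamma|_E=\sigma.\end{cases}
\]
A lift of a non-trivial element of $\mathrm{Gal}(E_\alpha/F_\alpha)$ lies in $\Gamma_{F_\alpha}\smallsetminus\Gamma_E$; it fixes $\alpha$ under the ordinary action but restricts to $\sigma$ on $E$, so it sends $\alpha^{\mathrm{op}}$ to $-\alpha\neq\alpha$. A lift of a non-trivial element of $\mathrm{Gal}(E_\alpha/E_{\pm\alpha})$ lies in $\Gamma_{E_{\pm\alpha}}\smallsetminus\Gamma_{F_\alpha}$; it acts trivially on $E$ and sends $\alpha$ to $-\alpha$, again failing to stabilize $\alpha^{\mathrm{op}}$. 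Finally, a lift of a non-trivial element of the remaining subgroup $\mathrm{Gal}(E_\alpha/K)$ lies in $\Gamma_{F_{\pm\alpha}}\smallsetminus(\Gamma_{F_\alpha}\cup\Gamma_{E_{\pm\alpha}})$; it acts as $\sigma$ on $E$ and sends $\alpha$ to $-\alpha$, so $\gamma\ast\alpha=-(-\alpha)=\alpha$. This identifies $\mathrm{Stab}_{\Gamma_F}(\alpha^{\mathrm{op}})/\Gamma_{E_\alpha}=\mathrm{Gal}(E_\alpha/K)$, whence $F_{\alpha^{\mathrm{op}}}=K$.

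There is no real obstacle here: the argument is essentially a tabulation of signs in a Klein four-group, and the hypothesis $E_{\pm\alpha}\neq F_\alpha$ is precisely what guarantees that $\mathrm{Gal}(E_\alpha/F_{\pm\alpha})$ is biquadratic (rather than cyclic of order $2$), so that the ``third'' intermediate field $K$ exists and is distinct from both $F_\alpha$ and $E_{\pm\alpha}$. The only minor care needed is to verify that the various overlaps of subgroups of $\Gamma_F$ match their projections to $\mathrm{Gal}(E_\alpha/F_{\pm\alpha})$, which follows from the fact that $E_\alpha=F_\alpha\cdot E_{\pm\alpha}$ (the compositum inside $F^s$) under the biquadratic hypothesis.
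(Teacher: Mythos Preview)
Your proposal is correct and follows essentially the same route as the paper: both arguments reduce to checking, in the Klein four-group $\mathrm{Gal}(E_\alpha/F_{\pm\alpha})$, which of the three non-trivial elements fixes $\alpha$ under the twisted $\ast$-action, and both find that the generators coming from $\mathrm{Gal}(E_\alpha/F_\alpha)$ and $\mathrm{Gal}(E_\alpha/E_{\pm\alpha})$ each flip the sign once while their product flips it twice. Your version is slightly more explicit in first verifying $\Gamma_{E_\alpha}\subseteq\mathrm{Stab}_{\Gamma_F}(\alpha^{\mathrm{op}})$ so that the computation really does take place in the quotient, whereas the paper simply names the two generators $\sigma,\tau$ and computes $(\sigma\tau)\ast\alpha=\alpha$ directly.
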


\begin{proof}
Let $\sigma\in \mathrm{Gal}(E_{\alpha}/F_{\alpha})$ and $\tau\in \mathrm{Gal}(E_{\alpha}/E_{\pm\alpha})$ be the two non-trivial elements. We have
$$\sigma\cdot \alpha=\alpha,~~\tau\ast\alpha=\tau\cdot\alpha=-\alpha,$$
which implies
$$(\sigma\tau)\ast\alpha=\sigma\ast(\tau\ast \alpha)=\sigma\ast(-\alpha)=-\sigma\ast\alpha=\sigma\cdot\alpha=\alpha.$$
Hence $F_{\alpha^{\mathrm{op}}}$ is the third intermediate field of $E_{\alpha}$ corresponding to the order two element $\sigma\tau$.
\end{proof}

\begin{lemma}
If $E_\alpha=F_{\alpha}$ and $\alpha$ is both symmetric over $F$ and $E$. Then $F_{\alpha^{\mathrm{op}}}=F_{\alpha}$
\end{lemma}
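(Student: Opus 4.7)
The plan is to translate the hypothesis $E_\alpha=F_\alpha$ and the symmetry conditions into concrete containments between stabilizer subgroups in $\Gamma_F$, and then compute the $\ast$-stabilizer of $\alpha$ directly from the definition of the twisted Galois action.

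First I would observe that $E_\alpha = F_\alpha$ is equivalent to $\mathrm{Stab}_{\Gamma_F}(\alpha)\subseteq\Gamma_E$, i.e.\ $\mathrm{Stab}_{\Gamma_F}(\alpha)=\mathrm{Stab}_{\Gamma_E}(\alpha)$. Combined with the assumption that $\alpha$ is symmetric over $E$, one can pick $\tau\in\Gamma_E$ with $\tau\cdot\alpha=-\alpha$; then for any $\gamma\in\mathrm{Stab}_{\Gamma_F}\{\pm\alpha\}$ with $\gamma\cdot\alpha=-\alpha$, the product $\gamma\tau^{-1}$ stabilizes $\alpha$, hence lies in $\Gamma_E$, forcing $\gamma\in\Gamma_E$ as well. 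This shows $\mathrm{Stab}_{\Gamma_F}\{\pm\alpha\}\subseteq\Gamma_E$, so by the previous lemma in the excerpt $F_{\pm\alpha}=F_{\pm\alpha^{\mathrm{op}}}=E_{\pm\alpha}$ and in particular $E\subseteq F_{\pm\alpha}\subseteq F_\alpha$.

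The crucial consequence is that no element of $\Gamma_F\smallsetminus\Gamma_E$ can move $\alpha$ into $\{\pm\alpha\}$: indeed, such a $\gamma$ would lie in $\mathrm{Stab}_{\Gamma_F}\{\pm\alpha\}=\Gamma_{F_{\pm\alpha}}\subseteq\Gamma_E$, a contradiction. Therefore, in the definition of the $\ast$-action, any $\gamma\in\Gamma_F\smallsetminus\Gamma_E$ satisfies $\gamma\ast\alpha=-\gamma\cdot\alpha\neq\alpha$, while for $\gamma\in\Gamma_E$ one has $\gamma\ast\alpha=\gamma\cdot\alpha$, which equals $\alpha$ precisely when $\gamma\in\mathrm{Stab}_{\Gamma_E}(\alpha)=\Gamma_{F_\alpha}$. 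Hence
\[
\mathrm{Stab}_{\Gamma_F}(\alpha^{\mathrm{op}})=\Gamma_{F_\alpha},
\]
which upon passing to fixed fields yields $F_{\alpha^{\mathrm{op}}}=F_\alpha$.

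There is essentially no obstacle here; this is a purely formal unwinding of definitions, and the only substantive step is checking that $F_{\pm\alpha}\supseteq E$ under the given hypotheses, which as described above uses the double symmetry of $\alpha$ together with $E_\alpha=F_\alpha$ in a one-line argument.
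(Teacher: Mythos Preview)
Your proof is correct. Both your argument and the paper's hinge on the observation that, under the hypotheses, $\mathrm{Stab}_{\Gamma_F}\{\pm\alpha\}\subseteq\Gamma_E$ (equivalently $E\subseteq F_{\pm\alpha}$, equivalently $E_{\pm\alpha}=F_{\pm\alpha}$). From there you proceed by a direct group-theoretic computation of the $\ast$-stabilizer, splitting into the cases $\gamma\in\Gamma_E$ and $\gamma\in\Gamma_F\smallsetminus\Gamma_E$. The paper instead argues field-theoretically: it notes that $\alpha^{\mathrm{op}}$ is symmetric over $E$ and hence over $F$, so $F_{\pm\alpha^{\mathrm{op}}}\subsetneq F_{\alpha^{\mathrm{op}}}$, and then sandwiches $F_{\alpha^{\mathrm{op}}}$ strictly between $F_{\pm\alpha^{\mathrm{op}}}=E_{\pm\alpha}$ and $E_\alpha$, a degree-$2$ extension, forcing $F_{\alpha^{\mathrm{op}}}=E_\alpha=F_\alpha$. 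Your approach is marginally more self-contained in that it does not invoke the symmetry of $\alpha^{\mathrm{op}}$ as a separate step; the paper's approach is a bit shorter once that symmetry is granted. Both are straightforward unwindings of the definitions.
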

\begin{proof}
Notice that $\alpha^{\mathrm{op}}$ is symmetric over $E$, hence it is necessarily symmetric over $F$. Hence we have $$F_{\pm\alpha^{\mathrm{op}}}=E_{\pm\alpha}\subsetneq F_{\alpha^{\mathrm{op}}}\subset E_{\alpha},$$
which means that $F_{\alpha^{\mathrm{op}}}=E_{\alpha}=F_{\alpha}$.
\end{proof}

\begin{lemma}
If $E_\alpha=F_{\alpha}$ and $\alpha$ is symmetric over $F$ but asymmetric over $E$. Then $F_{\alpha^{\mathrm{op}}}=F_{\pm\alpha^{\mathrm{op}}}$.
\end{lemma}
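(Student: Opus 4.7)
The plan is to unwind the definitions of $F_{\alpha^{\mathrm{op}}}$ and $F_{\pm\alpha^{\mathrm{op}}}$ and show that, under the hypotheses, the twisted stabilizer of $\alpha$ already coincides with the stabilizer of $\{\pm\alpha\}$. First I would extract the concrete field theory forced by the hypotheses: since $\alpha$ is asymmetric over $E$ we have $E_\alpha=E_{\pm\alpha}$, and combined with $E_\alpha=F_\alpha$ this gives the chain $F_\alpha=E_\alpha=E_{\pm\alpha}\supsetneq F_{\pm\alpha}$. In particular $F_\alpha\supseteq E$, so $\Gamma_{F_\alpha}\subseteq \Gamma_E$.

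Next I would pick an element $\tau\in \Gamma_{F_{\pm\alpha}}\setminus \Gamma_{F_\alpha}$, which exists since $F_\alpha/F_{\pm\alpha}$ is quadratic ($\alpha$ is symmetric over $F$), and satisfies $\tau\cdot\alpha=-\alpha$. The asymmetry of $\alpha$ over $E$ forces $\tau\notin\Gamma_E$, so $\tau$ projects to the nontrivial element $\sigma\in\mathrm{Gal}(E/F)$. By the definition of the twisted action, $\tau\ast\alpha=-\tau\cdot\alpha=\alpha$, so $\tau\in\mathrm{Stab}_{\Gamma_F}(\alpha^{\mathrm{op}})$. Meanwhile every $\gamma\in\Gamma_{F_\alpha}\subseteq \Gamma_E$ acts by $\gamma\ast\alpha=\gamma\cdot\alpha=\alpha$, so $\Gamma_{F_\alpha}\subseteq \mathrm{Stab}_{\Gamma_F}(\alpha^{\mathrm{op}})$ as well.

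Combining the two inclusions and using $\Gamma_{F_{\pm\alpha}}=\Gamma_{F_\alpha}\sqcup\tau\Gamma_{F_\alpha}$, I get $\Gamma_{F_{\pm\alpha}}\subseteq \mathrm{Stab}_{\Gamma_F}(\alpha^{\mathrm{op}})$. On the other hand, trivially $\mathrm{Stab}_{\Gamma_F}(\alpha^{\mathrm{op}})\subseteq \mathrm{Stab}_{\Gamma_F}\{\pm\alpha^{\mathrm{op}}\}=\Gamma_{F_{\pm\alpha^{\mathrm{op}}}}=\Gamma_{F_{\pm\alpha}}$, where the last equality is the first lemma in this subsection. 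Hence $\mathrm{Stab}_{\Gamma_F}(\alpha^{\mathrm{op}})=\Gamma_{F_{\pm\alpha^{\mathrm{op}}}}$, which by Galois correspondence means $F_{\alpha^{\mathrm{op}}}=F_{\pm\alpha^{\mathrm{op}}}$, as claimed.

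There is no real obstacle here beyond bookkeeping: the only thing to watch is that the argument $\tau\ast\alpha=\alpha$ uses both parts of the hypothesis (symmetric over $F$ supplies $\tau$ with $\tau\cdot\alpha=-\alpha$, and asymmetric over $E$ guarantees $\tau\notin\Gamma_E$ so that the sign flip in the twisted action applies). Everything else is formal Galois theory once the fields are aligned.
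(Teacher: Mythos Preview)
Your proof is correct and follows essentially the same idea as the paper's: both hinge on the observation that the element $\tau\in\Gamma_{F_{\pm\alpha}}\setminus\Gamma_{F_\alpha}$ projects to $\sigma$ (by asymmetry over $E$) and therefore fixes $\alpha$ under the twisted action. The paper phrases it as an elimination argument—$F_{\alpha^{\mathrm{op}}}$ is sandwiched in the quadratic extension $F_\alpha/F_{\pm\alpha}$ and cannot equal $F_\alpha$—while you directly exhibit both inclusions of stabilizers; the content is the same.
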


\begin{proof}
Notice that we have $F_{\pm\alpha^{\mathrm{op}}}\subset F_{\alpha^{\mathrm{op}}}\subset E_{\alpha}$ and $E_{\alpha}=F_{\alpha}$ over $F_{\pm\alpha^{\mathrm{op}}}=F_{\pm\alpha}$ is quadratic. Furthermore, we know $F_{\alpha^{\mathrm{op}}}\neq F_{\alpha}$ since $\sigma$ action on $\alpha^{\mathrm{op}}$ is non-trivial. Hence $ F_{\alpha^{\mathrm{op}}}=F_{\pm\alpha^{\mathrm{op}}}$, which means that $\alpha^{\mathrm{op}}$ is asymmetric over $F$.
\end{proof}

\subsubsection{Explicit computation.}
\begin{enumerate}

\item The $\Gamma_F$ orbit of $\alpha$ remains a single $\Gamma_E$ orbit.

The symmetry of $\alpha$ is necessarily the same over $E$ and $F$.

\begin{enumerate}

\item Symmetric case

\begin{enumerate}
\item $\alpha^{\mathrm{op}}$ is symmetric ramified both over $E$ and $F$.

In this case, $\alpha$ is also symmetric over $F$ since $\alpha$ is symmetric over $E$. Then $E_{\alpha}$ is a biquadratic extension of $F_{\pm{\alpha}^{\mathrm{op}}}=F_{\pm{\alpha}}$ with three intermediate fields $F_{\alpha}, F_{{\alpha}^{\mathrm{op}}},E_{\pm\alpha}$. We have the following diagram:
$$\xymatrix{& E_{\alpha} \ar@{-}[dl]_{\mathrm{ur}}
\ar@{-}[dr]^{\mathrm{r}} \\ F_{\alpha^{\mathrm{op}}} \ar@{-}[dr]_{\mathrm{r}} &
 & E_{\pm\alpha} \ar@{-}[dl]^{\mathrm{ur}}, \\ & F_{\pm\alpha^{\mathrm{op}}}}  ~~\xymatrix{& k_{E_{\alpha}} \ar@{-}[dl]
\ar@{=}[dr] \\ k_{F_{\alpha^{\mathrm{op}}}} \ar@{=}[dr] &
 & k_{E_{\pm\alpha}}. \ar@{-}[dl] \\ & k_{F_{\pm\alpha^{\mathrm{op}}}}}$$

Under the isomorphism $E_{\alpha}^{\times}\cong O_{E_{\alpha}}^{\times}\times \varpi_{E_{\alpha}}^{\mathbb{Z}}\cong k_{E_{\alpha}}^{\times}\times (1+\varpi_{E_{\alpha}} O_{E_{\alpha}})\times  \varpi_{E_{\alpha}}^{\mathbb{Z}} $, $\chi_{\mu\circ \mathrm{Nm}_{S(E)/S^{\mathrm{op}}(F)},\alpha}$ is the trivial character on
$1+\varpi_{E_{\alpha}} O_{E_{\alpha}}$, the unique non-trivial character on $k_{E_{\alpha}}^{\times}$, and is determined by its value at $\varpi_{E_{\alpha}}^{\mathbb{Z}}$. Notice that $\chi_{\mathrm{BC},\alpha^{\mathrm{op}}}=\chi_{\mu}\circ \mathrm{Nm}_{E_{\alpha}/F_{\alpha^{\mathrm{op}}}}$ is the trivial character on $1+\varpi_{E_{\alpha}} O_{E_{\alpha}}$, the unique non-trivial character on $k_{E_{\alpha}}^{\times}$, and is also determined by its value at $\varpi_{E_{\alpha}}^{\mathbb{Z}}$ by the surjectivity of
$$\mathrm{Nm}_{E_{\alpha}/F_{\alpha^{\mathrm{op}}}}:1+\varpi_{E_{\alpha}} O_{E_{\alpha}}\rightarrow 1+\varpi_{F_{\alpha^{\mathrm{op}}}}O_{F_{\alpha^{\mathrm{op}}}},$$
together with the fact
$$\mathrm{sgn}_{k_{E_\alpha}^{\times}}=\mathrm{sgn}_{k_{F_{\alpha^{\mathrm{op}}}}^{\times}}\circ \mathrm{Nm}_{k_{E_\alpha}/k_{F_{\alpha^{\mathrm{op}}}}}.$$
This means that $\chi_{\mathrm{BC},\alpha^{\mathrm{op}}}$ is also tamely ramified over $E$. Hence $\zeta_{\alpha}$ is trivial on $O_{E_{\alpha}}^{\times}$, and is determined by the difference of these two $\chi$-data. To compare these two $\chi$-data at the symmetric ramified root $\alpha$ over $E$, one only needs to evaluate the two character at $2a_{\alpha^{\mathrm{op}}}$ for certain mod-$a$-data over $E$.

\begin{proposition}
Let $a_{\alpha}\in F_{\alpha}$ be the $a$-data corresponding to a character $\mu:S(F)\rightarrow\mathbb{C}^{\times}$, then $a_{\alpha}\in E_{\alpha}$ is also the $a$-data corresponding to the character $\mu\circ\mathrm{Nm}_{S(E)/S(F)}:S(E)\rightarrow\mathbb{C}^{\times}$.
\end{proposition}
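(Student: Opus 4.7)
The plan is to unwind the defining identity of $a_{\alpha}$ on both sides of the base change and reduce the claim to the compatibility $\mathrm{Nm}_{E_{\alpha}/F_{\alpha}}(1+Y)\equiv 1+\mathrm{tr}_{E_{\alpha}/F_{\alpha}}(Y)$ modulo higher depth, coupled with the convention $\psi_{E_{\alpha}}=\psi_{F}\circ\mathrm{tr}_{E_{\alpha}/F}$. Recall that $\overline{a_{\alpha}}$ is characterized by Kaletha's formula
\[
\phi_{d-1}\bigl(\mathrm{Nm}_{S(F_{\alpha})/S(F)}\,\alpha^{\vee}(1+X)\bigr)=\psi_{F}\bigl(\mathrm{tr}_{F_{\alpha}/F}(\overline{a_{\alpha}}\cdot X)\bigr),\qquad X\in[F_{\alpha}]_{r}/[F_{\alpha}]_{r+},
\]
where $\phi_{d-1}$ is the $(d-1)$-st component of a chosen Howe factorization of $\mu$. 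First I would establish that the Howe factorization of the base-changed character $\mu\circ\mathrm{Nm}_{S(E)/S(F)}$ can be chosen so that its $(d-1)$-st component is $\phi_{d-1}^{\mathrm{BC}}:=\phi_{d-1}\circ\mathrm{Nm}_{G^{d-1}(E)/G^{d-1}(F)}$, with the same twisted Levi tower $\overrightarrow{G}\times_{F}E$; this is essentially formal from the construction in \Cref{howefactorization}, together with the fact that the root systems $\Phi_{r^+}$ are preserved under base change.

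Next I would compute, for $Y\in[E_{\alpha}]_{r}/[E_{\alpha}]_{r+}$, the quantity
\[
\phi_{d-1}^{\mathrm{BC}}\bigl(\mathrm{Nm}_{S(E_{\alpha})/S(E)}\,\alpha^{\vee}(1+Y)\bigr).
\]
Using the functoriality of norms, this equals $\phi_{d-1}\bigl(\mathrm{Nm}_{S(E_{\alpha})/S(F)}\,\alpha^{\vee}(1+Y)\bigr)$. Factoring $\mathrm{Nm}_{S(E_{\alpha})/S(F)}=\mathrm{Nm}_{S(F_{\alpha})/S(F)}\circ\mathrm{Nm}_{S(E_{\alpha})/S(F_{\alpha})}$ and using $\alpha^{\vee}\circ\mathrm{Nm}_{E_{\alpha}/F_{\alpha}}=\mathrm{Nm}_{S(E_{\alpha})/S(F_{\alpha})}\circ\alpha^{\vee}$, the argument becomes $\mathrm{Nm}_{S(F_{\alpha})/S(F)}\,\alpha^{\vee}\bigl(\mathrm{Nm}_{E_{\alpha}/F_{\alpha}}(1+Y)\bigr)$. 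In the present case $E_{\alpha}/F_{\alpha}$ is a (possibly trivial) quadratic extension, and a direct expansion yields
\[
\mathrm{Nm}_{E_{\alpha}/F_{\alpha}}(1+Y)=1+\mathrm{tr}_{E_{\alpha}/F_{\alpha}}(Y)+Y\cdot\sigma(Y),
\]
with $Y\sigma(Y)\in[F_{\alpha}]_{2r}\subset[F_{\alpha}]_{r+}$ since $r>0$, so this expression represents the same class as $1+\mathrm{tr}_{E_{\alpha}/F_{\alpha}}(Y)$ in $[F_{\alpha}]_{r}/[F_{\alpha}]_{r+}$.

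Applying the defining identity of $a_{\alpha}$ with $X=\mathrm{tr}_{E_{\alpha}/F_{\alpha}}(Y)$, the expression above collapses to
\[
\psi_{F}\bigl(\mathrm{tr}_{F_{\alpha}/F}(\overline{a_{\alpha}}\cdot\mathrm{tr}_{E_{\alpha}/F_{\alpha}}(Y))\bigr)=\psi_{F}\bigl(\mathrm{tr}_{E_{\alpha}/F}(\overline{a_{\alpha}}\cdot Y)\bigr)=\psi_{E_{\alpha}}(\overline{a_{\alpha}}\cdot Y),
\]
where the first equality uses $\overline{a_{\alpha}}\in F_{\alpha}$ to pull it through $\mathrm{tr}_{E_{\alpha}/F_{\alpha}}$ and the transitivity of trace, and the second uses $\psi_{E_{\alpha}}=\psi_{F}\circ\mathrm{tr}_{E_{\alpha}/F}$. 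This is exactly the characterizing identity for the mod-$a$-data of $\mu\circ\mathrm{Nm}_{S(E)/S(F)}$ at the root $\alpha$ over $E$, so $\overline{a_{\alpha}}$ serves as the required element.

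The main obstacle I anticipate is not the algebraic computation itself but verifying that a Howe factorization of $\mu\circ\mathrm{Nm}_{S(E)/S(F)}$ really admits the parallel form $\phi_{-1}^{\mathrm{BC}},\ldots,\phi_{d}^{\mathrm{BC}}$ with $\phi_{i}^{\mathrm{BC}}=\phi_{i}\circ\mathrm{Nm}_{G^{i}(E)/G^{i}(F)}$, preserving depths and genericity over $E$. One must check that $\phi_{i}^{\mathrm{BC}}$ remains $G^{i+1}$-generic in Yu's sense, which in residue characteristic $p\neq 2$ and for the tamely ramified extension $E/F$ follows from the compatibility of the Moy-Prasad filtrations and generic elements under unramified/tame base change; the uniqueness clause in \Cref{naivebasechange} then pins down $a_{\alpha}$ independently of the factorization chosen. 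Once this is in place, the rest of the argument is the short computation above.
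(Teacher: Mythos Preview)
Your proposal is correct and follows essentially the same computation as the paper's proof: factor the norm $\mathrm{Nm}_{S(E_{\alpha})/S(F)}$ through $\mathrm{Nm}_{S(F_{\alpha})/S(F)}\circ\mathrm{Nm}_{S(E_{\alpha})/S(F_{\alpha})}$, use $\mathrm{Nm}_{E_{\alpha}/F_{\alpha}}(1+Y)\equiv 1+\mathrm{tr}_{E_{\alpha}/F_{\alpha}}(Y)$ modulo higher depth, apply the defining identity for $a_{\alpha}$, and finish with transitivity of trace. The only difference is that the paper carries out the computation with $\mu$ itself rather than with $\phi_{d-1}$, so your discussion of base-changing the Howe factorization, while correct in spirit, is not actually needed for the argument to go through.
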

\begin{proof}
This follows from the following direct computation.
\begin{equation*}
\begin{aligned}
&\mu\circ\mathrm{Nm}_{S(E)/S(F)}(\mathrm{Nm}_{S(E_{\alpha})/S(E)}\alpha^{\vee}(1+X))\\
=&\mu\circ\mathrm{Nm}_{S(F_{\alpha})/S(F)}(\mathrm{Nm}_{S(E_{\alpha})/S(F_{\alpha})}\alpha^{\vee}(1+X))\\
=&\mu\circ\mathrm{Nm}_{S(F_{\alpha})/S(F)}\alpha^{\vee}(1+\mathrm{tr}_{E_{\alpha}/F_{\alpha}}X))\\
=&\psi_{F}(\mathrm{tr}_{F_{\alpha}/F}(a_{\alpha}\cdot \mathrm{tr}_{E_{\alpha}/F_{\alpha}}X))\\
=&\psi_{F}(\mathrm{tr}_{F_{\alpha}/F} \mathrm{tr}_{E_{\alpha}/F_{\alpha}}(a_{\alpha}\cdot X))\\
=&\psi_{F}( \mathrm{tr}_{E_{\alpha}/F}(a_{\alpha}\cdot X)).
\end{aligned}
\end{equation*}
\end{proof}

\begin{corollary}
$$\chi_{\mathrm{BC}_{\mu},\alpha}(2a_{\alpha})=\left\{\begin{matrix}
\chi_{\mu,\alpha}(2a_\alpha)=\lambda_{F_{\alpha}/F_{\pm\alpha}}     &  F_{\alpha}=E_{\alpha},\\
(\chi_{\mu,\alpha}(2a_\alpha))^{2}=\lambda_{F_{\alpha}/F_{\pm\alpha}}^{2}     & [E_{\alpha}:F_{\alpha}]=2.
\end{matrix}
\right.$$
$$\chi_{\mu\circ \mathrm{Nm}_{S(E)/S(F)},\alpha}(2a_{\alpha})=\lambda_{E_{\alpha}/E_{\pm\alpha}}.$$
\end{corollary}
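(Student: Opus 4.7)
The corollary is essentially a direct unpacking of two definitions, namely Kaletha's canonical minimally ramified tamely ramified $\chi$-datum and the naive base change $\chi$-datum $\chi_{\mathrm{BC}_\mu,\alpha} = \chi_{\mu,\alpha}\circ \mathrm{Nm}_{E_\alpha/F_\alpha}$, together with the proposition immediately preceding the statement (which identifies $a_\alpha \in F_\alpha \subset E_\alpha$ as simultaneously serving as $a$-datum for $\mu$ over $F$ and for $\mu\circ\mathrm{Nm}_{S(E)/S(F)}$ over $E$). The main things to verify are compatible choices of mod-$a$-data on the two sides and the behaviour of the norm map on $2a_\alpha$ in the two subcases $F_\alpha=E_\alpha$ versus $[E_\alpha:F_\alpha]=2$.

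For the first identity, I would proceed case by case. When $F_\alpha = E_\alpha$, the norm map $\mathrm{Nm}_{E_\alpha/F_\alpha}$ is the identity, so $\chi_{\mathrm{BC}_\mu,\alpha}(2a_\alpha) = \chi_{\mu,\alpha}(2a_\alpha)$, and Kaletha's prescription yields $\lambda_{F_\alpha/F_{\pm\alpha}}$. When $[E_\alpha:F_\alpha]=2$, using $2a_\alpha \in F_\alpha^\times$, we have $\mathrm{Nm}_{E_\alpha/F_\alpha}(2a_\alpha) = (2a_\alpha)^2$, so $\chi_{\mathrm{BC}_\mu,\alpha}(2a_\alpha) = \chi_{\mu,\alpha}((2a_\alpha)^2) = \chi_{\mu,\alpha}(2a_\alpha)^2 = \lambda_{F_\alpha/F_{\pm\alpha}}^2$. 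In this subcase one should also verify that $2a_\alpha$ still represents the correct class in $F_\alpha^\times/(F_\alpha^\times)^2$ to apply Kaletha's defining equation; this follows because $a_\alpha$ was chosen as the $a$-datum attached to $\mu$.

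For the second identity, the preceding proposition tells us that the same $a_\alpha$ (now regarded as an element of $E_\alpha$) furnishes an $a$-datum for the character $\mu\circ\mathrm{Nm}_{S(E)/S(F)}$ on $S(E)$. Consequently the Kaletha prescription applied over the field $E$ (with the additive character $\psi_F\circ\mathrm{tr}_{E/F}$, for which the compatibility $\mathrm{tr}_{E_\alpha/F} = \mathrm{tr}_{E/F}\circ\mathrm{tr}_{E_\alpha/E}$ ensures the defining Moy--Prasad identity carries over unchanged) gives directly $\chi_{\mu\circ\mathrm{Nm}_{S(E)/S(F)},\alpha}(2a_\alpha) = \lambda_{E_\alpha/E_{\pm\alpha}}$.

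The computation is essentially mechanical once the preceding proposition is in hand; the only subtlety worth writing out carefully is the squaring phenomenon in the $[E_\alpha:F_\alpha]=2$ case, which is precisely the feature that makes $\chi_{\mathrm{BC}_\mu,\alpha}$ fail to be minimally ramified in general and thus motivates the discrepancy measured by $\zeta_\alpha = \chi_{\mathrm{BC}_\mu,\alpha}/\chi_{\mu\circ\mathrm{Nm}_{S(E)/S(F)},\alpha}$ discussed in the surrounding paragraphs.
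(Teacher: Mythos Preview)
Your proposal is correct and matches the paper's approach: the paper gives no explicit proof here, treating the statement as an immediate consequence of the preceding proposition together with the definitions of $\chi_{\mathrm{BC},\alpha}=\chi_{\mu,\alpha}\circ\mathrm{Nm}_{E_\alpha/F_\alpha}$ and Kaletha's defining relation $\chi_{\mu,\alpha}(2a_\alpha)=\lambda_{F_\alpha/F_{\pm\alpha}}$. Your unpacking of the two cases via $\mathrm{Nm}_{E_\alpha/F_\alpha}(2a_\alpha)=(2a_\alpha)^{[E_\alpha:F_\alpha]}$ for $2a_\alpha\in F_\alpha^\times$ is exactly the intended argument.
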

Apply the above proposition, we know that $\zeta$ is given by $\frac{\lambda_{F_{\alpha^{\mathrm{op}}}/F_{\pm\alpha^{\mathrm{op}}}}^{2}}{\lambda_{E_{\alpha}/E_{\pm\alpha}}}$ on $2a_{\alpha^{\mathrm{op}}}\in F_{\alpha^{\mathrm{op}}}\subset E_{\alpha}$. According to Bushnell and Henniart \cite[1.5.2]{BH05}, for inclusions $F\subset E\subset K$ such that $K/F$ is a biquadratic extension, we have a formula:
$$\lambda_{K/F}=\lambda_{K/E}\lambda_{E/F}^{[K:E]}=\lambda_{K/E}\lambda_{E/F}^{2}.$$
Hence we have:
$$\lambda_{E_{\alpha}/F_{\pm\alpha^{\mathrm{op}}}}=\lambda_{E_{\alpha}/E_{\pm\alpha}}\lambda_{E_{\pm\alpha}/F_{\pm\alpha^{\mathrm{op}}}}^{2}=\lambda_{E_{\pm\alpha}/F_{\alpha^{\mathrm{op}}}}\lambda_{F_{\alpha^{\mathrm{op}}}/F_{\pm\alpha^{\mathrm{op}}}}^{2},$$
which implies
$$\frac{\lambda_{F_{\alpha^{\mathrm{op}}}/F_{\pm\alpha^{\mathrm{op}}}}^{2}}{\lambda_{E_{\alpha}/E_{\pm\alpha}}}=\frac{\lambda_{E_{\pm\alpha}/F_{\pm\alpha^{\mathrm{op}}}}^{2}}{\lambda_{E_{\alpha}/F_{\alpha^{\mathrm{op}}}}}.$$

By \cite[Lemma 1.5]{BH05}, for any unramified extension $E/F$ of degree $n$, $\lambda_{E/F}=(-1)^{n-1}$. Hence $\zeta_{\alpha}(2a_{\alpha})=-1$, which means it is the unramified quadratic character of $E_{\alpha}$ associated to the unique unramified extension $E_{\alpha}'/E_{\alpha}$, which means that $\zeta_{\alpha}|_{F_{\alpha}^{\times}}$ is the unramified quadratic character $\omega_{E_{\alpha}/F_{\alpha}}$.

Hence, the $\alpha$ component of $\zeta_{T(E)}|_{S(F)}$ is given by:
\begin{equation}\label{symram}
\zeta_{({\chi_{\mathrm{BC}}}_{\mu,\alpha},\chi_{\mu\circ\mathrm{Nm}_{T(E)/S^{\mathrm{op}}(F)},\alpha})}|_{F_{\alpha}^{\times}}(\iota_{F_\alpha}\alpha(t))
=\omega_{E_{\alpha}/F_{\alpha}}(\iota_{F_\alpha}\alpha (t)).
\end{equation}

\item $\alpha^{\mathrm{op}}$ is symmetric unramified over $E$ and symmetric ramified over $F$.
We have the following diagram:
$$\xymatrix{& E_{\alpha} \ar@{-}[dl]_{\mathrm{ur}}
\ar@{-}[dr]^{\mathrm{ur}} \\ F_{\alpha^{\mathrm{op}}} \ar@{-}[dr]_{\mathrm{r}} &
 & E_{\pm\alpha} \ar@{-}[dl]^{\mathrm{r}}, \\ & F_{\pm\alpha^{\mathrm{op}}}}  ~~\xymatrix{& k_{E_{\alpha}} \ar@{-}[dl]
\ar@{-}[dr] \\ k_{F_{\alpha^{\mathrm{op}}}} \ar@{=}[dr] &
 & k_{E_{\pm\alpha}}. \ar@{=}[dl] \\ & k_{F_{\pm\alpha^{\mathrm{op}}}}}$$
$$\zeta_{({\chi_{\mathrm{BC}}}_{\mu,\alpha},\chi_{\mu\circ\mathrm{Nm}_{T(E)/S^{\mathrm{op}}(F)},\alpha})}(\iota_{E_\alpha}\alpha(t))=\frac{\omega_{{E_{\alpha}}'/E_{\alpha}}}{\chi_{\alpha^{\mathrm{op}}}\circ \mathrm{Nm}_{E_{\alpha}/F_{\alpha^{\mathrm{op}}}}}(\iota_{E_\alpha}\alpha (t)),$$
where ${E_{\alpha}}'$ is the unramified extension of $E_{\alpha}$, and $\chi_{\alpha^{\mathrm{op}}}:F_{\alpha^{\mathrm{op}}}^{\times}\rightarrow \{\pm1\}$ is the tamely ramified $\chi$-data determined by the mod $a$ data. Notice that we have
$$(\omega_{{E_{\alpha}}'/E_{\alpha}}\circ\iota_{E_{\alpha}}\circ \alpha)|_{S(F)}=\omega_{{E_{\alpha}}'/E_{\alpha}}|_{F_{\alpha}^{\times}}\circ \iota_{F_{\alpha}}\circ \alpha|_{S(F)}.$$

Since $E_{\alpha}/F_{\alpha}$ is ramified, hence we have $\varpi_{F_{\alpha}}=\varpi_{E_{\alpha}}^{2}$ mod $O_{E_{\alpha}}^{\times}$. By the fact that $\omega_{{E_{\alpha}}'/E_{\alpha}}$ is an unramified character and $F_{\alpha}^{\times}=O_{F_{\alpha}}^{\times}\times \varpi_{F_{\alpha}}^{\mathbb{Z}}$, we know that $\omega_{{E_{\alpha}}'/E_{\alpha}}|_{F_{\alpha}^{\times}}$ is trivial. Hence we only need to compute $(\chi_{\alpha^{\mathrm{op}}}\circ \mathrm{Nm}_{E_{\alpha}/F_{\alpha^{\mathrm{op}}}})|_{F_{\alpha}^{\times}}$ explicitly.

\begin{lemma}
$(\chi_{\alpha^{\mathrm{op}}}\circ \mathrm{Nm}_{E_{\alpha}/F_{\alpha^{\mathrm{op}}}})|_{F_{\alpha}^{\times}}$ is $\omega_{E_{\alpha}/F_{\alpha}}$.
\end{lemma}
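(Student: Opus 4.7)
The plan is to unwind the norm map and then invoke the defining property of a $\chi$-datum together with the functoriality of local class field theory. Recall from the earlier lemma that in this case $E_{\alpha}/F_{\pm\alpha}=F_{\pm\alpha^{\mathrm{op}}}$ is biquadratic with three distinct intermediate fields $F_{\alpha}$, $F_{\alpha^{\mathrm{op}}}$, $E_{\pm\alpha}$, so the Galois-theoretic framework we need is already in place.

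First I would compute the restriction of $\mathrm{Nm}_{E_{\alpha}/F_{\alpha^{\mathrm{op}}}}$ to $F_{\alpha}^{\times}$. Let $\tau$ denote the nontrivial element of $\mathrm{Gal}(E_{\alpha}/F_{\alpha^{\mathrm{op}}})$. Because $F_{\alpha}$ and $F_{\alpha^{\mathrm{op}}}$ are distinct quadratic subfields with $F_{\alpha}\cap F_{\alpha^{\mathrm{op}}}=F_{\pm\alpha}$ and $F_{\alpha}\cdot F_{\alpha^{\mathrm{op}}}=E_{\alpha}$, the restriction $\tau|_{F_{\alpha}}$ is the nontrivial element of $\mathrm{Gal}(F_{\alpha}/F_{\pm\alpha})$. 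Hence for $x\in F_{\alpha}^{\times}$,
$$\mathrm{Nm}_{E_{\alpha}/F_{\alpha^{\mathrm{op}}}}(x)=x\cdot\tau(x)=\mathrm{Nm}_{F_{\alpha}/F_{\pm\alpha}}(x)\in F_{\pm\alpha}^{\times}\subset F_{\alpha^{\mathrm{op}}}^{\times}.$$

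Next, I would invoke the defining property of a $\chi$-datum for the symmetric root $\alpha^{\mathrm{op}}$, namely $\chi_{\alpha^{\mathrm{op}}}|_{F_{\pm\alpha^{\mathrm{op}}}^{\times}}=\omega_{F_{\alpha^{\mathrm{op}}}/F_{\pm\alpha^{\mathrm{op}}}}$. Using $F_{\pm\alpha^{\mathrm{op}}}=F_{\pm\alpha}$ and combining with the previous step yields
$$(\chi_{\alpha^{\mathrm{op}}}\circ \mathrm{Nm}_{E_{\alpha}/F_{\alpha^{\mathrm{op}}}})|_{F_{\alpha}^{\times}}=\omega_{F_{\alpha^{\mathrm{op}}}/F_{\pm\alpha}}\circ\mathrm{Nm}_{F_{\alpha}/F_{\pm\alpha}}.$$
Finally, by the standard functoriality of local class field theory (pulling back a quadratic character of $F_{\pm\alpha}^{\times}$ along $\mathrm{Nm}_{F_{\alpha}/F_{\pm\alpha}}$ corresponds to base-changing the associated quadratic extension), the right-hand side coincides with $\omega_{F_{\alpha^{\mathrm{op}}}\cdot F_{\alpha}/F_{\alpha}}=\omega_{E_{\alpha}/F_{\alpha}}$, which is the desired equality.

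No serious obstacle is anticipated; the argument is essentially a short exercise in class field theory. The one point that must be verified carefully is the Galois-theoretic identification $\tau|_{F_{\alpha}}\neq \mathrm{id}$, which rests on the fact that $F_{\alpha}$ and $F_{\alpha^{\mathrm{op}}}$ are genuinely distinct intermediate fields of the biquadratic tower; this is precisely the content of the earlier structural lemma for $F_{\alpha^{\mathrm{op}}}$, so it requires no additional work here.
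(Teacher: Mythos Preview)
Your proposal is correct and follows essentially the same route as the paper: reduce $\mathrm{Nm}_{E_{\alpha}/F_{\alpha^{\mathrm{op}}}}|_{F_{\alpha}^{\times}}$ to $\mathrm{Nm}_{F_{\alpha}/F_{\pm\alpha}}$, apply the $\chi$-data condition $\chi_{\alpha^{\mathrm{op}}}|_{F_{\pm\alpha}^{\times}}=\omega_{F_{\alpha^{\mathrm{op}}}/F_{\pm\alpha}}$, and finish with the class-field-theory identity $\omega_{F_{\alpha^{\mathrm{op}}}/F_{\pm\alpha}}\circ\mathrm{Nm}_{F_{\alpha}/F_{\pm\alpha}}=\omega_{E_{\alpha}/F_{\alpha}}$. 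Your explicit justification of the norm-restriction step via $\tau|_{F_{\alpha}}\neq\mathrm{id}$ is a nice touch, but otherwise the argument matches the paper's.
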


\begin{proof}

Notice that we have $\chi_{\alpha^{\mathrm{op}}}|_{F_{\pm\alpha}^{\times}}=\omega_{F_{\alpha^{\mathrm{op}}}/F_{\pm\alpha}}$ by the definition of $\chi$-data, which implies
\begin{equation*}
\begin{aligned}
(\chi_{\alpha^{\mathrm{op}}}\circ \mathrm{Nm}_{E_{\alpha}/F_{\alpha}^{\mathrm{op}}})|_{F_{\alpha}^{\times}}&=\chi_{\alpha^{\mathrm{op}}}|_{F_{\pm\alpha}^{\times}}\circ \mathrm{Nm}_{F_{\alpha}/F_{\pm\alpha}}\\
&=\omega_{F_{\alpha^{\mathrm{op}}}/F_{\pm\alpha}}\circ\mathrm{Nm}_{F_{\alpha}/F_{\pm\alpha}}=\omega_{E_{\alpha}/F_{\alpha}},
\end{aligned}
\end{equation*}
by considering the diagram:
$$\xymatrix{& E_{\alpha} \ar@{-}[dl]_{\mathrm{ur}}
\ar@{-}[dr]^{\mathrm{r}} \\ F_{\alpha^{\mathrm{op}}} \ar@{-}[dr]_{\mathrm{r}} &
 & F_{\alpha}. \ar@{-}[dl]^{\mathrm{ur}} \\ & F_{\pm\alpha}}$$

\end{proof}

Hence the $\alpha$ component of $\zeta_{T(E)}|_{S(F)}$ is given by:
\begin{equation}\label{symur}
\zeta_{({\chi_{\mathrm{BC}}}_{\mu,\alpha},\chi_{\mu\circ\mathrm{Nm}_{T(E)/S^{\mathrm{op}}(F)},\alpha})}|_{F_{\alpha}^{\times}}(\iota_{F_\alpha}\alpha(t))
=\omega_{E_{\alpha}/F_{\alpha}}(\iota_{F_\alpha}\alpha (t)).
\end{equation}

\item $\alpha^{\mathrm{op}}$ is symmetric unramified both over $E$ and $F$.

We have the following diagram:
$$\xymatrix{& E_{\alpha} \ar@{-}[dl]_{\mathrm{r}}
\ar@{-}[dr]^{\mathrm{ur}} \\ F_{\alpha^{\mathrm{op}}} \ar@{-}[dr]_{\mathrm{ur}} &
 & E_{\pm\alpha} \ar@{-}[dl]^{\mathrm{r}}, \\ & F_{\pm\alpha^{\mathrm{op}}}}  ~~\xymatrix{& k_{E_{\alpha}} \ar@{=}[dl]
\ar@{-}[dr] \\ k_{F_{\alpha^{\mathrm{op}}}} \ar@{-}[dr] &
 & k_{E_{\pm\alpha}}. \ar@{=}[dl] \\ & k_{F_{\pm\alpha^{\mathrm{op}}}}}$$

Notice that we have
\begin{equation*}
\begin{aligned}
\zeta_{({\chi_{\mathrm{BC}}}_{\mu,\alpha},\chi_{\mu\circ\mathrm{Nm}_{T(E)/S^{\mathrm{op}}(F)},\alpha})}(\iota_{E_\alpha}\alpha(t))&=\frac{\omega_{{E_{\alpha}}'/E_{\alpha}}}{\omega_{F_{\alpha^{\mathrm{op}}}'/F_{\alpha^{\mathrm{op}}}}\circ \mathrm{Nm}_{E_{\alpha}/F_{\alpha^{\mathrm{op}}}}}(\iota_{E_\alpha}\alpha (t))\\
&=\mathbbm{1}(\iota_{E_\alpha}\alpha(t))=1,
\end{aligned}
\end{equation*}
by considering the diagram
$$\xymatrix{& E_{\alpha}' \ar@{-}[dl]_{\mathrm{r}}
\ar@{-}[dr]^{\mathrm{ur}} \\ F_{\alpha^{\mathrm{op}}}' \ar@{-}[dr]_{\mathrm{ur}} &
 & E_{\alpha}. \ar@{-}[dl]^{\mathrm{r}} \\ & F_{\alpha^{\mathrm{op}}}}$$

Hence, the $\alpha$ component of $\zeta_{T(E)}|_{S(F)}$ is given by:
\begin{equation}\label{symur}
\zeta_{({\chi_{\mathrm{BC}}}_{\mu,\alpha},\chi_{\mu\circ\mathrm{Nm}_{T(E)/S^{\mathrm{op}}(F)},\alpha})}|_{F_{\alpha}^{\times}}(\iota_{F_\alpha}\alpha(t))
=1.
\end{equation}

\end{enumerate}

\item Asymmetric case

If $\alpha^{\mathrm{op}}$ is asymmetric both over $E$ and $F$. Then we have:
$$\zeta_{({\chi_{\mathrm{BC}}}_{\mu,\alpha},\chi_{\mu\circ\mathrm{Nm}_{T(E)/S^{\mathrm{op}}(F)},\alpha})}(\alpha(t))=\frac{\mathbbm{1}_{E_{\alpha}^{\times}}}{\mathbbm{1}_{F_{\alpha^{\mathrm{op}}}^{\times}}\circ \mathrm{Nm}_{E_{\alpha}/F_{\alpha}^{\mathrm{op}}}}(\alpha(t))=1.$$

Hence, the $\alpha$ component of $\zeta_{T(E)}|_{S(F)}$ is given by:
\begin{equation}\label{symur}
\begin{aligned}
\zeta_{({\chi_{\mathrm{BC}}}_{\mu,\alpha},\chi_{\mu\circ\mathrm{Nm}_{T(E)/S^{\mathrm{op}}(F)},\alpha})}|_{F_{\alpha}^{\times}}(\iota_{F_\alpha}\alpha(t))
=1.
\end{aligned}
\end{equation}
\end{enumerate}
\item The $\Gamma_F$ orbit of $\alpha$ breaks into two $\Gamma_E$ orbits

\begin{enumerate}

\item The symmetry of $\alpha$ is the same over $E$ and $F$.

We have the following diagrams
$$\xymatrix{& E_{\alpha} \ar@{=}[dl]
\ar@{-}[dr]^{\mathrm{ur}} \\ F_{\alpha^{\mathrm{op}}} \ar@{-}[dr]_{\mathrm{ur}} &
 & E_{\pm\alpha}, \ar@{=}[dl] \\ & F_{\pm\alpha^{\mathrm{op}}}}  ~~\xymatrix{& k_{E_{\alpha}} \ar@{=}[dl]
\ar@{-}[dr] \\ k_{F_{\alpha^{\mathrm{op}}}} \ar@{-}[dr] &
 & k_{E_{\pm\alpha}}. \ar@{=}[dl] \\ & k_{F_{\pm\alpha^{\mathrm{op}}}}}$$
or
$$\xymatrix{& E_{\alpha} \ar@{=}[dl]
\ar@{-}[dr]^{\mathrm{r}} \\ F_{\alpha^{\mathrm{op}}} \ar@{-}[dr]_{\mathrm{r}} &
 & E_{\pm\alpha}, \ar@{=}[dl] \\ & F_{\pm\alpha^{\mathrm{op}}}}  ~~\xymatrix{& k_{E_{\alpha}} \ar@{=}[dl]
\ar@{=}[dr] \\ k_{F_{\alpha^{\mathrm{op}}}} \ar@{=}[dr] &
 & k_{E_{\pm\alpha}}. \ar@{=}[dl] \\ & k_{F_{\pm\alpha^{\mathrm{op}}}}}$$
or
$$\xymatrix{& E_{\alpha} \ar@{=}[dl]
\ar@{=}[dr] \\ F_{\alpha^{\mathrm{op}}} \ar@{=}[dr] &
 & E_{\pm\alpha}, \ar@{=}[dl] \\ & F_{\pm\alpha^{\mathrm{op}}}}  ~~\xymatrix{& k_{E_{\alpha}} \ar@{=}[dl]
\ar@{=}[dr] \\ k_{F_{\alpha^{\mathrm{op}}}} \ar@{=}[dr] &
 & k_{E_{\pm\alpha}}. \ar@{=}[dl] \\ & k_{F_{\pm\alpha^{\mathrm{op}}}}}$$

 Notice that the equalities $E_{\alpha}=F_{\alpha^{\mathrm{op}}}$ and $E_{\pm\alpha}=F_{\pm\alpha^{\mathrm{op}}}$ imply $\chi_{\alpha,E}=\chi_{\mathrm{BC},\alpha}$. Hence the $\alpha$ component of $\zeta_{T(E)}$ is trivial in all these three cases.

\item The symmetry of $\alpha$ is different over $E$ and $F$.

We have the following diagrams:

\begin{enumerate}
\item
$$\xymatrix{& E_{\alpha} \ar@{=}[dl]
\ar@{=}[dr] \\ F_{\alpha^{\mathrm{op}}} \ar@{-}[dr]_{\mathrm{ur}} &
 & E_{\pm\alpha}, \ar@{-}[dl]^{\mathrm{ur}} \\ & F_{\pm\alpha^{\mathrm{op}}}}  ~~\xymatrix{& k_{E_{\alpha}} \ar@{=}[dl]
\ar@{=}[dr] \\ k_{F_{\alpha^{\mathrm{op}}}} \ar@{-}[dr] &
 & k_{E_{\pm\alpha}}. \ar@{-}[dl] \\ & k_{F_{\pm\alpha^{\mathrm{op}}}}}$$

Notice that $\chi_{\alpha,E}=\mathbbm{1}$ since $\alpha$ is asymmetric over $E$. The $\alpha$ component of $\zeta_{T(E)}|_{S(F)}$ is given by
$$\frac{\chi_{\alpha,E}}{\chi_{\alpha^{\mathrm{op}}}\circ id}(\alpha(t))=\omega_{E_{\alpha}'/E_{\alpha}}(\alpha(t))|_{S(F)}.$$
Furthermore, we have $\alpha(t)\in O_{E_{\alpha}}^{\times}$ and $\omega_{E_{\alpha}'/E_{\alpha}}$ is unramified, hence the $\alpha$ component of $\zeta_{T(E)}|_{S(F)}$ is trivial.

\item
$$\xymatrix{& E_{\alpha} \ar@{=}[dl]
\ar@{=}[dr] \\ F_{\alpha^{\mathrm{op}}} \ar@{-}[dr]_{\mathrm{r}} &
 & E_{\pm\alpha} \ar@{-}[dl]^{\mathrm{r}} \\ & F_{\pm\alpha^{\mathrm{op}}}}  ~~\xymatrix{& k_{E_{\alpha}} \ar@{=}[dl]
\ar@{=}[dr] \\ k_{F_{\alpha^{\mathrm{op}}}} \ar@{=}[dr] &
 & k_{E_{\pm\alpha}} \ar@{=}[dl] \\ & k_{F_{\pm\alpha^{\mathrm{op}}}}}$$

Notice that $\chi_{\alpha,E}=\mathbbm{1}$ since $\alpha$ is asymmetric over $E$. The $\alpha$ component of $\zeta_{T(E)}|_{S(F)}$ is given by
$$\frac{\chi_{\alpha,E}}{\chi_{\alpha^{\mathrm{op}}}\circ id}(\alpha(t))=\chi_{\alpha^{\mathrm{op}}}(\alpha(t))|_{S(F)},$$
where $\chi_{\alpha^{\mathrm{op}}}$ is a tamely ramified $\chi$-data, which is the unique nontrivial quadratic character on $k_{E_{\alpha}}^{\times}$ and determined by mod-$a$-data. Notice that $\alpha(t)\in O_{E_{\alpha}}^{\times}$, hence the $\alpha$ component of $\zeta_{T(E)}|_{S(F)}$ is given by
$$\mathrm{sgn}_{k_{E_{\alpha}}^{\times}}(\alpha(t))|_{S(F)}.$$
\end{enumerate}

\end{enumerate}
\end{enumerate}

\subsection{Tables.}\label{table}
We list tables describing the contributions of each type of roots to these four quadratic characters.

\begin{longtable}{ |c|c| }
\hline
$/F$  & contribution\\[0.5ex]
\hline\hline
asym & $\mathbbm{1}$ \\
\hline
sym ur & $\omega_{E_{\alpha}/F_{\alpha}}(\iota_{F_\alpha}\circ\alpha )$  \\
\hline
sym r & $\omega_{E_{\alpha}/F_{\alpha}}(\iota_{F_\alpha}\circ\alpha)$\\
[1ex]
\hline
\caption{\textbf{Prasad's character.}}\label{table1}
\end{longtable}

\begin{longtable}{ |c|c|c|c|c| }
\hline
$[E_{\alpha}:F_{\alpha}]$ & $/F$ & $/E$ & $E/F$ & contribution\\[0.5ex]
\hline\hline
1 & asym & asym & r/ur &  $\mathbbm{1}$\\
\hline
2 ur & asym & asym & r/ur & $\mathbbm{1}$  \\
\hline
2 r & asym & asym & r & $\mathrm{sgn}_{k_{E_{\alpha}}^{\times}}\circ\alpha|_{S(F)}$ \\
\hline
1 & sym ur & asym & ur & $\mathbbm{1}$\\
\hline
1 & sym ur & sym ur& r/ur & $\mathbbm{1}$\\
\hline
2 r & sym ur & sym ur& r & $\mathrm{sgn}_{k_{E_{\alpha}}^{1}}\circ \alpha|_{S(F)}$ \\
\hline
1 & sym r & asym& r & $\mathrm{sgn}_{k_{E_{\alpha}}^{\times}}\circ\alpha|_{S(F)}$\\
\hline
1 & sym r & sym r& r/ur &  $\mathbbm{1}$\\
\hline
2 ur & sym r & sym r& r/ur & $\mathbbm{1}$\\
\hline
2 ur & sym r & sym ur& r & $\mathrm{sgn}_{k_{E_{\alpha}}^{1}}\circ \alpha|_{S(F)}$\\
[1ex]
\hline
\caption{\textbf{Kaletha's character.}}\label{table2}
\end{longtable}

\begin{longtable}{ |c|c| }
\hline
$/F$  & contribution\\[0.5ex]
\hline\hline
asym & $\mathbbm{1}$ \\
\hline
sym ur & $\mathbbm{1}$  \\
\hline
sym r & $\mathrm{sgn}_{k_{F_{\alpha}}^{\times}}\circ\alpha$\\
[1ex]
\hline
\caption{\textbf{Hakim's character.}}\label{table3}
\end{longtable}

\begin{longtable}{ |c|c|c|c|c| }
\hline
$[E_{\alpha}:F_{\alpha^{\mathrm{op}}}]$ & $/F$ & $/E$ & $E/F$ & contribution\\[0.5ex]
\hline\hline
1 & asym & asym & r/ur &  $\mathbbm{1}$\\
\hline
2 ur & asym & asym & r/ur & $\mathbbm{1}$  \\
\hline
2 r & asym & asym & r & $\mathbbm{1}$ \\
\hline
1 & sym ur & asym & ur & $\mathbbm{1}$\\
\hline
1 & sym ur & sym ur& r/ur & $\mathbbm{1}$\\
\hline
2 r & sym ur & sym ur& r & $\mathbbm{1}$ \\
\hline
1 & sym r & asym& r & $\mathrm{sgn}_{k_{E_{\alpha}}^{\times}}\circ\alpha|_{S(F)}$\\
\hline
1 & sym r & sym r& r/ur &  $\mathbbm{1}$\\
\hline
2 ur & sym r & sym r& r/ur & $\omega_{E_{\alpha}/F_{\alpha}}(\iota_{F_\alpha}\circ\alpha )$\\
\hline
2 ur & sym r & sym ur& r & $\omega_{E_{\alpha}/F_{\alpha}}(\iota_{F_\alpha}\circ\alpha )$\\
[1ex]
\hline
\caption{\textbf{The character associated to $\zeta$-data.}}\label{table4}
\end{longtable}

\begin{longtable}{ |c|c|c|c|c| }
\hline
$[E_{\alpha}:F_{\alpha^{\mathrm{op}}}]$ & $\alpha/F$ & $\alpha/E$ & $\alpha^{\mathrm{op}}/F$ &$E_{\alpha}/F_{\alpha^{\mathrm{op}}}$ \\[0.5ex]
\hline\hline
1 & asym & asym & asym &  1\\
\hline
2 ur & asym & asym & sym ur & 1  \\
\hline
2 r & asym & asym & sym r & 1 \\
\hline
1 & sym ur & asym & asym & 2 ur\\
\hline
1 & sym ur & sym ur& sym ur & 1\\
\hline
2 r & sym ur & sym ur& sym r & 2 ur \\
\hline
1 & sym r & asym& asym & 2 r\\
\hline
1 & sym r & sym r& sym r &  1\\
\hline
2 ur & sym r & sym r& sym r & 2 ur\\
\hline
2 ur & sym r & sym ur& sym ur & 2 r\\
[1ex]
\hline
\caption{\textbf{Comparison between $\alpha$ and $\alpha^{\mathrm{op}}$.}}\label{table5}
\end{longtable}

\subsection{$E/F$ unramified case.}

In particular, we can use our methods to deal with all cases such that $E/F$ is unramified. The unramified assumption enables us to prove many components of these characters are trivial due to some arithmetic reasons.

\begin{theorem}\label{unramifiedprasad}
Prasad's conjecture \Cref{conjecture1} holds for regular supercuspidal representations when $E/F$ is unramified.
\end{theorem}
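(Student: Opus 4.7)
By the corollary at the end of \Cref{section7}, the proof of Prasad's conjecture for regular supercuspidal representations reduces to \Cref{productofcharacters}. The plan is to verify the character identity
$$\epsilon_{\mathrm{HM}}\cdot \omega_{G(F),E}|_{i(S)(F)}\cdot \epsilon_{\mathrm{Kal}}|_{i(S)(F)}=\zeta|_{i(S)(F)}$$
orbit-by-orbit on $\Gamma_F$-orbits in $\Phi(G,S)$, exploiting the fact that all four characters admit such a factorization: $\omega_{G(F),E}|_{S(F)}$ by \Cref{rootcharacter}; $\epsilon_{\mathrm{HM}}$ by the interpretation given in \Cref{Hakimcharacter} together with the vanishing of $\epsilon^{-}$ for Galois pairs; and $\epsilon_{\mathrm{Kal}}|_{S(F)}$ and $\zeta|_{S(F)}$ by the $\Gamma_E$-versus-$\Gamma_F$ orbit comparisons carried out in sections~8.2.2 and 8.4.2. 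This reduces the proof to a finite case analysis indexed by the possible combinatorial types of a $\Gamma_F$-orbit $\mathcal{O}_\alpha$, classified by the symmetry of $\alpha$ over $F$ and over $E$, whether $\mathcal{O}_\alpha$ splits under restriction to $\Gamma_E$, and the ramification types of the relevant field extensions. All of this information, together with the per-orbit contribution of each of the four characters, has been tabulated in \Cref{table1}--\Cref{table4}.

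Under the standing hypothesis that $E/F$ is unramified, every row whose $E/F$-column is marked ``r'' is eliminated, and the surviving rows are further constrained: whenever $E\not\subseteq F_{\pm\alpha}$, the compositum $E_{\pm\alpha}/F_{\pm\alpha}$ is unramified, and similarly for $E_\alpha=E\cdot F_\alpha$. For each surviving row, the proof amounts to a direct comparison of the entries in the four tables. Most rows are immediate: either all four contributions are $\mathbbm{1}$, or the non-trivial product on each side reduces to the common unramified quadratic character $\omega_{E_\alpha/F_\alpha}\circ\iota_{F_\alpha}\circ\alpha$, which appears as Prasad's contribution in the symmetric rows of \Cref{table1} and as the zeta contribution in the matching rows of \Cref{table4}, with Kaletha's and Hakim's contributions being trivial or cancelling against one another.

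The main obstacle will be the symmetric ramified rows in which $E_\alpha=F_\alpha$ and Hakim's residue sign $\mathrm{sgn}_{k_{F_\alpha}^\times}\circ\alpha$ is a priori non-trivial. The unramified hypothesis forces $E\subseteq F_{\pm\alpha}$ in these configurations, so that for any lift $s\in\Gamma_F-\Gamma_E$ of $\sigma$ one has $s\alpha\neq\pm\alpha$; combined with the condition $\sigma(t)=t$ for $t\in i(S)(F)$ and the squaring identity $f_{(G,S)}(\alpha)\,f_{(G,S)}(s\alpha)=f_{(G,S)}(\alpha)^{2}=1$ exploited in section~8.2.2, one shows that the apparent discrepancy is absorbed by a trivial residue-field contribution. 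After verifying each remaining row, the character identity of \Cref{productofcharacters} holds in the unramified setting, whence \Cref{unramifiedprasad} follows.
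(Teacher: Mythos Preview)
Your overall strategy coincides with the paper's: reduce \Cref{conjecture1} to \Cref{productofcharacters} via the corollary of \Cref{section7}, then read off the four characters from Tables~\ref{table1}--\ref{table4} after discarding the rows that require $E/F$ ramified. Three of the four characters simplify exactly as you predict: $\epsilon_{\mathrm{Kal}}|_{S(F)}$ is trivial (every nontrivial row of \Cref{table2} has $E/F$ ramified), $\omega_{G(F),E}|_{S(F)}$ collapses to the product over roots that are symmetric ramified both over $E$ and $F$, and $\zeta|_{S(F)}$ collapses to the \emph{same} product (using \Cref{table5} to see that $\alpha$ symmetric ramified over $F$ and $E$ with $[E_\alpha:F_\alpha]=2$ forces $\alpha^{\mathrm{op}}$ symmetric ramified over $F$). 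So after cancellation the identity of \Cref{productofcharacters} reduces to $\epsilon_{\mathrm{HM}}=\mathbbm{1}$.

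This is where your argument has a genuine gap. Hakim's character is a product over $\Gamma_F$-orbits, and for a symmetric ramified $\Gamma_F$-orbit its contribution is $\mathrm{sgn}_{k_{F_\alpha}^\times}(\alpha(t))$. Your proposed ``squaring'' trick does not apply: that identity $f_{(G,T)}(\alpha)\cdot f_{(G,T)}(s\alpha)=1$ concerns the toral invariant inside $\epsilon_{\mathrm{Kal}}$, which is indexed by $\Gamma_E$-orbits, whereas $\epsilon_{\mathrm{HM}}$ contributes \emph{once} per $\Gamma_F$-orbit --- there is no second factor to pair with. Moreover, you treat only the case $E_\alpha=F_\alpha$, but the surviving row $[E_\alpha:F_\alpha]=2$ unramified (sym r over both $E$ and $F$) occurs as well, and there $E\not\subseteq F_\alpha$, so your reduction to $E\subseteq F_{\pm\alpha}$ is not available. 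The paper acknowledges exactly this obstruction, writing that one ``can not detect more information of this character from this expression,'' and invokes instead a result of Zhang \cite[Proposition 4.1]{Zha20b}, whose proof constructs an $F$-symplectic structure on $\mathfrak{W}_i$ (using a depth-zero good element of trace zero, available precisely when $E/F$ is unramified) rather than analysing root contributions separately. You need either to cite this external input or to supply an independent argument for $\epsilon_{\mathrm{HM}}^{+}=\mathbbm{1}$; the orbit-by-orbit bookkeeping alone does not suffice.
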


\paragraph{Prasad's character.}
Notice that we have the following factorization of Prasad's character:
$$\omega_{G(F),E}(t)=\prod _{\alpha\in \Phi_{\mathrm{sym}}/\Gamma_F}\omega_{EF_{\alpha}/F_\alpha}(\iota_{\alpha} \alpha(t)),$$
for any $t\in S(F)$.
When $E$ is contained in $F_{\alpha}$, then $\omega_{EF_{\alpha}/F_{\alpha}}$ is trivial.
When $E$ and $F_\alpha$ are disjoint, then $EF_{\alpha}=E_{\alpha}$. Hence the only non-trivial term is the following case:
$$\xymatrix{
  E_{\alpha} \ar@{-}[d]_{/2} \ar@{-}[r]^{/2} & E_{\pm\alpha} \ar@{-}[d]_{/2} \ar@{-}[r] & E \ar@{-}[d] \\
  F_{\alpha} \ar@{-}[r]^{/2} & F_{\pm\alpha} \ar@{-}[r] & F .  }$$
In this case, $E_\alpha$ is a biquadratic extension of $F_{\pm\alpha}$. Since $E/F$ is unramified, both $E_{\pm\alpha}/F_{\pm\alpha}$ and $E_\alpha/F_\alpha$ are also unramified, which means $\alpha$ is symmetric ramified over $F$ and $E$.

Hence, Prasad's character could be simplified by:
$$\omega_{G(F),E}(t)=\prod \limits_{\substack{\alpha\in \Phi_{\mathrm{sym,ram}}/\Gamma_F\\ \alpha \in \Phi_{\mathrm{sym,ram}}/\Gamma_E}}\omega_{E_{\alpha}/F_\alpha}(\iota_{\alpha} \alpha(t)).$$

\paragraph{Kaletha's character.}
By \Cref{table2}, we have the following description of the character $\epsilon_{\mathrm{Kal}}|_{S(F)}$.
$$\epsilon_{\mathrm{Kal}}|_{S(F)}(t)=\prod\limits_{\substack{\alpha\in \Phi_{\mathrm{asy}}/\Gamma_{E}\times\{\pm 1\},\\ \alpha \in \Phi_{\mathrm{asym}}/\Gamma_F,\\ [E_{\alpha}:F_{\alpha}]=2,\mathrm{ram} }} \mathrm{sgn}_{k_{E_{\alpha}}^{\times}}\alpha(t)    \prod\limits_{\substack{\alpha\in \Phi_{\mathrm{asy}}/\Gamma_{E}\times\{\pm 1\},\\ \alpha \in \Phi_{\mathrm{sym,ram}}/\Gamma_F }}\mathrm{sgn}_{k_{E_{\alpha}}^{\times}}\alpha(t)
   \prod\limits_{\substack{\alpha\in \Phi_{\mathrm{sym,ur}}/\Gamma_{E},\\ \alpha \in \Phi_{\mathrm{sym,ur}}/\Gamma_F ,\\ [E_{\alpha}:F_{\alpha}]=2,\mathrm{ram}}}\mathrm{sgn}_{k_{E_{\alpha}}^{1}}\alpha(t)    \prod\limits_{\substack{\alpha\in \Phi_{\mathrm{sym,ur}}/\Gamma_{E},\\ \alpha \in \Phi_{\mathrm{sym,ram}}/\Gamma_F.}}\mathrm{sgn}_{k_{E_{\alpha}}^{1}}\alpha(t).$$

If $E/F$ is unramified, we have
$$\epsilon_{\mathrm{Kal}}|_{S(F)}(t)=1.$$

\paragraph{Hakim's character.}
Notice that $\epsilon_{\mathrm{HM}}$ consists of two parts
$$\epsilon_{\mathrm{HM}}=\epsilon_{\mathrm{HM}}^{+}\cdot\epsilon_{\mathrm{HM}}^{-},$$
where $\epsilon_{\mathrm{HM}}^{+}$ has the following factorization
$$\epsilon_{\mathrm{HM}}^{+}(t)=\prod\limits_{\substack{\alpha\in (\Phi_{\frac{r}{2}})_{\mathrm{sym,ram}}/\Gamma_{F},\\ }}\mathrm{sgn}_{k_{F_{\alpha}}^{\times}}\alpha(t)$$
by \Cref{Hakimcharacter}.
Notice that $\epsilon_{\mathrm{HM}}^{-}$ is trivial for any Galois involution by \cite[Corollary 3.18]{Zha20a}, which means that
$$\epsilon_{\mathrm{HM}}(t)=\prod\limits_{\substack{\alpha\in (\Phi_{\frac{r}{2}})_{\mathrm{sym,ram}}/\Gamma_{F},\\ }}\mathrm{sgn}_{k_{F_{\alpha}}^{\times}}\alpha(t).$$

However, we can not detect more information of this character from this expression. Luckily, when $E/F$ is unramified, $\epsilon_{\mathrm{HM}}^{+}$ is trivial by a result of Zhang \cite[Proposition 4.1]{Zha20b}. His proof relies on the existence of depth zero good element of trace zero, which is only available when $E/F$ is unramified. In fact, his argument is global in nature which means that he constructs an $F$ symplectic structure preserved by the adjoint action of $S(F)$, instead of analyzing a fixed root component of Hakim-Murnaghan's character. His method is somehow complementary to ours. Hence when $E/F$ is unramified, we have:
$$\epsilon_{\mathrm{HM}}(t)=1.$$
\paragraph{The character associated to $\zeta$-data.}
By \Cref{table4}, we have the following description of the character of $T(E)$ associated to $\zeta$-data.
$$\zeta=\prod\limits_{\substack{\alpha\in \Phi_{\mathrm{asy}}/\Gamma_{E}\times\{\pm 1\},\\ \alpha^{\mathrm{op}} \in \Phi_{\mathrm{sym,ram}}/\Gamma_F }}\mathrm{sgn}_{k_{E_{\alpha}}^{\times}}\alpha(t)     \prod\limits_{\substack{\alpha\in \Phi_{\mathrm{sym,ur}}/\Gamma_{E},\\ \alpha^{\mathrm{op}} \in \Phi_{\mathrm{sym,ram}}/\Gamma_F }}\omega_{E_{\alpha}/F_{\alpha}}(\iota_{F_\alpha}\alpha (t)) \prod\limits_{\substack{\alpha\in \Phi_{\mathrm{sym,ram}}/\Gamma_{E},\\ \alpha^{\mathrm{op}} \in \Phi_{\mathrm{sym,ram}}/\Gamma_F }}\omega_{E_\alpha/F_\alpha}(\iota_{\alpha}\alpha(t)).$$

If $E/F$ is unramified, the character is simply given by:

$$\zeta|_{S(F)}(t)=\prod\limits_{\substack{\alpha\in \Phi_{\mathrm{sym,ram}}/\Gamma_{E},\\ \alpha^{\mathrm{op}} \in \Phi_{\mathrm{sym,ram}}/\Gamma_F }}\omega_{E_\alpha/F_\alpha}(\iota_{\alpha}\alpha(t)).$$

\begin{proof}[Proof of \Cref{unramifiedprasad}]
Summarizing the above computation, we have
\begin{equation}
\omega_{G(F),E}|_{S(F)}(t)=\prod \limits_{\substack{\alpha\in \Phi_{\mathrm{sym,ram}}/\Gamma_F\\ \alpha \in \Phi_{\mathrm{sym,ram}}/\Gamma_E}}\omega_{E_{\alpha}/F_\alpha}(\iota_{\alpha} \alpha(t)),
\end{equation}
\begin{equation}
\epsilon_{\mathrm{Kal}}|_{S(F)}(t)=1,
\end{equation}
\begin{equation}
\epsilon_{\mathrm{HM}}(t)=1,
\end{equation}
\begin{equation}
\zeta_{T(E)}|_{S(F)}(t)=\prod\limits_{\substack{\alpha\in \Phi_{\mathrm{sym,ram}}/\Gamma_{E},\\ \alpha^{\mathrm{op}} \in \Phi_{\mathrm{sym,ram}}/\Gamma_F }}\omega_{E_\alpha/F_\alpha}(\iota_{\alpha}\alpha(t)).
\end{equation}
In this case, both $\alpha$ and $\alpha^{\mathrm{op}}$ are symmetric ramified over $F$, and the above computation implies Conjecture \Cref{productofcharacters} is true when $E/F$ is unramified.
\end{proof}

\subsection{Examples.}

In this section, we apply our method to various examples and give a new proof of Prasad's conjecture for regular supercuspidal representations of these groups. Although most of these results are not new, our method is still somehow illuminating.

\begin{theorem}\label{prasadexample}
Prasad's conjecture \Cref{conjecture1} holds for regular supercuspidal representations of $SL_2$,$GL_2$,$GL_n$ ($n$ odd) and $U_{n,E/F}$ ($n$ odd).
\end{theorem}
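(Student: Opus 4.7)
By the Corollary at the end of \Cref{section7}, it suffices to verify the character identity of \Cref{productofcharacters} for each of the listed groups. The strategy is to exploit the explicit root-by-root factorizations of the four characters $\omega_{G(F),E}|_{i(S)(F)}$, $\epsilon_{\mathrm{Kal}}|_{i(S)(F)}$, $\epsilon_{\mathrm{HM}}$, and $\zeta|_{i(S)(F)}$ compiled in Tables \ref{table1}--\ref{table4}: once the absolute root system $\Phi(G,S)$ and its orbit structure under $\Gamma_F$ (and the twisted action defining the opposite form $\alpha \mapsto \alpha^{\mathrm{op}}$) are understood, the identity reduces to a finite combinatorial check.

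For $SL_2$ and $GL_2$, an elliptic maximal torus $S$ is anisotropic modulo center and splits over a uniquely determined quadratic extension $K/F$; the absolute root system consists of a single pair $\{\pm\alpha\}$ forming one $\Gamma_F \times \{\pm 1\}$-orbit with $F_\alpha = K$ and $F_{\pm\alpha} = F$, so $\alpha$ is always symmetric. I would enumerate the cases according to (i) whether $K = E$ or $K \ne E$, and (ii) the ramification types of $K/F$ and $E/F$. In each case the four entries in the tables can be read off directly and the identity verified by a short comparison; in particular when $K = E$ all four characters degenerate in a controlled way, and when $K \ne E$ the compositum $E_\alpha = KE$ is forced, placing us into one of the fully specified rows of Tables \ref{table2} and \ref{table4}.

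For $GL_n$ with $n$ odd and an elliptic maximal torus $S = \mathrm{Res}_{E'/F}\mathbb{G}_m$, \Cref{examplegln} shows that $\omega_{G(F),E}|_{S(F)}$ is trivial, that $F_\alpha = E'$ for every root, and that the number of $\Gamma_F$-orbits of symmetric roots is congruent to $n-1$ modulo $2$ and hence even. The plan is to pair each symmetric $\Gamma_F$-orbit $\mathcal{O}_\alpha$ with the partner orbit $\mathcal{O}_{\alpha^{\mathrm{op}}}$ governing the contribution of $\alpha^{\mathrm{op}}$ to $\zeta$ (cf.\ Table \ref{table5}), and to verify, using Tables \ref{table2}--\ref{table4}, that the combined contribution of $\{\mathcal{O}_\alpha, \mathcal{O}_{\alpha^{\mathrm{op}}}\}$ to $\epsilon_{\mathrm{HM}} \cdot \epsilon_{\mathrm{Kal}}$ coincides with that to $\zeta$. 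The case of $U_{n,E/F}$ with $n$ odd is analogous: an elliptic maximal torus corresponds to a degree-$n$ \'etale algebra with a Galois-compatible structure; Prasad's character is well-behaved by the analogue of \Cref{examplegln} using \Cref{unitary}; and the parity argument goes through identically.

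The main obstacle will be the pairing step in the $GL_n$ and $U_n$ case: although the tables pinpoint the contribution of each individual root type, organizing symmetric orbits into $\{\alpha, \alpha^{\mathrm{op}}\}$-pairs compatibly with the opposite involution requires systematically tracking the joint ramification behavior of $E'/F$, $E/F$, and the compositum fields $E_\alpha$, $F_{\alpha^{\mathrm{op}}}$, and matching them against the rows of Table \ref{table5}. The oddness of $n$ is essential, since it guarantees that no symmetric orbit is unpaired; for even $n$, the nontrivial character $\omega_{E/F}\circ\det$ in \Cref{examplegln} reflects precisely the obstruction to such a clean pairing, which is why the even case lies outside the scope of this argument.
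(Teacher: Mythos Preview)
Your outline for $SL_2$ and $GL_2$ is roughly on the right track, though the case ``$K=E$'' is vacuous: if $S$ splits over $K/F$ quadratic and $S_E$ is to remain an \emph{elliptic} torus of $G_E$, one must have $K\neq E$, so the splitting field of $T=S_E$ is the compositum $KE$, biquadratic over $F$. The paper's actual argument for $GL_2$ is more delicate than a pure ramification table lookup: it separates the ``odd'' and ``even'' cases according to the \emph{parity of the depth of $\theta$} (governed by \cite{Bla10}), and in the odd case the nontrivial contributions from $\epsilon_{\mathrm{Kal}}$ and $\epsilon_{\mathrm{HM}}$ genuinely cancel against $\omega_{G(F),E}$ via an explicit computation involving $\lambda$-constants. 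Your proposal does not anticipate this depth dependence.

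The real gap is in your treatment of $GL_n$ and $U_{n,E/F}$ for $n$ odd. You propose to pair up symmetric $\Gamma_F$-orbits using the parity statement from \Cref{examplegln}, but this misreads the root structure. For $GL_n$ with $n$ odd and $S=\mathrm{Res}_{E_1/F}\mathbb{G}_m$, every $\Gamma_F$-orbit of roots has exactly $n$ elements; an orbit of odd size stable under $\alpha\mapsto -\alpha$ would have a fixed point, which is impossible. Hence \emph{all} roots are asymmetric over both $F$ and $E$, and there is nothing to pair. The paper exploits this directly: asymmetry makes the Prasad and Hakim contributions trivially $\mathbbm{1}$ (Tables~\ref{table1},~\ref{table3}), and Kaletha's character is shown trivial via \Cref{oddtrivial}, which uses that $E_\alpha/E$ has odd ramification degree. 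The only subtlety is $\zeta$: here one must pass to $\alpha^{\mathrm{op}}$, which is a \emph{symmetric} root for $G^{\mathrm{op}}=U_{n,E/F}$, and a short computation with $\chi_{\alpha^{\mathrm{op}}}$ shows this contribution is also trivial on $S(F)$.

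For $U_{n,E/F}$ with $n$ odd the picture is \emph{reversed}, not ``identical'': all roots are symmetric over $F$ (since $F_\alpha=K$, $F_{\pm\alpha}=E_1$) but asymmetric over $E$ (since $G_E\cong GL_{n,E}$), while $\alpha^{\mathrm{op}}$ is asymmetric over $F$ for $G^{\mathrm{op}}=GL_n$. So your claimed analogy with $GL_n$ does not hold at the level of root types, and the verification proceeds by a different (though equally short) set of table lookups. In both cases the oddness of $n$ enters not through a pairing argument but by forcing asymmetry of roots on the $GL_n$ side.
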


\subsubsection{Example of $SL_2$.}
In the case of $SL_2$, Prasad's character is trivial since $SL_2$ is simply connected and perfect. Hakim's character is also trivial since $\alpha(t)=(2\epsilon)(t)=\epsilon(t)^2$ is always a square. In fact, we can list all the possible cases of regular toral supercuspidal representations here. We also provide a comparison between our new interpretation of Hakim's character in \Cref{newinterpretation} and the original one given by taking determinant of the adjoint action.

\begin{example}[$\epsilon_{\mathrm{HM}}^{+}$ for $(SL_2(E),SL_2(F))$]

Let $E=F(\sqrt{\alpha})$, $K=E(\sqrt{\beta})=F(\sqrt{\alpha},\sqrt{\beta})$ with $\alpha,\beta, \alpha\beta\in F^{\times}-(F^{\times})^{2}$. Let $E_1=F(\sqrt{\beta}),E_2=F(\sqrt{\alpha\beta})$, so that we have the following biquadratic extension:
$$\xymatrix{& K \ar@{-}[dl]_{\sigma} \ar@{-}[d]^{\sigma\tau}
\ar@{-}[dr]^{\tau} \\ E_1 \ar@{-}[dr]_{\tau} & E_2
\ar@{-}[d] & E \ar@{-}[dl]^{\sigma} \\ & F}$$

$K^{\times}$ can be embedded into $GL_2(E)$ in the following two ways:
\begin{enumerate}[(i)]
\item $a+b\sqrt{\beta}\longmapsto\{\left(\begin{matrix} a&b\beta\\ b&a\end{matrix}\right)\}$.
\item $a+b\sqrt{\beta}\longmapsto\{\left(\begin{matrix} a&b\alpha\beta\\ b&a\end{matrix}\right)\}$.
\end{enumerate}
These two embeddings are $GL_2(E)$-equivalent. More precisely, $\mathrm{Ad}\{\left(\begin{matrix} 1&0\\ 0& \sqrt{\alpha}^{-1}\end{matrix}\right)\}$ maps $(i)$ to $(ii)$. However, the intersection with $GL_2(F)$ of these two elliptic maximal tori are different:
$$T_1(E)\cap GL_2(F)=E_{1}^{\times},~~T_2(E)\cap GL_2(F)=E_{2}^{\times}.$$
Hence it indeed happens that there are different $F$ structures of $gTg^{-1}$, which means that the sum over abstract torus with different $F$-structure in our formula is not trivial a priori.

Let $T_0$ be the split maximal torus of $SL_2$ over $E$ with $T_0(E)\cong E^{\times}$ consisting of diagonal matrices. Let $T$ be the anisotropic maximal torus of $SL_2$ over $E$ with $T(E)\cong K^{1}_{K/E}$, together with an embedding (there are two rational conjugacy classes of $E$ embeddings of $T$ into $SL_2$) given by:
\begin{equation*}
\begin{aligned}
K^1&\hookrightarrow SL_2(E)\\
a+b\sqrt{\beta}&\mapsto \{\left(\begin{matrix} a&b\beta\\ b&a\end{matrix}\right)|a^2-b^2\beta=1\}.
\end{aligned}
\end{equation*}

Let $g=\left(\begin{matrix} 1&\sqrt{\beta}\\ 1&-\sqrt{\beta}\end{matrix}\right)\in GL_2(K)$ (In fact we can even choose an element $g\in SL_2(K)$, here we use $g$ for simplicity). Then we have
$$g\left(\begin{matrix} a&b\beta\\ b&a\end{matrix}\right)g^{-1}=\left(\begin{matrix} a+b\sqrt{\beta}&0\\ 0&a-b\sqrt{\beta}\end{matrix}\right),$$
that is, $gT(K)g^{-1}=T_0(K)$.

Let $\mathcal{A}(G,T_0,E)$ be the apartment associated to $T_0$ of the building $\mathcal{B}(G,E)$, we have identifications
\begin{equation*}
\begin{aligned}
\mathcal{A}(G,T_0,K)\cong X_{*}(T_{0,K})\otimes\mathbb{R}=\mathbb{R}\langle \check{\alpha}\rangle,\\
\mathcal{A}(G,T_0,E)\cong X_{*}(T_0)\otimes\mathbb{R}=\mathbb{R}\langle \check{\alpha}\rangle.
\end{aligned}
\end{equation*}

Since $T_0$ is split, $\mathrm{Gal}(K/E)$ acts trivially on $X_{*}(T_{0,K})$ and
$$\mathcal{A}(G,T_0,E)=\mathcal{A}(G,T_0,K)^{\mathrm{Gal}(K/E)}.$$
Notice that $T_0(K)$ and $T(K)$ are related by $gT(K)g^{-1}=T_0(K)$. By the $G(K)$-equivariant property of the Bruhat-Tits building $\mathcal{B}(G,K)$, we have:
$$\mathcal{A}(G,T,K)=\mathcal{A}(G,g^{-1}T_{0}g,K)=g^{-1}\cdot\mathcal{A}(G,T_{0},K),$$
$$\mathcal{A}(G,T,E)=\mathcal{A}(G,T,K)^{\mathrm{Gal}(K/E)}.$$
The condition $\tau(g^{-1}x)=g^{-1}x$ implies that $x$ is the origin $0$ of $\mathcal{A}(G,T_{0},K)$. Hence we have $\mathcal{A}(G,T,E)=\{g^{-1} \cdot0\}:=\{y\}$, which is a point in $\mathcal{B}(G,E)$.

Now we restrict ourselves to the toral case (every tame supercuspidal representation of $SL_2$ is of depth zero or toral), that is, the twisted Levi sequence is $(T(E),G(E))$, we describe the group $J$ and $J^{+}$ explicitly. Such computations have essentially been done by Hakim and Lansky in \cite{HL10}.

Let $-r$ be the depth of the generic element $X^{*}\in \mathfrak{t}_{-r}^{*}-\mathfrak{t}_{-r+}^{*}$ over $E$, which corresponds to $X\in \mathfrak{t}_{-r}-\mathfrak{t}_{-r+}$ via $\langle X^{*},Y\rangle=\mathrm{tr}(XY)$. We fix a valuation $v_E$ on $E$ such that $v_E(E^{\times})=\mathbb{Z}$. Let $s=r/2$, the valuation of $\mathrm{tr}(X d\alpha^{\vee}(1))$ forces the condition
$$r\in \frac{1}{e(K/E)}+\mathbb{Z}.$$
We have:
$$\mathfrak{J}=(\mathfrak{t},\mathfrak{g})_{y,(r,s)}=\mathfrak{t}_{r}+\{\left(\begin{matrix} a&b\beta\\ -b&-a\end{matrix}\right)|a\in E_s,b\in E_{s'}\},$$
$$\mathfrak{J}^{+}=(\mathfrak{t},\mathfrak{g})_{y,(r,s+)}=\mathfrak{t}_{r}+\{\left(\begin{matrix} a&b\beta\\ -b&-a\end{matrix}\right)|a\in E_{s+},b\in E_{s'+}\},$$
where $s'=s-\frac{e(K/E)-1}{2}$.
Let $\mathfrak{W}=J/J^{+}\cong \mathfrak{J}/\mathfrak{J}^{+}$ denote the symplectic vector space over $k_F$, where the Moy-Prasad isomorphism between $\mathfrak{J}/\mathfrak{J}^{+}$ and $J/J^{+}$ is realized by Caylay transform \cite[A.6]{HL10}.

\begin{enumerate}\label{sl2}
\item $K/E$ ramified, $\mathfrak{W}=0$. \\
\item $K/E$ unramified, $r$ is odd, $\mathfrak{W}=0$.\\
\item $K/E$ unramified, $r$ is even. $s=s'\in \mathbb{Z}$,
$$\mathfrak{J}=(\mathfrak{t},\mathfrak{g})_{y,(r,s)}=\mathfrak{t}_{r}+\{\left(\begin{matrix} a&b\beta\\ -b&-a\end{matrix}\right)|a\in E_s,b\in E_{s}\},$$
$$\mathfrak{J}^{+}=(\mathfrak{t},\mathfrak{g})_{y,(r,s+)}=\mathfrak{t}_{r}+\{\left(\begin{matrix} a&b\beta\\ -b&-a\end{matrix}\right)|a\in E_{s+1},b\in E_{s+1}\},$$
where $\mathfrak{W}\cong E_{s,s+}\times E_{s,s+}$ is endowed with a natural symplectic structure associated to an additive character such that $\mathfrak{W}^{\sigma}\cong \mathfrak{J}^{\sigma}/{\mathfrak{J}^{+}}^{\sigma}\cong F_{s,s+}\times F_{s,s+}$.

In fact, we can compute the action of $T(E)$ on $\mathfrak{W}$ explicitly (although we actually restrict the action to $S(F)$ on $\mathfrak{W}^{\sigma}$). If we choose the isomorphism $\{\left(\begin{matrix} x\\ y\end{matrix}\right)|x\in E_{s,s+},y\in E_{s,s+}\}\leftrightarrow\{\left(\begin{matrix} x&y\beta\\ -y&-x\end{matrix}\right)|x\in E_s,y\in E_{s}\}$, the action of $\mathrm{Ad}(\left(\begin{matrix} a&b\beta\\ b&a\end{matrix}\right))$ is given by:

$$\left(\begin{matrix} x&y\beta\\ -y&-x\end{matrix}\right) \mapsto \left(\begin{matrix} (a^2+b^2\beta)x-2ab\beta y&((a^2+b^2\beta)y-2abx)\beta\\ 2abx-(a^2+b^2\beta)y& -(a^2+b^2\beta)x-2ab\beta y)\end{matrix}\right).$$

One can see that $\det (\left(\begin{matrix} a^2+b^2\beta&-2ab\beta\\ -2ab&a^2+b^2\beta\end{matrix}\right)$ (mod $\mathfrak{J}^{+}))$ is $1$, since we have $(a^2-b^2\beta)^2=1$.
\end{enumerate}

Now we use $SL_2$ as an example to show how to compute Hakim's character using our reinterpretation in \Cref{newinterpretation}. In this case, $\alpha$ is symmetric both over $F$ and $E$. In particular, we have $E_{\alpha}=K$, $E_{\pm\alpha}=E$, $F_{\alpha}=E_1$ and $F_{\pm\alpha}=F$. Furthermore, we have
$$(\bigoplus_{\alpha\in\mathcal{O}}\mathfrak{g}_{\alpha})(E)\cong E_{\alpha}\cong K,$$
together with an isomorphism
\begin{equation*}
\begin{aligned}
\mathfrak{W}\cong \mathfrak{J}/\mathfrak{J}^{+}&\cong K,\\
\{\left(\begin{matrix} x&y\beta\\ -y&-x\end{matrix}\right)|x\in E_s,y\in E_{s}\}&\mapsto x-y\sqrt{\beta}.
\end{aligned}
\end{equation*}
One can check directly that the adjoint action of $t=\left(\begin{matrix} a&b\beta\\ b&a\end{matrix}\right)\in S(F)=E_{1}^{1}$ is simply given by multiplication by $\alpha(t)=\frac{a+b\sqrt{\beta}}{a-b\sqrt{\beta}}=(a+b\sqrt{\beta})^2$ under the above isomorphism, and taking the determinant of the adjoint action mod $\mathfrak{J}^+$ is simply given by
$$\mathrm{Nm}_{k_{F_{\alpha}}/k}(\alpha(t)\mathrm{mod} (1+\varpi_{F_{\alpha}}))=1,$$
which implies the triviality of $\epsilon_{\mathrm{HM}}^{+}$.
\end{example}

Notice that in the case of $SL_2/E$, $\alpha$ is always symmetric over $E$. By our interpretation \ref{toralkottwitz} or \cite[Lemma 4.5.3]{Kal19}, the toral invariant of $SL_2$ is always trivial. Hence Kaletha's character is trivial by the triviality of the toral invariant in the symmetric ramified case, together with the fact $\alpha=2\epsilon$ in the symmetric unramified case. Hence the only remaining part is the computation of character of $\zeta$-data. Notice that the computation of $\zeta|_{i(S)}$ is exactly the same as the case of $GL_2$. By \Cref{zeta1}, \Cref{zeta2}, and \Cref{zeta3}, we have $\zeta|_{i(S)(F)}=\omega_{K/E_1}|_{E_{1,E_1/F}^{1}}=\omega_{E/F}\circ\mathrm{Nm}_{E_1/F}|_{E_{1,E_1/F}^{1}}=\mathbbm{1}$ or $\zeta|_{i(S)(F)}=\mathbbm{1}|_{E_{1,E_1/F}^{1}}$, both of which are trivial. Hence, Conjecture \eqref{productofcharacters} is true for $SL_2$, which gives a proof of Prasad's conjecture for regular supercuspidal representations of $SL_2$.

\subsubsection{Example of $GL_2$.}
The difference in treating the case of $SL_2$ and $GL_2$ shows different complexity of Prasad's conjecture. For the case of $SL_2$, the double coset part, which is related to the set of certain embeddings of tori, is more complicated, while the character part is relatively easy. However, for the case of $GL_2$, the non-trivial quadratic character associated to $E/F$ appears, while the embedding part is easy since there are not so many equivalence classes of rational embeddings.
Let $T$ be an elliptic maximal torus of $GL_2$ over $E$. Then $T(E)=K^{\times}$ for a quadratic extension $K/E$. A necessary condition for the existence of an elliptic maximal torus of $GL_2(F)$ whose base change equals to $T$ is that $K/F$ is a biquadratic extension. Let $E_1$ and $E_2$ be the other $2$ intermediate fields between $K$ and $F$. Let $\tau\in\mathrm{Gal}(E_1/F)$ denote the non-trivial involution. Then $\mathrm{Gal}(K/F)\cong \{1,\sigma,\tau,\sigma\tau|\sigma^2=\tau^2=1,\sigma\tau=\tau\sigma\}.$ For simplicity, let $S$ be a torus over $F$, whose base change to $E$ is $T$ with $S(F)=E_1^{\times}$.

$$\xymatrix{& K \ar@{-}[dl]_{\sigma}
\ar@{-}[dr]^{\tau} \\ E_1 \ar@{-}[dr]_{\tau} &
 & E \ar@{-}[dl]^{\sigma} \\ & F}$$

In the case of $GL_2$, Prasad's quadratic character is given by:
\begin{equation*}
\begin{aligned}
\omega_{GL_{2}(F),E}:GL_2(F)&\rightarrow \{\pm 1\},\\
g&\mapsto \omega_{E/F}(\det(g)).
\end{aligned}
\end{equation*}

Hence $\omega_{GL_{2}(F),E}|_{i(S)(F)}:E_{1}^{\times}\rightarrow \{\pm 1\}$ is given by:
$$t\rightarrow \omega_{E/F}(\mathrm{Nm}_{E_1/F}(t))=\omega_{K/E_{1}}(t).$$

We will give an explicit computation of the case of $GL_2$. Before doing this, we first review some results on the descriptions of supercuspidal representations of $GL_{2}(E)$, which arises as base change of irreducible representations of $U_{1,1,E/F}(F)$. These results are essentially due to \cite{Bla10}.

Assume $E/F$ is ramified, and $K/E$ is unramified. Then $E_1/F$ could be unramified or ramified. Let $\theta:T(E)=K^{\times}\rightarrow \mathbb{C}^{\times}$ be a character such that $\pi$ is a regular supercuspidal representation of $GL_2(E)$ associated to $(T,\theta)$. Assume $\phi_{\pi}$ is a base change from a Langlands parameter of $U_{1,1,E/F}(F)$. Then $T$ is a base change of a torus $S^{\mathrm{op}}\subset U_{1,1,E/F}$ with $S^{\mathrm{op}}(F)=K_{K/E_1}^{1}$, and $\theta$ is a base change of $\mu:S^{\mathrm{op}}(F)=K_{K/E_1}^{1}\rightarrow\mathbb{C}^{\times}$, which means that $\theta(t)=\mu(\frac{t}{\sigma(t)})$.

\begin{lemma}
$\mathrm{depth}\theta$ is odd if and only if we have the field diagram
$$\xymatrix{& K \ar@{-}[dl]_{\mathrm{r}}
\ar@{-}[dr]^{\mathrm{ur}} \\ E_1 \ar@{-}[dr]_{\mathrm{ur}} &
 & E. \ar@{-}[dl]^{\mathrm{r}} \\ & F}$$
\end{lemma}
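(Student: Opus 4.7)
The plan is to exploit the constraint $\theta|_{E_1^\times}=\mathbbm{1}$, which follows from the base-change formula $\theta(t)=\mu(t/\sigma(t))$ together with the identity $\ker(t\mapsto t/\sigma(t))=E_1^\times$, and then track how this interacts with the Moy--Prasad filtration of $T(E)=K^\times$. Since $K/E$ is unramified, the $v_E$-normalized filtration reads $T(E)_n=1+\mathfrak{p}_K^n$, with graded quotient $T(E)_n/T(E)_{n+1}\cong k_K$ viewed as a $k_F$-vector space; and since $v_K|_{E_1}=e(K/E_1)\cdot v_{E_1}$, one has
$$E_1^\times\cap T(E)_n=1+\mathfrak{p}_{E_1}^{\lceil n/e(K/E_1)\rceil}.$$
The question is for which $n$ the image of this intersection exhausts the graded piece $k_K$.

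For the ``if'' direction (the field diagram of the lemma), one has $E_1/F$ unramified and $K/E_1$ ramified, so $e(K/E_1)=2$ and $k_{E_1}=k_K$. For even $n=2m$, the reduction map $\mathfrak{p}_{E_1}^m/\mathfrak{p}_{E_1}^{m+1}\to \mathfrak{p}_K^{2m}/\mathfrak{p}_K^{2m+1}$ becomes the identity $k_{E_1}\xrightarrow{\sim}k_K$, so the image of $1+\mathfrak{p}_{E_1}^m$ is all of $k_K$. Hence $\theta|_{E_1^\times}=\mathbbm{1}$ forces $\theta$ trivial on every even-depth graded piece, and the depth of $\theta$ must be odd.

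For the ``only if'' direction I would treat the alternative configuration ($E_1/F$ ramified, $K/E_1$ unramified), where $k_{E_1}=k_F$ is a proper subfield of $k_K$. Writing $\pi_{E_1}=\sqrt{\pi_F u}$ for a non-square unit $u$, a direct calculation shows that the image of $1+\mathfrak{p}_{E_1}^n$ in $k_K$ is the $k_F$-line $(\sqrt{\bar u})^n\,k_F$; this line coincides with the ``$+$''-eigenspace $k_K^+=k_F$ of the involution induced by $\sigma$ when $n$ is even, and with the ``$-$''-eigenspace $k_K^-=\sqrt{\bar u}\,k_F$ when $n$ is odd. Because $\sigma(\pi_K)=-\pi_K$, the leading behaviour of $\mathrm{Nm}\colon T(E)_n\to K^1_{K/E_1}$ on graded quotients lands in the \emph{opposite} eigenspace, so that for odd $n$ the triviality of $\theta$ on $k_K^-$ combined with the image of $\mathrm{Nm}$ landing in $k_K^+$ and the Yu-genericity of $\mu$ (which pins down the leading character of $\mu$ to the graded piece of $K^1_{K/E_1}$ whose direction is determined by the parity of $n$) forces $\theta$ to be trivial on the whole graded piece, excluding odd depth.

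The main obstacle is the last step, where one has to match the parity of the Moy--Prasad jumps on the two sides of the norm map and feed in the regularity of $\mu$. The bookkeeping is mechanical but error-prone, amounting to a careful calculation inside the biquadratic tower $K/E_1E/F$ tracking how $\sigma$ acts on the chosen uniformizers $\pi_K$, $\pi_{E_1}$ and on the residue field $k_K$; no new conceptual input beyond the explicit local structure of the tower is needed.
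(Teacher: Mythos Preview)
Your argument for the ``$\Leftarrow$'' direction is correct and differs from the paper's. The paper works with the generic element $X\in K_{-r}$ attached to $\theta$: from $\theta^\sigma=\theta^{-1}$ it deduces $\sigma(X)\equiv -X$, then chooses $\varpi_K$ with $\sigma(\varpi_K)=-\varpi_K$ (possible since $K/E_1$ is ramified) and reads off that $r$ is odd from $X=a\varpi_K^{-r}$. Your filtration argument---that $E_1^\times$ fills every even graded piece of $K^\times$ when $k_{E_1}=k_K$---is more elementary and avoids the generic element altogether.

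The ``$\Rightarrow$'' direction, however, has a genuine gap. In the configuration $E_1/F$ ramified, $K/E_1$ unramified, you correctly note that for odd $r$ the image of $E_1^\times$ in the $r$-th graded piece is the line $k_K^-$, and that $\mathrm{Nm}_r$ has image $k_K^+$. But these two observations encode the \emph{same} constraint: $k_K^-$ is exactly $\ker(\mathrm{Nm}_r)$, so the triviality of $\theta_r$ on $k_K^-$ is automatic from the factorization $\theta=\mu\circ\mathrm{Nm}$ and carries no new information. Nothing you have written prevents $\mu$ from being nontrivial on $(K^1)_r/(K^1)_{r+1}\cong k_K^+$, which would make $\theta_r$ nontrivial on $k_K^+$. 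Your appeal to genericity is then backwards: genericity does not force $\theta_r$ to vanish on $k_K^+$. What genericity of $\theta$ as a $GL_2(E)$-datum actually says is that the leading coefficient $\overline{\varpi_E^{\,r}X}$ lies in $k_K\setminus k_E=k_K\setminus k_F$, which is equivalent to $\theta_r$ being \emph{nontrivial} on $k_K^-$. That contradicts the triviality you already established---so you do get a contradiction, but with the roles reversed from what you wrote.

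For comparison, the paper runs this idea directly on $X$: taking $\varpi_K=\varpi_E$ with $\sigma(\varpi_E)=-\varpi_E$ (possible since $E/F$ is ramified), the element $\varpi_E^{\,r}X$ is a $\sigma$-fixed unit, hence lies in $E_1$; genericity gives $X\notin E$, so $\varpi_E^{\,r}X\notin F$, and a unit of $E_1\setminus F$ forces $E_1/F$ to be unramified. In residue-field language this is exactly the statement that $\overline{\varpi_E^{\,r}X}\in k_{E_1}\setminus k_F$, which is impossible when $k_{E_1}=k_F$.
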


\begin{proof}
This result has been mentioned in \cite{Bla10}. We write down a proof for completeness.

$\Leftarrow$. This direction is relatively easy. Let $r$ denote the depth of $\theta$. Let $X\in K_{-r}$ be the generic element associated to $\theta$, that is,
$$\theta_{1}(1+Y)=\psi_E(\langle X,Y\rangle).$$
Since $\theta(t)=\mu(\frac{t}{\sigma(t)})$, we know that $\theta^{\sigma}=\theta^{-1}$, in particular, $\theta_{1}^{\sigma}=\theta_{1}^{-1}$. This is equivalent to the fact $\sigma(X)=-X$ mod ${\varpi_K}^{-r+1}$. We can choose a uniformizer $\varpi_K$ of $K$ such that $X=a{\varpi_K}^{-r}$ and $\sigma(\varpi_K)=-{\varpi_K}$. The condition $\sigma(X)=-X$ forces $r$ to be odd.

$\Rightarrow$. The direction is suggested by Blasco. Since $K/E$ is unramified, we can choose $\varpi_K$ to be a uniformizer $\varpi_E$ of $E$. Since $E/F$ is ramified, we can choose $\varpi_E$ to satisfy $\varpi_E=-\sigma(\varpi_E)$. Since $X$ corresponds to a level $r$ character, ${\varpi_E}^{r}X$ is a unit. On the other hand, since $r$ is odd, we have $\sigma({\varpi_E}^{r}X)=(-1)^{r}{\varpi_E}^{r}(-X)={\varpi_E}^{r}X$, which means that ${\varpi_E}^{r}X\in E_1$. However ${\varpi_E}^{r}X$ does not lie in $E$, hence does not belong to $F$, which means ${\varpi_E}^{r}X$ generates $E_1$ over $F$. This implies $E_1/F$ is unramified, hence $K/E_1$ is ramified.

\end{proof}

Now we give a comparison of these four characters in the case of $GL_2$.

\begin{enumerate}
\item  The odd case. (This terminology is from \cite{HM02})

$\bullet$ Representation side:

$$\xymatrix{& K \ar@{-}[dl]_{\mathrm{ur}}
\ar@{-}[dr]^{\mathrm{ur}} \\ E_1 \ar@{-}[dr]_{\mathrm{r}} &
 & E \ar@{-}[dl]^{\mathrm{r}} \\ & F}  ~~\xymatrix{& k_K \ar@{-}[dl]
\ar@{-}[dr] \\ k_{E_1} \ar@{=}[dr] &
 & k_E \ar@{=}[dl] \\ & k_F}$$

In this case, by the above lemma, we know that $\mathrm{depth}\theta$ is even, which means that there is a non-degenerate Heisenberg quotient and $\alpha$ belongs to $\Phi_{\frac{r}{2}}$:

\begin{equation*}
\begin{aligned}
\epsilon_{\mathrm{Kal}}|_{E_{1}^{\times}}(t)&=\mathrm{sgn}_{k_{K/E}^{1}}(\alpha(t))=(\frac{t}{\tau(t)})^{\frac{q_{E}+1}{2}},\\
\epsilon_{\mathrm{HM}}(t)&=\mathrm{sgn}_{k_{E_{1}}^{\times}}(\alpha(t))=(\frac{t}{\tau(t)})^{\frac{q_{E_1}-1}{2}}.
\end{aligned}
\end{equation*}

Notice that both of these two characters are trivial on $\mathcal{O}_{E_1}^{\times}$ since $\frac{t}{\tau(t)}=1(\mod 1+p\varpi_{E_1})$ for $t\in \mathcal{O}_{E_1}^{\times}$. On $ \varpi_{E_1}$, the product of these two characters is given by:
$$\epsilon_{\mathrm{Kal}}|_{E_{1}^{\times}}(t)\cdot\epsilon_{\mathrm{HM}}(t)=(-1)^{\frac{q_{E_1}-1}{2}}(-1)^{\frac{q_E+1}{2}}=-1.$$

Explicit computation shows that these two non-trivial quadratic characters coincide on $\mathcal{O}_{E_1}^{\times}\times {\varpi}_{E_1}^{\mathbb{Z}}$, which means:
$$\epsilon_{\mathrm{Kal}}|_{E_{1}^{\times}}(t)\cdot\epsilon_{\mathrm{HM}}(t)=\omega_{\mathrm{Pra}}|_{E_{1}^{\times}}(t).$$

$\bullet$ Parameter side.

Notice that we have $T^{\mathrm{op}}\cong \mathrm{Res}_{E_{1}/F}(U_{1,K/E_1})\subset U_{1,1,E/F}=G^{\mathrm{op}}$, with
$$T^{\mathrm{op}}(F)=U_{1,K/E_{1}}(E_1)=\{x\in K|x\sigma(x)=1\}.$$

Moreover, we have $F_{\alpha^{\mathrm{op}}}=E_2$ and $F_{\pm\alpha}=F$. The commutative diagram:
$$\xymatrix{
  T^{\mathrm{op}}(F) \ar@{^{(}->}[d] \ar[r]^{\alpha}
                & {F_{\alpha^{\mathrm{op}}}}^{\times} \ar@{^{(}->}[d]  \\
  T(E)  \ar[r]^{\alpha}
                & E_{\alpha}^{\times}}$$
becomes
$$\xymatrix{
  K^{1}_{K/E_1} \ar@{^{(}->}[d] \ar[r]^{\alpha}
                & E_{2}^{\times} \ar@{^{(}->}[d]  \\
  K^{\times}  \ar[r]^{\alpha}
                & K^{\times},}$$
and $\alpha$ is given by $\alpha(t)=\frac{t}{\tau(t)},$ for any $t\in K^{\times}$ or $t\in K^{1}_{K/E_1}$.
$$\xymatrix{& K \ar@{-}[dl]_{\mathrm{r}}
\ar@{-}[dr]^{\mathrm{ur}} \\ E_2 \ar@{-}[dr]_{\mathrm{ur}} &
 & E \ar@{-}[dl]^{\mathrm{r}} \\ & F}  ~~\xymatrix{& k_K \ar@{=}[dl]
\ar@{-}[dr] \\ k_{E_2} \ar@{-}[dr] &
 & k_E \ar@{=}[dl] \\ & k_F}$$
In this case, $\chi_{\mu\circ \mathrm{Nm}_{S(E)/S(F)},\alpha}$ is the unramified quadratic character of $K^{\times}$ and $\chi_{\mathrm{BC},\alpha}=\chi_{\mu}\circ \mathrm{Nm}_{K/E_2}$, where $\chi_{\mu}$ is the unramified quadratic character of $E_{2}^{\times}$. Hence we have $\chi_{\mu\circ \mathrm{Nm}_{S(E)/S(F)},\alpha}=\chi_{\mathrm{BC},\alpha}$ and we have
$$\zeta_{K^{\times}}(t)=\zeta_{\alpha}(\iota_{E_\alpha}\alpha(t))=\mathbbm{1}(t)=1.$$
Hence we have the equality
\begin{equation}\label{zeta1}
\epsilon_{\mathrm{Kal}}|_{E_{1}^{\times}}\cdot\epsilon_{\mathrm{HM}}\cdot\omega_{\mathrm{Pra}}|_{E_{1}^{\times}}=\zeta_{K^{\times}}|_{E_{1}^{\times}}=\mathbbm{1}.
\end{equation}
\item The even case. There are two cases:
\begin{enumerate}

\item

$\bullet$ Representation side.

$$\xymatrix{& K \ar@{-}[dl]_{\mathrm{r}}
\ar@{-}[dr]^{\mathrm{ur}} \\ E_1 \ar@{-}[dr]_{\mathrm{ur}} &
 & E \ar@{-}[dl]^{\mathrm{r}} \\ & F}  ~~\xymatrix{& k_K \ar@{=}[dl]
\ar@{-}[dr] \\ k_{E_1} \ar@{-}[dr] &
 & k_E \ar@{=}[dl] \\ & k_F}.$$
A priori, if $\alpha\in \Phi_{\frac{r}{2}}$, we have
\begin{equation*}
\begin{aligned}
\epsilon_{\mathrm{Kal}}|_{E_{1}^{\times}}(t)&=\mathrm{sgn}_{k_{K/E}^{1}}(\alpha(t))=\mathrm{sgn}_{k_{K/E}^{1}}(\frac{t}{\tau(t)}),\\
\epsilon_{\mathrm{HM}}(t)&=\mathrm{sgn}_{k_{F}^{\times}}(\mathrm{Nm}_{k_{E_{1}}/k_F}\alpha(t))=(\frac{t}{\tau(t)}\cdot \frac{\tau(t)}{t})^{\frac{q_{E_1}-1}{2}}=1.
\end{aligned}
\end{equation*}

It is clear that $\epsilon_{\mathrm{Kal}}|_{E_{1}^{\times}}(t)\cdot\epsilon_{\mathrm{HM}}(t)$ is trivial on $\varpi_{E_1}$ and given by $(\frac{t}{\tau(t)})^{\frac{q_E+1}{2}}$ for $t\in \mathcal{O}_{E_{1}}$. Notice that $\omega_{K/E_{1}}$ satisfies $$\omega_{K/E_{1}}(\varpi_{E_1})=\omega_{K/E_{1}}(-1)=(-1)^{\frac{q_{E_{1}}-1}{2}}=1$$ 
since $q_{E_{1}}=q_{F}^{2}= 1\mod 4$, and is given by $[t]^{\frac{q_{E_{1}}-1}{2}}$ for $t\in \mathcal{O}_{E_{1}}$. Moreover, we have the following indentity
$$([\frac{t}{\tau(t)}])^{\frac{q_E+1}{2}}=(\frac{[t]}{[t]^{q_{F}}})^{\frac{q_E+1}{2}}=[t]^{\frac{1-q_{E_{1}}}{2}}=[t]^{\frac{q_{E_{1}}-1}{2}},$$
which means that if $\alpha\in \Phi_{\frac{r}{2}}$, then we have $\epsilon_{\mathrm{Kal}}|_{E_{1}^{\times}}(t)\epsilon_{\mathrm{HM}}(t)=\omega_{K/E_1}$.

However, in this case, $\mathrm{depth}\theta$ is odd, which means that $\alpha\notin \Phi_{\frac{r}{2}}$ and the Heisenberg quotient is degenerate. Hence Kaletha's character and Hakim's character are automatically both trivial in this case.

$\bullet$ Parameter side.
$$\xymatrix{& K \ar@{-}[dl]_{\mathrm{ur}}
\ar@{-}[dr]^{\mathrm{ur}} \\ E_2 \ar@{-}[dr]_{\mathrm{r}} &
 & E \ar@{-}[dl]^{\mathrm{r}} \\ & F}  ~~\xymatrix{& k_K \ar@{-}[dl]
\ar@{-}[dr] \\ k_{E_2} \ar@{=}[dr] &
 & k_E \ar@{=}[dl] \\ & k_F}$$

Following \Cref{symur}, $\chi_{\mathrm{BC},\alpha}=\chi_{\mu,\alpha}\circ \mathrm{Nm}_{K/E_2}$, where $\chi_{\mu,\alpha}$ is the quadratic character of $E_{1}^{\times}$ determined by mod $a$ data, and $\chi_{\mu\circ \mathrm{Nm}_{S(E)/S^{\mathrm{op}}(F)},\alpha}$ is the unramified quadratic character $\omega_{K'/K}$ of $K^{\times}$. Hence $\chi_{\mu\circ \mathrm{Nm}_{S(E)/S^{\mathrm{op}}(F)},\alpha}|_{E_{1}^{\times}}=\omega_{K'/K}|_{E_{1}^{\times}}=\mathbbm{1}$. Now we are going to prove
$$(\chi_{\mu}\circ \mathrm{Nm}_{K/E_2})|_{E_{1}^{\times}}=\omega_{K/E_1}.$$

Notice that we have $\chi_{\alpha}|_{F^\times}=\omega_{E_{2}/F}$ by the definition of $\chi$-data, which implies
\begin{equation}
\begin{aligned}
(\chi_{\alpha}\circ \mathrm{Nm}_{K/E_2})|_{E_{1}^{\times}}&=\chi_{\alpha}|_{F^{\times}}\circ \mathrm{Nm}_{E_1/F}\\
&=\omega_{E_2/F}(\mathrm{Nm}_{E_1/F})=\omega_{K/E_1},
\end{aligned}
\end{equation}
by considering
$$\xymatrix{& K \ar@{-}[dl]_{\mathrm{ur}}
\ar@{-}[dr]^{\mathrm{r}} \\ E_2 \ar@{-}[dr]_{\mathrm{r}} &
 & E_1. \ar@{-}[dl]^{\mathrm{ur}} \\ & F}$$

Hence we also have the equality
\begin{equation}\label{zeta2}
\epsilon_{\mathrm{Kal}}|_{E_{1}^{\times}}\cdot\epsilon_{\mathrm{HM}}\cdot\omega_{\mathrm{Pra}}|_{E_{1}^{\times}}=\zeta_{K^{\times}}|_{E_{1}^{\times}}=\omega_{K/E_1}.
\end{equation}

\item
$\bullet$ Representation side.

$$\xymatrix{& K \ar@{-}[dl]_{\mathrm{ur}}
\ar@{-}[dr]^{\mathrm{r}} \\ E_1 \ar@{-}[dr]_{\mathrm{r}} &
 & E \ar@{-}[dl]^{\mathrm{ur}} \\ & F}  ~~\xymatrix{& k_K \ar@{-}[dl]
\ar@{=}[dr] \\ k_{E_1} \ar@{=}[dr] &
 & k_E \ar@{-}[dl] \\ & k_F}$$
A priori, if $\alpha\in\Phi_{\frac{r}{2}}$, then we have
\begin{equation*}
\begin{aligned}
\epsilon_{\mathrm{Kal}}|_{E_{1}^{\times}}(t)&=f_{(G,T)}(\alpha)=1\\
\epsilon_{\mathrm{HM}}(t)&=\mathrm{sgn}_{k_{E_{1}}^{\times}}(\alpha(t))=(\frac{t}{\tau(t)})^{\frac{q_{E_1}-1}{2}}
\end{aligned}
\end{equation*}
Notice that $\mathfrak{W}$ is trivial by \Cref{sl2} in this case, which means Hakim's character is always trivial.

$\bullet$ Parameter side.
$$\xymatrix{& K \ar@{-}[dl]_{\mathrm{ur}}
\ar@{-}[dr]^{\mathrm{r}} \\ E_2 \ar@{-}[dr]_{\mathrm{r}} &
 & E \ar@{-}[dl]^{\mathrm{ur}} \\ & F}  ~~\xymatrix{& k_K \ar@{-}[dl]
\ar@{=}[dr] \\ k_{E_2} \ar@{=}[dr] &
 & k_E \ar@{-}[dl] \\ & k_F}$$

By \Cref{symram}, we have $\zeta_{K^{\times}}|_{E_{1}^{\times}}=\omega_{K/E_1}$. Hence we have
\begin{equation}\label{zeta3}
\epsilon_{\mathrm{Kal}}|_{E_{1}^{\times}}\cdot\epsilon_{\mathrm{HM}}\cdot\omega_{\mathrm{Pra}}|_{E_{1}^{\times}}=\zeta_{K^{\times}}|_{E_{1}^{\times}}=\omega_{K/E_1}.
\end{equation}
\end{enumerate}
\end{enumerate}

\begin{corollary}\label{gl2prasad}
Prasad's conjecture \Cref{conjecture1} holds for regular supercuspidal representations of $GL_{2}$.
\end{corollary}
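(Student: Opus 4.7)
The plan is to verify Conjecture \ref{productofcharacters} directly for $GL_2$, which by \Cref{LHS}, \Cref{RHS}, and \Cref{comparisonoflr} suffices to establish Prasad's identity in this case. Since $GL_2$ has only one positive root up to Galois action, the character identity reduces to a single-root comparison at the unique $\Gamma_F$-orbit, which is precisely what has been tabulated in \Cref{table1}--\Cref{table5}. Concretely, the plan is to fix the biquadratic tower $K/E_1,E,E_2/F$ associated to a base-change regular supercuspidal $\pi$ with $T(E) = K^\times$, and enumerate the possible ramification configurations of this tower.

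First, I would record that the $\Gamma_F$-orbit of the unique positive root $\alpha$ is always symmetric over $F$ (since $F_\alpha = E_1$ and $F_{\pm\alpha} = F$), so only the symmetric rows of the tables contribute; likewise $\alpha^{\mathrm{op}}$ corresponds to $E_2$ in the parameter picture. Prasad's character restricts to $E_1^\times$ as $\omega_{K/E_1}$ by \Cref{examplegln}. The $\chi$-data comparison is governed by the ramification of $K/E_2$ over $E_2$, and Kaletha's and Hakim's characters are governed by ramification in the tower $K/E_1/F$ together with the parity of $\operatorname{depth}\theta$ (which controls whether $\alpha \in \Phi_{r/2}$, hence whether the Heisenberg quotient is non-degenerate).

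Second, I would split into the three configurations that actually occur (the remaining configurations being symmetric under swapping $E_1 \leftrightarrow E_2$ or trivially covered by the $E/F$ unramified theorem \Cref{unramifiedprasad}):
\begin{enumerate}
\item[(i)] $E/F$ and $E_1/F$ both ramified, $K/E$ and $K/E_1$ both unramified (the ``odd'' case, where $\operatorname{depth}\theta$ is even by the lemma in the excerpt);
\item[(ii)] $E/F$ ramified, $E_1/F$ unramified, $K/E_1$ ramified (the first ``even'' case, where $\operatorname{depth}\theta$ is odd and $\alpha \notin \Phi_{r/2}$);
\item[(iii)] $E/F$ unramified, $E_1/F$ ramified (covered already by \Cref{characterunramified}, but consistent with the calculations here).
\end{enumerate}
In case (i), Kaletha contributes $\mathrm{sgn}_{k_K^1}\circ\alpha$ and Hakim contributes $\mathrm{sgn}_{k_{E_1}^\times}\circ\alpha$; their product on $\mathcal{O}_{E_1}^\times$ is trivial and on $\varpi_{E_1}$ equals $-1$, so the product equals $\omega_{K/E_1} = \omega_{\mathrm{Pra}}|_{E_1^\times}$, giving $\epsilon_{\mathrm{Kal}}\cdot\epsilon_{\mathrm{HM}}\cdot\omega_{\mathrm{Pra}}|_{E_1^\times} = \mathbbm{1}$, which matches $\zeta|_{E_1^\times}$ as computed in \eqref{zeta1}. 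In case (ii), the degeneracy of the Heisenberg module (as already noted in the excerpt via $\mathfrak{W}=0$) forces $\epsilon_{\mathrm{Kal}}|_{E_1^\times}\cdot\epsilon_{\mathrm{HM}} = \mathbbm{1}$, so the identity reduces to $\omega_{\mathrm{Pra}}|_{E_1^\times} = \omega_{K/E_1} = \zeta|_{E_1^\times}$, matching \eqref{zeta2}--\eqref{zeta3}.

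The main obstacle, and the step that required the most care in the analysis above, is correctly detecting when $\alpha$ belongs to $\Phi_{r/2}$ (equivalently, when the Heisenberg quotient is non-degenerate), because this controls whether the tabulated non-trivial contributions to $\epsilon_{\mathrm{Kal}}$ and $\epsilon_{\mathrm{HM}}$ actually occur. This is precisely the role played by the parity lemma linking $\operatorname{depth}\theta$ to the ramification of $K/E_1$; once that link is used, the four characters can be cleanly identified using the Hilbert-symbol computations with $q_{E_1}\equiv 1\pmod 4$ and the identity $[t/\tau(t)]^{(q_E+1)/2} = [t]^{(q_{E_1}-1)/2}$. Assembling these cases yields Conjecture \ref{productofcharacters} for $GL_2$, which by the general reduction established in \Cref{section7} completes the proof of \Cref{gl2prasad}.
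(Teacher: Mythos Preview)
Your proposal is correct and follows essentially the same approach as the paper: a case-by-case verification of Conjecture~\ref{productofcharacters} for the biquadratic tower $K/E,E_1,E_2/F$, using the depth-parity lemma to detect when $\alpha\in\Phi_{r/2}$ and then matching the four characters via the computations already recorded in Tables~\ref{table1}--\ref{table5} and equations \eqref{zeta1}--\eqref{zeta3}. The only cosmetic difference is that the paper treats the $E/F$ unramified configuration (your case (iii)) by a direct computation rather than by invoking Proposition~\ref{characterunramified}, but the content is the same.
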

It seems that our approach is quite different from the proof existing in the literature. These characters may not be seen explicitly in the trace formula approach or in the local Rankin-Selberg method even in the simple case of $GL_2$.

\subsubsection{Example of $GL_n$, $n$ is odd.}
Let $S$ be the elliptic maximal torus $\mathrm{Res}_{E_1/F}\mathbb{G}_m$ of $GL_n$, where $E_1$ is a degree $n$ extension of $F$. Since we are considering the case when $S_E$ is also elliptic, $E$ and $E_1$ are disjoint. Let $K:=EE_1$ denote the composite of $E$ and $E_1$. Fix a set of representatives $\{g_1,\cdots,g_n\}$ of $\Gamma_F/\Gamma_{E_1}$. One can diagonalize the element $t\in E_{1}^{\times}\in GL_n(F)$ over $\overline{F}$, such that $t=(t_{g_1},\cdots,t_{g_n})$, where $t_{g_i}$ lies in ${\overline{F}}^{\times}$. Then each root in $\Phi(G,S)$ is of the form $t\mapsto \frac{t_{g_1}}{t_{g_j}}$. When $E_{1}/F$ is cyclic generated by $\tau$, then $e_i-e_j$ is simply $t\mapsto \frac{\tau^{i}(t)}{\tau^{j}(t)}$. Since $n$ is odd, we know that all the roots are asymmetric both over $E$ and $F$. In this case, we have $E_{\alpha}=E_{\pm\alpha}=K$, $F_{\alpha}=F_{\pm\alpha}=E_1$. In this case, we have $S^{\mathrm{op}}=\mathrm{Res}_{E_1/F}U_{1,K/E_1}\subset G^{\mathrm{op}}=U_{n,E/F}$. All the roots in $\Phi(G^{\mathrm{op}},S^{\mathrm{op}})$ are symmetric over $F$ but asymmetric over $E$. In this case, we have $F_{\alpha^{\mathrm{op}}}=K$ and $F_{\pm\alpha^{\mathrm{op}}}=E_1$.
$$\xymatrix{& K \ar@{-}[dl]_{\mathrm{2}}
\ar@{-}[dr]^{\mathrm{n}} \\ E_1 \ar@{-}[dr]_{n} &
 & E \ar@{-}[dl]^{2} \\ & F}$$
\paragraph{Prasad's character.}
Prasad's character is trivial, since $n$ is odd. The fact that the restriction of Prasad's character to an elliptic maximal torus is trivial can also be seen from \Cref{examplegln}. Since all the roots are asymmetric over $F$, when $n$ is odd.
\paragraph{Hakim's character.}
Hakim's character is also trivial by \Cref{Hakimcharacter}, since all the roots are asymmetric over $F$, when $n$ is odd.
\paragraph{Kaletha's character.}
A priori, it seems that Kaletha's character is not necessarily trivial from its definition. Notice that all the roots are also asymmetric over $E$. Hence from its definition, the character $\epsilon_{\mathrm{Kal}}:T(E)\rightarrow \mathbb{C}^{\times}$ is given by
$$\prod_{\alpha\in (\Phi_{\frac{r}{2}})_{\mathrm{asy}}/\Gamma_{E}\times\{\pm 1\}}\mathrm{sgn}_{k_{E_{\alpha}}^{\times}}\alpha(t).$$

\begin{lemma}\label{oddtrivial}
Let $T$ be a maximal elliptic torus of $G_E$. The character $\mathrm{sgn}_{k_{E_{\alpha}^{\times}}}(\alpha (t))$ is trivial, if $E_{\alpha}/E$ is of odd ramification degree.
\end{lemma}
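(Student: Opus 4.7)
The plan is to reduce the statement to a computation in the residue field of $E_\alpha$, using the ellipticity of $T$ to pin down both the integrality of $\alpha(t)$ and the triviality of its $E_\alpha/E$ norm, and then exploiting the odd ramification hypothesis to upgrade a norm condition to a squareness condition.

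First I would show that $\alpha(t) \in O_{E_\alpha}^\times$ for every $t \in T(E)$. Since $\alpha$ is a root, it is trivial on $Z(G)$, hence factors through the quotient $T/Z(G)$. Ellipticity of $T$ means precisely that $T/Z(G)$ is anisotropic as an $E$-torus, so $(T/Z(G))(E)$ is compact; its continuous image in $E_\alpha^\times$ must therefore lie in the unique maximal compact subgroup $O_{E_\alpha}^\times$. Next, I would prove $\mathrm{Nm}_{E_\alpha/E}(\alpha(t)) = 1$. Indeed, since $\gamma(t) = t$ for $\gamma \in \Gamma_E$,
\[
\mathrm{Nm}_{E_\alpha/E}(\alpha(t)) \;=\; \prod_{\gamma \in \Gamma_E/\Gamma_{E_\alpha}} (\gamma\alpha)(t) \;=\; \Bigl(\sum_{\gamma \in \Gamma_E/\Gamma_{E_\alpha}} \gamma\alpha\Bigr)(t),
\]
and the character $\sum_\gamma \gamma\alpha$ is $\Gamma_E$-invariant and vanishes on $Z(G)$ (each summand being a root), so it defines an element of $X^*(T/Z(G))^{\Gamma_E}$, which is trivial by anisotropy.

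Third, I would pass to residues. Set $e = e(E_\alpha/E)$ and $f = f(E_\alpha/E)$. The $ef$ embeddings $E_\alpha \hookrightarrow \overline{E}$ restrict to only $f$ distinct embeddings $k_{E_\alpha} \hookrightarrow \overline{k_E}$, each occurring with multiplicity $e$, so for $u \in O_{E_\alpha}^\times$ one has $\overline{\mathrm{Nm}_{E_\alpha/E}(u)} = \mathrm{Nm}_{k_{E_\alpha}/k_E}(\bar u)^{e}$. Writing $\bar y = \overline{\alpha(t)}$, Step~2 then gives $\mathrm{Nm}_{k_{E_\alpha}/k_E}(\bar y)^{e} = 1$.

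Finally, I would invoke the odd ramification hypothesis. Since $k_E^\times$ is cyclic of even order $q_E - 1$ (because $p \neq 2$), an element whose $e$-th power is $1$ has odd order whenever $e$ is odd, and all odd-order elements of $k_E^\times$ are squares. Thus $\mathrm{Nm}_{k_{E_\alpha}/k_E}(\bar y) \in (k_E^\times)^2$. On the other hand, for a generator $a$ of the cyclic group $k_{E_\alpha}^\times$, the kernel of $\mathrm{Nm}_{k_{E_\alpha}/k_E}$ is generated by $a^{q_E-1}$, which is a square because $q_E - 1$ is even; combined with the surjectivity of the residue norm this yields $\mathrm{Nm}_{k_{E_\alpha}/k_E}^{-1}\bigl((k_E^\times)^2\bigr) = (k_{E_\alpha}^\times)^2$. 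Hence $\bar y \in (k_{E_\alpha}^\times)^2$, and $\mathrm{sgn}_{k_{E_\alpha}^\times}(\bar y) = 1$, as desired. The only subtle point is the residue computation of the norm in the presence of ramification, which is exactly what forces the odd-$e$ hypothesis; everything else is a direct unwinding of ellipticity and finite field arithmetic.
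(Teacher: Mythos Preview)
Your proof is correct and follows essentially the same route as the paper: both use ellipticity to force $\mathrm{Nm}_{E_\alpha/E}(\alpha(t))=1$, reduce mod the maximal ideal to obtain $\mathrm{Nm}_{k_{E_\alpha}/k_E}(\overline{\alpha(t)})^{e}=1$, and then invoke oddness of $e$. The only difference is in the final step: the paper applies the identity $\mathrm{sgn}_{k_E^\times}\circ\mathrm{Nm}_{k_{E_\alpha}/k_E}=\mathrm{sgn}_{k_{E_\alpha}^\times}$ directly to get $\mathrm{sgn}_{k_{E_\alpha}^\times}(\overline{\alpha(t)})^{e}=1$, whereas you reach the same conclusion via a slightly longer (but equivalent) squareness argument. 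Your explicit verification that $\alpha(t)\in O_{E_\alpha}^\times$ is a point the paper leaves implicit.
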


\begin{proof}
Notice that $\sum\limits_{\alpha'\in\mathcal{O}_{\alpha}}\alpha'$ is character of $T$ defined over $E$, which is trivial due to the ellipticity of $T$.
Hence we have:
\begin{equation*}
\begin{aligned}
1&=\mathrm{sgn}_{k_{E^{\times}}}(\sum_{\alpha'\in\mathcal{O}_{\alpha}}\alpha')(t),\\
&=\mathrm{sgn}_{k_{E^{\times}}}(\mathrm{Nm}_{E_{\alpha}/E}\alpha(t)),\\
&=\mathrm{sgn}_{k_{E^{\times}}}(\mathrm{Nm}_{k_{E_{\alpha}}/k_E}\alpha(t))^{e(E_{\alpha}/E)},\\
&=\mathrm{sgn}_{k_{E_{\alpha}^{\times}}}(\alpha(t))^{e(E_{\alpha}/E)},
\end{aligned}
\end{equation*}
where $\mathcal{O}_{\alpha}$ is the $\Gamma_E$ orbit of $\alpha$, and can be identified with $\Gamma_E/\Gamma_{E_{\alpha}}$. Hence if $e(E_{\alpha}/E)$ is odd, we have $\mathrm{sgn}_{k_{E_{\alpha}^{\times}}}(\alpha(t))=1$.
\end{proof}
Notice that all the elliptic tori of $GL_{n,E}$ is of the form $\mathrm{Res}_{K/E}\mathbb{G}_m$, for a degree $n$ extension $K/E$. Hence by the above lemma, Kaletha's character $\epsilon_{\mathrm{Kal}}$ is trivial when $n$ is odd, and so is $\epsilon_{\mathrm{Kal}}|_{i(S)(F)}$.
\paragraph{Character from zeta data.}
Since all the roots are asymmetric over $E$, by \Cref{zetacharacter}, the character is
$$\prod_{\alpha\in \Phi_{\mathrm{asy}}/\Gamma_E\times\{\pm1\}}\zeta_{\alpha}(\alpha(t))=\prod_{\alpha\in \Phi_{\mathrm{asy}}/\Gamma_E\times\{\pm1\}}\frac{\chi_{\alpha,E}}{\chi_{\alpha^{\mathrm{op}}}\circ id}(\alpha(t)).$$

Notice that $\chi_{\alpha,E}=\mathbbm{1}$ since $\alpha$ is asymmetric over $E$. We only need to compute $\chi_{\alpha^{\mathrm{op}}}(\alpha(t))$. We can choose $\alpha$ in its orbit such that $\alpha(t)=\frac{t}{\tau(t)}$ for some $\tau\in \mathrm{Gal}(E_1/F)$. We have the following cases.

\begin{enumerate}
\item $K/E_1$ is unramified. $\zeta_{T(E)}:K^{\times}\rightarrow \mathbb{C}^{\times}$ is given by $\omega_{K'/K}(\alpha(t))=\omega_{K'/K}(\frac{t}{\tau(t)})$. Notice that $\omega_{K'/K}|_{E_{1}^{\times}}=\omega_{K/E_1}$ since both $K'/K$ and $K/E_1$ are unramified. Hence for $t\in S(F)=E_{1}^{\times}$, $\zeta|_{S(F)}=\omega_{K/E_1}(\frac{t}{\tau(t)})=\omega_{E/F}\circ \mathrm{Nm}_{E_1/E}(\frac{t}{\tau(t)})=\omega_{E/F}(1)=1$.
\item $K/E_1$ is ramified. $\zeta_{T(E)}$ is given by $\chi_{\alpha}(\alpha (t))$, where $\chi_{\alpha}$ is the tamely ramified character, which is the unique non-trivial character on $k_{K}^{\times}$ and determined by the value at some mod-$a$-data.
    Notice that for any $t\in E_{1}^{\times}=S(F)$, $\mathrm{Im}(\alpha)\subset {E_{1}}^{1}_{E_1/F}\subset O_{E_1}^{\times}$.

\begin{enumerate}
\item $E_1/F$ is ramified. $\frac{t}{\tau(t)}=1 (\mathrm{mod} 1+\varpi_{E_{1}^{\times}})$, which means the character is trivial.
\item $E_1/F$ is unramified. $\mathrm{sgn}_{k_{K}^{\times}}[\frac{t}{\tau(t)}]=\mathrm{sgn}_{k_{K}^{\times}}\frac{t}{t^{q}}=1$, since $p$ is odd.
\end{enumerate}
\end{enumerate}

The above computation shows that \Cref{productofcharacters} is true for $G=GL_n$, when $n$ is odd. This also gives a simple proof of Prasad's conjecture \Cref{conjecture1} for regular supercuspidal representations of $GL_n$ when $n$ is odd.

\subsubsection{Example of $U_{n,E/F},$ $n$ is odd. }

Notice that $T\cong S_E$ is an elliptic maximal torus of $G_E=GL_{n,E}$, hence $T$ is of the form $\mathrm{Res}_{K/E}\mathbb{G}_{m,K}$ for a degree $n$ extension $K/E$. Hence we have $S\cong\mathrm{Res}_{E_1/F}U_{1,K/E_1}\subset G=U_{n,E/F}$ for a degree $n$ extension $E_1/F$ such that $K=EE_1$. Moreover, we have $E_{\alpha}=E_{\pm\alpha}=K$, $F_{\alpha}=K,F_{\pm\alpha}=E_1$, which means all the roots are asymmetric over $E$ but symmetric over $F$. We also know that $S^{\mathrm{op}}\cong \mathrm{Res}_{E_1/F}\mathbb{G}_m\subset G^{\mathrm{op}}=GL_{n,F}$. Moreover, we have $F_{\alpha^{\mathrm{op}}}=F_{\pm\alpha^{\mathrm{op}}}=E_1$, which means all the roots in $\Phi(G^{\mathrm{op}},S^{\mathrm{op}})$ are asymmetric over $F$.

\paragraph{Prasad's character.}
Notice all the roots are asymmetric over $E$, since $U_{n,E/F}\times_{F} E\cong GL_{n,E}$. Hence we have $E_{\alpha}=E_{\pm\alpha}=K$. Prasad's character is trivial by \Cref{unitary}. The fact that the restriction of Prasad's character to an elliptic maximal torus is trivial can also be seen from \Cref{rootcharacter}, since $E_{\alpha}=F_{\alpha}$.
\paragraph{Hakim's character.}
The only possibility for Hakim's character to be non-trivial is that $K/E_1$ is ramified quadratic by \Cref{Hakimcharacter}. In this case Hakim's character is given by $$\mathrm{sgn}_{k_{K}^{\times}}(\alpha(t))|_{K^{1}_{K/E_1}}.$$
Since $K/E_1$ is ramified, the reduction mod $1+\varpi_{K}$ of $K^{1}_{K/E_1}$ is $O_{1}(k_E)\cong \{\pm 1\}$. Hence the reduction of $\alpha(t)$ is $[\alpha(t)]=[\frac{t}{\tau(t)}]=\frac{[t]}{[\tau(t)]}=1$, which means Hakim's character is trivial.
\paragraph{Kaletha's character.}
Notice that $U_{n,E/F}\times_{F} E\cong GL_{n,E}$. By the argument in \Cref{oddtrivial}, Kaletha's character $\epsilon_{\mathrm{Kal}}$ is trivial when $n$ is odd, so is $\epsilon_{\mathrm{Kal}}|_{i(S)(F)}$.
\paragraph{Character from zeta data}
Since all the roots are asymmetric both over $F$ and $E$, by \Cref{zetacharacter}, the character is
\begin{equation*}
\begin{aligned}
\prod_{\alpha\in \Phi_{\mathrm{asy}}/\Gamma_E\times\{\pm1\}}\zeta_{\alpha}(\alpha(t))&=\prod_{\alpha\in \Phi_{\mathrm{asy}}/\Gamma_E\times\{\pm1\}}\frac{\chi_{\alpha,E}}{\chi_{\alpha^{\mathrm{op}}}\circ \mathrm{Nm}_{K/E}}(\alpha(t))\\
&=\prod_{\alpha\in \Phi_{\mathrm{asy}}/\Gamma_E\times\{\pm1\}}\frac{\mathbbm{1}}{\mathbbm{1}\circ \mathrm{Nm}_{K/E}}(\alpha(t))=1.
\end{aligned}
\end{equation*}

The above computation shows that Conjecture \Cref{productofcharacters} is true for $G=U_{n,E/F}$, when $n$ is odd. This also gives a simple proof of Prasad's conjecture \Cref{conjecture1} for regular supercuspidal representations of $GL_n$ when $n$ is odd.

\appendix
\section{Whittaker model for distinguished regular supercuspidal representations}\label{appendix}

%(This type of results may be generalized to the case of regular supercuspidal representations by applying a result of Kazhdan and Debacker \cite{DK11}. In fact for twisted Levi sequences, there is no obstruction and one can certainly find good elements such that $G^0=C_{G}(X)$, where $X=X_1+\cdots+X_d$, where $X_i$ is the generic good elements constructed inductively. To capture the information of the elliptic tori, one need to consider elements involving in the depth zero part such that $S=C_{G}(X')$, where $X'$ is constructed from $X$ involving the depth zero character, which is exactly treated in Kazhdan and Debacker \cite{DK11}).

In this appendix, we collect some facts about the characterization of Whittaker models of regular supercuspidal representations in terms of geometry. This origins from the work of Debacker and Reeder \cite{DR10} on description of Whittaker models for generic very cuspidal representations. Basically, one can describe all the characters $\psi$ of $N(F)$ which occur in $\pi_{(S,\mu)}$ in terms of certain intersection property between the Kostant sections of $\psi$ and the $G(F)$ orbit of the generic element associated to $\pi_{(S,\mu)}$. Some of these work is generalized to regular supercuspidal representations in \cite{Kal19} and \cite{FKS21}. We will also mention some results about the Whittaker model of a $G(F)$ distinguished representation of $G(E)$. These results interact naturally when we study the distinction problem of a regular supercuspidal representation of $G(E)$.

\subsection{Local character expansions.}
We first review some basic facts about the character expansion and $p$-adic analogue of Rossman's character formula, which is known as the Murnaghan-Kirillov formula \cite{Mur96}. Later, this work has been extended to a more general setting by many people: Jeffrey Adler, Stephen Debacker, David Kazhdan, Loren Spice \cite{AD04} \cite{DK11} \cite{AS09} etc. Fix a $G(F)$-invariant bilinear form $\langle,\rangle$ on $\mathfrak{g}(F)$, which identifies $\mathfrak{g}(F)$ with $\mathfrak{g}^{*}(F)$. For any regular semisimple element $X\in \mathfrak{g}(F)$, let $\mu_{X}$ denote the semisimple orbital integral:
$$\mu_{X}(f):=\int_{G(F)/T(F)}f(^{g}X)d\dot{g},$$
where $T:=C_{G}(X)$ denotes the centralizer of $X$ and $d\dot{g}$ is a left invariant Haar measure on $\mathcal{O}_X\cong G(F)/T(F)$ defined by Rao \cite{Rao72}, depending on a choice of Haar measures on $G(F)$ and $T(F)$. Notice that $\mu_X(f)$ is a distribution if we vary $f\in C_{c}^{\infty}(\mathfrak{g})$, while it could also be regarded as a function on $\mathfrak{g}$ if we fix $f$ and vary $X\in \mathfrak{g}$. We use $\mu_{X}(f)$ and $\mu_f(X)$ to distinguish them.

Fix an additive central character $\psi:F\rightarrow \mathbb{C}^{\times}$, one can define the Fourier transform of arbitrary orbital integral by:
$$\widehat{\mu_{\mathcal{O}}}(f):=\mu_{\mathcal{O}}(\widehat{f}),$$
where $\widehat{f}\in C_{c}^{\infty}(\mathfrak{g})$ is defined by:
$$\widehat{f}(Y):=\int_{\mathfrak{g}(F)}f(Z)\psi(\langle Y,Z\rangle)dZ.$$
Notice that the Fourier transform of a nilpotent orbital integral $\widehat{\mu_{\mathcal{O}}}$ as a distribution could also be represented as locally constant function on an open dense subset $\mathfrak{g}$, which we also denote by $\widehat{\mu_{\mathcal{O}}}$.

Let $\Theta_{\pi}$ denote the character distribution associated to an irreducible admissible representation $(\pi,V)$. More precisely, fix a Haar measure $dg$ on $G(F)$, for any $f\in C_{c}^{\infty}(G(F))$,
$$\Theta_{\pi}(f):=\mathrm{tr}\pi(f),$$
where $\pi(f):V\rightarrow V$ is an operator defined by
$$\pi(f)v:=\int_{G(F)}f(g)\pi(g)vdg.$$
It is known that $\Theta_{\pi}$ is represented by some locally constant function on an open dense subset of $G(F)$, which we still denote by $\Theta_{\pi}$. Moreover, we have the well-known local expansion formula of $\Theta_{\pi}$, which is essentially due to Harish-Chandra and Howe \cite{DP99}.
\begin{theorem}[Harish-Chandra, Howe]

There exist constants $c_{\mathcal{O}}(\pi,\gamma_{0})$ and a neighbourhood $U$ of $0$ in $\mathfrak{m}$ such that, for any $X\in U\cap \mathfrak{m}^{\mathrm{reg}}$
$$\Theta_{\pi}(\gamma_{0}\exp(Y))=\sum\limits_{\mathcal{O}} c_{\mathcal{O}}(\pi,\gamma_{0})\widehat{\mu_{\mathcal{O}}}(Y),$$
where $\gamma_0$ is a semisimple element of $G$, $M=C_{G}(\gamma_{0})$, and $\mathcal{O}$ runs over all the nilpotent $M$ orbits in $\mathfrak{m}$.
\end{theorem}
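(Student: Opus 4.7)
The plan is to follow Harish-Chandra's classical two-step strategy: first reduce, via semisimple descent, to the case where $\gamma_0 = 1$ and the expansion takes place on a neighbourhood of $0$ in $\mathfrak{g}$; then establish the expansion for an arbitrary $\mathrm{Ad}(M(F))$-invariant distribution on $\mathfrak{m}$ near $0$ by combining Howe's finiteness theorem with the spanning property of the Fourier transforms of nilpotent orbital integrals.

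For the descent step, I would choose an $\mathrm{Ad}(M(F))$-invariant neighbourhood $U_0$ of $0$ in $\mathfrak{m}$ small enough that the map $Y \mapsto \gamma_0 \exp(Y)$ is a homeomorphism onto an $M(F)$-stable neighbourhood $V_0$ of $\gamma_0$, and so that the natural $G(F)$-equivariant map
\[
G(F) \times_{M(F)} V_0 \;\longrightarrow\; G(F),\qquad (g, m) \mapsto gmg^{-1},
\]
is an open embedding onto a $G(F)$-invariant neighbourhood of the $G(F)$-orbit of $\gamma_0$. Pulling the character distribution $\Theta_\pi$ back along this map and integrating out the $G(F)/M(F)$ factor yields an $\mathrm{Ad}(M(F))$-invariant distribution $\widetilde{\Theta}_{\pi,\gamma_0}$ on $U_0$. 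A direct computation using the Weyl integration formula, plus the fact that the Jacobian of $Y \mapsto \gamma_0 \exp(Y)$ is a non-vanishing analytic function on $U_0 \cap \mathfrak{m}^{\mathrm{reg}}$, shows that on regular elements $\widetilde{\Theta}_{\pi,\gamma_0}$ is represented by $Y \mapsto \Theta_\pi(\gamma_0 \exp(Y))$. Thus it suffices to establish a local character expansion for an arbitrary $\mathrm{Ad}(M(F))$-invariant distribution on $\mathfrak{m}$ near $0$.

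The core ingredient for the second step is the theorem that every $\mathrm{Ad}(M(F))$-invariant distribution $T$ on $\mathfrak{m}$ agrees, on some $\mathrm{Ad}(M(F))$-invariant neighbourhood of $0$, with a unique finite $\mathbb{C}$-linear combination of the $\widehat{\mu_{\mathcal{O}}}$ indexed by the nilpotent $M(F)$-orbits $\mathcal{O} \subset \mathfrak{m}$. I would prove this in three substeps: (i) invoke Howe's finiteness theorem to show that, for each compact-open subgroup $K \subset M(F)$, the space of $K$-invariant restrictions of $\mathrm{Ad}(M(F))$-invariant distributions supported in a fixed compact $\mathrm{Ad}$-stable subset of $\mathfrak{m}$ is finite dimensional; (ii) show via Shalika-germ theory that the $\widehat{\mu_{\mathcal{O}}}$ are linearly independent as locally constant functions on $\mathfrak{m}^{\mathrm{reg}}$; and (iii) use the homogeneity $\widehat{\mu_{\mathcal{O}}}(t^2 Y) = |t|^{-\dim \mathcal{O}} \widehat{\mu_{\mathcal{O}}}(Y)$ under $t \in F^\times$, together with Howe's finiteness applied to the dilates of $T$, to match $T$ with a combination of $\widehat{\mu_{\mathcal{O}}}$ on an arbitrarily small neighbourhood of $0$. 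Applying this to $T = \widetilde{\Theta}_{\pi,\gamma_0}$ produces the asserted expansion with coefficients $c_{\mathcal{O}}(\pi,\gamma_0)$.

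The main obstacle is substep (i): Howe's finiteness theorem is the nontrivial harmonic-analytic input, established by Howe for $GL_n$ and extended by Clozel and others to general reductive $p$-adic groups using Bruhat--Tits theory and Hecke-algebraic techniques. The descent construction in the first step and the homogeneity-plus-linear-independence argument of (ii)--(iii) are comparatively routine once finiteness is in hand; some mild care is required to ensure that $U_0$ can be chosen small enough for both the descent and the matching of distributions to hold simultaneously, but this follows from the standard analysis of Moy--Prasad filtrations on $\mathfrak{m}$.
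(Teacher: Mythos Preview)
The paper does not prove this theorem. It is stated as a classical result (``essentially due to Harish-Chandra and Howe'') and simply cited to \cite{DP99}, with no argument given; it is used only as background for the Murnaghan--Kirillov discussion in the appendix. So there is nothing in the paper for your proposal to be compared against.

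That said, your outline is a faithful sketch of the standard argument in the literature: semisimple descent to reduce to a neighbourhood of $0$ in $\mathfrak{m}$, followed by the combination of Howe's finiteness theorem, linear independence and homogeneity of the $\widehat{\mu_{\mathcal{O}}}$, to match any invariant distribution near $0$ with a linear combination of Fourier transforms of nilpotent orbital integrals. One minor remark: in substep (i) you attribute the general case of Howe's finiteness to ``Clozel and others''; the relevant extension for general reductive $p$-adic groups (and the passage to characteristic zero residue fields) is usually credited to Harish-Chandra himself and to Waldspurger, with the detailed exposition in the reference \cite{DP99} that the paper cites. This is a matter of attribution rather than mathematics, and does not affect the validity of your plan.
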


\begin{remark}

For $\gamma_{0}=1$, this formula is the usual Harish-Chandra character expansion:
$$\Theta_{\pi}(\exp(Y))=\sum\limits_{\mathcal{O}\in \mathcal{O}(0)} c_{\mathcal{O}}(\pi)\widehat{\mu_{\mathcal{O}}}(Y),$$
as an equality of locally constant function in a small neighbourhood of $0$.
\end{remark}

In fact, semisimple orbital integrals and nilpotent orbital integrals both play an important role in the space of invariant distribution on $G(F)$. Moreover, they are related by the following germ expansion result of regular semisimple orbital integral by Shalika \cite{Sha72}.
\begin{theorem}[\cite{Sha72}]
For any $f\in C_{c}^{\infty}(\mathfrak{g})$, there exist locally constant functions $c_{\mathcal{O}}$ on $\mathfrak{t}^{\mathrm{rs}}$ and a neighbourhood $U_f$ of $0$ in $\mathfrak{t}$ such that for any $X\in U_f\cap\mathfrak{t}^{\mathrm{rs}}$,
$$\mu_{\mathcal{O}_X}(f)=\mu_{f}(X)=\sum\limits_{\mathcal{O}\in \mathcal{O}(0)}\mu_{\mathcal{O}}(f)c_{\mathcal{O}}(X).$$
\end{theorem}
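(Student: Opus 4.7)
The plan is to prove this germ expansion by combining a partition-of-unity reduction near the nilpotent cone with a dilation/homogeneity argument. First I would note that for $X\in \mathfrak{t}^{\mathrm{rs}}$ close to $0$, the orbit $\mathcal{O}_X$ meets $\mathrm{supp}(f)$ only inside an arbitrarily small invariant neighborhood of the nilpotent cone $\mathcal{N}(\mathfrak{g})$, since regular semisimple orbits are closed and their ``distance'' from $\mathcal{N}$ is controlled by the characteristic polynomial of $X$. Thus one may decompose $f=f_0+f_1$ where $f_0$ is supported in a small invariant neighborhood of $\mathcal{N}$ and $f_1$ is supported away from it; then $\mu_{f_1}(X)=0$ for $X$ in a small neighborhood $U_f$ of $0$ in $\mathfrak{t}$, and simultaneously $\mu_\mathcal{O}(f_1)=0$ for every nilpotent orbital integral. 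This reduces the theorem to the case where $f$ is supported in an arbitrarily small $G(F)$-invariant neighborhood of $\mathcal{N}$.

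Next, the construction of the germs $c_\mathcal{O}$ proceeds by induction on $\dim G$ and exploits the homogeneity $f\mapsto f_t$, $f_t(Y):=f(t^{-2}Y)$ for $t\in F^\times$, under which each nilpotent orbital integral scales by $\mu_\mathcal{O}(f_t)=|t|^{d-\dim \mathcal{O}}\mu_\mathcal{O}(f)$ with a known exponent depending on $\dim \mathcal{O}$, while $\mu_{f_t}(X)$ is related to $\mu_f(t^{-2}X)$ by a comparable factor. Combining these scaling identities with descent to proper Levi subgroups containing $T$ (the inductive hypothesis takes care of contributions from non-distinguished nilpotent orbits) and the Jacobson--Morozov parametrization of nilpotent orbits (available under the tameness hypotheses on $p$ imposed in the paper), one produces a candidate expansion with explicit germs defined initially only as asymptotics as $X\to 0$ in $\mathfrak{t}^{\mathrm{rs}}$.

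The main obstacle, and the technical heart of Shalika's original argument, is the \emph{uniqueness} of the expansion together with the local constancy of the $c_\mathcal{O}$ on $\mathfrak{t}^{\mathrm{rs}}$ on a neighborhood $U_f$ depending only on $f$. Uniqueness is equivalent to the linear independence of the nilpotent orbital integrals $\{\mu_\mathcal{O}\}_{\mathcal{O}\in \mathcal{O}(0)}$ viewed as invariant distributions on $C_c^\infty(V)$ for arbitrarily small $G(F)$-invariant neighborhoods $V$ of $0$; this follows by running the homogeneity argument in reverse, using that the $\mu_\mathcal{O}$ scale by distinct exponents under $f\mapsto f_t$, together with the non-vanishing of the Fourier transforms $\widehat{\mu}_\mathcal{O}$ on a dense open subset of $\mathfrak{g}^{\mathrm{rs}}$. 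Once uniqueness is secured, the local constancy of each $c_\mathcal{O}$ on $\mathfrak{t}^{\mathrm{rs}}\cap U_f$ follows from Harish-Chandra's regularity theorem, which already asserts the local constancy of $X\mapsto \mu_f(X)$ on $\mathfrak{t}^{\mathrm{rs}}$, combined with the uniqueness applied to translations $f\mapsto f(\cdot-X_0)$.
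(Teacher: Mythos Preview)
The paper does not give its own proof of this statement: it is quoted as a classical result with citation \cite{Sha72} and immediately followed by a remark and further discussion, with no proof or even proof sketch in the text. So there is nothing in the paper to compare your proposal against; the theorem functions purely as background in the appendix on local character expansions.

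As for the content of your sketch itself: the overall architecture (reduce to a neighborhood of the nilpotent cone, use homogeneity under dilations, induct via descent to Levi subgroups, and establish uniqueness through linear independence of nilpotent orbital integrals) is indeed the shape of Shalika's argument. One point to be careful about: your uniqueness step appeals to ``distinct exponents'' for the scaling of $\mu_{\mathcal{O}}$ under $f\mapsto f_t$, but distinct nilpotent orbits can have the same dimension, so the homogeneity degrees do not separate all orbits by themselves; Shalika's actual argument filters by orbit closure and peels off one orbit at a time from the top of the closure ordering, using test functions supported near a given orbit but away from smaller ones. Also, invoking Harish-Chandra's regularity theorem for the local constancy of $c_{\mathcal{O}}$ is anachronistic and somewhat circular in spirit, since in the original development the Shalika germs are an input to, not a consequence of, the finer regularity statements; local constancy comes more directly from the construction once uniqueness is in hand. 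But since the paper offers no proof here, these are comments on your sketch relative to the literature rather than relative to the paper.
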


\begin{remark}
For a fixed $X$ sufficient close to $0$, this identity is an identity of distributions on $\mathfrak{g}$:
$$\mu_{\mathcal{O}_X}=\sum\limits_{\mathcal{O}\in \mathcal{O}(0)}c_{\mathcal{O}}(X)\mu_{\mathcal{O}}.$$
A priori, this identity has nothing to do with representations. In fact, we could expect more for the relationship between character expansions and Shalika germs expansions. For instance, if we admit the philosophy of orbit method in the spirit of Kirillov-Rossmann \cite{Ros78} for real reductive groups, that is, for some particular discrete series representation $\pi$, we can associate a semisimple elliptic element $X_{\pi}\in \mathfrak{g}^*$ (in fact a coadjoint orbit in $\mathfrak{g}^*/G \cong \mathfrak{g}/G \cong \mathfrak{h}/W$), one can relate $\Theta_\pi$ and $\widehat{\mu_{\mathcal{O}_X}}$.
\end{remark}

Murnaghan \cite{Mur96} studies the relation of $\Theta_\pi$ and $\widehat{\mu_{\mathcal{O}_X}}$ and found that $$c_{\mathcal{O}}(\pi)=\deg(\pi)c_{\mathcal{O}}(X_{\pi}),$$
for arbitrary nilpotent orbit $\mathcal{O}\subset \mathfrak{g}$, which leads to a formula of Kirillov-Rossmann type.
\begin{theorem}[\cite{Mur96}]
$$\Theta_{\pi}(\exp Y)=\deg(\pi)\widehat{\mu_{X_{\pi}}}(Y),$$
where $X_{\pi}\in \mathfrak{g}\cong \mathfrak{g}^*$ is a regular elliptic element associated to the supercuspidal representation $\pi$.
\end{theorem}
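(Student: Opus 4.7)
The strategy is to reduce the Kirillov–Rossmann type identity to a comparison of coefficients in two local expansions valid on a common neighbourhood of $0\in\mathfrak{g}(F)$. On the representation side, the Harish-Chandra–Howe character expansion recalled just above gives constants $c_{\mathcal{O}}(\pi)$ with
\begin{equation*}
\Theta_{\pi}(\exp Y) = \sum_{\mathcal{O}\in \mathcal{O}(0)} c_{\mathcal{O}}(\pi)\,\widehat{\mu_{\mathcal{O}}}(Y).
\end{equation*}
On the orbit side, applying Shalika's germ expansion to $\widehat{f}$ and unfolding the Fourier transform yields, on an analogous neighbourhood,
\begin{equation*}
\widehat{\mu_{X_{\pi}}}(Y) = \sum_{\mathcal{O}\in \mathcal{O}(0)} c_{\mathcal{O}}(X_{\pi})\,\widehat{\mu_{\mathcal{O}}}(Y).
\end{equation*}
By Harish-Chandra's theorem on the linear independence of the germs $\widehat{\mu_{\mathcal{O}}}$ at the origin, the theorem is equivalent to the coefficient identity
\begin{equation*}
c_{\mathcal{O}}(\pi) \;=\; \deg(\pi)\, c_{\mathcal{O}}(X_{\pi}) \qquad \text{for every } \mathcal{O}\in \mathcal{O}(0). \tag{$\ast$}
\end{equation*}

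To establish $(\ast)$ I would exploit Howe's explicit induced model for $\pi$ (or Yu's construction, for groups beyond $GL_n$). Since $\pi$ is a tame supercuspidal representation, the matrix coefficient $\langle \pi(\cdot)v,v\rangle$ restricted to the deepest Moy–Prasad lattice $G(F)_{x,r+}$ agrees, after applying the Moy–Prasad isomorphism, with the additive character $\psi(\langle X_{\pi},\cdot\rangle)$ and is normalised by the formal degree $\deg(\pi)$. Substituting this matrix-coefficient formula into the standard expression $\Theta_{\pi}(f) = \deg(\pi)^{-1}\,\mathrm{tr}\,\pi(f)$ for $f$ supported near the identity, and rewriting the integral via the Weyl integration formula, should express $\Theta_{\pi}(\exp Y)$ directly as $\deg(\pi)$ times a semisimple orbital integral of the generic character attached to $X_{\pi}$, i.e.\ $\deg(\pi)\,\widehat{\mu_{X_{\pi}}}(Y)$, on some shrinking neighbourhood of $0$.

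The main obstacle is two-fold. First, one must control the neighbourhoods on which the two expansions and the matrix-coefficient identification are simultaneously valid; a priori the Harish-Chandra–Howe neighbourhood $U_{\pi}$ and the Shalika neighbourhood $U_f$ are only loosely related to the depth $r$ of $\pi$, and matching them requires delicate Moy–Prasad bookkeeping. Second, and more seriously, $(\ast)$ must be verified for \emph{every} nilpotent orbit, not merely the regular one, so reading off the leading term of the matrix coefficient does not suffice; one needs to propagate the identity to subregular and deeper orbits. The cleanest route is to invoke the homogeneity of the Fourier transforms $\widehat{\mu_{\mathcal{O}}}$ under $F^\times$-scaling, combined with the fact that replacing $\pi$ by a depth-$r$ twist rescales $X_{\pi}$ accordingly: this lets one test $(\ast)$ against a family of dilated neighbourhoods and separate the nilpotent contributions by their distinct homogeneity degrees. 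For $GL_n$, Murnaghan carries this out by an explicit double-coset analysis of Howe's induced representation and recognises the resulting Gauss sums as $\widehat{\mu_{X_{\pi}}}$; for a general reductive group, the natural substitute is the Adler–DeBacker–Spice character formula, whose semisimple-descent term directly exhibits $\Theta_{\pi}(\exp Y)$ as $\deg(\pi)\,\widehat{\mu_{X_{\pi}}}(Y)$ after taking the Fourier transform.
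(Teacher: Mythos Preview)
The paper does not supply its own proof of this theorem: it is quoted as a result of Murnaghan \cite{Mur96}, and the only argument the paper records is the sentence immediately preceding the statement, namely that Murnaghan proved the coefficient identity $c_{\mathcal{O}}(\pi)=\deg(\pi)\,c_{\mathcal{O}}(X_{\pi})$ for every nilpotent orbit $\mathcal{O}$, from which the Kirillov--Rossmann type formula follows. Your reduction to exactly this identity $(\ast)$ via the Harish-Chandra--Howe expansion, the Shalika germ expansion, and the linear independence of the $\widehat{\mu_{\mathcal{O}}}$ is therefore precisely the approach the paper attributes to Murnaghan; the remainder of your sketch (matrix-coefficient analysis on Moy--Prasad lattices, homogeneity arguments, and the Adler--DeBacker--Spice generalisation) goes beyond what the paper itself supplies and is a reasonable outline of how $(\ast)$ is actually established in \cite{Mur96} and its successors.
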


\subsection{Kostant section.}
Let $\mathcal{O}\subset\mathfrak{g}(F)$ be a regular nilpotent $G(F)$ orbit. Let $e\in \mathcal{O}$ be a nilpotent element. By Jacobson Morozov theorem, there exists a $\mathfrak{sl}_2$ triple $\{h,e,f\}$ such that:
$$[h,e]=2e,~~[h,f]=-2f,~~[e,f]=h.$$
\begin{definition}
A Kostant section of $\mathcal{O}$ at $e$ is an affine subspace $\mathbb{V}(\overline{F})\subset \mathfrak{g}(\overline{F})$ defined by:
$$\mathbb{V}(\overline{F}):=e+C_{\mathfrak{g}(\overline{F})}(f).$$
\end{definition}
Since $e$ and $f$ are $F$-rational, the Kostant section $\mathbb{V}$ is defined over $F$.

\begin{lemma}[\cite{Kos63}]\label{Kostant}

Every regular $G(\overline{F})$ orbit $\mathcal{O}_{-}\subset \mathfrak{g}(\overline{F})$ meets $\mathbb{V}(\overline{F})$ in exactly one point.
\end{lemma}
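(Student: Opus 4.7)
The plan is to invoke Kostant's classical 1963 theorem on the principal Kostant section, whose proof proceeds in three steps. The unifying idea is to compare $\mathbb{V}$ to the Chevalley quotient $\mathfrak{g}/\!/G \cong \mathbb{A}^r$ (where $r = \mathrm{rank}(G)$) via the adjoint quotient morphism $\chi: \mathfrak{g} \to \mathfrak{g}/\!/G$, and to show that the restriction $\chi|_{\mathbb{V}} : \mathbb{V} \to \mathfrak{g}/\!/G$ is an isomorphism of affine varieties. Once this is known, the conclusion is immediate: regular orbits are precisely the closed orbits contained in the regular locus, each fiber of $\chi$ contains a unique closed orbit, and therefore bijectivity of $\chi|_{\mathbb{V}}$ together with $\mathbb{V} \subset \mathfrak{g}^{\mathrm{reg}}$ forces exactly one meeting point per regular orbit.

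First I would fix the $\mathfrak{sl}_2$-structure on $\mathfrak{g}$ coming from $\{h,e,f\}$ and decompose $\mathfrak{g}$ into $\mathfrak{sl}_2$-irreducibles. Regularity of $e$ forces the highest weights to be exactly the doubled exponents $2e_1, \ldots, 2e_r$ of $G$, so in particular $\dim C_{\mathfrak{g}}(f) = r$ and $\dim \mathbb{V} = r = \dim \mathfrak{g}/\!/G$. I would then verify $\mathbb{V} \subset \mathfrak{g}^{\mathrm{reg}}$ using the weighted $\mathbb{G}_m$-action
\begin{equation*}
\phi(t) \cdot x := t^{-2}\,\mathrm{Ad}(\rho(t))\,x,
\end{equation*}
where $\rho: \mathbb{G}_m \to G$ is the cocharacter associated to $h$. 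On the $\mathrm{ad}(h)$-eigenspace $\mathfrak{g}_k$ this action has weight $k-2$; since $e \in \mathfrak{g}_2$ is fixed and $C_{\mathfrak{g}}(f)$ lies entirely in non-positive $\mathrm{ad}(h)$-weights, $\mathbb{V}$ is $\phi$-stable and $\phi(t)$ contracts it to $\{e\}$ as $t \to \infty$. Because $\mathfrak{g}^{\mathrm{reg}}$ is open, $G$-invariant, and contains $e$, this contraction forces $\mathbb{V} \subset \mathfrak{g}^{\mathrm{reg}}$.

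The real work, and the part I expect to be the main obstacle, is proving $\chi|_{\mathbb{V}}$ is an isomorphism. Note that $\chi$ is $\phi$-equivariant for the induced action on $\mathfrak{g}/\!/G$, which is simply scaling $x \mapsto t^{-2}x$ (conjugation acts trivially on invariants); consequently the $i$-th basic invariant, being homogeneous of degree $d_i = e_i + 1$, transforms by $t^{-2d_i}$. The differential $d\chi_e : C_{\mathfrak{g}}(f) \to T_{\chi(e)}(\mathfrak{g}/\!/G)$ is then $\mathbb{G}_m$-equivariant, and both source and target decompose into one-dimensional weight spaces of weights $-2d_1, \ldots, -2d_r$. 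Matching weights reduces the task to showing $d\chi_e$ is non-zero on each graded piece; this can be verified directly by pairing the lowest-weight vector of each $\mathfrak{sl}_2$-summand against the corresponding fundamental invariant, or alternatively by invoking the Chevalley restriction theorem applied to a compatible Cartan. A standard $\mathbb{G}_m$-equivariance argument (both $\mathbb{V}$ and $\mathfrak{g}/\!/G$ being affine spaces on which $\phi$ contracts to a single fixed point with strictly negative weights) then upgrades the differential isomorphism to a global isomorphism of varieties, and the lemma follows as outlined above.
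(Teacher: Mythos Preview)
Your outline is correct and follows Kostant's classical argument, but there is nothing to compare against: the paper does not supply its own proof of this lemma, it merely cites \cite{Kos63}. What you have written is essentially a sketch of the proof one finds by following that citation.
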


Moreover, if the $G(\overline{F})$ orbit $\mathcal{O}_{-}$ is also defined over $F$, then the unique point in $\mathcal{O}_{-}\cap \mathbb{V}(\overline{F})$ is $F$-rational. Hence the Kostant section $\mathbb{V}$ determines an $F$-rational point in every regular $G(\overline{F})$ orbit.

\begin{proposition}[\cite{DR10}, \cite{Kot99}, \cite{She89}]

Let $\mathcal{O}$ be a regular $G(F)$ nilpotent orbit in $\mathfrak{g}(F)$, $\mathbb{V}$ be a Kostant section for $\mathcal{O}$, and $X$ be a regular semisimple element in $\mathfrak{g}(F)$. Then the coefficient $c_{\mathcal{O}}(X)$ is non zero if and only if the $G(F)$ orbit of $X$ meets $\mathbb{V}(F)$
\end{proposition}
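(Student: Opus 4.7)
The plan is to prove both directions simultaneously by constructing suitable test functions localized near the Kostant section and extracting $c_{\mathcal{O}}(X)$ from Shalika's germ expansion applied to them. This is the strategy of Kottwitz and Shelstad, taken up in Debacker--Reeder, and it rests on two inputs already available: Kostant's cross-section \Cref{Kostant}, and the homogeneity of Shalika germs under the $\mathbb{G}_m$-action coming from the $\mathfrak{sl}_2$-triple attached to $\mathcal{O}$.

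First I would fix an $\mathfrak{sl}_2$-triple $\{h,e,f\}$ with $e\in\mathcal{O}\cap\mathbb{V}(F)$, and the associated cocharacter $\lambda\colon\mathbb{G}_m\to G$ defined over $F$. The contracted scaling $\tau_t(X):=t^{-2}\mathrm{Ad}(\lambda(t))(X)$ preserves $\mathbb{V}(F)$ and satisfies $\tau_t(X)\to e$ as $|t|\to\infty$; meanwhile the germs $c_{\mathcal{O}}$ are homogeneous of an explicit degree under $\tau_t$, and the condition that the $G(F)$-orbit of $X$ meets $\mathbb{V}(F)$ is invariant under $\tau_t$ since $\lambda(t)\in G(F)$. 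This homogeneity step reduces the whole statement to the range of $X$ near $0$ where Shalika's expansion is literally valid, and lets us remove this restriction at the end.

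Next I would construct a family of test functions $f_{U,N}\in C_c^{\infty}(\mathfrak{g}(F))$ supported in the $G(F)$-saturation of a small tube $U+\mathfrak{g}(F)_{\geq N}$, where $U$ is a compact open neighborhood of $e$ in $\mathbb{V}(F)$. Applying the Shalika expansion to $f_{U,N}$ at a regular semisimple $X$ close enough to $0$ yields
\[
\mu_{\mathcal{O}_X}(f_{U,N})\;=\;\sum_{\mathcal{O}'}c_{\mathcal{O}'}(X)\,\mu_{\mathcal{O}'}(f_{U,N}).
\]
The main obstacle is to show that for $U$ small and $N$ large, $\mu_{\mathcal{O}'}(f_{U,N})=0$ for every regular nilpotent $G(F)$-orbit $\mathcal{O}'\neq\mathcal{O}$, while $\mu_{\mathcal{O}}(f_{U,N})\neq 0$. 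The non-vanishing is immediate because $e\in\mathcal{O}\cap U$. The vanishing uses \Cref{Kostant}: the unique regular nilpotent $G(\overline{F})$-orbit meets $\mathbb{V}(\overline{F})$ only at $e$, so any $G(\overline{F})$-conjugate of $\mathcal{O}'$ lying in $\mathbb{V}(\overline{F})$ would have to equal $e$, forcing $\mathcal{O}'\subset G(\overline{F})\cdot\mathcal{O}$; then a Galois-descent and transversality argument controls how the $F$-rational orbits inside a single $\overline{F}$-orbit distribute with respect to $\mathbb{V}(F)$, and shows that $\mathcal{O}'\neq\mathcal{O}$ cannot be $G(F)$-conjugated into the tube once it is narrow enough in the $f$-direction. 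This is the subtle rationality point where I expect to spend most effort.

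With the isolation in hand, the identity collapses to $\mu_{\mathcal{O}_X}(f_{U,N})=c_{\mathcal{O}}(X)\mu_{\mathcal{O}}(f_{U,N})$, so $c_{\mathcal{O}}(X)\neq 0$ if and only if $\mathcal{O}_X$ meets $\mathrm{supp}(f_{U,N})$. Letting $U$ run through a neighborhood basis of a point of $\mathbb{V}(F)$ and $N\to\infty$, and using that $\mathcal{O}_X$ is closed in $\mathfrak{g}(F)$, converts the "meets a neighborhood of $\mathbb{V}(F)$" condition into the honest intersection $\mathcal{O}_X\cap\mathbb{V}(F)\neq\emptyset$. Finally the homogeneity of the first paragraph removes the hypothesis that $X$ lies in the domain of validity of Shalika's expansion, yielding the iff for arbitrary regular semisimple $X\in\mathfrak{g}(F)$.
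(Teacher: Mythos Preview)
The paper does not supply its own proof of this proposition; it is stated with attribution to \cite{DR10}, \cite{Kot99}, \cite{She89} and used as a black box. So there is no in-paper argument to compare your proposal against.

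Your outline is essentially the Kottwitz--Shelstad argument and is broadly correct, but two points deserve tightening. First, Shalika's expansion runs over \emph{all} nilpotent orbits, not only the regular ones; you should say explicitly that once $f_{U,N}$ is supported inside $\mathfrak{g}^{\mathrm{reg}}(F)$ (which it is for $U$ small and the tube narrow enough), the orbital integrals $\mu_{\mathcal{O}'}(f_{U,N})$ vanish automatically for non-regular $\mathcal{O}'$, so only the finitely many regular rational orbits remain. Second, your isolation of $\mathcal{O}$ among the regular $G(F)$-orbits inside the single regular $G(\overline F)$-orbit can be done more cleanly than by a tube-and-limit construction: the action map $G\times\mathbb{V}\to\mathfrak{g}^{\mathrm{reg}}$ is smooth and surjective over $\overline F$, hence over $F$ its image is an open $G(F)$-invariant subset $\Omega\subset\mathfrak{g}^{\mathrm{reg}}(F)$ containing $e$, and $\Omega$ meets the nilpotent cone exactly in $\mathcal{O}$ (since $e$ is the only nilpotent point of $\mathbb{V}$). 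A characteristic function of a compact open neighborhood of $e$ inside $\Omega$ then kills all $\mathcal{O}'\neq\mathcal{O}$ in one stroke, and the same $\Omega$ shows directly that $\mu_{\mathcal{O}_X}$ is nonzero on such a function if and only if $\mathcal{O}_X\cap\mathbb{V}(F)\neq\emptyset$. This avoids the limiting argument and the ``Galois-descent and transversality'' step you flagged as delicate.
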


If $\pi$ is a very supercuspidal representation associated to a tame elliptic pair $(S,\mu)$, let $X$ denote the good semisimple element with $C_{G}(X)=S$. We have the following theorem due to Debacker and Reeder.
\begin{theorem}[{\cite[Proposition 4.10]{DR10}}]\label{DR}
Let $\pi=\pi_{(S,\mu)}=\pi(X)$ be a regular supercuspidal representation of $G(F)$. Assume $\mathcal{B}_{\mathrm{red}}(S)\subset \mathcal{B}_{\mathrm{red}}(G)$ is a hyperspecial point. For any generic character $\psi:N_0(F)\rightarrow \mathbb{C}^{\times}$, $\mathrm{Hom}_{N_0(F)}(\pi,\psi)\neq 0$ if and only if $G(F)$ orbit of $X$ meets $\mathbb{V}_{e_\psi}$.
\end{theorem}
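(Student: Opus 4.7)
The plan is to chain together three classical inputs that together identify the set of Whittaker data occurring in $\pi$ with the set of Kostant sections hit by the $G(F)$-orbit of $X$.

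First, I would invoke Rodier's theorem (in the form generalized by M\oe glin–Waldspurger to $p$-adic groups): for a generic character $\psi$ of $N_0(F)$ associated to a regular nilpotent orbit $\mathcal{O}_\psi$ of $\mathfrak{g}(F)$, one has
\[
\dim \mathrm{Hom}_{N_0(F)}(\pi,\psi) \;=\; c_{\mathcal{O}_\psi}(\pi),
\]
where $c_{\mathcal{O}_\psi}(\pi)$ is the Harish-Chandra–Howe coefficient appearing in the local character expansion of $\Theta_\pi$ around the identity. Thus the existence of a $\psi$-Whittaker model is equivalent to the non-vanishing of $c_{\mathcal{O}_\psi}(\pi)$, and the problem becomes purely a question about the local character expansion of $\pi$.

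Second, I would apply the Murnaghan–Kirillov formula for regular supercuspidal representations: near the identity,
\[
\Theta_\pi(\exp Y) \;=\; \deg(\pi)\,\widehat{\mu_{X}}(Y),
\]
where $X = X_\pi$ is the good regular semisimple element attached to the tame elliptic pair $(S,\mu)$. Comparing the Harish-Chandra–Howe expansion of the left-hand side with the Shalika germ expansion of $\widehat{\mu_X}$ (via Fourier transform and the germ identity $\mu_X = \sum_{\mathcal{O}} c_{\mathcal{O}}(X)\,\mu_{\mathcal{O}}$ in a neighbourhood of $0$), one reads off, for every nilpotent orbit $\mathcal{O}$,
\[
c_{\mathcal{O}}(\pi) \;=\; \deg(\pi)\,c_{\mathcal{O}}(X).
\]
The hyperspecial hypothesis on $\mathcal{B}_{\mathrm{red}}(S)\subset\mathcal{B}_{\mathrm{red}}(G)$ is what ensures we are in the range where Murnaghan–Kirillov applies on the nose (the good element $X$ is sufficiently regular and close to $0$ in the Moy–Prasad sense that the character formula holds as an equality of functions, not merely modulo higher-order corrections).

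Third, I would combine this with the Kostant-section criterion of Kottwitz–Shelstad recalled just above the statement: $c_{\mathcal{O}_\psi}(X)\neq 0$ if and only if the $G(F)$-orbit of $X$ meets $\mathbb{V}_{e_\psi}(F)$. Stringing the three equivalences together,
\[
\mathrm{Hom}_{N_0(F)}(\pi,\psi)\neq 0 \;\Longleftrightarrow\; c_{\mathcal{O}_\psi}(\pi)\neq 0 \;\Longleftrightarrow\; c_{\mathcal{O}_\psi}(X)\neq 0 \;\Longleftrightarrow\; G(F)\cdot X \cap \mathbb{V}_{e_\psi}\neq \emptyset,
\]
which is exactly the theorem.

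\textbf{Main obstacle.} The routine bookkeeping is the translation between Fourier transforms of nilpotent orbital integrals and the character expansion, and between the two parameterizations of Whittaker data (by generic characters $\psi$ versus by Kostant sections $\mathbb{V}_{e_\psi}$); these are classical but require compatible normalizations. The real technical input is justifying the Murnaghan–Kirillov identity for \emph{regular supercuspidal} representations attached to a tame elliptic pair $(S,\mu)$ with $S$ hyperspecial—i.e.\ pinning down the neighbourhood on which $\Theta_\pi(\exp Y) = \deg(\pi)\widehat{\mu_X}(Y)$ holds and checking that $\mathcal{O}_\psi$ lies in the set of nilpotent orbits controlled by that neighbourhood. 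This is precisely the place where the hyperspecial hypothesis is used, and removing it is what the subsequent work of Kaletha and Fintzen–Kaletha–Spice has to address.
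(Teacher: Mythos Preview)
The paper does not supply its own proof of this statement---it is quoted from DeBacker--Reeder---but the surrounding exposition in the appendix (the Harish-Chandra--Howe expansion, Murnaghan's identity $c_{\mathcal{O}}(\pi)=\deg(\pi)\,c_{\mathcal{O}}(X)$, and the Kostant-section criterion for $c_{\mathcal{O}}(X)\neq 0$) lays out precisely the three ingredients you chain together. Your proposal is correct and is the intended argument; the only step you make explicit that the paper leaves unstated is Rodier/M\oe glin--Waldspurger, identifying $\dim\mathrm{Hom}_{N_0(F)}(\pi,\psi)$ with the regular-nilpotent coefficient $c_{\mathcal{O}_\psi}(\pi)$.
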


From another perspective, if we pick up a generic representation $\pi$ of $G(E)$ which is $G(F)$ distinguished, one has the following expectations on the Whittaker model of $\pi$ from Prasad.
\begin{conjecture}[{$SL_n$ \cite[Lemma 4.1, Proposition 4.2]{AP16}}]
Let $\pi$ be an irreducible generic representation of $G(E)$. If $\pi$ is $G(F)$ distinguished, then $\pi$ has a Whittaker model with respect to a character $\psi:N_{0}(E)/N_{0}(F)\rightarrow \mathbb{C}^{\times}$.
\end{conjecture}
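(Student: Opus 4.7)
The plan is to use the intersection characterization of Whittaker models via Kostant sections. Under the standard dictionary between pinnings and generic characters, the condition $\psi_{N_0}|_{N_0(F)}=\mathbbm{1}$ is equivalent to $\sigma(e_{\psi_{N_0}})=-e_{\psi_{N_0}}$ for the associated regular nilpotent element $e_{\psi_{N_0}}\in\mathfrak{g}(E)$, as already invoked in the proof of \Cref{embeddingoverF}. Thus the task reduces to exhibiting a regular nilpotent $e\in\mathfrak{g}(E)$ with $\sigma(e)=-e$ whose Kostant section $\mathbb{V}_e$ meets a Whittaker-relevant regular semisimple orbit in $\mathfrak{g}(E)$ attached to $\pi$.

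First I would reduce to the case where $\pi$ is regular supercuspidal. A general irreducible generic representation is a Langlands quotient of a representation parabolically induced from a generic tempered representation of a Levi, and Rodier's heredity theorem identifies Whittaker functionals on the induced representation with those on the inducing data; the distinction property passes to the Levi via a Mackey-type decomposition together with the standard geometric lemma controlling the relevant open double cosets in $P(E)\backslash G(E)/G(F)$. This should let us assume $\pi$ is a regular supercuspidal representation attached to a tame elliptic pair $(T,\theta)$ with good generic element $X_\pi\in\mathfrak{g}(E)$. By the Murnaghan-Kirillov formula and \Cref{DR}, the Whittaker characters of $\pi$ correspond to the Kostant sections $\mathbb{V}_e\subset\mathfrak{g}(E)$ which meet the $G(E)$-orbit of $X_\pi$.

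Now I would exploit the $G(F)$-distinction. Integrating the character identity $\Theta_\pi(\exp Y)=\deg(\pi)\widehat{\mu}_{X_\pi}(Y)$ against the matrix coefficient of a nonzero element of $\mathrm{Hom}_{G(F)}(\pi,\mathbbm{1})$ yields nonvanishing of a certain integral of $\widehat{\mu}_{X_\pi}$ against $G(F)$-invariant test functions on $\mathfrak{g}(F)$. If we fix an additive character $\psi_E$ of $E$ with $\psi_E|_F=\mathbbm{1}$, then the Fourier transform on $\mathfrak{g}(F)$ is compatible with restriction from $\mathfrak{g}(E)$, and the nonvanishing forces $\mathcal{O}_{X_\pi}$ to be stable under the involution $\tau:X\mapsto -\sigma(X)$ on $\mathfrak{g}(E)$. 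After conjugating by $G(E)$ we may assume $\sigma(X_\pi)=-X_\pi$. Since $\tau$ acts on the space of Kostant sections and $\mathcal{O}_{X_\pi}$ is a $\tau$-stable orbit, the uniqueness clause of \Cref{Kostant} (applied orbit by orbit to the $\tau$-fixed-point locus) produces a $\tau$-antisymmetric $e_{\psi_{N_0}}$ whose section meets $\mathcal{O}_{X_\pi}$, yielding the desired Whittaker character.

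The principal obstacle is the step extracting $\sigma(X_\pi)=-X_\pi$ from the mere existence of a $G(F)$-invariant functional. Distinction only sees the restriction of $\Theta_\pi$ and $\widehat{\mu}_{X_\pi}$ to $\mathfrak{g}(F)$, and upgrading this to antisymmetry of the full $G(E)$-orbit of $X_\pi$ genuinely requires a base-change orbital integral identity in the spirit of Kottwitz-Shelstad; the cleanest route is probably through the comparison of stable orbital integrals on $\mathfrak{g}(E)$ and $\mathfrak{g}^{\mathrm{op}}(F)$. A second, more bookkeeping-level obstacle is the reduction to the supercuspidal case: one must track both the distinction property and the Galois-equivariance of the Whittaker normalization through every step of parabolic induction, in particular matching the $\sigma$-stable parabolics with the distinguished Levi factors supplied by Mackey theory.
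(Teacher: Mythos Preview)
The paper does not prove this statement: it is presented as a conjecture, with a remark that it has been established for $SL_n$ by Anandavardhanan--Prasad via restriction from $GL_n$ and the classification of representations of $GL_n$, treated for general symmetric spaces in \cite{DS19} (with an appendix by Sakellaridis), and reproved by Gourevitch--Sayag \cite{GS20} through the geometry of coadjoint orbits in the image of the moment map. There is no argument in the paper to compare your proposal against.

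That said, your sketch does not constitute a proof, as you yourself acknowledge. The crucial step --- deducing from $G(F)$-distinction that the $G(E)$-orbit of $X_\pi$ is stable under $X\mapsto -\sigma(X)$ --- is exactly the heart of the matter, and you defer it to an unproved base-change orbital integral identity. Nonvanishing of a $G(F)$-invariant functional controls $\Theta_\pi$ only along $G(F)$, and the passage from that to a global $\tau$-symmetry of $\mathcal{O}_{X_\pi}$ is precisely what requires input of the type provided in \cite{DS19} or \cite{GS20}; it is not a consequence of the Murnaghan--Kirillov formula together with \Cref{DR}. The reduction to the regular supercuspidal case is also problematic: Mackey theory for $P(E)\backslash G(E)/G(F)$ does not in general force the distinguished functional to live on a single closed or open orbit with $\sigma$-stable Levi, so distinction need not descend to a supercuspidal inducing datum in the way you suggest. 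In short, your outline identifies the right geometric translation of the problem (via Kostant sections and the condition $\sigma(e_{\psi})=-e_{\psi}$), but the two steps you flag as obstacles are genuine gaps, not technicalities.
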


This conjecture has been proved by Anandavardhanan and Prasad for $SL_{n}(F)$ \cite{AP16} based on the study of the restriction of representations of $GL_n(F)$ to $SL_n(F)$ and the classification of representations of $GL_{n}$. %We realize that one can give a simple proof for toral supercuspidal representations by applying a result of Debacker and Reeder on description of Whittaker models for generic very cuspidal representations. Basically one can describe all the characters $\psi$ of $N(F)$ which occur in $\pi_{(S,\mu)}$ in terms of certain intersection property between the Kostant sections of $\psi$ and the $G(F)$ orbit of the generic good element associated to $\pi_{(S,\mu)}$.
Later, this type of question has been raised and treated in a paper of Dipendra Prasad \cite{DS19} with an appendix by Sakellaridis for general symmetric spaces. Later, Gourevitch and Sayag \cite{GS20} give another proof by carefully studying the geometry of coadjoint orbits sitting inside the image of moment map.

We have the following characterization of generic characters $\psi_{N_0(E)}:N_0(E)\rightarrow \mathbb{C}^{\times}$, such that $\psi_{N_0(E)}|_{N_0(F)}=\mathbbm{1}$.

Notice that we have the following exact sequence of $F$-groups:
$$1\rightarrow [N_0,N_0]\rightarrow N_0\stackrel{\{x_{\alpha}\}_{\alpha\in \Delta}}{\longrightarrow} \prod_{\Delta}\mathbb{G}_a\rightarrow 1.$$
Since $[N,N]$ is unipotent, $H^1(F,[N_0,N_0])$ is trivial. The long exact sequence gives the isomorphism
$$N_0(F)/[N_0,N_0](F)\rightarrow F^{|\Delta|}.$$
Let $\Delta_{\Sigma}$ denote the composition
$$\Delta_{\Sigma}:N\stackrel{\{x_{\alpha}\}_{\alpha\in \Delta}}{\longrightarrow} \prod_{\Delta}\mathbb{G}_a\stackrel{\Sigma}{\longrightarrow}\mathbb{G}_a.$$
Notice that any generic character $\psi_{N_0(F)}:N(F)\rightarrow \mathbb{C}^{\times}$ is of the form $\psi_F\circ \Delta_{\Sigma}(F)$. More explicitly, $\psi_{N(F)}$ is defined by:
\begin{equation}\label{additive}
\begin{aligned}
\psi_{N(F)}(\prod_{\alpha\in \Phi^{+}}x_{\alpha}(t_{\alpha})):=\psi_{F}(\sum_{\alpha\in \Delta} t_{\alpha})
\end{aligned}
\end{equation}
where $\psi_F:F\rightarrow\mathbb{C}^{\times}$ is a fixed additive character of $F$.

\begin{lemma}
Let $\mathbb{G}_a^{\mathrm{op}}$ denote the $F$-group $\mathrm{Res}_{E/F}\mathbb{G}_{a,E}/\mathbb{G}_a$, whose $F$-rational points consist of trace zero elements in $E$
$$\mathbb{G}_a^{\mathrm{op}}(F)=\{x\in E|x+\sigma(x)=0\}=\mathrm{Im}(1-\sigma),$$
where $1-\sigma$ denotes the map $E\rightarrow E$ sending $x$ to $x-\sigma(x)$. Let $\sqrt{a}$ denote a chosen trace zero element such that $E=F(\sqrt{a})$, we have the following exact sequence
$$1\rightarrow F^{|\Delta|}\rightarrow E^{|\Delta|}\rightarrow (\sqrt{a}F)^{|\Delta|}\rightarrow 1.$$
\end{lemma}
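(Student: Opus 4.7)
The plan is to deduce both claims from the additive analog of Hilbert~90 applied to the short exact sequence of $F$-groups
$$1 \to \mathbb{G}_a \to \mathrm{Res}_{E/F}\mathbb{G}_{a,E} \to \mathbb{G}_a^{\mathrm{op}} \to 1,$$
which by definition realizes $\mathbb{G}_a^{\mathrm{op}}$ as the cokernel, and then to identify the cokernel explicitly using the involution $\sigma$.

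First, I would pass to Galois cohomology. Since $\mathbb{G}_a$ is a (smooth) unipotent group over a field, $H^{i}(F, \mathbb{G}_a) = 0$ for $i \geq 1$ by the normal basis theorem (or simply because the $\Gamma_F$-module $\overline{F}$ is cohomologically trivial in positive degrees). The long exact sequence therefore collapses to the short exact sequence of $F$-vector spaces
$$0 \to F \to E \to \mathbb{G}_a^{\mathrm{op}}(F) \to 0,$$
so $\mathbb{G}_a^{\mathrm{op}}(F) \cong E/F$.

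Next, to identify $\mathbb{G}_a^{\mathrm{op}}(F)$ with the trace-zero subspace, I would exploit the splitting given by averaging against $\sigma$. Since the residual characteristic is odd, the operator $e^{-} := \tfrac{1}{2}(1-\sigma)$ is an $F$-linear projection of $E$ onto $E^{\sigma = -1} = \{x \in E : x + \sigma(x) = 0\}$ with kernel $F$. Hence the composition $E^{\sigma = -1} \hookrightarrow E \twoheadrightarrow E/F$ is an isomorphism, which yields the identification $\mathbb{G}_a^{\mathrm{op}}(F) = \{x \in E : x + \sigma(x) = 0\}$. Moreover, every $x - \sigma(x)$ lies in $E^{\sigma = -1}$, and conversely any $y \in E^{\sigma = -1}$ is visibly $(1-\sigma)(y/2)$, so $\mathrm{Im}(1-\sigma) = E^{\sigma = -1}$. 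Fixing a trace-zero element $\sqrt{a}$ with $E = F \oplus F\sqrt{a}$ gives the concrete description $E^{\sigma = -1} = \sqrt{a} F$.

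Finally, since direct sums preserve exactness, taking $|\Delta|$ copies of the sequence $0 \to F \to E \to \sqrt{a}F \to 0$ (under the identifications just established) yields the claimed
$$1 \to F^{|\Delta|} \to E^{|\Delta|} \to (\sqrt{a}F)^{|\Delta|} \to 1.$$
There is no real obstacle here: the whole lemma reduces to the vanishing of $H^{1}(F,\mathbb{G}_a)$ together with the observation that, in odd residual characteristic, the $\pm 1$-eigenspace decomposition of $E$ under $\sigma$ splits the short exact sequence $0 \to F \to E \to E/F \to 0$.
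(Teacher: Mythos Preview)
Your proof is correct and follows essentially the same approach as the paper: both use the defining short exact sequence of $F$-groups together with the vanishing of $H^{1}(F,\mathbb{G}_a)$ to deduce the exact sequence on $F$-points. The paper takes the product over $\Delta$ from the outset and is more terse, whereas you work with a single copy and spell out the identification $\mathbb{G}_a^{\mathrm{op}}(F)=E^{\sigma=-1}=\sqrt{a}\,F$ via the projector $\tfrac{1}{2}(1-\sigma)$ before passing to $|\Delta|$ copies, but the substance is the same.
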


\begin{proof}
Notice that we have an exact sequence of $F$-groups:
\begin{equation}\label{ga}
\begin{aligned}
1\rightarrow \prod_{\Delta}\mathbb{G}_a\rightarrow \prod_{\Delta}\mathrm{Res}_{E/F}\mathbb{G}_{a,E}\rightarrow \prod_{\Delta}\mathbb{G}_a^{\mathrm{op}}\rightarrow 1.
\end{aligned}
\end{equation}
Since $H^1(F,\prod\limits_{\Delta}\mathbb{G}_a)$ is trivial, the long exact sequence implies the lemma.
\end{proof}

\begin{lemma}\label{whittakerF}
$\psi_{N_0(E)}:N_0(E)\rightarrow \mathbb{C}^{\times}$ satisfies $\psi_{N_0(E)}|_{N_0(F)}=\mathbbm{1}$ if and only if $\psi_{N_0(E)}^{\sigma}=\psi_{N_0(E)}^{-1}$.
\end{lemma}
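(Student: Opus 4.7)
The plan is to reduce both sides of the equivalence to the single condition that the underlying additive character $\psi_E : E \to \mathbb{C}^\times$ (through which $\psi_{N_0(E)}$ factors) is trivial on $F \subset E$. Because the pinning $\mathcal{P} = (B_0, S_0, \{X_\alpha\}_{\alpha \in \Delta})$ is defined over $F$, every generic character of $N_0(E)$ has the form $\psi_{N_0(E)} = \psi_E \circ \Delta_\Sigma$ in the sense of \Cref{additive}, where $\Delta_\Sigma : N_0 \to \mathbb{G}_a$ is defined over $F$ and the Galois involution $\sigma$ acts on $N_0(E)$ through its action on $E$ alone, i.e.\ $\sigma(x_\alpha(t)) = x_\alpha(\sigma(t))$ for every $\alpha \in \Delta$ and $t \in E$.

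First I would handle the restriction side. Since the short exact sequence \Cref{ga} shows that $\Delta_\Sigma(N_0(F)) = F^{|\Delta|}$ surjects onto $F$ via summation, the restriction $\psi_{N_0(E)}|_{N_0(F)}$ is trivial if and only if $\psi_E|_F = \mathbbm{1}$.

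Next I would handle the Galois-twist side. A direct computation using the $F$-rationality of the pinning gives
\[
\psi_{N_0(E)}^\sigma\!\left(\prod_{\alpha \in \Phi^+} x_\alpha(t_\alpha)\right) = \psi_E\!\left(\sigma\!\left(\sum_{\alpha \in \Delta} t_\alpha\right)\right), \qquad \psi_{N_0(E)}^{-1}\!\left(\prod_{\alpha \in \Phi^+} x_\alpha(t_\alpha)\right) = \psi_E\!\left(-\sum_{\alpha \in \Delta} t_\alpha\right),
\]
so the identity $\psi_{N_0(E)}^\sigma = \psi_{N_0(E)}^{-1}$ is equivalent to $\psi_E(x + \sigma(x)) = 1$ for all $x \in E$, that is, $\psi_E \circ \mathrm{tr}_{E/F} \equiv \mathbbm{1}$. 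Since $E/F$ is separable quadratic, the trace is surjective onto $F$, and this last condition is again $\psi_E|_F = \mathbbm{1}$.

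Combining these two reductions gives the claimed equivalence. There is no real obstacle here; the only subtlety is to make sure that $\sigma$ acts on the root vectors $x_\alpha$ trivially, which is why we must use a pinning defined over $F$ (guaranteed by the quasi-split hypothesis on $G$), and to invoke the surjectivity of $\mathrm{tr}_{E/F}$ and of $\Delta_\Sigma$ restricted to $F$-points, both of which are immediate from $p \neq 2$ and the vanishing of $H^1(F, \mathbb{G}_a)$.
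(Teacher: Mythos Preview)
Your argument is correct and is essentially the paper's own proof distilled to its core: both conditions are shown equivalent to $\psi_E|_F = \mathbbm{1}$, using that $\Delta_\Sigma$ is defined over $F$ on one side and that $\mathrm{tr}_{E/F}$ is surjective on the other. One minor imprecision worth flagging: in general $\sigma(x_\alpha(t)) = x_{\sigma(\alpha)}(\sigma(t))$ rather than $x_\alpha(\sigma(t))$ (an $F$-pinning only guarantees that $\sigma$ permutes the $X_\alpha$, not that it fixes each one), but since you immediately pass to the sum $\sum_{\alpha\in\Delta} t_\alpha$ this does not affect your displayed identity or the conclusion.
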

\begin{proof}
By \ref{additive}, we know that $\psi_{N(E)}|_{N(F)}=\mathbbm{1}$ if and only if $\psi_E\circ \Sigma|_{F^{|\Delta|}}=\mathbbm{1}$. By applying the exact functor $\mathrm{Hom}_{\mathrm{cts}}(\cdot,\mathbb{C}^{\times})$ to \ref{ga}, we know $\psi_E\circ \Sigma|_{F^{|\Delta|}}=\mathbbm{1}$ if and only if $\psi_E\circ \Sigma$ is a lift of a character of $(\sqrt{a}F)^{|\Delta|}$, that is, $\psi_E\circ \Sigma:E^{|\Delta|}\rightarrow \mathbb{C}^{\times}$ is of the form $\psi_{F}'(\frac{1}{\sqrt{a}}\Sigma\circ (1-\sigma))$ for some character $\psi_F'$ of $F$. This in turns implies that $\psi_{N_0(E)}^{\sigma}=\psi_{N_0(E)}^{-1}$. The reverse direction easily follows from the surjectivity of the trace map $\mathrm{tr}:E\rightarrow F$ sending $x$ to $x+\sigma(x)$.
\end{proof}
%We apply to the case that $\pi$ is a regular supercuspidal representation of $G(E)$. Then a necessary condition of $\pi$ being distinguished or $\omega_{G(F),E}$ distinguished is that $\pi$ is conjugate self dual $\pi^{\sigma}\cong \pi^{\vee}$, which means that $\mu^{\sigma}=\mu^{-1},\sigma(X)=-X$. Assume $\pi$ is generic, then \Cref{DR} implies that $G(E)$ orbit of $X$ meets $\mathbb{V}_{e_\psi}(E)$. Assume we have
%$$gXg^{-1}=e_{\psi}+Y\in G(E)\cdot X\cap \mathbb{V}_{e_\psi}(E),$$
%for $g\in G(E)$ and $Y\in C_{\mathfrak{g}(E)}(f_{\psi})$, which means $X\in \mathbb{V}_{g^{-1}\cdot e_{\psi}}$. Notice that $X=-\sigma(X)$, we also have $X=-\sigma(X)\in \mathbb{V}_{-\sigma (g^{-1}\cdot e_{\psi})}$. Hence $X\in \mathbb{V}_{g^{-1}\cdot e_{\psi}}\cap \mathbb{V}_{-\sigma (g^{-1}\cdot e_{\psi})} $, which implies that the intersection of these two Kostant sections is nonempty. Hence we have $g^{-1}\cdot e_{\psi}\in {\mathfrak{n}^{\mathrm{op}}(F)}^*$.

%Let $e$ be a nilpotent element such that $\mathbb{V}_e\cap \mathbb{V}_{-\sigma(e)}\neq \emptyset$. Notice that $\overline{F}\cdot e$ is stable under the %action of $-\sigma$, hence $-\sigma(e)=\lambda\cdot e$ for some constant $\lambda\in \overline{F}$. Let $e+a=-\sigma (e)+b=\lambda e+b$ lies in the %intersection $\mathbb{V}_e\cap \mathbb{V}_{-\sigma(e)}$ with $a\in C_{\mathfrak{g}}(f)$ and $b\in C_{\mathfrak{g}}(\lambda f)=C_{\mathfrak{g}}(f)$. %Taking bracket with $f$ on both sides, we have $h=\lambda h$, which means $\lambda=1$, hence $e=-\sigma(e)$.

By \cite[4.5]{DR10}, one has the following assignment:
\begin{equation*}
\begin{aligned}
\mathrm{Hom}(N(F),\mathbb{C}^{\times})&\rightarrow {\mathfrak{n}(F)}^*\\
\psi_{N(F)}&\mapsto e_{\psi},
\end{aligned}
\end{equation*}
where $e_{\psi}$ is defined by
\begin{equation*}
\begin{aligned}
\psi_{N(F)}(\exp X)=\psi_F(\langle e_{\psi},X\rangle)
\end{aligned}
\end{equation*}
for any $X\in \mathfrak{n}(F)$. This implies that the element $e_{\psi_{N_0(E)}}\in \mathfrak{n}(E)^{*}$ associated to $\psi_{N_{0}(E)}$ satisfies the condition
\begin{equation}\label{regularunipotent}
\begin{aligned}
e_{\psi_{N_0(E)}}=-\sigma(e_{\psi_{N_0(E)}}).
\end{aligned}
\end{equation}

In fact, if $\pi$ is a regular supercuspidal representation of $G(E)$, which is $G(F)$ distinguished, the results of Prasad \cite{DS19} on the description of Whittaker model for $\pi$ could be regarded as a reverse direction of \Cref{embeddingoverF}.
 %This means that one has $\psi_{g^{-1}\cdot e_{\psi}}|_{N(F)}=\mathbbm{1}$.

\Addresses
\end{document}